 \newcommand{\R}{\ensuremath{\mathbb{R}}}
 \newcommand{\ba}{\begin{align*}}
 \newcommand{\ea}{\end{align*}}
 \newcommand{\norm}[2]{{ \ensuremath{\left\|} #1 \ensuremath{\right\|}}_{#2}}
 \def\ExtendSymbol#1#2#3#4#5{\ext@arrow 0099{\arrowfill@#1#2#3}{#4}{#5}}
 \def\ExtendSymbol#1#2#3#4#5{\ext@arrow 0099{\arrowfill@#1#2#3}{#4}{#5}}
 \definecolor{orange}{rgb}{1,0.5,0}
 \definecolor{brown}{rgb}{0.48,0.33,0.19}
 \definecolor{miao}{cmyk}{0.5,0,0.2,0.2}
 \definecolor{qiao}{gray}{0.96}
\newtheorem{prop}{Proposition}[section]
\newtheorem{proposition}[prop]{Proposition}
\newtheorem{theorem}[prop]{Theorem}
\newtheorem{lemma}[prop]{Lemma}
\newtheorem{corollary}[prop]{Corollary}
\newtheorem{remark}[prop]{Remark}
\newtheorem{definition}[prop]{Definition}
\numberwithin{equation}{section}
\title{Ricci curvature integrals, local functionals, and the Ricci flow}
 \author{Yuanqing Ma,  \; Bing Wang}
\date{}
\begin{document}	
	\maketitle
\begin{abstract}
 %We study the non-collapsing Riemannian manifolds whose average $L^p$-integral of  $Rc_{-}$ is uniformly small on each unit geodesic ball. 
 %If the non-collapsing constant is close to the Euclidean one and $p$ is strictly greater than half dimension, we show that the Ricci flow started 
 %from such metrics should have uniform existence time, via estimating the localized Perelman functionals.
 %Spacetime integral of scalar curvature can also be explicitly calculated. Then we study the distance distortion estimate and stability of the Ricci flow.
 %In particular, we show that on a Riemannian manifold with small $L^{p}$-integral of $\{Rc-(m-1)g\}_{-}$ and the same volume of standard sphere, 
% the Ricci flow initiated from it will exist immortally and converge to round sphere.
% Therefore,  our study provide an efficient Ricci flow method to understand the non-collapsing Riemannian manifolds with small integral bound of Ricci curvature. 

Consider a Riemannian manifold $(M^{m}, g)$ whose volume is the same as the standard sphere $(S^{m}, g_{round})$.
If $p>\frac{m}{2}$ and $\int_{M} \left\{ Rc-(m-1)g\right\}_{-}^{p} dv$ is sufficiently small, we show that the normalized Ricci flow initiated from $(M^{m}, g)$ will exist immortally
and converge to the standard sphere.   The choice of $p$ is optimal. 
\end{abstract}

%————————————%目录———————————————%
\tableofcontents
%\titlecontents{section}
%[2em]%标题左边距
%{\bf \large}
%{\contentslabel{1.5em}}%序号左边距
%{}%行间距
%{\titlerule*[0.5pc]{$\cdot$}\contentspage\hspace*{0.1cm}}%pc是打点的密度%不要点就把cdot换成\quad%\hspace是页码右边距

%\titlecontents{subsection}
%[4em]%标题左边距
%{\bf \normalsize}
%{\contentslabel{2em}}%序号左边距
%{}%
%{\titlerule*[0.5pc]{$\cdot$}\contentspage\hspace*{0.1cm}}%页码右边距

\section{Introduction}

The classical Myer's theorem states that if a closed Riemannian manifold $(M^{m}, g)$ satisfies 
\begin{subequations}
  \begin{align}[left = \empheqlbrace \,]
    & Rc \geq m-1, \label{eqn:OF13_1a} \\
    & |M|_{g} = (m+1) \omega_{m+1},  \label{eqn:OF13_1b}  
  \end{align}
  \label{eqn:OF13_1}
\end{subequations}
\hspace{-2mm}
where $\omega_m$ is the volume of unit ball in the Euclidean space $\R^{m}$. Then the inequality (\ref{eqn:OF13_1a}) must be an equality, and the manifold $(M^{m}, g)$ must be isometric to $(S^{m}, g_{round})$.
It is a natural question to find a quantative version of this rigidity theorem. 
If we replace (\ref{eqn:OF13_1})  by the following conditions
\begin{subequations}
  \begin{align}[left = \empheqlbrace \,]
    & Rc \geq m-1-\delta, \label{eqn:OD14_1a} \\
    & |M|_{g} = (m+1) \omega_{m+1},  \label{eqn:OD14_1b}  
  \end{align}
  \label{eqn:OD14_1}
\end{subequations}
\hspace{-2mm}
for some small number $\delta<\delta_0(m)$, we can still declare many properties of $M$.  
For example, by the deep work of Perelman~\cite{Pe94}, we know that such $M$ must be homeomorphic to $S^{m}$.  
The seminal work of Colding~\cite{Co1}~\cite{Co2} proved that $(M, g)$ is Gromov-Hausdorff close to the standard sphere $(S^{m}, g_{round})$. 
Furthermore, it is proved by the foundational work of Cheeger-Colding~\cite{CC} that $M$ is diffeomorphic to $S^{m}$,  and $(M, g)$ is uniformly bi-H\"older equivalent to $(S^{m}, g_{round})$. 
Their proof relies on the Reifenberg method. 
In~\cite{WB2}, the second named author proved that the normalized Ricci flow initiated from $(M, g)$ exists immortally and converges to a round metric $g_{\infty}$,
and the identity map from $(M, g)$ to $(M, g_{\infty})$ is uniformly bi-H\"older. 
Therefore,  \cite{WB2} provides an alternative proof of Cheeger-Colding' result, via Ricci flow smoothing, which seems to be natural and was intensively studied (e.g. See~\cite{DWY}~\cite{SiTo}~\cite{HuWa} and the references therein.).

The results mentioned above are based on the point-wise Ricci lower bound.
It is interesting to investigate whether this point-wise Ricci lower bound can be replaced by an integral Ricci lower bound.
Actually, many important results based on integral Ricci lower bound were established by the work of Petersen-Wei~\cite{PW1}~\cite{PW2}, D. Yang~\cite{Yan92a}~\cite{Yan92b}~\cite{Yan92c}, S. Gallot~\cite{Gallot}, etc. 
The Ricci flow behavior with initial data satisfying Ricci or other curvature's integral pinching conditions was also studied by many people, e.g., see ~\cite{WYQ09}~\cite{XGY13}~\cite{CCL}. 
Under an appropriate integral Ricci curvature condition(cf. Definition~\ref{dfn:OF16_1} for precise definitions), we have the following theorem.

\begin{theorem}[\textbf{Main Theorem}]
  For each $m \geq 3$ and $p>\frac{m}{2}$, there is a constant $\delta_{0}=\delta_{0}(m, p)$ with the following property.
  
  Suppose $(M^{m}, g)$ is a Riemannian manifold satisfying 
  \begin{subequations}
  \begin{align}[left = \empheqlbrace \,]
    & \int_{M} \left\{ Rc-(m-1)g \right\}_{-}^{p}  dv <\delta, \label{eqn:OH25_13} \\
    & |M|_{g} = (m+1) \omega_{m+1},  \label{eqn:OH25_14}  
  \end{align}
\label{eqn:RG26_1}  
\end{subequations}
\hspace{-3mm}
  where $|\cdot|$ means volume, $\omega_{m+1}$ is the volume of unit ball in $\R^{m+1}$, and $\delta\in (0, \delta_{0})$. 
  Then the normalized Ricci flow initiated from $(M^{m}, g)$ exists immortally and converges to a round metric $g_{\infty}$ exponentially fast.
  The limit metric $(M, g_{\infty})$ has constant sectional curvature $1$. 

  Furthermore, for each pair of points $x,y \in M$ satisfying $d_{g}(x,y) \leq 1$, the following distance bi-H\"older estimate holds:
 \begin{align}
   e^{-\psi} d_{g}^{1+\psi}(x,y) \leq d_{g_{\infty}}(x,y) \leq e^{\psi} d_{g}^{1-\psi}(x,y),    \label{eqn:RG26_2} 
 \end{align}
 where $\psi=\psi(\delta|m, p)$ such that $\displaystyle \lim_{\delta \to 0^{+}} \psi(\delta|m, p)=0$.    
 \label{thm:RG26_3}
\end{theorem}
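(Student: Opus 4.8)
The plan is to run the normalized Ricci flow $g(t)$ starting from $(M^{m},g)$ and to show that, after a short time, it coincides up to a small error with a flow issued from a metric already very close to $g_{round}$; from there the dynamical stability of the round sphere takes over. Everything rests on converting the weak hypothesis \eqref{eqn:RG26_1} into a \emph{scale-invariant} piece of information that the flow can detect. Note that $\left\{Rc-(m-1)g\right\}_{-}$ scales like $(\text{length})^{-2}$, so the integral $\int_{M}\left\{Rc-(m-1)g\right\}_{-}^{p}\,dv$ has length-dimension $m-2p$, which is \emph{negative} precisely when $p>\frac{m}{2}$; hence the smallness in \eqref{eqn:OH25_13} is not merely preserved but improves under parabolic rescaling to smaller scales. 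This is why the hypothesis $p>\frac{m}{2}$ appears and why it is optimal: for $p\le\frac{m}{2}$ the functional is scaling-critical or supercritical and concentrated counterexamples to the conclusion can be built.

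First I would exploit the volume normalization \eqref{eqn:OH25_14} together with the integral Ricci bound to run the machinery for integral curvature bounds of Gallot~\cite{Gallot}, Petersen--Wei~\cite{PW1,PW2} and D.~Yang~\cite{Yan92a,Yan92b,Yan92c}: relative volume comparison and the attendant isoperimetric and Sobolev inequalities yield a Sobolev inequality on $(M,g)$ whose constant is within $o_{\delta}(1)$ of the sharp round-sphere constant, hence a lower bound on the Yamabe invariant $Y(M,[g])\ge Y(S^{m},[g_{round}])-o_{\delta}(1)$ and a uniform noncollapsing bound. Passing to the logarithmic form of this almost-sharp Sobolev inequality, I would conclude that the \emph{local functional} of the title --- Perelman's $\mu$-functional, equivalently the local entropy / local $\nu$-invariant used in \cite{WB2} --- is $\ge -o_{\delta}(1)$ at every scale $\le 1$, i.e. almost maximal. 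In parallel, Colding's volume-to-metric almost-rigidity~\cite{Co1,Co2}, in the form valid under integral Ricci lower bounds, gives that $(M,g)$ is $\varepsilon(\delta)$-Gromov--Hausdorff close to $(S^{m},g_{round})$ with $\varepsilon(\delta)\to 0$ as $\delta\to 0$.

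Next, almost-maximality of the local entropy is exactly the hypothesis under which a pseudolocality-type smoothing theorem applies. I would show that the normalized Ricci flow out of $(M,g)$ exists on a definite interval $[0,T_{0}(m)]$ and satisfies, for $0<t\le T_{0}$, the estimates $|Rm|_{g(t)}\le K(m)t^{-1}$, $\mathrm{inj}_{g(t)}\ge K(m)^{-1}\sqrt{t}$, and a doubling bound on volumes of balls, with all constants independent of the modulus of smoothness of the initial $g$ (depending only on $m,p,\delta$). Combined with the Gromov--Hausdorff closeness, the $t^{-1}$ curvature decay upgrades at a fixed time $t_{0}\in(0,T_{0}]$ to a \emph{smooth} metric $g(t_{0})$ that is $\delta'$-close in $C^{2,\alpha}$ (and then, by a further parabolic estimate, in $C^{\infty}$) to a round metric, with $\delta'=\delta'(\delta|m,p)\to 0$. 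Now the dynamical stability of the round sphere under the normalized Ricci flow finishes the convergence: starting close to $g_{round}$, the Hamilton--B\"ohm--Wilking pinching improvement (the curvature operator becomes positive and the pinching cone shrinks to the constant-curvature rays) --- or, alternatively, a Lojasiewicz--Simon inequality for Perelman's $\lambda$-functional near $g_{round}$, as exploited in \cite{WB2} --- forces $g(t)$ to exist for all $t\ge t_{0}$ and to converge exponentially fast to a limit $g_{\infty}$ of constant sectional curvature $1$. Finally \eqref{eqn:RG26_2} follows by integrating Hamilton's distance-distortion inequality $|\partial_{t}\log d_{g(t)}(x,y)|\le C\sup_{M}|Rc|$ along the flow: on $[t_{0},\infty)$ the exponentially decaying curvature contributes only a bounded multiplicative factor, while on $(0,t_{0}]$ the bound $|Rm|\le K/t$ is integrable against $d\log d_{g(t)}$ up to the scale where $d_{g(t)}(x,y)\sim\sqrt{t}$, and balancing the two regimes --- exactly as in \cite{WB2} --- converts the unit-scale distance into the stated bi-H\"older relation with exponent $\psi=\psi(\delta|m,p)\to 0$.

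The main obstacle is the smoothing step: producing the $|Rm|\le K/t$ estimate, the injectivity-radius lower bound and the noncollapsing for the flow emanating from initial data controlled \emph{only} by the integral quantity \eqref{eqn:OH25_13}, with no a priori pointwise or $C^{k}$ bounds and with constants that do not deteriorate as the initial metric degenerates. This demands a pseudolocality statement driven purely by the local-entropy / almost-sharp-Sobolev input, together with a careful use of the negative scaling dimension $m-2p<0$ to guarantee that the set where $\left\{Rc-(m-1)g\right\}_{-}$ is large occupies a negligible fraction of every small ball --- which is also exactly the mechanism forcing the optimality of $p>\frac{m}{2}$.
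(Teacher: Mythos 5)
Your overall architecture (GH-closeness to the round sphere from Petersen--Sprouse/Aubry, almost-maximal local entropy, pseudo-locality smoothing, stability of $g_{round}$, distance distortion for \eqref{eqn:RG26_2}) is the same as the paper's, but the proposal has genuine gaps at exactly the two places the paper identifies as the real difficulties. First, your derivation of the local $\boldsymbol{\mu}$-functional lower bound --- an ``almost sharp'' Sobolev constant from the Gallot/Petersen--Wei/Yang machinery, then a Yamabe-invariant bound, then the ``logarithmic form'' --- is not available under a purely integral Ricci hypothesis. The Sobolev estimates one actually has under $\kappa(p,1)$ small (Lemma~\ref{lma:OH20_1}, from \cite{DWZ}) give a uniform constant $C_S=C(m)r$, not a constant within $o_{\delta}(1)$ of the sharp Euclidean one; the sharp-constant (isoperimetric) route, which works when $Rc\ge-(m-1)g$ pointwise, goes through needle decomposition on RCD spaces \cite{CaMo} and is exactly what the paper says cannot be used here. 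The paper replaces it by Theorem~\ref{thm:OD12_1}: a blowup/contradiction analysis of the minimizer of the local $\boldsymbol{\bar{\mu}}$-functional (Euler--Lagrange equation, the Wang--Wang gradient estimate, weak Harnack and boundary H\"older estimates, a Green's-function representation passing to the Euclidean limit, contradiction with the Euclidean log-Sobolev inequality), plus Lemma~\ref{l3.10} to absorb the $\int\tau R\varphi^{2}$ term using the $L^{p}$ bound on $R_{-}$. Your sketch supplies none of this, and the claimed almost-sharpness of the Sobolev constant is unproved.

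Second, the bi-H\"older estimate \eqref{eqn:RG26_2} --- and already the identification of $g(t_{0})$ as smoothly close to the round metric --- does not follow from ``integrating Hamilton's distance-distortion inequality and balancing at $d\sim\sqrt{t}$''. The upper-distortion bound $\left|\tfrac{d}{dt}\log d_{g(t)}\right|\le C\epsilon/t$ is not integrable down to $t=0$, so curvature decay alone gives no control of $d_{g(d_{0}^{2})}$ in terms of $d_{g(0)}=d_{0}$; in \cite{WB2} and \cite{TW} this initial-layer (short-range) distortion is extracted from the almost-Einstein condition, the smallness of $\int_{0}^{1}\int|R|\,dv\,dt$, which in \cite{WB2} is produced by the maximum principle because $R\ge-\epsilon$ is preserved along the flow. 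Under only the initial $L^{p}$ bound \eqref{eqn:OH25_13} that argument is gone, and the paper's Theorems~\ref{thm:OF21_1} and~\ref{thm:OH25_4} (the new cutoff functions, the ODE comparison for $\int\eta\,dv$ and $\int\eta^{3/2}R_{-}^{p}\,dv$, then interpolation and a volume argument) exist precisely to recover it; your proposal offers no substitute. A further quantitative point: if the pseudo-locality constant in $|Rm|\le K(m)t^{-1}$ is a fixed dimensional constant rather than $\epsilon=\psi(\delta|m,p)$, the integrated distortion factor behaves like $d_{0}^{-CK}$ and destroys the small exponent $\psi$ in \eqref{eqn:RG26_2}; the argument requires the $\epsilon$-form of the curvature decay coming from the entropy estimate above.
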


Note that Theorem~\ref{thm:RG26_3} implies the following topological rigidity result. 

\begin{corollary}[Petersen-Sprouse~\cite{PeSp}, Aubry\cite{Aubry}]
 If $(M^{m}, g)$ satisfies  the condition (\ref{eqn:RG26_1}),  then $M$ is diffeomorphic to standard sphere $S^{m}$ and is close to the standard sphere in the Gromov-Hausdorff topology:
 \begin{align}
      d_{GH} \left\{ (M^{m}, g),  (S^{m}, g_{round})\right\} < \psi(\delta|m,p).    \label{eqn:OH24_1}
 \end{align}
 \label{cly:OH23_1}
\end{corollary}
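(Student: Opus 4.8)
The corollary follows from the Main Theorem by extracting the topological and Gromov-Hausdorff consequences once the Ricci flow has been run, so the plan is essentially a packaging argument rather than a new analysis.

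First I would invoke Theorem~\ref{thm:RG26_3}: under hypothesis (\ref{eqn:RG26_1}), the normalized Ricci flow $g(t)$ starting from $(M^m,g)$ exists for all $t\geq 0$ and converges exponentially fast in $C^\infty$ to a limit metric $g_\infty$ of constant sectional curvature $1$. Since $M$ is closed, a complete simply-connected (or merely complete) manifold of constant curvature $1$ is a spherical space form; but the exponential convergence $g(t)\to g_\infty$ forces $(M,g_\infty)$ to have the volume $(m+1)\omega_{m+1}$ preserved along the normalized flow, hence $\Vol(M,g_\infty)=\Vol(S^m,g_{round})$, which among spherical space forms $S^m/\Gamma$ singles out $\Gamma=\{1\}$. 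Therefore $(M,g_\infty)$ is isometric to $(S^m,g_{round})$, and in particular $M$ is diffeomorphic to $S^m$. This gives the diffeomorphism claim in Corollary~\ref{cly:OH23_1}.

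Next I would extract the Gromov-Hausdorff estimate (\ref{eqn:OH24_1}) from the bi-H\"older distance estimate (\ref{eqn:RG26_2}). Fix the identification $M\cong S^m$ coming from the previous step, so that $g_\infty$ corresponds to $g_{round}$ after pulling back by the isometry. The estimate (\ref{eqn:RG26_2}) says that for all $x,y$ with $d_g(x,y)\leq 1$ one has $e^{-\psi}d_g^{1+\psi}(x,y)\leq d_{g_\infty}(x,y)\leq e^{\psi}d_g^{1-\psi}(x,y)$. A short computation shows that on the scale $d_g\leq 1$ this implies $|d_g(x,y)-d_{g_\infty}(x,y)|<\psi'(\delta|m,p)$ with $\psi'\to 0$ as $\delta\to 0$: indeed on $[0,1]$ the functions $r\mapsto e^{\pm\psi}r^{1\mp\psi}$ converge uniformly to $r\mapsto r$ as $\psi\to 0$, and one must only additionally control pairs with $d_g(x,y)$ comparable to $\diam(M,g)$, which is itself close to $\diam(S^m)=\pi$ because the total volume is fixed and a lower Ricci-integral bound (implicit in (\ref{eqn:OH25_13})) yields a diameter upper bound via the Bishop--Gromov--type estimates of Petersen--Wei. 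Chaining finitely many such short segments upgrades the local comparison to a global one, so the identity map $(M,g)\to (M,g_\infty)$ is a $\psi'$-Gromov--Hausdorff approximation, and since $(M,g_\infty)$ is isometric to $(S^m,g_{round})$ we obtain (\ref{eqn:OH24_1}) after enlarging $\psi$ if necessary.

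The one point requiring genuine care — the main obstacle — is the passage from the pointwise bi-H\"older bound (\ref{eqn:RG26_2}), which is only stated for pairs at distance $\leq 1$, to a global Gromov--Hausdorff bound: one needs a uniform diameter bound on $(M,g)$ so that any pair of points can be joined by a controlled number of steps of length $\leq 1$, and one needs the per-step additive errors to accumulate to something still tending to $0$ with $\delta$. Both are available from the integral-curvature machinery already in play (the diameter bound from the smallness of $\int_M\{Rc-(m-1)g\}_-^p\,dv$ with $p>\tfrac m2$, à la Petersen--Sprouse/Aubry, and the additive error control from the elementary uniform-convergence estimate above), so the argument closes; everything else is bookkeeping.
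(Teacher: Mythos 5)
Your argument is correct and is essentially the paper's intended derivation: the paper gives no separate proof, only the remark that Theorem~\ref{thm:RG26_3} implies the corollary (alternatively citing the direct Petersen--Sprouse/Aubry route), and your elaboration — volume preservation along the normalized flow to rule out nontrivial space forms, plus the bi-H\"older estimate (\ref{eqn:RG26_2}) chained over segments of length at most $1$ using Aubry's diameter bound — fills in exactly the intended bookkeeping. The only cosmetic slip is attributing the diameter bound to Petersen--Wei volume comparison rather than to Aubry's Myers-type estimate, which is what the paper actually invokes.
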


We remark that the conclusion of Corollary~\ref{cly:OH23_1} was  obtained by Petersen-Sprouse~\cite{PeSp} under the condition (cf. Definition~\ref{dfn:OH25_9})
\begin{align}
 \kappa(1,p,1)  \coloneqq \sup_{x \in M}   \left\{ \frac{\int_{B(x, 1)} \{Rc-(m-1)g\}_{-}^{p} dv}{|B(x, 1)|}\right\}^{\frac{1}{p}} <\xi(m,p)  \label{eqn:OH25_15}
\end{align}
for some small number $\xi=\xi(m,p)$.    The above condition can be deduced from (\ref{eqn:RG26_1}) by the diameter bound of Aubry~\cite{Aubry}.  
One may wonder whether $p$ can be chosen as $\frac{m}{2}$.
However, this expectation is destroyed by the construction of Aubry. 
In view of Theorem 9.2 of~\cite{Aubry},  for arbitrary closed Riemannian manifold $(N^{m}, h)$ satisfying $|N|=(m+1)\omega_{m+1}$ and $\delta>0$, one can construct a Riemannian metric $(M,g)$
of the same volume satisfying
\begin{align}
   d_{GH}\left\{(M,g), (N,h) \right\}<\delta, \quad \textrm{and} \quad \int_{M} \left\{ Rc-(m-1)g \right\}_{-}^{\frac{m}{2}}  dv<\delta. \label{eqn:OI05_1}
\end{align}
Also, it is not hard to see from the construction in~\cite{Aubry} that the topology of $M$ could be far away from $S^{m}$. 
Therefore, the coefficient $p>\frac{m}{2}$ in Theorem~\ref{thm:RG26_3} is optimal. 
It would be a possible generalization to replace $\left\{ Rc-(m-1)g \right\}_{-}$ in equation (\ref{eqn:OI05_1}) by $\left\{ Rc-(m-1)g \right\}$. 
If such generalization holds,  then Corollary~\ref{cly:OH23_1} can be improved to a topological rigidity result in the similar spirit as in Margerin~\cite{Mar}, Bour-Carron~\cite{BoCa2}, and Chang-Gursky-Yang~\cite{CGY}, etc.

We briefly discuss the proof of Theorem~\ref{thm:RG26_3}. 
The first thing need to be done is to obtain uniform existence  time of the Ricci flow solution initiated from $(M, g)$, say $t \geq 1$.
Then we shall show that the volume closeness at $t=0$ implies the Gromov-Hausdorff closeness at $t=0$, and consequently $C^{0}$-closeness
for all $t \in [0.5, 1]$.   By standard regularity improvement, we then obtain $C^{\infty}$-closeness between $(M, g(1))$ and $(S^{m}, g_{round})$.
Then it is well-known (cf.~\cite{Huisken85}~\cite{BoWil}~\cite{BrSch}) that the normalized Ricci flow started from $g(1)$ 
exists forever and converges to the standard sphere $(S^{m}, g_{round})$. 
The strategy described above is the same as the one used in~\cite{WB2}. However, there exist essential technical difficulties to be overcome. 

\textbf{The first difficulty} is to obtain uniform existence time for the Ricci flow initiated from $(M, g)$. 
In~\cite{WB2}, this is achieved through the application of the improved pseudo-locality, whose key is
the estimate of the local $\boldsymbol{\mu}$-functional.   Note that in~\cite{WB2}, the $\boldsymbol{\mu}$-functional can be obtained either through
the blowup analysis, or it can be derived through the isoperimetric constant estimate via the needle-decomposition method for RCD space (cf.~\cite{CaMo}).
However, in the current setting, the Ricci curvature is not bounded from below point-wisely. 
Thus the RCD theory can not be applied directly here.  Fortunately, the blowup analysis method survives, as it only  requires gradient estimates, volume comparison theorems and  compactness properties,  
all of which are available. 

\textbf{The second difficulty} is the distance distortion estimate.  
Note that it is a key  ingredient for the distance distortion estimate to obtain the  the smallness of the space-time integral $\iint |R| dv dt$, which is the almost Einstein condition. 
In ~\cite{WB2}, this condition is obtained through applying maximum principle for the scalar curvature $R$, since the almost non-negativity of $R$ is preserved under the Ricci flow.  In the current paper, we only have an integral estimate of $|Rc_{-}|$ at initial time.
Therefore, the maximum principle argument fails.  We overcome this difficulty by delicately applying the curvature condition
\begin{align}
  t|Rm|(x,t)\leq \epsilon,  
\label{eqn:OF17_1}
\end{align}
where $\epsilon=\epsilon(m)$ is a small number far less than $1$. 
We construct new cutoff functions and develop an ODE system for rough volume ratio upper bound  and $\int |R_{-}|^{p}$. 
Then it follows from ODE comparison that both of them are well-controlled.    A further interpolation argument then implies the almost Einstein condition.

In conclusion,  the first difficulty can be overcome if we are able to apply the pseudo-locality property  in Theorem 1.2 of~\cite{WB2}.
Then it suffices to show a delicate $\boldsymbol{\mu}$-functional estimate under the initial metric, which is achieved by the following theorem. 

\begin{theorem}
Let $(M,g)$ be a closed Riemannian manifold of dimension $m\geq3$ and $p>\frac{m}{2}$. For $x_0\in M$ and each pair of positive numbers $(\eta,A)$, there exists a constant $\delta=\delta(m,p,\eta,A)$ with the following properties. 

Suppose
\begin{subequations}
	\begin{align}[left = \empheqlbrace \,]
		& \int_{B(x_0,\delta^{-1})}Rc_-^p dv\leq \frac{1}{2} \omega_m \delta^{4p-m},   \label{eqn:OH27_2a}\\
		& \left| B\left(x_0, \delta^{-1}\right) \right|\geq(1-\delta) \omega_m\delta^{-m}.  \label{eqn:OH27_2b}
	\end{align}  
\label{eqn:OH27_2}			
\end{subequations}
\hspace{-2mm}
Then we have 
\begin{align}
&\inf\limits_{\tau\in\left[\frac{1}{3}, 3\right] }\boldsymbol{\bar{\mu}}\left( B(x_0,A),g,\tau\right)\geq-\frac{\eta}{2}, \label{eqn:OD12_4}\\
&\inf\limits_{\tau\in\left[\frac{1}{2}, 2\right] }\boldsymbol{\mu}\left( B(x_0,A),g,\tau\right)\geq-\eta. \label{eqn:OF16_2}
\end{align}
\label{thm:OD12_1}
\end{theorem}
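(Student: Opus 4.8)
The plan is to argue by contradiction through a blow-up, which is legitimate here because, as the introduction notes, it uses only gradient estimates, volume comparison, and compactness for manifolds with small integral Ricci curvature. Suppose the statement failed for some $m,p,\eta,A$. Then there would be $\delta_i\downarrow 0$, closed manifolds $(M_i,g_i)$ and points $x_{0,i}$ satisfying (\ref{eqn:OH27_2a})--(\ref{eqn:OH27_2b}) with $\delta=\delta_i$ but violating (\ref{eqn:OD12_4}) or (\ref{eqn:OF16_2}) at some $\tau_i$ in the corresponding interval. First I would divide (\ref{eqn:OH27_2a}) by (\ref{eqn:OH27_2b}) to get $\big(|B(x_{0,i},\delta_i^{-1})|^{-1}\int_{B(x_{0,i},\delta_i^{-1})}Rc_-^p\,dv\big)^{1/p}\le \delta_i^{4}(1+o(1))$, so the scale-invariant integral Ricci curvature at scale $\delta_i^{-1}$ is $O(\delta_i^{2})\to 0$; in particular $\int_{B(x_{0,i},R)}Rc_-^p\,dv\to 0$ for every fixed $R$, since $B(x_{0,i},R)\subset B(x_{0,i},\delta_i^{-1})$ once $i$ is large.

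Next I would use the Petersen--Wei relative volume comparison under an integral Ricci bound to propagate the \emph{single-scale} almost-maximal volume (\ref{eqn:OH27_2b}) to all fixed scales: combined with the integral Bishop--Gromov inequality this squeezes $\omega_m^{-1}R^{-m}|B(x_{0,i},R)|$ between $1-o(1)$ and $1+o(1)$ for every fixed $R$. By the Petersen--Wei harmonic-radius / $\varepsilon$-regularity estimate (small integral Ricci plus non-collapsing yields a definite harmonic radius), $(M_i,g_i,x_{0,i})$ would sub-converge, in the pointed $C^{1,\alpha}$ (and $W^{2,q}$) topology, to a complete limit $(X,g_\infty,x_\infty)$; since all of its volume ratios equal $1$, the rigidity case of volume comparison identifies $(X,g_\infty,x_\infty)$ with $(\R^m,g_{\mathrm{euclid}},0)$.

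Then I would pass the functionals to the limit. After passing to a subsequence with $\tau_i\to\tau_\infty$, I would show that the localized functional $\boldsymbol{\bar{\mu}}$ on the fixed ball $B(x_{0,i},A)$ — the variant of $\boldsymbol{\mu}$ with the scalar-curvature term dropped, hence metric-measure in nature — is lower semicontinuous under $C^{1,\alpha}$ convergence; this is where one must verify that minimizers of the localized functional exist, are concentrated at scale $\sqrt{\tau_i}$, and neither collapse onto $x_{0,i}$ nor lose mass across $\partial B(x_{0,i},A)$. Hence $\liminf_i\boldsymbol{\bar{\mu}}(B(x_{0,i},A),g_i,\tau_i)\ge\boldsymbol{\bar{\mu}}(B(0,A),g_{\mathrm{euclid}},\tau_\infty)$. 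On $\R^m$ the global minimizer of the curvature-free functional is the Gaussian $f=|x|^2/(4\tau)$ with value $0$, and truncating it to $B(0,A)$ changes the value by an amount tending to $0$ as $A\to\infty$, uniformly for $\tau$ in a compact interval; thus $\boldsymbol{\bar{\mu}}(B(0,A),g_{\mathrm{euclid}},\tau)>-\eta/2$ for all $\tau\in[1/3,3]$, contradicting the failure of (\ref{eqn:OD12_4}). Finally, (\ref{eqn:OF16_2}) is deduced from (\ref{eqn:OD12_4}) via $\boldsymbol{\mu}(B(x_0,A),g,\tau)\ge\boldsymbol{\bar{\mu}}(B(x_0,A),g,\tau)-\tau\sup\big\{\int_{B(x_0,A)}R_-\,w\,dv\big\}$, the supremum being over the $L^1$-normalized densities $w=(4\pi\tau)^{-m/2}e^{-f}$ coming from near-minimizers of $\boldsymbol{\bar{\mu}}$; these have $\|w\|_{L^{p/(p-1)}(B(x_0,A))}$ under control, while $\|R_-\|_{L^p(B(x_0,A))}\le C(m)\|Rc_-\|_{L^p(B(x_0,\delta^{-1}))}\le C(m)\big(\tfrac12\omega_m\delta^{4p-m}\big)^{1/p}\to 0$ because $p>\tfrac m2>\tfrac m4$. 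So $\boldsymbol{\mu}$ and $\boldsymbol{\bar{\mu}}$ differ by $o(1)$ on $[1/2,2]$, and (\ref{eqn:OD12_4}) upgrades to (\ref{eqn:OF16_2}) after shrinking $\delta$ a last time.

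The main obstacle is the compactness package: converting the one-scale volume pinching (\ref{eqn:OH27_2b}) into all-scale Euclidean-volume pinching through the integral relative volume comparison, extracting a $C^{1,\alpha}$ limit \emph{without} any pointwise Ricci lower bound, identifying that limit as flat $\R^m$, and — most delicately — establishing the (semi)continuity of the \emph{local} functionals $\boldsymbol{\bar{\mu}}$ and $\boldsymbol{\mu}$ under this weak convergence, i.e. ruling out concentration or escape of mass of near-minimizers. By comparison, the scalar-curvature correction linking $\boldsymbol{\bar{\mu}}$ and $\boldsymbol{\mu}$ is a routine \Holder estimate once that concentration control is in hand.
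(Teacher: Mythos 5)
Your overall strategy (contradiction/blow-up, identify the limit with Euclidean space, contradict the Euclidean log-Sobolev inequality, then pass from $\boldsymbol{\bar{\mu}}$ to $\boldsymbol{\mu}$ by a \Holder-type absorption of the scalar curvature term) is the same skeleton as the paper's proof of Proposition~\ref{prn:OF29_7}, Lemma~\ref{l3.10} and Theorem~\ref{thm:OF29_8}. But there is a genuine gap at the step you yourself flag as the main obstacle, and the tool you invoke to close it does not exist under these hypotheses. The conditions (\ref{eqn:OH27_2a})--(\ref{eqn:OH27_2b}) control only the negative part $Rc_-$ in $L^p$ together with one-scale almost-Euclidean volume; there is no upper Ricci bound of any kind. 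A ``Petersen--Wei harmonic-radius / $\varepsilon$-regularity estimate'' giving pointed $C^{1,\alpha}$ (or $W^{2,q}$) subconvergence from smallness of $\kappa(p,r)$ plus non-collapsing is not available: what Petersen--Wei theory gives (and what the paper uses, Lemmas~\ref{l3.7} and~\ref{l3.8}) is only Gromov--Hausdorff closeness to Euclidean balls together with volume convergence. One can have metrics satisfying (\ref{eqn:OH27_2}) with wildly positive Ricci curvature concentrated on small sets, so no harmonic-radius bound can hold. Consequently the lower semicontinuity statement $\liminf_i\boldsymbol{\bar{\mu}}(B(x_{0,i},A),g_i,\tau_i)\ge\boldsymbol{\bar{\mu}}(B(0,A),g_{E},\tau_\infty)$, which is the entire analytic content of the theorem, cannot be obtained from a $C^{1,\alpha}$-compactness package; it has to be proved directly under mere GH$+$volume convergence. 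This is precisely what the paper's ten-step argument in Proposition~\ref{prn:OF29_7} does: uniform bounds on $\boldsymbol{\bar{\mu}}$ and on the minimizer via Sobolev/Moser iteration, the interior gradient estimate of Theorem~\ref{t3.2}, a boundary \Holder estimate via the weak Harnack inequality (Proposition~\ref{p3.6}), an integral Laplacian comparison controlling $\int|\Delta d^{2-m}|$, and a Green's-function representation that lets one pass the Euler--Lagrange equation and the normalization (no loss of mass at $\partial B(x_0,A)$) to a limit function on the Euclidean ball, contradicting the log-Sobolev inequality. None of this is supplied or replaced in your proposal; deferring it to an unproved ``semicontinuity of the local functionals'' leaves the theorem unproven.

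A secondary, smaller issue: in deducing (\ref{eqn:OF16_2}) from (\ref{eqn:OD12_4}) you bound $\boldsymbol{\mu}-\boldsymbol{\bar{\mu}}$ using densities $w$ coming from near-minimizers of $\boldsymbol{\bar{\mu}}$, but to bound $\boldsymbol{\mu}$ from below one must test with a (near-)minimizer of $\boldsymbol{\mu}$ itself, and for such $\varphi$ the $L^{p/(p-1)}$ norm of $\varphi^2$ is not ``under control'' a priori: its control comes from the Dirichlet energy, which appears in the functional with a coefficient that the scalar-curvature term could eat. The paper's Lemma~\ref{l3.10} resolves this by comparing $\boldsymbol{\mathcal{W}}(\varphi,\tau)$ with $\boldsymbol{\overline{\mathcal{W}}}(\varphi,\sigma)$ at the shifted scale $\sigma=\tau e^{-C_S\epsilon_0}$, so that the surplus $4(\tau-\sigma)\int|\nabla\varphi|^2$ absorbs the Sobolev term produced by \Holder/Young; this also explains why the $\boldsymbol{\bar{\mu}}$ estimate is needed on the larger interval $[\frac13,3]$. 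Your sketch can be repaired along these lines, but as written it skips the absorption mechanism.
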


Note that (\ref{eqn:OH27_2}) is a version of the ``almost Euclidean" condition. 
(\ref{eqn:OH27_2b}) means the volume ratio is very close to the Euclidean one, (\ref{eqn:OH27_2a}) means that Ricci curvature is almost non-negative in the integral sense.
The coefficient $\frac{1}{2} \omega_m$ in (\ref{eqn:OH27_2a}) is only for technical convenience and could be replaced by other small constant. 
The entropy estimate (\ref{eqn:OF16_2}) assures us to apply the pseudo-locality theorem.   In particular, we shall have the curvature estimate (\ref{eqn:OF17_1}). 
Therefore, under the help of condition (\ref{eqn:OF17_1}), we are able to overcome the second difficulty in light of the following theorem.

\begin{theorem}
  Suppose $\{(M^m,g(t)),  0 \leq t \leq 1\}$ is a Ricci flow solution,  $r_0\geq 3$ and $p>\frac{m}{2}$. 
  Suppose
		\begin{subequations}
			\begin{align}[left = \empheqlbrace \,]
				& \left| B_{g(0)}(x_0,r_0) \right|_{dv_{g(0)}} \leq V_0,   \label{eqn:OH27_1a} \\
				&\int_{B_{g(0)}(x_0,r_0)} R_{-,g(0)}^p dv_{g(0)}\leq \epsilon,   \label{eqn:OH27_1b}
			\end{align}  
		\label{eqn:OH27_1}	
		\end{subequations}
		and 
		\begin{align}
			R_{-, g(t)}(x) \leq \frac{\epsilon}{t}, \quad  \forall \; x \in B_{g(0)}(x_0,r_0), \; t \in (0, 1].       \label{eqn:OF21_5}
		\end{align}
		Then we have
		\begin{subequations}
			\begin{align}[left = \empheqlbrace \,]
				& \left|  B_{g(0)}(x_0,r_0-2)\right|_{dv_{g(t)}} \leq V_0+\frac{(V_0+\epsilon)\left( e^{2Ct}-1\right) }{2} ,   \label{eqn:OF21_3} \\
				&\int_{B_{g(0)}(x_0,r_0-2)} R_{-,g(t)}^p dv_{g(t)} \leq \epsilon+\frac{(V_0+\epsilon)\left( e^{2Ct}-1\right) }{2},   \label{eqn:OF21_2}
			\end{align}  
		\end{subequations}
		for all $t\in [0,1]$. Furthermore,
		\begin{align}
		    \int_{0}^{1}dt\int_{B_{g(0)}(x_0,r_0-2)} R_{-,g(t)}dv_{g(t)}\leq \psi(\epsilon |m,p,V_0),      \label{eqn:OF21_4}
		\end{align}		
      	where $\psi=\psi(\epsilon|m, p)$ such that $\displaystyle \lim_{\epsilon \to 0} \psi(\epsilon|m, p)=0$.
      	\label{thm:OF21_1} 		
\end{theorem}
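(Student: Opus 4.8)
The plan is to localize everything with a cutoff $\phi$ supported in $B_{g(0)}(x_0,r_0)$, equal to $1$ on $B_{g(0)}(x_0,r_0-2)$, with $\phi^p$ Lipschitz in $d_{g(0)}(x_0,\cdot)$ (e.g. $\phi^p=\min\{1,\max\{0,\tfrac12(r_0-d_{g(0)}(x_0,\cdot))\}\}$, so that $|\nabla_{g(0)}\phi^{2p}|^2/\phi^{2p}=4|\nabla_{g(0)}\phi^{p}|^2$ is bounded by an absolute constant), and to reduce the three assertions to a coupled system of differential inequalities for
\begin{align*}
  V(t):=\int_M \phi^{2p}\, dv_{g(t)},\qquad W(t):=\int_M \phi^{2p}\, R_{-,g(t)}^{\,p}\, dv_{g(t)},
\end{align*}
for which $V(0)\le V_0$ and $W(0)\le\epsilon$ by \eqref{eqn:OH27_1}. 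The whole difficulty is compressed into controlling $|\nabla_{g(t)}\phi|_{g(t)}$ along the flow on the transition annulus, which is where the carefully constructed (``new'') cutoffs and the curvature hypothesis \eqref{eqn:OF21_5} will be needed.

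The inequality for $V$ is soft and uses no derivatives of $\phi$: from $\partial_t\,dv_{g(t)}=-R\,dv_{g(t)}$, $-R\le R_{-}$, and H\"older's inequality with exponents $\tfrac{p}{p-1}$ and $p$,
\begin{align*}
  V'(t)=-\int_M \phi^{2p}R\, dv_{g(t)}\;\le\;\int_M \phi^{2p}R_{-}\, dv_{g(t)}\;\le\; V(t)^{1-\frac1p}W(t)^{\frac1p}\;\le\; V(t)+W(t).
\end{align*}
For $W$ I would start from the (distributional/barrier sense) parabolic inequality for $R_{-}^{\,p}$ coming from $\partial_tR=\Delta R+2|Rc|^2$ and $|Rc|^2\ge\tfrac1m R^2$, namely
\begin{align*}
  (\partial_t-\Delta)\,R_{-}^{\,p}\;\le\;-\frac{2p}{m}\,R_{-}^{\,p+1}-\frac{4(p-1)}{p}\,\bigl|\nabla R_{-}^{\,p/2}\bigr|^{2}
\end{align*}
on $\{R_{-}>0\}$ (justified by the usual regularization of the corner of $R_{-}$), combine it with $\partial_t\,dv=-R\,dv$, test against $\phi^{2p}\ge 0$, integrate by parts once, use $-R\le R_{-}$, and absorb the cross term $-\!\int\langle\nabla\phi^{2p},\nabla R_{-}^{p}\rangle$ into the good gradient term via Young's inequality. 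The essential mechanism is that $p>\tfrac m2$ forces the coefficient $1-\tfrac{2p}{m}$ of the surviving $R_{-}^{p+1}$ term to be negative, so that term and the remaining gradient term may be discarded, leaving
\begin{align*}
  W'(t)\;\le\; \frac{p}{2(p-1)}\int_M \frac{\bigl|\nabla_{g(t)}\phi^{2p}\bigr|^{2}}{\phi^{2p}}\,R_{-}^{\,p}\, dv_{g(t)}.
\end{align*}

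This last error term, supported in the annulus $\{r_0-2<d_{g(0)}(x_0,\cdot)<r_0\}$, is the genuine obstacle: one must bound it by $C(m,p)\bigl(V(t)+W(t)\bigr)$ uniformly for $t\in(0,1]$. The trouble is twofold — controlling $|\nabla_{g(t)}\phi|_{g(t)}$ on the annulus along the flow, which cannot be obtained by metric comparison back to $t=0$ since $\int_0 dt/t$ diverges even under \eqref{eqn:OF21_5}, and which is precisely what forces an engineered, possibly time-dependent, cutoff together with the degenerate bound \eqref{eqn:OF21_5}; and dominating the annular $R_{-}^{p}$-mass by the localized quantities, which is why the radius must drop by $2$. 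Granting this, $V'(t),W'(t)\le C(V(t)+W(t))$, so $S:=V+W$ obeys $S'\le 2CS$, whence $S(t)\le(V_0+\epsilon)e^{2Ct}$; substituting back and integrating from $0$ to $t$ (using $V(0)\le V_0$, $W(0)\le\epsilon$) yields \eqref{eqn:OF21_3} and \eqref{eqn:OF21_2}.

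For the almost-Einstein estimate \eqref{eqn:OF21_4} I would not discard the favourable terms above but retain $\theta:=\tfrac{2p}{m}-1>0$ times $\int\phi^{2p}R_{-}^{p+1}\,dv_{g(t)}$. Integrating the $W$-inequality in time gives
\begin{align*}
  \theta\int_0^1\!\!\int_M \phi^{2p} R_{-}^{\,p+1}\, dv_{g(t)}\, dt\;\le\; W(0)+\int_0^1(\mathrm{error})(t)\, dt\;\le\;\epsilon+\psi(\epsilon|m,p,V_0),
\end{align*}
where a sharper use of the same cutoff analysis shows the error integral is $\psi(\epsilon)$-small (not merely bounded, as in the Gr\"onwall step). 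Then the elementary pointwise bound $R_{-}\le\delta+\delta^{-p}R_{-}^{\,p+1}$, valid for every $\delta>0$, integrated over $B_{g(0)}(x_0,r_0-2)\times[0,1]$ and combined with \eqref{eqn:OF21_3} to control $\int_0^1\!\int_M\phi^{2p}\,dv_{g(t)}\,dt$, gives a bound of the form $\delta\,C(V_0,m,p)+\delta^{-p}\theta^{-1}\bigl(\epsilon+\psi(\epsilon)\bigr)$; optimizing in $\delta$ (roughly $\delta\sim\epsilon^{1/(p+1)}$) produces the claimed $\psi(\epsilon|m,p,V_0)\to 0$. I expect essentially all the real work to lie in the cutoff error term: producing cutoffs whose gradients remain controlled under a Ricci flow possessing only the one-sided, time-degenerate curvature bound \eqref{eqn:OF21_5}, and showing that the associated annular mass of $R_{-}^{p}$ integrates over $[0,1]$ to something that vanishes with $\epsilon$.
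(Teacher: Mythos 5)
Your overall framework (a cutoff supported in $B_{g(0)}(x_0,r_0)$, the pair $V,W$, a coupled Gr\"onwall/ODE system with initial data $V_0,\epsilon$) is the same as the paper's, and your $V$-inequality is fine. But the proof has a genuine gap exactly at the point you yourself flag and then ``grant'': the cutoff error term $\int |\nabla_{g(t)}\phi^{p}|^2 R_-^p\,dv_{g(t)}$ is \emph{not} bounded by $C(V+W)$ for a generic time-independent cutoff, because on the transition annulus the integrand carries no power of $\phi$, while $W$ weights $R_-^p$ by $\phi^{2p}$, which degenerates there; and $|\nabla_{g(t)}\phi|_{g(t)}$ is not controlled by comparison with $g(0)$ under the one-sided bound \eqref{eqn:OF21_5}. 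The paper's resolution is not to bound this error by $C(V+W)$ at all, and not to discard the good term: it uses a time-shifted cutoff $\eta\bigl(d_{g(0)}(x_0,\cdot)+t^{\alpha}\bigr)$ with $\alpha=\frac{p-1}{2p+2}$ and a profile engineered so that $\eta'=-\eta^{\frac{4p+3}{4p+4}}$ near the outer edge. The time-derivative term $\eta'\alpha t^{\alpha-1}\le 0$, combined with \eqref{eqn:OF21_5} in the form $t^{-1}\ge R_-/\epsilon$, produces a negative annular term that absorbs the unweighted annular contribution in the volume evolution; and in the $W$-evolution the cutoff-gradient error satisfies $|\nabla\eta|^2\eta^{-1/2}\le C\eta^{\frac{3p+2}{2p+2}}$, which is then interpolated (H\"older plus Young) against the \emph{retained} good term $-\bigl(\tfrac{2p}{m}-1\bigr)\int\eta^{3/2}R_-^{p+1}$ and $\int\eta$, yielding $\frac{d}{dt}\int\eta^{3/2}R_-^p\le C\int\eta$. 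In other words, the negative $R_-^{p+1}$ term you propose to throw away is precisely what makes the annular error harmless; without supplying this (or an equivalent) mechanism, your coupled system does not close, so \eqref{eqn:OF21_3} and \eqref{eqn:OF21_2} are not yet proved.

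Your route to \eqref{eqn:OF21_4} also does not close as written. Keeping $\theta\int\phi^{2p}R_-^{p+1}$ and integrating in time only gives $\int_0^1\!\int \phi^{2p}R_-^{p+1}\,dv\,dt \le W(0)+C\int_0^1 V(t)\,dt = O(\epsilon+V_0)$, not $\psi(\epsilon)$: the error/source term is driven by the volume, which is of size $V_0$, and nothing in your analysis makes it $\epsilon$-small (the claim that ``a sharper use of the same cutoff analysis'' yields this is unsubstantiated). Consequently the optimization $R_-\le\delta+\delta^{-p}R_-^{p+1}$ returns a bound of order $V_0$, not $\psi(\epsilon|m,p,V_0)$. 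The paper instead obtains \eqref{eqn:OF21_4} elementarily from the already-proved bounds: on $[\epsilon,1]$ it uses the pointwise bound \eqref{eqn:OF21_5} together with the volume estimate \eqref{eqn:OF21_3}, giving $CV_0\,\epsilon|\log\epsilon|$, and on $[0,\epsilon]$ it uses H\"older with \eqref{eqn:OF21_2} and \eqref{eqn:OF21_3}, giving $CV_0\epsilon$; no smallness of the space-time $R_-^{p+1}$ integral is needed. You should replace your final step by an argument of this type (or genuinely establish $\psi(\epsilon)$-smallness of the $R_-^{p+1}$ space-time integral, which your estimates do not provide).
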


By Theorem~\ref{thm:OF21_1}, we actually obtain the volume ratio upper bound and  the $L^{p}$-integral bound of scalar curvature along the flow,  under appropriate initial conditions. 
If we have the almost Euclidean condition (\ref{eqn:OH27_2}) at initial time, then conditions  (\ref{eqn:OH27_1}) and (\ref{eqn:OF21_5}) in Theorem~\ref{thm:OF21_1} are all satisfied automatically. 
Furthermore,  the inequality (\ref{eqn:OF21_4}) implies the local almost Einstein condition, via another volume comparison argument. 

     \begin{theorem}
      Suppose $\{(M^m,g(t)),  0 \leq t \leq 1\}$ is a Ricci flow solution,  $m \geq 3$ and $p>\frac{m}{2}$. 
      For $x_0\in M$ and each pair of positive numbers $(\xi,A)$, there exists a constant $\delta=\delta(m,p,\xi,A)$ with the following property. 
     
     If (\ref{eqn:OH27_2}) is satisfied with respect to the initial metric $g(0)$, then we have  
		\begin{align}
		\int_{0}^{1} \int_{B_{g(0)}(x_0, A)} |R| dv_{g(t)} dt \leq  \xi.  
		\label{eqn:OH25_3}
		\end{align}
        \label{thm:OH25_4}		
       \end{theorem}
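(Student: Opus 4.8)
The plan is to deduce the space-time $L^1$ bound on the full scalar curvature $|R|$ from the one-sided bound on $R_-$ provided by Theorem~\ref{thm:OF21_1}, by converting the negative-part integral control plus a volume bound into a control of the positive part. First I would verify that the almost Euclidean hypothesis~(\ref{eqn:OH27_2}) on $g(0)$ allows us to invoke Theorem~\ref{thm:OF21_1} on a ball of radius $r_0 = r_0(A) \geq 3$ containing $B_{g(0)}(x_0, A)$ in its $(r_0-2)$-core: condition~(\ref{eqn:OH27_2a}) gives $\int_{B(x_0, r_0)} Rc_-^p\, dv \leq \frac12\omega_m\delta^{4p-m} \leq \epsilon$, which dominates $\int R_{-}^p$ since $R_- \leq m\,|Rc_-|$ pointwise (here I use $\delta$ small relative to $r_0$), and~(\ref{eqn:OH27_2b}) gives $|B(x_0, r_0)| \leq \omega_m \delta^{-m}$ trivially, so $V_0$ may be taken of the form $C(m)$ after we shrink to the fixed-size ball; one must also check that the pseudo-locality-type curvature bound~(\ref{eqn:OF21_5}), i.e.\ $R_{-,g(t)} \leq \epsilon/t$, is available — this is exactly the content of~(\ref{eqn:OF17_1}) which Theorem~\ref{thm:OD12_1} secures under~(\ref{eqn:OH27_2}). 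Thus Theorem~\ref{thm:OF21_1} applies and yields~(\ref{eqn:OF21_4}): $\int_0^1\!\!\int_{B_{g(0)}(x_0, A)} R_-\, dv_{g(t)}\, dt \leq \psi(\epsilon \,|\, m, p, V_0)$, together with the volume ratio bound~(\ref{eqn:OF21_3}).

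The main step is then to bound $\int_0^1\!\!\int_{B} R_+\, dv_{g(t)}\, dt$. I would use the evolution equation for the volume form, $\partial_t\, dv_{g(t)} = -R\, dv_{g(t)}$, in integrated form: for a suitable cutoff $\varphi$ supported in $B_{g(0)}(x_0, r_0-2)$ and equal to $1$ on $B_{g(0)}(x_0, A)$,
\begin{align}
  \frac{d}{dt}\int_M \varphi\, dv_{g(t)} = -\int_M \varphi\, R\, dv_{g(t)} = \int_M \varphi\, R_-\, dv_{g(t)} - \int_M \varphi\, R_+\, dv_{g(t)}. \nonumber
\end{align}
Integrating over $[0,1]$ and rearranging,
\begin{align}
  \int_0^1\!\!\int_M \varphi\, R_+\, dv_{g(t)}\, dt = \int_M \varphi\, dv_{g(0)} - \int_M \varphi\, dv_{g(1)} + \int_0^1\!\!\int_M \varphi\, R_-\, dv_{g(t)}\, dt. \nonumber
\end{align}
The last term is $\leq \psi$ by~(\ref{eqn:OF21_4}). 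For the first two: $\int\varphi\, dv_{g(0)} \leq |B_{g(0)}(x_0, r_0-2)|_{dv_{g(0)}} \leq V_0$, while $\int\varphi\, dv_{g(1)} \geq 0$ trivially, but that only gives $\int_0^1\int \varphi R_+ \leq V_0 + \psi$, which is \emph{not} small. The genuine obstacle is therefore extracting smallness, not mere boundedness, of $\int\varphi\, dv_{g(0)} - \int\varphi\, dv_{g(1)}$. To do this I would exploit that~(\ref{eqn:OH27_2}) forces the $g(0)$-volume of $B_{g(0)}(x_0, r_0-2)$ to be \emph{almost exactly} Euclidean (both a sharp upper bound from the hypothesis and a sharp lower bound from Bishop-type comparison using the smallness of $\int Rc_-^p$ à la Petersen-Wei), and~(\ref{eqn:OF21_3}) of Theorem~\ref{thm:OF21_1} gives $|B_{g(0)}(x_0, r_0-2)|_{dv_{g(t)}} \leq V_0 + \tfrac{(V_0+\epsilon)(e^{2Ct}-1)}{2}$, which for $t \leq 1$ is only $V_0(1+\psi')$ when $\epsilon$ is small — wait, $e^{2C}-1$ is an $O(1)$ constant, so this must be applied after rescaling $r_0$ large so that $C$ is effectively replaced by something controlled; concretely one runs Theorem~\ref{thm:OF21_1} on time interval $[0, t]$ for each small $t$ and uses that $\int R_-^p$ stays $\leq 2\epsilon$ to see the volume cannot drop: combining the near-maximality of $|B|_{dv_{g(0)}}$ with an almost-monotonicity of $t \mapsto |B_{g(0)}(x_0, r_0-2)|_{dv_{g(t)}}$ (whose defect is controlled by $\int R_-\, dv \leq \psi$) shows $\big|\,|B|_{dv_{g(0)}} - |B|_{dv_{g(1)}}\,\big| \leq \psi(\epsilon\,|\,m,p)$.

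Assembling: $\int_0^1\!\!\int_{B_{g(0)}(x_0, A)} R_+\, dv_{g(t)}\, dt \leq \int_0^1\!\!\int_M \varphi\, R_+\, dv_{g(t)}\, dt \leq \psi$, and adding the $R_-$ contribution from~(\ref{eqn:OF21_4}) gives $\int_0^1\!\!\int_{B_{g(0)}(x_0, A)} |R|\, dv_{g(t)}\, dt \leq \psi(\epsilon\,|\,m,p) = \psi(\delta\,|\,m,p,A)$ after expressing $\epsilon$ in terms of $\delta$ via~(\ref{eqn:OH27_2a}); choosing $\delta$ small enough that this $\psi < \xi$ finishes the proof. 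The hard part, as indicated, is the almost-rigidity of the volume of the core ball along the flow — showing that near-Euclidean volume at $t=0$ plus the integral scalar curvature control is enough to prevent the volume from changing by more than $\psi$, which is where the sharp Bishop-Gromov-with-integral-Ricci comparison of Petersen-Wei and the cutoff/ODE machinery underlying Theorem~\ref{thm:OF21_1} do the real work.
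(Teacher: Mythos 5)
Your reduction of the problem is on target: you correctly see that the $R_-$ contribution is handled by Theorem~\ref{thm:OF21_1} (i.e.\ (\ref{eqn:OF21_4})), that $\int\!\!\int R_+$ should be read off from the volume identity $\tfrac{d}{dt}\,dv=-R\,dv$, and that the real issue is \emph{smallness}, not boundedness, of the volume drop of the cutoff region. But the mechanism you offer for that smallness does not work. Theorem~\ref{thm:OF21_1} gives only upper bounds (volume ratio and $\int R_-^p$), and the ``almost-monotonicity of $t\mapsto|B|_{dv_{g(t)}}$ whose defect is controlled by $\int R_-$'' controls the volume \emph{increase}: one has $\tfrac{d}{dt}|B|_{dv_{g(t)}}\leq\int_B R_-\,dv$, so the volume is almost non-increasing. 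What you need is the opposite inequality — a lower bound on the evolved volume — and its defect is exactly $\int\!\!\int R_+$, the quantity you are trying to prove small. As written, the step ``to see the volume cannot drop'' is circular; near-maximality of the $g(0)$-volume by itself gives no information about $dv_{g(t)}$.

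The paper supplies the missing lower bound from pseudo-locality, and in a way that also forces a time splitting which your proposal omits. By Theorem~\ref{thm:OF28_5} one has both $t|Rm|\leq\epsilon$ and the volume-ratio lower bound (\ref{eqn:OH27_15b}) for $g(t)$-balls; combined with Perelman's estimate $(\partial_t-\Delta)d_{g(t)}\geq -2m/\sqrt{t}$ (Lemma 8.3 of \cite{Pere}), which yields $B_{g(\sqrt{\epsilon})}(x_0,1-4m\epsilon^{1/4})\subset B_{g(0)}(x_0,1)$, one gets $vol_{g(\sqrt{\epsilon})}\left(B_{g(0)}(x_0,1)\right)\geq(1-\psi)\omega_m$, so the volume identity applied on $[0,\sqrt{\epsilon}]$ gives $\int_0^{\sqrt{\epsilon}}\!\!\int R\leq\psi$. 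This only works on a short time interval, because the distance distortion $4m\sqrt{t}$ is small only for $t$ small; at $t=1$ the loss is of order $m$ and no choice of $r_0$ rescues the comparison (the volume discrepancy between radii $r_0-2$ and $r_0-2-4m$ is of order $r_0^{m-1}$, never small). For the remaining times the paper does not use volumes at all: on $[\epsilon,1]$ it bounds $|R|\leq m(m-1)\epsilon/t$ pointwise from (\ref{eqn:OF17_1}) and integrates to get $C\epsilon\log\epsilon^{-1}$, which is (\ref{4.28}); together with $|R|=R+2R_-$ and Corollary~\ref{cly:OH12_32} this proves Theorem~\ref{thm:OF28_8}, and Theorem~\ref{thm:OH25_4} then follows by a covering argument. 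So the two ingredients your proof is missing are the pseudo-locality volume lower bound at a \emph{small} positive time (with Perelman's distance-shrinking estimate to compare balls) and the splitting of $[0,1]$ into a short initial interval handled by the volume identity and a tail handled by the $\epsilon/t$ curvature bound.
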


In light of the local almost Einstein condition (\ref{eqn:OH25_3}), we are able to develop the distance distortion estimate.
Thus the second difficulty is overcome.  Consequently, we can apply the distance distortion estimate and $C^{0}$-continuous dependence of initial data
for the Ricci-Deturck flow, as done in~\cite{WB2}, to obtain the continuous dependence of the metric with respect to the Gromov-Hausdorff topology.

\begin{theorem}[\textbf{Continuous dependence in the $C^\infty$-Cheeger-Gromov topology}]
		Suppose that $\mathcal{N}=\left\lbrace \left( N^m, h(t)\right), 0\leq t\leq T \right\rbrace$ is a (normalized) Ricci flow solution on a closed manifold $N$, initiated from $h(0)=h$, and $p>\frac{m}{2}$.
		For each $\epsilon$ small, there exists $\eta=\eta(\mathcal{N}, p, \epsilon)$ with the following properties.
		
		Suppose $\left( M^m, g\right)$ is a Riemannian manifold satisfying 
		\begin{align}
			\kappa(p,\epsilon)\leq \epsilon^{2-\frac{m}{p}},\quad d_{GH}\left\lbrace (M,g),(N,h)\right\rbrace<\eta  
			\label{eqn:OF21_7}
		\end{align}
		where $p>\frac{m}{2}$. Then the (normalized) Ricci flow initiated from $(M,g)$ exists on $\left[0,T \right]$. Furthermore, there exists a family of diffeomorphisms $\left\lbrace \Phi_t:N\longrightarrow M, \epsilon\leq t\leq T\right\rbrace $ such that 
		\begin{align}
			\sup\limits_{t\in\left[ \epsilon,T\right] }\left\| \Phi_t^*g(t)-h(t)\right\|_{C^{\left[ \epsilon^{-1}\right]}(N,h(t))}<\epsilon. 
		\label{eqn:OF21_8}	
		\end{align}
		
	\label{thm:OF21_6}	
	\end{theorem}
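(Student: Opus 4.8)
The plan is to follow the architecture of~\cite{WB2}, feeding in the integral-curvature estimates of Theorems~\ref{thm:OD12_1}, \ref{thm:OF21_1} and~\ref{thm:OH25_4} in place of the point-wise Ricci lower bound exploited there; the present statement is essentially the assembly of those ingredients with the Ricci--DeTurck machinery of~\cite{WB2}. Since $h$ is smooth and $N$ is closed, at a sufficiently small fixed scale $r_{0}=r_{0}(\mathcal{N},\epsilon)$ every metric ball of $(N,h)$ has volume ratio as close to the Euclidean one as needed; if $\eta\ll r_{0}$, the Gromov--Hausdorff closeness in~(\ref{eqn:OF21_7}) transports this almost-Euclidean volume-ratio lower bound to $(M,g)$, while the Petersen--Wei volume comparison under integral Ricci bounds~\cite{PW1}, combined with the hypothesis $\kappa(p,\epsilon)\le\epsilon^{2-m/p}$, gives the matching upper bound and the smallness of $\int Rc_{-}^{p}$. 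Rescaling by $r_{0}^{-1}$, one checks that the rescaled metric satisfies the almost-Euclidean condition~(\ref{eqn:OH27_2}) at every point --- this is exactly where the exponent $2-m/p>0$, i.e. $p>m/2$, must enter in order to survive the rescaling. Theorem~\ref{thm:OD12_1} then yields the entropy bound~(\ref{eqn:OF16_2}), and the improved pseudo-locality (Theorem~1.2 of~\cite{WB2}) produces a uniform existence time $t_{1}=t_{1}(\mathcal{N},\epsilon)>0$ for the Ricci flow $g(t)$ from $(M,g)$, the curvature estimate $t\,|Rm|_{g(t)}\le\epsilon_{0}(m)$ on $(0,t_{1}]$, and Perelman-type non-collapsing; this resolves the first difficulty of the introduction.

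With~(\ref{eqn:OH27_2}) available and the above curvature bound, the hypotheses of Theorem~\ref{thm:OF21_1} hold after rescaling, so Theorem~\ref{thm:OH25_4} supplies the local almost-Einstein estimate $\int_{0}^{t_{1}}\!\int_{B}|R|\,dv_{g(t)}\,dt\le\psi(\eta\,|\,\mathcal{N},p,\epsilon)$. Plugging this into the distance-distortion estimate of~\cite{WB2} --- Perelman's control of $\tfrac{d}{dt}d_{g(t)}$ by the time-integral of scalar curvature along minimizing geodesics --- shows that $d_{g(t)}$ stays $\psi$-close to $d_{g(0)}$ for $t\in[0,t_{1}]$, so $(M,g(t))$ is $\psi$-Gromov--Hausdorff close to $(N,h(0))$, and hence (as $h$ is smooth) to $(N,h(t))$. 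This is the resolution of the second difficulty, and it is precisely where a merely point-wise curvature bound would fail: near $t=0$ the factor $\epsilon_{0}/t$ is not time-integrable, so the refined $L^{1}$-in-time bound on $|R|$ is essential.

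Fix $t_{0}\le\min(t_{1},\epsilon)$ small. At time $t_{0}$ the metric $g(t_{0})$ has two-sided curvature bounds and is non-collapsed, so $(M,g(t_{0}))$ has bounded geometry at a definite scale; being $\psi$-Gromov--Hausdorff close to the smooth closed manifold $(N,h(t_{0}))$, the harmonic-coordinate $C^{1,\alpha}$-compactness argument shows $M$ is diffeomorphic to $N$ and yields a diffeomorphism $\phi_{0}\colon N\to M$ with $\|\phi_{0}^{*}g(t_{0})-h(t_{0})\|_{C^{0}(N)}<\psi(\eta\,|\,\mathcal{N},p,\epsilon)$. Now continue the flow from $g(t_{0})$ in the DeTurck gauge relative to the moving background $h(t)$: the gauge-fixed solution $\tilde g(t)$ solves a strictly parabolic equation of which $h(t)$ is a stationary solution, and $\tilde g(t_{0})=\phi_{0}^{*}g(t_{0})$ is a $C^{0}$-small perturbation of $h(t_{0})$. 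A continuity/bootstrap argument on $[t_{0},T]$, as in~\cite{WB2} and using only that $\mathcal{N}$ has bounded geometry on the compact interval $[0,T]$, shows $\tilde g(t)$ stays uniformly equivalent to $h(t)$; hence the equation remains uniformly parabolic, the (normalized) Ricci flow from $(M,g)$ exists on all of $[0,T]$, and interior parabolic regularity promotes the $C^{0}$-closeness at $t_{0}$ to $\|\tilde g(t)-h(t)\|_{C^{[\epsilon^{-1}]}(N,h(t))}<\epsilon$ on $[\epsilon,T]$. Composing $\phi_{0}$ with the DeTurck diffeomorphisms produces the family $\{\Phi_{t}\colon N\to M,\ \epsilon\le t\le T\}$ satisfying~(\ref{eqn:OF21_8}).

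The main obstacle is the last step: propagating the a priori $C^{0}$-estimate for $\tilde g(t)-h(t)$ all the way to the fixed time $T$ around a general, merely smooth and time-dependent reference flow, and doing so with the quantitative $C^{0}\to C^{[\epsilon^{-1}]}$ gain whose constants depend only on $\mathcal{N}$, $p$, $\epsilon$. A secondary subtlety is the scale calibration in the first step --- matching the exponent in $\kappa(p,\epsilon)\le\epsilon^{2-m/p}$ to the $\delta^{4p-m}$ normalization of~(\ref{eqn:OH27_2a}) after rescaling, and extracting the Euclidean volume-ratio \emph{lower} bound for $(M,g)$ from Gromov--Hausdorff closeness together with the non-collapsing of $(N,h)$, i.e. a volume-continuity statement under Gromov--Hausdorff convergence with integral Ricci bounds.
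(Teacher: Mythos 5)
Your outline for the unnormalized flow is essentially the paper's own route: transfer of the almost-Euclidean volume ratio from $(N,h)$ to $(M,g)$ via volume continuity under Gromov--Hausdorff closeness with integral Ricci bounds (Lemma~\ref{l3.7}) together with Petersen--Wei comparison, the rescaling calibration $\kappa(p,r)\leq 2(r/\epsilon)^{2-\frac{m}{p}}\kappa(p,\epsilon)$ to reach the hypotheses of Theorem~\ref{thm:OD12_1}, pseudo-locality (Theorem~\ref{thm:OF28_5}), the almost-Einstein bound from Theorems~\ref{thm:OF21_1} and~\ref{thm:OH25_4}, the distance distortion estimates (Theorems~\ref{thm:OH26_2} and~\ref{thm:OH26_3}), construction of a diffeomorphism with $C^0$-small distortion at a small positive time, and then the Ricci--DeTurck bootstrap around the moving background $h(t)$, which the paper (like you) defers to Theorem 6.1 of~\cite{WB2}. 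Up to the cosmetic difference that you invoke harmonic-coordinate $C^{1,\alpha}$ compactness to produce $\phi_0$ where the paper cites Lemmas 6.2--6.3 of~\cite{WB2}, this matches the proof of Theorem~\ref{thm:OH26_0}.

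The genuine gap is the normalized-flow half of the statement, which you dispose of with a parenthesis but which the paper treats as a separate step (Lemma~\ref{l6.1} and Theorem~\ref{thm:OH26_1}). Passing from the Ricci flow to the volume-normalized flow requires comparing the scaling factors $\lambda_M(t)=\bigl(|M|_{dv_{g(t)}}/|M|_{dv_{g(0)}}\bigr)^{-2/m}$ and $\lambda_N(t)$ and the associated reparametrized times; the delicate range is $t\in(0,\xi)$, \emph{before} any smooth closeness to $h(t)$ is available, where a priori the total volume of $M$ could move rapidly (the pseudo-locality bound $|R|\leq \epsilon/t$ is not integrable in time, and it is only local anyway). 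The paper controls this by a global $L^p$ bound on $R_-$ for small times, obtained from Theorem~\ref{t4.2} via a Vitali covering, which yields $\frac{d}{dt}|M|_{dv_{g(t)}}\leq C\,|M|_{dv_{g(t)}}^{(p-1)/p}$ and hence the volume estimate of Lemma~\ref{l6.1}; combined with volume continuity at $t=0$ and the smooth closeness at $t=\xi$, this gives $\lambda_M\to\lambda_N$ uniformly, so the normalized flow exists up to the normalized time $T$ and (\ref{eqn:OF21_8}) transfers. Your argument as written establishes only the unnormalized case; to claim the stated theorem you need this additional volume/$\lambda$ comparison (or an equivalent control of $|M|_{dv_{g(t)}}$ near $t=0$).
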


	In Theorem 1.3 of~\cite{WB2}, the author proved the continuous dependence when $Rc_M>-(m-1)A$.
	In the current paper, this condition is replaced by the integral condition $\kappa(p,\epsilon) <\epsilon^{2-\frac{m}{p}}$. 
	Note that if $\epsilon <\left( \frac{1}{(m-1)A}\right) ^{\frac{p}{m}}$ , then $Rc>-(m-1)A$ implies that $\kappa(p,\epsilon) <\epsilon^{2-\frac{m}{p}}$.
	Therefore,  Theorem~\ref{eqn:OF21_8} is an improvement of Theorem 1.3 in~\cite{WB2}. 
	
	It is not hard to see that Theorem~\ref{thm:RG26_3} follows from Theorem~\ref{thm:OF21_6}. 
	In fact, if condition (\ref{eqn:RG26_1}) is satisfied for sufficiently small $\delta$, then  it follows from Petersen-Sprouse~\cite{PeSp} and Aubry~\cite{Aubry} that (\ref{eqn:OF21_7}) holds with $(N^{m},h)=(S^{m}, g_{round})$. 
	Thus  we can apply Theorem~\ref{thm:OF21_6} to obtain that the normalized Ricci flow started from $(M,g)$ exists on $[0,1]$. Furthermore, the metric $g(1)$ is very close to $g_{round}$ in $C^{5}$-topology.
	In particular, the curvature operator of $g(1)$ is nearby the standard one.  Thus we can apply Huisken's theorem~\cite{Huisken85}  to obtain the convergence of the normalized Ricci flow. 
	The remaining argument is the same as that in~\cite{WB2}. 
	
	Reviewing the proof of Theorem~\ref{thm:RG26_3},  it is clear that whenever the major two difficulties (almost Euclidean entropy estimate and almost Einstein condition) are overcome,
	most of the theorems in~\cite{WB2} can be naturally extended to the current situation.  For example, when the metric is Gromov-Hausdorff close to a stable Einstein manifold, 
	then the normalized Ricci flow has global existence and convergence.   The details will be provided at the end of this paper in Section~\ref{sec:stability}.
	We also remark that Theorem~\ref{thm:OD12_1}, Theorem~\ref{thm:OF21_1} and Theorem~\ref{thm:OH25_4} have their independent interests, rather than only being intermediate steps to obtain Theorem~\ref{thm:RG26_3}. \\

	The structure of the paper is described as follows.  In Section~\ref{sec:pre}, we review some elementary materials needed in the paper. 
	In Section~\ref{sec:functionals},  we settle the first difficulty by developing the local $\boldsymbol{\mu}$-functional estimate in Theorem~\ref{thm:OD12_1}. 
	In Section~\ref{sec:SLP}, we overcome the second difficulty and obtain the almost Einstein condition, by proving Theorem~\ref{thm:OF21_1} and Theorem~\ref{thm:OH25_4}. 
	In Section~\ref{sec:distortion}, we follow the strategy of~\cite{WB2} to obtain the distance distortion estimate and we improve the rough distance distortion estimate to $C^{\infty}$-closeness, i.e., Theorem~\ref{thm:OF21_6}. 
	Finally, in Section~\ref{sec:stability}, we study the stability of the Ricci flow and prove Theorem~\ref{thm:RG26_3}.\\

{\bf Acknowledgements}:  

 Both authors are grateful to professor Youde Wang and Jie Wang for helpful discussions. 
 Yuanqing Ma is supported by  YSBR-001, NSFC 11731001 and NSFC 11971400,  Bing Wang is supported by  YSBR-001,  NSFC 11971452, NSFC 12026251 and a research fund of USTC.

\section{Preliminaries}
\label{sec:pre}

In this section, we fix notations and overview important results under the integral Ricci curvature condition.

Let $(M^{m},g)$ be a closed Riemannian manifold of dimension $m$. Let $p$ be a positive number for studying $L^{p}$-norm of Ricci curvature. 
Without further explanation, we always assume by default that
\begin{align}
   m \geq 3, \quad \textrm{and} \quad p>\frac{m}{2}.    \label{eqn:OH25_12}
\end{align}
We shall use $|\cdot|_{dv_{g}}$ to denote volume of domains in the Riemannian manifold $(M,g)$, and the subscription $g$ will be omitted if $g$ is clear in the context.
When no confusion is possible, we also use $|\cdot|$ to denote the area of hypersurfaces  in a Riemannian manifold. 
In the situation we want to highlight the underlying metric to define the volume, we also use $vol_{g} (\cdot)$ to denote volume. 
We use $\omega_m$ to denote the volume of unit ball in $\R^{m}$. Therefore, the standard round sphere of sectional curvature $1$ has volume $(m+1) \omega_{m+1}$. 

The following notations are commonly used in the study of comparison geometry in terms of the integral Ricci curvature condition.

\begin{definition}
	For any $x\in M$, let $\zeta(x)$ be the smallest eigenvalue of the Ricci tensor $Rc:T_x M\rightarrow T_x M$. 
	Then we define
	\begin{align}
		&Rc_{-}(x) \coloneqq \left|\min\left\lbrace0,\zeta(x)\right\rbrace \right|,   \label{eqn:OF29_0}\\
		&\{Rc-(m-1)g\}_{-}(x) \coloneqq \left|\min\left\lbrace0,\zeta(x)-(m-1)\right\rbrace \right|,   \label{eqn:OH25_7}\\
		&R_{-}(x) \coloneqq \left| \min\left\lbrace{0,R}\right\rbrace\right|,  \label{eqn:OH25_8}
	\end{align}	
where $R=tr_{g} Rc$ is the scalar curvature. 	
\label{dfn:OF16_1}	
\end{definition}	

By definition, it is clear that
\begin{align}
   0\leq R_-\leq mRc_{-}.    \label{eqn:OH25_10}
\end{align}

\begin{definition}	
	For $p,r>0$, we define
	\begin{align}
		& \kappa_g(x,p,r):=r^2\left( \fint_{B_g(x,r)} Rc_{-,g}^p dv_g\right)^{\frac{1}{p}}
		=r^2\left( \frac{\int_{B_g(x,r)} Rc_{-,g}^p dv_g}
		{vol_g\left(B_g(x,r)\right)}\right)^{\frac{1}{p}}, \label{eqn:OF29_1} \\
		& \kappa_g(p,r):=\sup\limits_{x\in M}\kappa_g(x,p,r).    \label{eqn:OF29_2}
	\end{align}		
	For a constant $\lambda$,  we can similarly define
	\begin{align}
		& \kappa_g(x,\lambda,p,r):=r^2\left( \fint_{B_g(x,r)} \left|\min\left\lbrace0,\zeta(x)-(m-1)\lambda\right\rbrace \right|^p dv_g\right)^{\frac{1}{p}},  \label{eqn:OF29_1.1} \\
		& \kappa_g(\lambda,p,r):=\sup\limits_{x\in M}\kappa_g(x,\lambda,p,r).    \label{eqn:OF29_2.2}
	\end{align}
	The subscription $g$ will be omitted if there is no ambiguity. 
\label{dfn:OH25_9}	
\end{definition}	

It is clear that $\kappa_g(\lambda,p,r)=\kappa_g(p,r)$ when $\lambda=0$.
Note that $\kappa_{g}(p, r)$ is scaling invariant. Namely, $\kappa_g(p,r)=\kappa_{a^2g}(p,ar)$ for each $a>0$.

\begin{definition}	
	For a function $f$ on $(M, g)$, we define
	\begin{align}
		& \left| \left| f\right| \right|^*_{p,B(x,r)}:= \left( \fint_{B(x,r)}\left| f\right|^p dv\right)^{\frac{1}{p}}=\left( \frac{\int_{B(x,r)}\left| f\right|^p dv}{vol\left(B(x,r)\right)}\right)^{\frac{1}{p}}, \label{eqn:OF29_3} \\
		& \left| \left| f\right| \right|_{p,B(x,r)}:= \left( \int_{B(x,r)}\left| f\right|^p dv \right)^{\frac{1}{p}}.    \label{eqn:OF29_4}
	\end{align}
	Here $\fint$ denotes the mean value of integration.

	\label{def:OF16_1}	
\end{definition}

The Sobolev constant estimate and the non-collapsing condition are closely related. 
The existence of uniform Sobolev constant naturally implies the non-collapsing condition(e.g. Proposition 2.4 of~\cite{YDWang}). 
On the other hand, the non-collapsing condition also implies the estimate of isoperimetric constant and hence the Sobolev constant(e.g. Theorem 3 of~\cite{Gallot} and Theorem 7.4 of~\cite{Yan92a}).  
If we consider the normalized volume, then non-collapsing always holds true formally and so does the Sobolev constant. 
The following estimate was part of Corollary 4.6 of~\cite{DWZ}. 

	\begin{lemma}
	\label{lma:OH20_1}
		There exists a small constant $\epsilon=\epsilon(m,p)>0$ and some $C_S=C(m)r$ such that if $\kappa(p,1)\leq \epsilon$, there holds that
		\begin{equation}\label{2.2}
			\left| \left| f\right| \right|^*_{\frac{2m}{m-2},B(x,r)}\leq C_S\left| \left| \nabla f\right| \right|^*_{2,B(x,r)}
		\end{equation}
		for any $r \in (0, 1)$, $f\in C^{\infty}_0(B(x,r))$ and consequently, for all $f\in W^{1,2}_0(B(x,r))$. 
	\end{lemma}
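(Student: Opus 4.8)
The goal is to prove the local Sobolev inequality \eqref{2.2} under the hypothesis $\kappa(p,1)\leq\epsilon$, with $\epsilon=\epsilon(m,p)$ small and $C_S=C(m)$. The plan is to reduce this to the corresponding statement in~\cite{DWZ} (their Corollary 4.6), tracing through the normalization conventions so that the small-$\kappa$ hypothesis here matches the hypothesis there. First I would recall the standard mechanism relating integral Ricci bounds to Sobolev constants: under a bound on $\kappa(p,1)$, the Gallot-type volume doubling and the segment inequality hold on balls of radius at most $1$, and these plus a local $L^2$-Poincar\'e inequality yield a local Neumann Sobolev inequality; restricting to functions in $C^\infty_0(B(x,r))$ then upgrades this to the Dirichlet form \eqref{2.2}. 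The point is that each of these ingredients is quantitative in $\kappa(p,1)$ and degenerates only as $\kappa\to\infty$, so choosing $\epsilon$ small enough (depending only on $m,p$) makes all the comparison constants effective and uniform.

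The key steps, in order, would be: (i) fix $x\in M$ and $r\in(0,1)$; since $\kappa(p,1)\leq\epsilon$ and $\kappa(x,p,\rho)$ is monotone-controlled as $\rho$ decreases (using $r^2(\fint_{B(x,r)}Rc_-^p)^{1/p}$ scaling and the doubling already available), the relative-volume-comparison inequality of Petersen--Wei holds on $B(x,r)$ with constants close to the Euclidean ones; (ii) invoke the resulting local volume doubling constant $D=D(m)$ valid on all balls contained in $B(x,1)$; (iii) invoke the local $L^2$-Poincar\'e (Neumann) inequality on these balls, again with constant depending only on $m$ once $\epsilon$ is small; (iv) apply the abstract mechanism (\`a la Saloff-Coste / Maheux--Saloff-Coste, or directly Theorem~7.4 of~\cite{Yan92a} combined with Theorem~3 of~\cite{Gallot} as the text hints) by which doubling plus $L^2$-Poincar\'e on a ball implies the $(2,\frac{2m}{m-2})$ Sobolev inequality on a slightly smaller ball with a constant determined by $D$ and the Poincar\'e constant; (v) finally, for $f\in C^\infty_0(B(x,r))$, extend $f$ by zero and use the compact support to pass from the Neumann-type inequality to the normalized Dirichlet inequality \eqref{2.2}, absorbing the mean-value term since $\fint_{B(x,r)}f$ is controlled by $\|\nabla f\|^*_2$ for compactly supported $f$ via the same Poincar\'e inequality. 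The extension from $C^\infty_0$ to $W^{1,2}_0$ is then just density.

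I would expect the main obstacle to be the bookkeeping of scales and normalizations rather than any genuinely new estimate: one must be careful that the hypothesis is stated as $\kappa(p,1)\leq\epsilon$ (a bound at the \emph{unit} scale) while the Sobolev inequality is claimed for \emph{every} $r\in(0,1)$, and that the constant $C_S$ comes out depending only on $m$ (not on $r$) — this requires checking that $\kappa(x,p,r)\leq\kappa(p,1)$ fails in general, so one instead uses that $\kappa(x,p,r)$ stays small because of the rescaling identity $\kappa_g(p,r)=\kappa_{a^2g}(p,ar)$ together with the fact that after rescaling $B(x,r)$ to unit size the rescaled $\kappa$ is $r^{2-m/p}$ times a controlled quantity, hence still $\leq\epsilon$ for $r<1$ when $p>m/2$. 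Once this scaling is pinned down, the rest is a citation of the quoted results in~\cite{DWZ},~\cite{Gallot},~\cite{Yan92a}. Since the statement is explicitly attributed to Corollary 4.6 of~\cite{DWZ}, the honest proof is essentially: \emph{this is a restatement of~\cite[Corollary 4.6]{DWZ} in the normalized form, using the scale-invariance of $\kappa$ noted above}, and I would present it that way, spelling out only the normalization reduction.
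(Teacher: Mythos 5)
Your proposal is correct and matches the paper's treatment: the paper gives no proof of Lemma~\ref{lma:OH20_1} at all, simply attributing it to Corollary 4.6 of~\cite{DWZ}, and your reduction-by-citation together with the scaling bookkeeping (using $p>\frac{m}{2}$ so that $\kappa(x,p,r)\lesssim r^{2-\frac{m}{p}}\kappa(p,1)$ via volume doubling) is exactly the intended content. Only note that the constant in the statement is $C_S=C(m)r$, not $C(m)$ independent of $r$; your own scaling discussion already produces this scale-invariant normalized form, so this is a slip of wording rather than a gap.
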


	Given $z \in M$, let $r(y)=d(y,z)$ be the distance function and 
	\begin{align*}
	   \iota(y) \coloneqq \left( \Delta r-\frac{m-1}{r}\right) _{+}. 
	\end{align*}
	The classical Laplacian comparison states that, if the Ricci curvature $Rc_M$ of $M$ satisfies $Rc_M\geq0$, then $\Delta r\leq \frac{m-1}{r}$, i.e. $\iota\equiv0$. In \cite{PW1}, this result is generalized to the case of integral Ricci bounds. 
	
	\begin{lemma}[{\cite{PW1}}, Lemma 2.2]
		For any $p>\frac{m}{2}$ and $r>0$, there hold
		\begin{align}
			&\left| \left| \iota\right| \right|_{2p,B(z,r)}\leq\left(\frac{(m-1)(2p-1)}{2p-m} \left| \left|Rc_- \right| \right|_{p,B(z,r)} \right)^{\frac{1}{2}},  \label{l2.3}\\
			&\left| \left| \iota\right| \right|^*_{2p,B(z,r)}\leq\left(\frac{(m-1)(2p-1)}{2p-m} \left| \left|Rc_- \right| \right|^*_{p,B(z,r)} \right)^{\frac{1}{2}}. \label{2.4}
		\end{align}	
	\end{lemma}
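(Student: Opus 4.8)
The plan is to prove a sharp pointwise Riccati inequality for $\psi:=\iota=(\Delta r-\tfrac{m-1}{r})_+$ along minimal geodesics from $z$, then integrate it against the Riemannian volume element; the key algebraic fact making everything work is that $p>\tfrac m2$ forces two correction coefficients to have the favorable sign. Fix $z$ and work in geodesic polar coordinates on the star-shaped segment domain, writing $dv=\mathcal{A}(r,\theta)\,dr\,d\theta$, so that $h:=\Delta r=\partial_r\log\mathcal{A}$ and $\mathcal{A}'=h\,\mathcal{A}$. The Bochner formula for the distance function together with $|\nabla^2 r|^2\ge \tfrac{(\Delta r)^2}{m-1}$ gives the Riccati inequality $h'\le-\tfrac{h^2}{m-1}-Rc(\partial_r,\partial_r)$. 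Subtracting the Euclidean model identity $\big(\tfrac{m-1}{r}\big)'=-\tfrac1{m-1}\big(\tfrac{m-1}{r}\big)^2$ and using $h+\tfrac{m-1}{r}=\psi+\tfrac{2(m-1)}{r}$ on the open set $\{h>\tfrac{m-1}{r}\}$, one obtains there
\[
  \psi'\le-\frac{\psi^2}{m-1}-\frac{2\psi}{r}+Rc_-,
\]
while $\psi\equiv0$ on the complement; the standard barrier argument shows this inequality is harmless across $\{h=\tfrac{m-1}{r}\}$ and across the cut locus, where the singular part of $\Delta r$ is a nonpositive measure.

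Next I multiply the displayed inequality by $(2p-1)\psi^{2p-2}\mathcal{A}\ge0$. The left side becomes $(\psi^{2p-1})'\mathcal{A}=(\psi^{2p-1}\mathcal{A})'-h\,\psi^{2p-1}\mathcal{A}$, and after substituting $h=\psi+\tfrac{m-1}{r}$ the terms proportional to $\psi^{2p}\mathcal{A}$ combine with coefficient $1-\tfrac{2p-1}{m-1}=\tfrac{m-2p}{m-1}$ and the terms proportional to $\tfrac1r\psi^{2p-1}\mathcal{A}$ combine with coefficient $\tfrac{m+1-4p}{r}$. Because $p>\tfrac m2$, \emph{both} coefficients are negative (for the second, $4p>2m\ge m+3$), so after discarding the negative $\tfrac1r$-term we get
\[
  (\psi^{2p-1}\mathcal{A})'\le-\frac{2p-m}{m-1}\,\psi^{2p}\mathcal{A}+(2p-1)\,\psi^{2p-2}Rc_-\,\mathcal{A}.
\]
Integrating in $r$ from $0$, where $\psi^{2p-1}\mathcal{A}\to0$, up to $\min\{r,c(\theta)\}$ with $c(\theta)$ the distance to the cut point, the boundary term produced is $\ge 0$ and may be dropped; integrating the result over $\theta\in S^{m-1}$ yields the integrated inequality
\[
  \frac{2p-m}{m-1}\int_{B(z,r)}\iota^{2p}\,dv\le(2p-1)\int_{B(z,r)}\iota^{2p-2}Rc_-\,dv.
\]

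Finally I apply H\"older's inequality to the right side with exponents $\tfrac{p}{p-1}$ and $p$, using $(2p-2)\cdot\tfrac{p}{p-1}=2p$, to obtain $\int_{B(z,r)}\iota^{2p-2}Rc_-\,dv\le I^{\frac{p-1}{p}}J^{\frac1p}$ where $I:=\int_{B(z,r)}\iota^{2p}\,dv$ and $J:=\int_{B(z,r)}Rc_-^p\,dv$. With $C:=\tfrac{(m-1)(2p-1)}{2p-m}$, the last two displays give $I\le C\,I^{\frac{p-1}{p}}J^{\frac1p}$, hence $I^{1/p}\le C\,J^{1/p}$, i.e.\ $I\le C^pJ$; taking $2p$-th roots is exactly (\ref{l2.3}), and dividing both sides by $vol(B(z,r))$ before taking roots gives the mean-value version (\ref{2.4}). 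The only genuinely delicate point, and the one to write carefully, is the rigorous passage from the pointwise differential inequality to the integrated one in the presence of the cut locus and of the corner of $(\cdot)_+$; this is handled exactly as in the classical comparison arguments, by restricting to the segment domain and noting that the distributional contribution of $\Delta r$ at the cut locus only improves the estimate.
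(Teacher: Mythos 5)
Your argument is correct and is essentially the standard Petersen--Wei proof: the paper itself states this lemma as a quotation from \cite{PW1} without reproducing the argument, and your Riccati-plus-integration scheme (multiplying by $(2p-1)\psi^{2p-2}\mathcal{A}$, exploiting that $p>\tfrac m2$ makes the coefficients $\tfrac{m-2p}{m-1}$ and $\tfrac{m+1-4p}{r}$ negative, then H\"older with exponents $\tfrac{p}{p-1}$ and $p$) is exactly the one in that source. The cut-locus and $(\cdot)_+$ issues are handled correctly by working in the segment domain, and the normalized estimate (\ref{2.4}) indeed follows from (\ref{l2.3}) since the volume factors cancel in $I\leq C^{p}J$.
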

	
	We also need the following important consequences about volume comparison.
	\begin{lemma}[\textbf{Volume doubling property}, {\cite{PW2}}, Theorem 2.1] 
	\label{l2.4}
		Let $(M,g)$ be a Riemannian manifold of dimension $m\geq3$ and $p>\frac{m}{2}$. For a point $x_0\in M$, if $\kappa(x_0,p, r_2)\leq \epsilon$, there exists a constant $C(m,p)$ which satisfies 
		\begin{equation}\label{2.5}
		   \left\{1-C(m,p)\epsilon^{\frac{1}{2}} \right\} \cdot \frac{ \left|B(x_0,r_2)\right| }{r_2^m}\leq\frac{\left| B(x_0,r_1)\right| }{r_1^m}
		\end{equation}
	    for any $r_1\in (0,r_2]$.
	\end{lemma}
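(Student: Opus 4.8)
The statement to prove is the Volume doubling property (Lemma~\ref{l2.4}), attributed to Petersen--Wei. Here is a plan.

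\textbf{Plan of proof.} The plan is to reduce the statement to an integral inequality for the excess Laplacian $\iota(y)=\left(\Delta r-\tfrac{m-1}{r}\right)_+$ along geodesics emanating from $x_0$, and then run a Gronwall/Bishop--Gromov-type argument. First I would recall the infinitesimal version of volume comparison in geodesic polar coordinates around $x_0$: writing $dv = \mathcal{A}(t,\theta)\, dt\, d\theta$ on the part of $M$ reachable by minimizing geodesics, the function $t\mapsto \mathcal{A}(t,\theta)$ satisfies $\partial_t \log \mathcal{A} = \Delta r \le \tfrac{m-1}{t} + \iota$ wherever $r=t$ is smooth. Hence for $0<r_1\le r_2$ and fixed $\theta$, integrating the differential inequality for $\bigl(\mathcal{A}(t,\theta) t^{-(m-1)}\bigr)$ gives
\begin{align*}
  \left(\frac{\mathcal{A}(t,\theta)}{t^{m-1}}\right)^{1/(2p)} \Bigg|_{t=r_1}^{t=r_2} \le \frac{1}{2p}\int_{r_1}^{r_2} \iota(t,\theta)\, \left(\frac{\mathcal{A}(t,\theta)}{t^{m-1}}\right)^{1/(2p)} dt,
\end{align*}
which is the standard trick (Petersen--Wei) of taking a small power $1/(2p)$ to make the error term integrable against the volume element with the right exponent.

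\textbf{Key steps.} After this pointwise-in-$\theta$ estimate, I would integrate over $\theta$ in the unit sphere, apply H\"older's inequality with exponents $2p$ and $\tfrac{2p}{2p-1}$ to the right-hand side, and convert the $\iota$-integral into $\left\| \iota\right\|_{2p, B(x_0,r_2)}$, then use the Laplacian-comparison estimate~\eqref{l2.3} to bound $\left\| \iota\right\|_{2p,B(x_0,r_2)}^2 \le \tfrac{(m-1)(2p-1)}{2p-m}\,\left\| Rc_-\right\|_{p,B(x_0,r_2)}$. The remaining volume-element factors reassemble, modulo the normalization by $\mathrm{vol}\,B(x_0,r_2)$, into powers of $\tfrac{|B(x_0,r_2)|}{r_2^m}$ and $\tfrac{|B(x_0,r_1)|}{r_1^m}$; absorbing the error term and using the scale-invariant smallness hypothesis $\kappa(x_0,p,r_2)\le\epsilon$ (which controls exactly $r_2^2 \left\|Rc_-\right\|^*_{p,B(x_0,r_2)}$) yields
\begin{align*}
  \left(\frac{|B(x_0,r_2)|}{r_2^m}\right)^{1/(2p)} - \left(\frac{|B(x_0,r_1)|}{r_1^m}\right)^{1/(2p)} \le C(m,p)\,\epsilon^{1/2}\left(\frac{|B(x_0,r_2)|}{r_2^m}\right)^{1/(2p)},
\end{align*}
and after rearranging and raising to the power $2p$ one gets~\eqref{2.5} with a possibly adjusted constant $C(m,p)$. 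A minor technical point is that the identity $\partial_t\log\mathcal{A}=\Delta r$ only holds inside the segment domain (before the cut locus); but the cut locus has measure zero and $\mathcal{A}$ is non-increasing across it in the sense of distributions, so the integrated inequality survives — I would phrase everything via the monotone representative of $\mathcal{A}$ as in Petersen--Wei.

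\textbf{Main obstacle.} The delicate part is not the geometry but the bookkeeping of exponents: one must take the $1/(2p)$-th power early so that, via H\"older, the $\iota$ term pairs against $dv$ with the correct total exponent $1$ in the volume element, and simultaneously the leftover factor is precisely $\left(\tfrac{|B(x_0,r_2)|}{r_2^m}\right)^{(2p-1)/(2p)}$, making the final inequality homogeneous in $\tfrac{|B|}{r^m}$. Getting the constant to come out in the clean form $1-C(m,p)\epsilon^{1/2}$ requires using $\kappa(x_0,p,r_2)\le\epsilon$ together with the doubling one is trying to prove in a bootstrap, or else proving it first for $r_1$ close to $r_2$ and iterating; I would follow Petersen--Wei's original iteration scheme, which closes this loop. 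Everything else — the polar-coordinate volume formula, H\"older, and plugging in~\eqref{l2.3} — is routine.
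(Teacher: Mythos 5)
You should first note that the paper does not prove Lemma~\ref{l2.4} at all: it is quoted from Petersen--Wei~\cite{PW2} (see also Theorem 1.1 of~\cite{PW1}), so your proposal can only be measured against the standard Petersen--Wei argument. Your sketch does contain the right ingredients of that argument: the identity $\partial_t\log\mathcal{A}=\Delta r$ inside the segment domain, the error term $\iota=\left(\Delta r-\frac{m-1}{r}\right)_+$ controlled through (\ref{l2.3}), the small power $\frac{1}{2p}$, H\"older with exponents $2p$ and $\frac{2p}{2p-1}$, and the extension of $\mathcal{A}$ past the cut locus.

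However, the step in which ``the remaining volume-element factors reassemble into powers of $|B(x_0,r_i)|/r_i^m$'' does not work in the order you propose, and this is a genuine gap. If you take the $\frac{1}{2p}$-th power pointwise in $\theta$ and then integrate over $S^{m-1}$, the left-hand side becomes $\int_{S^{m-1}}\bigl(\mathcal{A}(r_2,\theta)r_2^{1-m}\bigr)^{\frac{1}{2p}}d\theta-\int_{S^{m-1}}\bigl(\mathcal{A}(r_1,\theta)r_1^{1-m}\bigr)^{\frac{1}{2p}}d\theta$. By concavity of $x\mapsto x^{\frac{1}{2p}}$ the $r_1$-term is bounded above by $|S^{m-1}|^{1-\frac{1}{2p}}\bigl(A(r_1)/r_1^{m-1}\bigr)^{\frac{1}{2p}}$, where $A(t)$ denotes the area of $\partial B(x_0,t)$, which is the direction you want; but the $r_2$-term admits no lower bound of the form $c\,\bigl(A(r_2)/r_2^{m-1}\bigr)^{\frac{1}{2p}}$ (Jensen goes the wrong way there), so your inequality cannot be converted into a comparison of the ratios $|B(x_0,r)|/r^m$. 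The Petersen--Wei proof avoids this by integrating in $\theta$ first: one works with $A(t)$ itself, estimates $\frac{d}{dt}\bigl(A(t)t^{1-m}\bigr)\le t^{1-m}\int\iota\,\mathcal{A}\,d\theta$, applies H\"older on the sphere with respect to the weight $\mathcal{A}\,d\theta$, and only then takes the $\frac{1}{2p}$-th power of the ratio $A(t)/t^{m-1}$; a second H\"older in $t$ (using $\frac{m-1}{2p-1}<1$, i.e. $p>\frac{m}{2}$) together with (\ref{l2.3}) gives the area comparison, and one further (routine but not skippable) step converts sphere areas into ball volumes. Finally, the ``main obstacle'' you describe is not there: no bootstrap or iteration is needed, since the resulting right-hand side $C(m,p)\bigl(r_2^{2p-m}\int_{B(x_0,r_2)}Rc_-^p\,dv\bigr)^{\frac{1}{2p}}$ equals, by the definition (\ref{eqn:OF29_1}) of $\kappa$, exactly $C(m,p)\,\kappa(x_0,p,r_2)^{\frac{1}{2}}\bigl(|B(x_0,r_2)|/r_2^m\bigr)^{\frac{1}{2p}}$; with $\kappa(x_0,p,r_2)\le\epsilon$ one rearranges and raises to the power $2p$ to obtain (\ref{2.5}) directly.
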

	
	\begin{lemma}
	\label{l2.5}
		Let $(M,g)$ be a closed Riemannian manifold of dimension $m\geq3$ and $p>\frac{m}{2}$. For $x_0\in M$ and a constant $r \geq 1$, there exists a small constant $\delta=\delta(r,m,p)$ and $C=C(m,p)$ with the following properties.
		Suppose
		\begin{subequations}
			\begin{align}[left = \empheqlbrace \,]
				& \int_{B(x_0,\delta^{-1})}Rc_-^p dv\leq \frac{1}{2} \omega_m \delta^{4p-m},  \label{equ:2.1}\\
				& \left| B\left(x_0, \delta^{-1}\right) \right|\geq (1-\delta) \omega_m\delta^{-m}. \label{equ:2.2}
			\end{align}  
		\end{subequations}
		Then for any $x\in B(x_0,r)$ and $s\in (0,\delta^{-1}-r]$, we have
		\begin{subequations}
			\begin{align}[left = \empheqlbrace \,]
				& \kappa(x,p,s)\leq \delta^2,  \label{eqn:OI05_6a}\\
				& \left|B\left( x,s\right) \right|\geq \left(1-C r\delta \right) \omega_ms^{m}. \label{eqn:OI05_6b}
			\end{align}  
		\label{eqn:OI05_6}	
		\end{subequations}	
	\end{lemma}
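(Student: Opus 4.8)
The plan is to bootstrap the two hypotheses, which are stated only for the large ball $B(x_0,\delta^{-1})$, down to every small ball $B(x,s)$ with center $x\in B(x_0,r)$. First I would deduce the local integral Ricci smallness (\ref{eqn:OI05_6a}). Since $B(x,s)\subseteq B(x_0,\delta^{-1})$ whenever $s\le \delta^{-1}-r$, we have the trivial bound $\int_{B(x,s)}Rc_-^p\,dv\le \int_{B(x_0,\delta^{-1})}Rc_-^p\,dv\le \tfrac12\omega_m\delta^{4p-m}$. To convert this into a bound on $\kappa(x,p,s)=s^2\bigl(\fint_{B(x,s)}Rc_-^p\,dv\bigr)^{1/p}$ I need a lower bound on $|B(x,s)|$; a crude one (e.g. from the fact that the volume ratio cannot have collapsed much, using Lemma~\ref{l2.4} starting from the near-Euclidean bound at scale $\delta^{-1}$) gives $|B(x,s)|\gtrsim \omega_m s^m$ up to a controlled factor, and then $\kappa(x,p,s)\le s^2\cdot\bigl(\tfrac12\omega_m\delta^{4p-m}/(c\,\omega_m s^m)\bigr)^{1/p}\le C s^{2-m/p}\delta^{4-m/p}$. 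Since $2-m/p>0$ and $s\le \delta^{-1}-r\le\delta^{-1}$, this is bounded by $C\delta^{4-m/p}\cdot\delta^{-(2-m/p)}=C\delta^{2}$, which after shrinking $\delta_0$ gives (\ref{eqn:OI05_6a}). There is a small circularity to untangle here — I need a volume lower bound to get the $\kappa$-bound, and a $\kappa$-bound to apply volume comparison — so I would first establish a rough, non-sharp volume lower bound directly from $\kappa(x_0,p,\delta^{-1})\le\delta^2$ (which itself follows from (\ref{equ:2.1})--(\ref{equ:2.2}) as above applied at $x=x_0$, $s=\delta^{-1}$), use it to bound $\kappa(x,p,s)$, and only then feed the sharper $\kappa$-bound back in.

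The heart of the matter is the sharp volume lower bound (\ref{eqn:OI05_6b}), $|B(x,s)|\ge(1-Cr\delta)\omega_m s^m$. The natural route is a segment/annulus comparison argument à la Petersen--Wei: the excess volume defect over the Euclidean model is controlled by the integral $\int\iota^{2p}$ via Lemma (\cite{PW1}, Lemma 2.2), i.e. by $\|Rc_-\|_p$ on the relevant balls. Concretely, I would compare $|B(x,s)|/s^m$ with $|B(x_0,\delta^{-1})|/\delta^{-m}$. Since these two balls are roughly concentric at the large scale (their centers are within $r$, which is tiny compared to $\delta^{-1}$), the relative volume monotonicity with an error term controlled by $\kappa$ — applied along a geodesic from $x$ to $x_0$ and then exploiting (\ref{equ:2.2}) — yields $|B(x,s)|/(\omega_m s^m)\ge \bigl(1-C\kappa(\cdot,p,\cdot)^{1/2}\text{-type error}\bigr)\cdot |B(x_0,\delta^{-1})|/(\omega_m\delta^{-m})\ge (1-C\epsilon')(1-\delta)$. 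Tracking the dependence of the error term on $r$ and $\delta$ — each unit of distance traveled from $x_0$ to $x$, and from the large radius down to $s$, contributes a factor involving $\kappa^{1/2}\sim\delta$, and concatenating these gives the linear-in-$r$ loss $Cr\delta$ — produces exactly the claimed form. I expect the bookkeeping of these cumulative error factors, and making sure the volume comparison is applied at scales where the $\kappa$-hypothesis is valid (which is why the restriction $s\le\delta^{-1}-r$ appears), to be the main technical obstacle; the analytic inputs (Lemma~\ref{l2.4}, the Laplacian comparison in the integral sense) are all in place.

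Finally I would assemble the pieces: fix $\delta_0=\delta_0(r,m,p)$ small enough that all the auxiliary smallness requirements (the $\epsilon(m,p)$ thresholds in Lemma~\ref{l2.4} and in the Laplacian comparison, plus $C\delta^{4-m/p}\delta^{-(2-m/p)}\le\delta^2$) are met simultaneously, and state the constants $C=C(m,p)$ uniformly. One subtlety worth flagging: the exponent arithmetic $4p-m$ in (\ref{equ:2.1}) is precisely calibrated so that the $\kappa$-bound at scale $\delta^{-1}$ comes out as $\delta^2$ rather than $O(1)$ — this is what makes the volume doubling applicable at the top scale and is the reason that particular power appears, so I would double-check that this scaling is respected throughout.
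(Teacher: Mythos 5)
Your proposal is correct and follows essentially the same route as the paper's proof: deduce $\kappa(x_0,p,\delta^{-1})\le\delta^2$ from the hypotheses, apply the Petersen--Wei volume doubling (Lemma~\ref{l2.4}) centered at $x_0$, transfer to $x$-centered balls at the top scale via the inclusions $B(x_0,\delta^{-1}-2r)\subset B(x,\delta^{-1}-r)\subset B(x_0,\delta^{-1})$ (which is exactly where the $Cr\delta$ loss arises, since $r\ll\delta^{-1}$), then apply Lemma~\ref{l2.4} again at $x$ and rerun the $\kappa$ computation with the sharp volume lower bound, the factor $\tfrac12$ in (\ref{equ:2.1}) absorbing the $(1-Cr\delta)^{-1/p}$ so that the bound is exactly $\delta^2$. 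The only cosmetic difference is your mention of concatenating per-unit-distance errors along a geodesic; a single inclusion at the top scale (as you also describe) suffices and is what the paper does.
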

	
	\begin{proof}
		It follows from (\ref{equ:2.1}) and (\ref{equ:2.2}) that
		\begin{align*}
			\kappa(x_0,p, \delta^{-1})=\delta^{-2}\left( \frac{\int_{B(x,\delta^{-1})} Rc_{-}^p dv}{\left|B(x,\delta^{-1})\right|}\right)^{\frac{1}{p}} < \delta^2.
		\end{align*}
		Thus we can apply Lemma \ref{l2.4} to obtain
		\begin{align}
			\left| B(x_0,s)\right| 
			\geq \left(1-C\delta\right)\left| B\left( x_0,\delta^{-1}\right)\right|  s^m\delta^m 
			\geq (1-C\delta)\omega_ms^m
		\label{eqn:OI05_2}	
		\end{align}	
		for any  $s\in (0,\delta^{-1}]$. 	Fix $x \in B(x_0, r)$. It is clear that 
		\begin{align}
			B(x_0,\delta^{-1}-2r)\subset B(x,\delta^{-1}-r)\subset B(x_0,\delta^{-1}). 
		\label{eqn:OI05_3}	
		\end{align}
		Combining (\ref{eqn:OI05_2}) and (\ref{eqn:OI05_3}), we have
		\begin{align}
			\left|B(x,\delta^{-1}-r)\right| &\geq  \left| B(x_0,\delta^{-1}-2r) \right| \geq
			(1-C\delta)\omega_m(\delta^{-1}-2r)^m
			\geq (1-Cr\delta)\omega_m(\delta^{-1}-r)^m, 
		\label{eqn:OI05_4}	
		\end{align}
		which implies that
		\begin{align}
			&\quad \kappa(x,p, \delta^{-1}-r) \notag\\
			&=(\delta^{-1}-r)^2 \left( \frac{\int_{B(x,\delta^{-1}-r)} Rc_{-}^p dv}{\left|B(x,\delta^{-1}-r)\right|}\right)^{\frac{1}{p}}
			\leq (\delta^{-1}-r)^2\left(  \frac{\int_{B(x,\delta^{-1})} Rc_{-}^p dv}{(1-Cr\delta)\omega_m(\delta^{-1}-r)^m}\right)^{\frac{1}{p}} \notag\\
			&\leq (\delta^{-1}-r)^2\left(  \frac{ \frac{1}{2} \omega_m \delta^{4p-m}}{(1-Cr\delta)\omega_m(\delta^{-1}-r)^m}\right)^{\frac{1}{p}}
			  < (1-\delta r)^{2-\frac{m}{p}} \cdot \delta^2 < \delta^2. 
			\label{eqn:OI05_5}  
		\end{align}
		In light of (\ref{eqn:OI05_4}) and (\ref{eqn:OI05_5}), we can apply the volume comparison in Lemma~\ref{l2.4} again to obtain
		\begin{align}
			s^{-m}\left| B(x,s)\right| 
			\geq \left(1-C\delta\right) (\delta^{-1}-r)^{-m} \left| B\left( x,\delta^{-1}-r\right)\right| 
			\geq (1-Cr\delta)\omega_m
		\label{eqn:OI05_7}	
		\end{align}	
		for each $s\in (0,\delta^{-1}-r]$. 
		Similar to (\ref{eqn:OI05_5}), we deduce that
		\begin{align}
			\kappa(x,p,s)\leq s^2\left( \frac{ \frac{1}{2} \omega_m \delta^{4p-m}}{(1-Cr\delta)\omega_ms^m}\right)^{\frac{1}{p}}
			\leq (s\delta)^{2-\frac{m}{p}} \delta^{2}< \delta^{2}.
		\label{eqn:OI05_8}	
		\end{align}
		It is clear that (\ref{eqn:OI05_6}) is equivalent to (\ref{eqn:OI05_7}) and (\ref{eqn:OI05_8}). The proof of Lemma~\ref{l2.5} is complete. 
	\end{proof}
	
	We also need to estimate the area of the sphere under the integral Ricci curvature condition.
	\begin{lemma}[\textbf{Comparison of volume element}, {cf. \cite{DW}}, Lemma 3.2]
		Given $r>0$ and $p>\frac{m}{2}$, for any $\rho\leq r$, there exists a constant $C=C(m,p,r)$ such that
		\begin{align}
		  &\frac{\left| \partial B(x,\rho)\right|}{\omega_m \rho^{m-1}}
			\leq 1+C(m,p,r)\sup\limits_{x\in M}\left( \int_{B(x,r) }Rc_{-} ^p dv \right)^{\frac{1}{2p}} 
			\leq 1+C(m,p,r)\kappa(p,r)^{\frac{1}{2}}. \label{2.12}\\
		  &\left| \partial B(x,\rho)\right|\leq \left( 1+C(m,p,r)\kappa(p,r)^{\frac{1}{2}}\right) \omega_m \rho^{m-1}.\label{2.13} 
		\end{align}
	\label{l2.6}	
	\end{lemma}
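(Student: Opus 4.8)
The plan is to compare, in geodesic polar coordinates about $x$, the area density with the Euclidean model, in the same spirit as the Laplacian comparison \eqref{l2.3}. Fix $x$ and write $dv=\mathcal A_x(t,\theta)\,dt\,d\theta$ in polar coordinates about $x$ (valid up to the cut locus), with $\theta\in S^{m-1}$ and the convention $\mathcal A_x(t,\theta)=0$ once $t$ passes the cut distance in direction $\theta$; then the geodesic sphere area $s(x,t):=\int_{S^{m-1}}\mathcal A_x(t,\theta)\,d\theta$ dominates $|\partial B(x,t)|$. Along each unit–speed radial geodesic one has $\partial_t\log\mathcal A_x=\Delta r$ and $\mathcal A_x(t,\theta)/t^{m-1}\to1$ as $t\to0^+$; since $\Delta r\le\frac{m-1}{t}+\iota$ by the definition of $\iota$, inspecting the $t$–derivative of $\mathcal A_x(t,\theta)\,t^{-(m-1)}-1$ yields the pointwise integral comparison
\[
  \mathcal A_x(\rho,\theta)-\rho^{m-1}\ \le\ \rho^{m-1}\int_0^{\rho}\frac{\iota(s,\theta)\,\mathcal A_x(s,\theta)}{s^{m-1}}\,ds .
\]

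Integrating over $S^{m-1}$ and recognizing the resulting double integral as $\int_{B(x,\rho)}\iota(y)\,d(x,y)^{-(m-1)}\,dv(y)$ (re-expanding $dv$ in polar form), I would obtain
\[
  |\partial B(x,\rho)|\ \le\ s(x,\rho)\ \le\ \omega_m\rho^{m-1}\Bigl(1+\tfrac1{\omega_m}\int_{B(x,\rho)}\frac{\iota(y)}{d(x,y)^{m-1}}\,dv(y)\Bigr),
\]
$\omega_m$ being the model Euclidean sphere area in the paper's normalization. The next step is Hölder's inequality with the conjugate pair $(2p,q)$, $q:=\tfrac{2p}{2p-1}$: the factor $\|\iota\|_{2p,B(x,\rho)}$ is bounded by $C(m,p)\,\|Rc_-\|_{p,B(x,r)}^{1/2}$ thanks to \eqref{l2.3}, while for the weight I would use the co-area formula $\int_{B(x,\rho)}d(x,\cdot)^{-(m-1)q}\,dv=\int_0^\rho t^{-(m-1)q}\,s(x,t)\,dt$ together with $s(x,t)\le\omega_m t^{m-1}\,\Phi(\rho)$, where $\Phi(\rho):=\sup_{0<t\le\rho}s(x,t)\,(\omega_m t^{m-1})^{-1}$. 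The radial integral $\int_0^\rho t^{\,m-1-(m-1)q}\,dt$ converges exactly when $(m-1)q<m$, i.e. $p>\tfrac m2$; this is where the exponent hypothesis is used. Assembling the pieces produces the self–improving inequality
\[
  \Phi(\rho)\ \le\ 1+C(m,p)\,\|Rc_-\|_{p,B(x,r)}^{1/2}\,\rho^{\,1-\frac m{2p}}\,\Phi(\rho)^{\frac{2p-1}{2p}} .
\]

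Because $\tfrac{2p-1}{2p}<1$, this bootstraps: as long as $\kappa(p,r)$ is small — the only regime in which the lemma is used, consistently with the hypotheses of Lemmas~\ref{l2.4} and \ref{l2.5} — one first deduces $\Phi(\rho)\le 2$, and then, reinserting, $\Phi(\rho)\le 1+C(m,p)\,\|Rc_-\|_{p,B(x,r)}^{1/2}\,\rho^{1-\frac m{2p}}$. Taking the supremum over base points and using $\rho\le r$ gives the middle expression of \eqref{2.12}; bounding $|B(x,r)|\le C(m,p,r)$ (by integrating the bound just obtained for $s(x,t)$, or by Lemma~\ref{l2.4}) and recalling $\kappa(p,r)=r^2(\fint Rc_-^p\,dv)^{1/p}$ turns $\|Rc_-\|_{p,B(x,r)}^{1/2}$ into $C(m,p,r)\,\kappa(p,r)^{1/2}$, which finishes \eqref{2.12}; finally \eqref{2.13} is \eqref{2.12} multiplied by $\omega_m\rho^{m-1}$.

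I expect the main difficulty to be the apparent circularity in the Hölder weight — $\int_{B(x,\rho)}d(x,\cdot)^{-(m-1)q}\,dv$ is itself controlled by the sphere areas being estimated — which forces the introduction of $\Phi$ and a careful check that $p>\tfrac m2$ is simultaneously the threshold for integrability of the singular weight and for the bootstrap exponent to be $<1$, with the coefficient kept small so that one lands on the sharp bound $1+C\kappa^{1/2}$. Handling the cut locus (which only decreases $|\partial B(x,\rho)|$) and passing between $\|Rc_-\|_{p,\cdot}$, its averaged version and $\kappa$ are routine.
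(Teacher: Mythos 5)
The paper itself does not prove this lemma: it is quoted from Dai--Wei \cite{DW} (Lemma 3.2), so there is no internal proof to match; your write-up should be judged against the standard Petersen--Wei/Dai--Wei argument, and it takes a genuinely different (but essentially correct) route. You apply H\"older on the ball against the singular weight $d(x,\cdot)^{1-m}$, which makes the sphere areas reappear inside the weight and forces the auxiliary quantity $\Phi$ together with a smallness bootstrap. The classical route instead applies H\"older on each geodesic sphere first: from $\partial_t(\mathcal A t^{1-m})\le \iota\,\mathcal A t^{1-m}$ one gets $\frac{d}{dt}\bigl(A(t)t^{1-m}\bigr)^{\frac{1}{2p}}\le C\bigl(\int_{\partial B(x,t)}\iota^{2p}\bigr)^{\frac{1}{2p}}t^{-\frac{m-1}{2p}}$ for $A(t)=\int_{S^{m-1}}\mathcal A\,d\theta$, and then H\"older in $t$ (same exponents, same threshold $p>\frac m2$ for integrability of $t^{-\frac{m-1}{2p-1}}$) plus \eqref{l2.3} gives $\bigl(A(\rho)/(\omega_m\rho^{m-1})\bigr)^{\frac{1}{2p}}\le 1+C(m,p)\,\|Rc_-\|_{p,B(x,r)}^{1/2}\rho^{1-\frac{m}{2p}}$ with no self-reference and no smallness assumption. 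What the classical argument buys is an unconditional estimate in $\tfrac1{2p}$-power form; what your argument costs is the smallness hypothesis needed to close the bootstrap. That cost is harmless here: raising the power-form estimate to the $2p$ gives a factor $(1+x)^{2p}$, which linearizes to $1+Cx$ only when $x$ is bounded, so the clean bound \eqref{2.12} is in any case implicitly a statement for small curvature integrals, and that is the only regime in which the lemma is used (e.g.\ in Step 4 of the proof of Proposition~\ref{prn:OF29_7}, under \eqref{eqn:OF29_5a}). Your reading of $\omega_m$ as the Euclidean sphere-area constant in this lemma is also the right one.

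Two points you should make explicit if you keep your version. First, the bootstrap needs $\Phi(\rho)<\infty$ a priori; this is fine because $M$ is closed and $\mathcal A(t,\theta)t^{1-m}\to1$ as $t\to0^+$, and in fact no continuity of $\Phi$ is needed (finiteness plus $\Phi\le 1+a\Phi^{1-\frac{1}{2p}}$ with $a\le\frac12$ already forces $\Phi\le2$). Second, your bootstrap parameter is the \emph{unnormalized} quantity $\bigl(\int_{B(x,r)}Rc_-^p\,dv\bigr)^{\frac{1}{2p}}r^{1-\frac{m}{2p}}$, whereas you phrase the smallness in terms of $\kappa(p,r)$; converting one into the other requires an upper bound on $|B(x,r)|/r^m$, which must be obtained \emph{before} the bootstrap, e.g.\ from Lemma~\ref{l2.4} (let $r_1\to0$ there) or, in the paper's application, directly from the hypothesis \eqref{eqn:OF29_5a}. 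You mention this, but the order of quantifiers matters to avoid circularity, so state it in that order.
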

	
	The local functionals of Perelman are defined as follows(cf.~\cite{WB2}). 

       \begin{definition}
		For $\varphi\in W^{1,2}_0(\Omega),\varphi\geq0$ and $\int_\Omega\varphi^{2} dv=1$, as usual, we define the local entropy functionals
		\begin{align}
		 &\boldsymbol{\mathcal{W}}(\Omega,g,\varphi,\tau):=-m-m\log\sqrt{4\pi\tau}+\int_{\Omega} \left\{ \tau\left(R \varphi^2+4\left| \nabla\varphi\right|^2 \right)-2\varphi^2\log\varphi \right\} dv, \label{eqn:OF28_9}\\
		 &\boldsymbol{\overline{\mathcal{W}}}(\Omega,g,\varphi,\tau):=-m-m\log\sqrt{4\pi\tau}+\int_{\Omega} \left\{ 4\tau\left| \nabla\varphi\right|^2-2\varphi^2\log\varphi \right\} dv, \label{eqn:OF28_10}\\
		 &\boldsymbol{\mu}(\Omega,g,\tau):=\inf\limits_{\varphi}\boldsymbol{\mathcal{W}}(\Omega,g,\varphi,\tau), \label{eqn:OF28_11}\\
		 &\boldsymbol{\bar{\mu}}(\Omega,g,\tau):=\inf\limits_{\varphi}\boldsymbol{\overline{\mathcal{W}}}(\Omega,g,\varphi,\tau). \label{eqn:OF28_12}
		\end{align}
	 \label{d3.1}		
	\end{definition}

        As we already know(cf.~\cite{WB2}), the estimates of local functionals are crucial for taming the behavior of Ricci flows. 
	The following gradient estimate, obtained recently by Jie Wang and Youde Wang, is important for estimating the local functionals. 
			
	\begin{theorem}[J.Wang-Y.D.Wang~\cite{WJWYD}]
	\label{t3.2}
		Suppose $m\geq3$, $p>\frac{m}{2}$; or $m=2$, $p>\frac{3}{2}$. 
		There exists a small positive constant  $\epsilon=\epsilon(m,p)>0$ with the following properties.
		
		Suppose $(M^{m}, g)$ is a complete Riemannian manifold satisfying  $\kappa(p,1)\leq \epsilon$.
		Suppose $u$ is a positive solution of 
		\begin{equation}
		\label{3.1}
		\Delta u+\frac{1}{2}u\log u+Au=0
	       \end{equation}
	       satisfying $0<u\leq D$.  Suppose $B(x,1) \cap \partial M=\emptyset$.
	       Then for each  $0<\lambda\leq1$ and $q>1$, we have
		\begin{equation}
		\label{3.2}
			\sup\limits_{B\left( x,\frac{\lambda}{2}\right) } \frac{\left| \nabla u\right|}{u^{1-\frac{1}{2q}}}
			\leq \frac{C}{\lambda}
		\end{equation} 
	       where $C=C\left( m,p,q,A,D,\kappa(p,1)\right)$. 
	\end{theorem}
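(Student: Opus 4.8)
The plan is to establish Theorem~\ref{t3.2} as a local gradient estimate of Li--Yau / Cheng--Yau type, with the classical pointwise maximum principle replaced by a De~Giorgi--Nash--Moser iteration, since under $\kappa(p,1)\le\epsilon$ the Ricci tensor is only controlled in $L^p$ rather than pointwise. The two ingredients that make the iteration close up are the local Sobolev inequality of Lemma~\ref{lma:OH20_1} and the volume doubling property of Lemma~\ref{l2.4}, both valid because $\kappa(p,1)$ is small; the hypothesis $B(x,1)\cap\partial M=\emptyset$ guarantees that these interior estimates apply on $B(x,1)$.

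First I would reduce the problem. Since $0<u\le D$, put $\psi:=u^{1/(2q)}$, so that $0<\psi\le D^{1/(2q)}$ and the quantity to be bounded is $|\nabla u|/u^{1-1/(2q)}=2q\,|\nabla\psi|$. A direct computation shows that $\psi$ solves a quasilinear equation of the form $\Delta\psi+\tfrac{2q-1}{\psi}|\nabla\psi|^2+b(\psi)=0$, where $b(\psi)=-\psi\log\psi-\tfrac{A}{2q}\psi$ is a \emph{bounded} function of $\psi$ on $(0,D^{1/(2q)}]$ --- this is where the logarithmic nonlinearity $u\log u$ stops being an obstruction once $u$ is known to be bounded. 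So it suffices to prove $\sup_{B(x,\lambda/2)}|\nabla\psi|\le C/\lambda$. Setting $h:=|\nabla\psi|^2$ and inserting the equation into the Bochner formula, using $Rc(\nabla\psi,\nabla\psi)\ge -Rc_-\,h$, the Kato-type inequality $|\nabla^2\psi|^2\ge |\nabla h|^2/(4h)$, the boundedness of $\psi$, and the fact that the term $b'(\psi)h$ produced by differentiating $b(\psi)$ is bounded below (indeed nonnegative where $\psi$ is small), one arrives --- after absorbing the first-order term $\tfrac{2q-1}{\psi}\langle\nabla\psi,\nabla h\rangle$ into the available good terms $|\nabla^2\psi|^2$ and $\tfrac{2q-1}{\psi^2}h^2$ --- at a differential inequality that schematically reads $\Delta h\ge -C\,h-C\,Rc_-\,h$ on $B(x,1)$.

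Next I would run the iteration. Multiplying this inequality by $h^{a-1}\eta^2$ with $\eta$ a cutoff supported in $B(x,\lambda)$, integrating by parts, and using Young's inequality on the cutoff-gradient terms produces, on the left, a controlled Dirichlet energy of $h^{a/2}\eta$, and, on the right, the single dangerous term $\int Rc_-\,h^a\eta^2$. I would estimate this term by H\"older with exponents $p$ and $\tfrac{p}{p-1}$, and then --- since $\tfrac{p}{p-1}<\tfrac{m}{m-2}$ precisely when $p>\tfrac m2$ --- by interpolation between $L^1$ and $L^{2m/(m-2)}$ followed by the Sobolev inequality of Lemma~\ref{lma:OH20_1} applied to $h^{a/2}\eta$. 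The Ricci factor that comes out is $\|Rc_-\|_{L^p(B(x,1))}$, comparable to $\kappa(p,1)^{1/2}\le\epsilon^{1/2}$ and hence absorbable into the energy. This yields a reverse-H\"older inequality; iterating over $a\to\infty$ gives $\sup_{B(x,\lambda/2)}h\le C\lambda^{-N}\fint_{B(x,\lambda)}h$ for some $N=N(m,p)$ and $C=C(m,p,q,A,D,\kappa(p,1))$, which by volume doubling may be rephrased in terms of $\fint_{B(x,1)}h$. Finally, testing the equation for $\psi$ against $\psi\eta^2$ and integrating by parts gives $(2q-2)\int|\nabla\psi|^2\eta^2=\int\psi\langle\nabla\psi,\nabla\eta^2\rangle-\int b(\psi)\psi\eta^2$, so Cauchy--Schwarz together with $\psi\le D^{1/(2q)}$ yields $\fint_{B(x,1)}|\nabla\psi|^2\le C$; combining the two estimates completes the proof.

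The main obstacle is the genuinely degenerate, nonlinear behaviour near the zero set of $u$: since no positive lower bound for $u$ is assumed, the drift coefficient $\tfrac{2q-1}{\psi}$ and the potential $b'(\psi)$ in the Bochner computation blow up as $\psi\to 0$, and the whole point of the estimate is that $|\nabla u|$ must vanish there at the precise rate $u^{1-1/(2q)}$. Keeping exactly the right positive terms ($|\nabla^2\psi|^2$, $\tfrac{2q-1}{\psi^2}h^2$, and the favourable sign of $b'(\psi)h$) so that the bad terms are dominated \emph{uniformly} down to $\psi=0$, and propagating this control through every stage of the Moser iteration, is the delicate heart of the argument; it is also where the strict inequality $q>1$ enters, the borderline $q=1$ being where the relevant coefficients degenerate. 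The secondary, by now standard, difficulty is turning the $L^p$ Ricci bound into a genuine small perturbation, which is exactly what forces $p>\tfrac m2$.
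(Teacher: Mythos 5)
A preliminary remark: the paper itself contains no proof of Theorem~\ref{t3.2}; it is imported as a black box from the preprint of J.~Wang and Y.D.~Wang~\cite{WJWYD}, so your argument has to stand on its own rather than be compared with an internal proof. On its own terms it has a genuine gap exactly at the step you yourself call the delicate heart. With $\psi=u^{1/(2q)}$ and $h=|\nabla\psi|^2$, the equation gives $\Delta\psi=-\tfrac{2q-1}{\psi}h-\tfrac12\psi\log\psi-\tfrac{A}{2q}\psi$, and Bochner yields $\tfrac12\Delta h=|\nabla^2\psi|^2-\tfrac{2q-1}{\psi}\langle\nabla\psi,\nabla h\rangle+\tfrac{2q-1}{\psi^2}h^2+O(h)+Rc(\nabla\psi,\nabla\psi)$. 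You claim the drift term can be absorbed into $|\nabla^2\psi|^2+\tfrac{2q-1}{\psi^2}h^2$, giving $\Delta h\ge -Ch-C\,Rc_-h$. But $|\langle\nabla\psi,\nabla h\rangle|\le 2h|\nabla^2\psi|$, and Young's inequality produces $\theta|\nabla^2\psi|^2+\tfrac{(2q-1)^2}{\theta\psi^2}h^2$, so absorption needs $\theta\le 1$ and $(2q-1)^2/\theta\le 2q-1$, i.e.\ $q\le 1$ --- precisely the excluded range. Even the sharp refinement $|\nabla^2\psi|^2\ge \psi_{11}^2+\tfrac{(\Delta\psi-\psi_{11})^2}{m-1}$ (where $\psi_{11}$ is the Hessian in the direction $\nabla\psi/|\nabla\psi|$, so the drift term is exactly $-2(2q-1)h\psi_{11}/\psi$) leaves a quadratic form whose minimum over $\psi_{11}$ equals $\bigl(\tfrac{4-m}{m}+\tfrac{1}{2q-1}\bigr)Y^2$ with $Y=(2q-1)h/\psi$, up to $O(h)$ terms; this is negative whenever $2q-1>\tfrac{m}{m-4}$, e.g.\ for every $q>1$ once $m$ is large. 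So the schematic differential inequality on which your whole iteration rests is not obtainable by pointwise absorption in the stated generality, and you give no other derivation of it.

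The standard repair is to keep the drift term in divergence form inside the Moser iteration: multiplying by $h^{a-1}\eta^2$ and writing $h^{a-1}\nabla h=\tfrac1a\nabla h^a$, integration by parts turns the dangerous term into $\tfrac{2q(2q-1)}{a}\int h^{a+1}\psi^{-2}\eta^2$ plus controllable terms, which the good term $(2q-1)\int h^{a+1}\psi^{-2}\eta^2$ absorbs only when $a\ge 2q$. That closes the reverse-H\"older step at high exponents, but it exposes a second gap in your plan: your starting datum is the energy bound $\fint_{B(x,1)}|\nabla\psi|^2\le C$ (exponent $a=1$), and you provide no mechanism to bridge from exponent $1$ to exponent $\sim 2q$ where the iteration becomes available. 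Two smaller points: the coefficient mismatch above shows that large $q$, not the borderline $q=1$, is where the Bochner step degenerates ($q>1$ is what gives your final energy identity its good sign via the factor $2q-2$); and $\|Rc_-\|_{L^p(B(x,1))}$ is comparable to $\kappa(p,1)$, not to $\kappa(p,1)^{1/2}$. The overall skeleton --- substitution $u^{1/(2q)}$, Bochner, Moser iteration driven by the Sobolev inequality of Lemma~\ref{lma:OH20_1} and the doubling of Lemma~\ref{l2.4}, H\"older on the $Rc_-$ term using $p>\tfrac m2$, an energy estimate exploiting $q>1$ --- is the right family of ideas for such results under integral Ricci bounds (compare~\cite{ZM}), but as written the central differential inequality is unjustified and the iteration cannot be started, so the proof does not go through.
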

	
	Note that equation (\ref{3.1}) is the Euler-Lagrange equation of the local $\bar{\boldsymbol{\mu}}$-functional.  The gradient estimate (\ref{3.2})  does not need the non-collapsing condition and has independent interest.\\
	
	The following weak Harnack inequality,  introduced by P. Li in \cite{Li}, will be very useful in this paper. 
	
		\begin{lemma}[{\cite{Li}, Lemma 11.2}]
		\label{l3.3}
		Let $(M,g)$ be a complete Riemannian manifold of dimension $m\geq3$. Suppose that the geodesic ball $B(x,r)$
		satisfies $B(x,r)\cap\partial M =\phi$. Let $u\geq0$ be a function in the Sobolev space $W^{1,2}\left( B_r\right)$ and satisfy
		\begin{equation}\label{3.3}
			\Delta u\leq Au
		\end{equation} 
		in the weak sense for some constant $A\geq0$ on $B_r$.  Suppose the following conditions hold true:
		
		(i) Normalized Sobolev inequality:
		\begin{equation}\label{3.4}
			\left| \left| f\right| \right|^*_{\frac{2m}{m-2},B(x,r)}\leq C_S\left| \left| \nabla f\right| \right|^*_{2,B(x,r)}
		\end{equation}
		for any compactly supported $f\in W^{1,2}\left( B_r\right)$ where $C_S=C_s(m)r>0$.
		
		(ii) Poincar\'{e} inequalities:
		\begin{equation}\label{3.5}
			4C_P\int_{B\left( x,\frac{r}{2}\right) }\left| f-f_{B_\frac{r}{2}}\right|^2 dv \leq r^2\int_{B\left( x,\frac{r}{2}\right) }\left| \nabla f\right|^2 dv 
		\end{equation}
		for any $f\in W^{1,2}\left( B_\frac{r}{2}\right)$ and
		\begin{equation}\label{3.6}
			16C_P\int_{B\left( x,\frac{r}{4}\right) }\left| f-f_{B_\frac{r}{4}}\right|^2 dv \leq r^2\int_{B(x,\frac{r}{4})}\left| \nabla f\right|^2 dv 
		\end{equation}
		for any $f\in W^{1,2}\left( B_\frac{r}{4}\right)$ where $f_{B_r}=\fint_{B_r}f$ and $C_P>0$ is a uniform constant which does not depend on $r$.
		
		(iii)  Volume doubling property:
		\begin{equation}\label{3.7}
			\frac{\left| B_r\right|}{\left| B_{\frac{r}{16}}\right|}\leq C_V
		\end{equation}
		where $C_V$ is a uniform constant which does not depend on $r$.
		
		Then for $\theta>0$ sufficiently small, there exists a constant $C$ depending only on $\theta, m, C_S,C_P,C_V$ and $Ar^2+1$ such that 
		\begin{equation}\label{3.8}
			\left| \left| u\right| \right|^*_{\theta,B_{\frac{r}{8}}}\leq C\inf\limits_{B_{\frac{r}{16}}}u.
		\end{equation}
	\end{lemma}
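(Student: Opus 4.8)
The plan is to establish (3.8) by the classical De Giorgi--Nash--Moser machinery, i.e. Moser iteration for positive and negative powers of $u$ together with a local John--Nirenberg inequality, run purely off the intrinsic hypotheses (i)--(iii). First I would reduce to the case $u>0$: replacing $u$ by $u+\delta$ preserves (3.3), since $\Delta(u+\delta)=\Delta u\le Au\le A(u+\delta)$, and (3.8) for $u+\delta$ descends to $u$ by monotone convergence as $\delta\to0^{+}$. Next I would record the Caccioppoli-type energy estimates obtained by testing the weak form of (3.3) against $\eta^{2}u^{\beta}$ for a cutoff $\eta$: when $\beta\neq-1$ this gives, after an application of the normalized Sobolev inequality (3.4), reverse-H\"older inequalities linking $\| u\|^{*}_{\chi p,B'}$ and $\| u\|^{*}_{p,B''}$ on concentric balls $B'\subset B''$ with $\chi=\tfrac{m}{m-2}>1$. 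Since all balls occurring in the argument have radii in a bounded ratio (between $r/16$ and $r/8$), a single use of the volume doubling property (3.7) lets us compare normalized means at different radii, so a geometric iteration over radii shrinking from $r/8$ to $r/16$ converges; all constants depend only on $m$, $C_{S}$, $C_{V}$ and $Ar^{2}$.

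Applying this iteration to $w\coloneqq u^{-1}$ --- which, by the pointwise identity $\Delta u^{-1}=-u^{-2}\Delta u+2u^{-3}|\nabla u|^{2}$ together with (3.3), satisfies $\Delta w\ge -Aw$ weakly and is hence a subsolution of a shifted Laplacian --- produces the one-sided supremum bound $\sup_{B_{r/16}}u^{-1}\le C\,\| u^{-1}\|^{*}_{p_{0},B_{r/8}}$ for every fixed small $p_{0}>0$; pushing the natural $L^{2}$-based estimate down to an $L^{p_{0}}$-based one is the standard Young's inequality/interpolation trick inside the iteration. Separately, the borderline exponent $\beta=-1$ yields the logarithmic estimate: testing (3.3) against $\eta^{2}u^{-1}$ controls $\fint_{B_{r/2}}|\nabla\log u|^{2}\,dv$ by $C(Ar^{2}+1)r^{-2}$, and combining this with the normalized Poincar\'e inequalities (3.5)--(3.6) and the doubling property (3.7) through a finite chain-of-balls argument shows that $\log u$ has bounded mean oscillation on $B_{r/8}$ with seminorm controlled in terms of $Ar^{2}+1$. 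A local John--Nirenberg inequality --- self-improving the $L^{1}$-oscillation bound to exponential integrability, again using only doubling and Poincar\'e --- then furnishes a small $p_{0}=p_{0}(\theta,m,C_{S},C_{P},C_{V},Ar^{2}+1)>0$ with $\big(\fint_{B_{r/8}}u^{p_{0}}\,dv\big)\big(\fint_{B_{r/8}}u^{-p_{0}}\,dv\big)\le C$.

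It remains to concatenate these facts. After shrinking $\theta$ so that $\theta\le p_{0}$, Jensen's inequality gives $\| u\|^{*}_{\theta,B_{r/8}}\le\| u\|^{*}_{p_{0},B_{r/8}}$; the John--Nirenberg bound rearranges to $\| u\|^{*}_{p_{0},B_{r/8}}\le C\big(\fint_{B_{r/8}}u^{-p_{0}}\,dv\big)^{-1/p_{0}}$; and the negative-power supremum estimate reads $\big(\fint_{B_{r/8}}u^{-p_{0}}\,dv\big)^{1/p_{0}}=\| u^{-1}\|^{*}_{p_{0},B_{r/8}}\ge C^{-1}\sup_{B_{r/16}}u^{-1}=C^{-1}(\inf_{B_{r/16}}u)^{-1}$, hence $\big(\fint_{B_{r/8}}u^{-p_{0}}\,dv\big)^{-1/p_{0}}\le C\inf_{B_{r/16}}u$. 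Chaining the three inequalities yields $\| u\|^{*}_{\theta,B_{r/8}}\le C\inf_{B_{r/16}}u$, which is (3.8).

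\textbf{The main obstacle} I anticipate is the John--Nirenberg step: converting the single-scale Poincar\'e inequalities (3.5)--(3.6) and the single-scale doubling bound (3.7) into a genuine BMO estimate for $\log u$ on $B_{r/8}$, and then self-improving it to the two-sided exponential integrability $\big(\fint u^{p_{0}}\,dv\big)\big(\fint u^{-p_{0}}\,dv\big)\le C$, requires a careful covering/chaining argument, and this is where the precise choice of radii ($B_{r/2}$, $B_{r/4}$, $B_{r/8}$, $B_{r/16}$) in the hypotheses is used. A secondary technical point is verifying that the Moser iteration genuinely reaches all sufficiently small positive exponents $p_{0}$ with constants depending only on the stated data, which forces the interpolation bookkeeping inside each iteration step.
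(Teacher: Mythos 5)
The paper does not prove this lemma at all: it is quoted as Lemma 11.2 of \cite{Li}, with \cite{GT} and \cite{HL} pointed to in the following remark, so your proposal can only be measured against the standard arguments in those sources. Your architecture is exactly that standard Moser-iteration proof of the weak Harnack inequality, and most of it is sound as stated: the reduction to $u+\delta>0$, the Caccioppoli/reverse-H\"older step (note that the single doubling ratio (\ref{3.7}) really is enough to renormalize the Sobolev inequality (\ref{3.4}) on every ball $B_\rho$ with $r/16\le\rho\le r/8$, since all the normalizing volumes lie between $\left|B_{r/16}\right|$ and $C_V\left|B_{r/16}\right|$), the sup bound for $u^{-1}$ as a subsolution of the shifted operator, the $\beta=-1$ energy estimate for $\log u$, and the final three-inequality chain.

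The genuine gap is precisely the step you defer as the ``main obstacle'': the passage from the gradient bound on $\log u$ to the crossover $\bigl(\fint_{B_{r/8}}u^{p_0}dv\bigr)\bigl(\fint_{B_{r/8}}u^{-p_0}dv\bigr)\le C$ via BMO and a local John--Nirenberg inequality. Any form of John--Nirenberg (Calder\'on--Zygmund decomposition or chaining) needs the Poincar\'e inequality and volume doubling on arbitrary sub-balls of $B_{r/8}$ centered at arbitrary points, whereas (\ref{3.5})--(\ref{3.7}) supply them only for the two balls $B\left(x,\frac{r}{2}\right)$, $B\left(x,\frac{r}{4}\right)$ centered at $x$ and for the single ratio $\left|B_r\right|/\left|B_{r/16}\right|$; so the covering argument you postpone cannot be run from the stated hypotheses. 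The standard repair --- and, I expect, the reason the hypotheses are phrased at exactly these fixed scales --- is to avoid John--Nirenberg altogether in the manner of Moser and Bombieri--Giusti: the Caccioppoli estimate for $\log u$ (cutoff in $B_{r/2}$, equal to $1$ on $B_{r/4}$) combined with (\ref{3.6}), Chebyshev, and the single doubling ratio yields a weak-type bound for $\log u-c$ on $B_{r/8}$ at one fixed scale, and the abstract lemma of Bombieri--Giusti converts this, together with the reverse-H\"older inequalities you already have for small positive and negative powers on the nested balls between $B_{r/16}$ and $B_{r/8}$, into $\fint_{B_{r/8}}u^{p_0}dv\le Ce^{p_0c}$ and $\sup_{B_{r/16}}u^{-1}\le Ce^{-c}$, which is all your concluding chain requires. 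Alternatively you would have to strengthen (ii)--(iii) to all sub-balls; that stronger input is indeed available in the paper's application (Lemma~\ref{l2.4} and Theorem~\ref{t3.5} give doubling and Poincar\'e at every scale), but it is not what the lemma hypothesizes, so as written your proof does not establish the statement.
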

	
	\begin{remark}
		In fact, after some technical improvements, (\ref{3.8}) holds true for any $0<\theta<\frac{m}{m-2}$, cf. Theorem 8.18 of \cite{GT} and step 2 in Theorem 4.15 of \cite{HL}.
	\end{remark}
	
	Under the Ricci curvature condition $\kappa(p,D)<\epsilon(m,p)$ for some $\epsilon$, 
	we obtain (\ref{3.4}) and (\ref{3.7}) by Lemma~\ref{lma:OH20_1} and Lemma~\ref{l2.4} respectively. To obtain the Poincar\'{e} inequalities, we need the following result proved by A. Grigor'yan in \cite{Gr} and L. Saloff-Coste in \cite{Sal1}.
	
	\begin{theorem}[{\cite{Sal2}, Theorem 5.5.1}]\label{t3.5}
		Fix $0<D\leq\infty$ and consider the following properties:
		
		(i) Poincar\'{e} inequality: 
		
		There exists a uniform constant $P_0$ such that, for any ball $ B(x, r)$, $0<r<D$ and all $f\in C^\infty\left(B_r\right)$,
		\begin{equation}\label{3.9}
			\int_{B(x,r)}\left| f-f_{B_r}\right|^2 dv\leq P_0r^2\int_{B(x,r)}\left| \nabla f\right|^2 dv.  
		\end{equation}
		
		(ii)  Volume doubling property:
		
		There exists a uniform constant $D_0$ such that, for any ball $B(x, r)$, $0<r<D$,
		\begin{equation}\label{3.10}
			\left|B_{2r} \right|\leq D_0\left| B_r\right|.  
		\end{equation}
		
		(iii) Parabolic Harnack inequality:
		
		There exists a uniform constant $H_0$ such that, for any ball $B(x, r)$, $0<r<D$, and for any smooth positive solution u to $\Delta u-\partial_t u=0$ in
		$Q=\left( s-r^2, s\right) \times B(x, r)$ with arbitrary real number $s$,
		such that
		\begin{equation}\label{3.11}
			\sup\limits_{Q^-}u\leq H_0\inf\limits_{Q^+}u
		\end{equation}
		where $Q^-=\left( s-\frac{3}{4}r^2, s-\frac{1}{2}r^2\right) \times B\left( x, \frac{r}{2}\right) $ and $Q^+=\left( s-\frac{1}{4}r^2, s\right) \times B\left( x, \frac{r}{2}\right) $.
		
		Then (iii) is equivalent to (i) and (ii).
	\end{theorem}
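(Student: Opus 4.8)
The statement is the classical Grigor'yan--Saloff-Coste characterization, so the plan is only to indicate the structure of the proof in \cite{Sal2}; the substantive implication is ``(i) and (ii) $\Rightarrow$ (iii)'', and the strategy is to run Moser's parabolic iteration, powered by a Sobolev inequality that is manufactured from the two geometric hypotheses. The first step is to note that the doubling property (ii), with constant $D_0$, produces a ``dimension'' $\nu:=\log_2 D_0$ together with the reverse volume bound $|B(y,s)|/|B(x,r)|\geq c\,(s/r)^{\nu}$ whenever $B(y,s)\subseteq B(x,r)$; feeding this and the \Poincare inequality (i) into the standard iteration of Saloff-Coste produces, for $\nu>2$, the family of local Sobolev inequalities
\[
  \left(\fint_{B(x,r)}|f|^{\frac{2\nu}{\nu-2}}\,dv\right)^{\frac{\nu-2}{\nu}}
  \leq C\,r^{2}\fint_{B(x,r)}|\nabla f|^{2}\,dv+C\fint_{B(x,r)}f^{2}\,dv
\]
for all $f\in C_0^\infty(B(x,r))$, $0<r<D$, with $C=C(\nu,P_0,D_0)$; the cases $\nu\leq 2$ are reduced to this by artificially enlarging the exponent.

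Next I would perform two Moser-type estimates on a parabolic cylinder $Q=(s-r^2,s)\times B(x,r)$, $0<r<D$. The first is the mean value inequality: iterating the Caccioppoli energy estimate against the Sobolev inequality over a chain of shrinking sub-cylinders gives, for every nonnegative weak subsolution $v$ of $\Delta v-\partial_t v\geq0$ and every $\theta>0$, a bound of the form $\sup_{Q'}v\leq C_\theta\,(\fint_Q v^{\theta})^{1/\theta}$ on a smaller cylinder $Q'$ sitting in the upper (later-time) part of $Q$ (the case $0<\theta<1$ needs an extra iteration step). This applies to $u$ itself and --- crucially --- to $u^{-1}$, which is a nonnegative subsolution since $\Delta(u^{-1})-\partial_t(u^{-1})=2u^{-3}|\nabla u|^{2}\geq0$. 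The second is the logarithmic estimate: testing the equation against $\varphi^{2}u^{-1}$ and setting $w=-\log u$ yields $\partial_t\!\int\varphi^{2}w+\tfrac12\!\int\varphi^{2}|\nabla w|^{2}\leq C\!\int|\nabla\varphi|^{2}$, so that after subtracting a suitable time-dependent constant $c$ the \Poincare inequality forces $w-c$ to have bounded oscillation on $Q$ in the parabolic sense. A Bombieri--Giusti type measure-theoretic iteration (the parabolic analogue of the John--Nirenberg inequality) then produces $p_0=p_0(\nu,P_0,D_0)>0$ with $\fint_{Q^-}u^{p_0}\leq Ce^{p_0 c}$ over the earlier part and $\fint_{Q^+}u^{-p_0}\leq Ce^{-p_0 c}$ over the later part.

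Chaining the two estimates then closes the hard direction: $\sup_{Q^-}u\leq C(\fint_{Q^-}u^{p_0})^{1/p_0}\leq C'e^{c}$, while $\sup_{Q^+}u^{-1}\leq C(\fint_{Q^+}u^{-p_0})^{1/p_0}\leq C'e^{-c}$, i.e.\ $\inf_{Q^+}u\geq(C')^{-1}e^{c}$, so that $\sup_{Q^-}u\leq H_0\inf_{Q^+}u$ with $H_0=H_0(\nu,P_0,D_0)$, which is (iii). For the converse ``(iii) $\Rightarrow$ (i) and (ii)'', I would apply the parabolic Harnack inequality to the heat kernel $p_t(x,\cdot)$: iterating it along a time-chain yields the matching on-diagonal lower bound $p_t(x,x)\gtrsim|B(x,\sqrt t)|^{-1}$, which together with the standard on-diagonal upper bound gives two-sided Gaussian heat kernel estimates, and these are well known to be equivalent to the conjunction of volume doubling and the scale-invariant \Poincare inequality; alternatively, (iii) directly implies an elliptic Harnack inequality, from which doubling follows by comparing a positive solution on concentric balls and the \Poincare inequality follows by a chaining argument.

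The main obstacle is the logarithmic step: $\log u$ is not globally of bounded mean oscillation, only after subtracting a time-monotone drift --- which is precisely why $Q^-$ and $Q^+$ must be separated in time in (iii) --- and turning this into the quantitative crossover estimate, uniformly over all scales $0<r<D$, is the delicate heart of the argument. The remaining ingredients (the Sobolev inequality from doubling and \Poincare, and the two Moser iterations) are by now routine, but one must carry the doubling constant $D_0$ faithfully through every step, since that is what keeps the final constant $H_0$ scale-invariant.
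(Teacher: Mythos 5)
The paper gives no proof of this statement: it is quoted as a known result from Saloff-Coste \cite{Sal2}, Theorem 5.5.1 (the Grigor'yan--Saloff-Coste theorem) and used as a black box. Your sketch correctly reproduces the standard argument of that source --- the local Sobolev inequality manufactured from doubling plus Poincar\'e, the two Moser iterations for $u$ and $u^{-1}$, the logarithmic estimate combined with a parabolic Bombieri--Giusti (John--Nirenberg) lemma, and the converse via heat-kernel bounds or elliptic Harnack with chaining --- so it is consistent with the cited proof and contains no gap at the level of detail appropriate here.
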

	
	As Theorem 5.5 of \cite{DWZ} asserts, by the parabolic gradient estimate obtained in \cite{ZM} and a scaling argument, (iii) of Theorem \ref{t3.5} holds true provided $\kappa(p,D)<\epsilon(n,p)$ for some small $\epsilon$ and $H_0=H_0(m,p,D)$, then we obtain the desired Poincar\'{e} inequalities in Lemma \ref{l3.3} under Ricci curvature bounds at once.
	
	As a consequence, Lemma \ref{l3.3} holds true and we have the following estimate analogous to Proposition 2.2 of \cite{TW}.
	\begin{proposition}\label{p3.6}
		Let $(M,g)$ be a complete Riemannian manifold and $B(x,2)\subset M$. Suppose $\varphi\geq0$ satisfies $(-\Delta+a)\varphi\geq b$ with $\left| a\right|+ \left|b \right|<C_F$ for some constant $C_F$ on $B(x,2).$ Then for any $0<\rho\leq 1$ and $\theta$ small enough, there exists a constant $\epsilon(n,p)$ such that if $\kappa(p,8)<\epsilon$, there holds
		\begin{equation}\label{3.12}
			\rho^{-m}\int_{B_{\rho}}\varphi^\theta dv \leq C^\theta\left( \rho^2+\inf\limits_{B_{\frac{\rho}{2}}}\varphi\right)^\theta
		\end{equation}
		for some constant $C=C(m,p,C_F,C_s(m),C_P,C_V)$ where $\theta$ is the same as in Lemma \ref{l3.3}.
	\end{proposition}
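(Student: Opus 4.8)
The plan is to derive Proposition~\ref{p3.6} from the weak Harnack inequality of Lemma~\ref{l3.3}, after an auxiliary substitution that makes the hypotheses scale-invariant. First I would rewrite $(-\Delta+a)\varphi\geq b$ as $\Delta\varphi\leq a\varphi-b$ and, using $\varphi\geq0$ and the pointwise bounds $|a|<C_F$, $|b|<C_F$, estimate $a\varphi-b\leq|a|\varphi+|b|\leq C_F\varphi+C_F$ on $B(x,2)$. The key point is to work not with $\varphi$ itself but with the shifted function $u:=\varphi+\rho^{2}$, which is nonnegative (indeed $u\geq\rho^{2}>0$) and, since $\rho\leq1$, satisfies
\begin{align*}
\Delta u=\Delta\varphi\leq C_F\varphi+C_F\leq C_F\rho^{-2}\varphi+C_F=Au,\qquad A:=C_F\rho^{-2}.
\end{align*}
The coupling of the shift $\rho^{2}$ with $A\sim\rho^{-2}$ is forced by the requirement that the combination $Ar^{2}$, on which the constant in Lemma~\ref{l3.3} depends, stay bounded independently of $\rho$ when the lemma is applied at a radius $r$ proportional to $\rho$; for $r=8\rho$ one has $Ar^{2}=64C_F$.

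Next I would verify that the structural hypotheses (i)--(iii) of Lemma~\ref{l3.3} are in force on the ball $B(x,8\rho)$; this ball lies in the domain $B(x,2)$ of the differential inequality when $\rho\leq 1/4$, while for $1/4<\rho\leq1$ one instead covers $B(x,\rho)$ by a bounded number of balls of radius $1/8$ and combines the corresponding estimates through a Harnack chain. Under the assumption $\kappa(p,8)<\epsilon(m,p)$, the normalized Sobolev inequality \eqref{3.4} comes from Lemma~\ref{lma:OH20_1}, the volume doubling \eqref{3.7} from Lemma~\ref{l2.4}, and the Poincar\'e inequalities \eqref{3.5}--\eqref{3.6} from Theorem~\ref{t3.5} together with the parabolic gradient estimate recalled just after Theorem~\ref{t3.5}; the resulting constants $C_S$, $C_P$, $C_V$ depend only on $m$ and $p$. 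Applying Lemma~\ref{l3.3} to $u$ on $B(x,8\rho)$, so that $B_{r/8}=B(x,\rho)$ and $B_{r/16}=B(x,\rho/2)$, then yields, for $\theta$ small,
\begin{align*}
\|u\|^{*}_{\theta,B(x,\rho)}\leq C\inf_{B(x,\rho/2)}u,\qquad C=C\big(\theta,m,C_S,C_P,C_V,64C_F+1\big).
\end{align*}

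Finally I would unwind the averaged norm. Raising the last inequality to the power $\theta$ and using the identity $\big(\|u\|^{*}_{\theta,B(x,\rho)}\big)^{\theta}=|B(x,\rho)|^{-1}\int_{B(x,\rho)}u^{\theta}\,dv$ together with the volume bound $|B(x,\rho)|\leq 2\omega_m\rho^{m}$ (valid for $\epsilon$ small, by Lemma~\ref{l2.6} or Lemma~\ref{l2.4}), I obtain $\rho^{-m}\int_{B(x,\rho)}u^{\theta}\,dv\leq 2\omega_m C^{\theta}\big(\inf_{B(x,\rho/2)}u\big)^{\theta}$. Since $0\leq\varphi\leq u$ and $\theta>0$ give $\varphi^{\theta}\leq u^{\theta}$, and $\inf_{B(x,\rho/2)}u=\rho^{2}+\inf_{B(x,\rho/2)}\varphi$, absorbing the factor $2\omega_m$ into the constant (replacing $C$ by $(2\omega_m)^{1/\theta}C$, permissible because $\theta$ is fixed) produces exactly \eqref{3.12}. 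The step I expect to require the most care is keeping every constant independent of $\rho$: this is the whole reason for the coupled choice $u=\varphi+\rho^{2}$, $A=C_F\rho^{-2}$, and it is also why $\kappa$ is assumed small at the fixed scale $8$ rather than only at a single scale, so that the Sobolev, Poincar\'e and doubling constants remain uniform on every ball $B(x,8\rho)$ with $0<\rho\leq1$.
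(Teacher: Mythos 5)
Your proposal is correct and follows essentially the same route as the paper: both reduce the statement to the weak Harnack inequality of Lemma~\ref{l3.3} applied to a shifted function, with the shift and the zeroth-order coefficient coupled to $\rho$ so that the constant (which depends only on $\theta,m,C_S,C_P,C_V$ and $Ar^2+1$) stays uniform in $\rho$. The only difference is cosmetic: the paper first proves the unit-scale estimate for $\varphi+C_F$ and then transfers it to scale $\rho$ via the rescaling $\tilde g=\rho^{-2}g$, $\tilde\varphi=\rho^{-2}\varphi$, whereas you apply Lemma~\ref{l3.3} directly at radius $8\rho$ with $A=C_F\rho^{-2}$ and shift $\rho^{2}$ --- an equivalent bookkeeping, and your explicit treatment of the domain restriction $B(x,8\rho)\subset B(x,2)$ when $\rho>1/4$ is, if anything, more careful than the paper's.
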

	\begin{proof}
		Let $\bar{\varphi}=\varphi+C_F$, then
		\begin{align*}
				(-\Delta+a)\bar{\varphi}&\geq b+aC_F=C_F\left( a+C_F^{-1}b\right)\geq-C_F\left| a+C_F^{-1}b\right|\geq -\bar{\varphi}\left| a+C_F^{-1}b\right|\\
				&\geq-\bar{\varphi}\left( \left| a\right| +1\right).
		\end{align*}
		It follows that
		$$\Delta\bar{\varphi}\leq\left( 2\left| a\right| +1\right)\bar{\varphi}\leq\left( 2C_F +1\right)\bar{\varphi}.$$
		By Lemma \ref{l3.3}, there holds
		\begin{equation}\label{3.13}
			\fint_{B_1}{\bar{\varphi}}^\theta dv \leq \left(C \inf\limits_{B_\frac{1}{2}}\bar{\varphi}\right)^\theta
		\end{equation}
		for some constant $C$ depending on $m,p,C_F,C_s(m),C_P,C_V$.
		
		Fix $0<\rho\leq1$ and let $\tilde{g}=\rho^{-2}g$. By scaling, we have
		$$(-\Delta+a)\varphi\geq b\Leftrightarrow-\rho^{-2}\Delta_{\tilde{g}}\varphi+a\varphi\geq0\Leftrightarrow-\Delta_{\tilde{g}}\left( \rho^{-2}\varphi\right)+\rho^2a\left( \rho^{-2}\varphi\right)\geq b.$$
		Let $\tilde{\varphi}=\rho^{-2}\varphi$.  Then we have
		$$-\Delta_{\tilde{g}}\tilde{\varphi}+\rho^2a\tilde{\varphi}\geq b.$$
		Note that (\ref{3.13}) still holds true on $(M,\tilde{g})$ after scaling. Consequently, we obtain
		$$\fint_{B_{\tilde{g}}(x,1)}\left( {\tilde{\varphi}}+C_F\right)^\theta dv \leq C^\theta\left( \inf\limits_{B_{\tilde{g}}\left( x,\frac{1}{2}\right) }\tilde{\varphi}+C_F\right) ^\theta,$$
		which yields that
		$$\fint_{B_{g}(x, \rho)}{\varphi}^\theta dv \leq C^\theta\left( \inf\limits_{B_{g}\left( x,\frac{\rho}{2}\right) }\varphi+\rho^2\right) ^\theta$$ for some constant $C$.
		
		By (\ref{2.5}) for $1-C(m,p)\epsilon\geq \frac{1}{2}$, we deduce that $\left| B(x,\rho)\right| \leq2\omega_m\rho^m$ for some $\epsilon$, then we obtain (\ref{3.12}) at once.\\
	\end{proof}	
	The following generalization of Cheeger-Colding theory was achieved by Petersen-Wei~\cite{PW2}. 
	
	\begin{lemma}[{\cite{PW2}, Theorem 1.3}]
	\label{l3.7}
		Suppose a sequence of complete Riemannian m-manifolds $ (M_i,g_i)$ converges to a Riemannian m-manifold $(M,g)$ in the pointed Gromov-Hausdorff topology. 
		Then we can find an $\epsilon(m,p)>0$ such that if for all the manifolds we have $\kappa(p,D)\leq\epsilon$ and the points $x_i\in M_i$ converge to $x\in M$, then
		\begin{align}
		   \left| B(x_i,r)\right|\longrightarrow \left| B(x,r)\right|, \quad \forall \; 0<r< \frac{D}{8}.  \label{eqn:OH27_3}
		\end{align}
		Consequently, a standard covering argument then implies
		\begin{align}
		   |M_i| \longrightarrow |M|.    \label{eqn:OH27_4}
		\end{align}
	\end{lemma}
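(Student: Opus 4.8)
The plan is to reduce the statement to the convergence of the volume of a single metric ball, and then obtain the conclusion for $|M_i|$ by a covering argument. Fix $0<r<\tfrac{D}{8}$ and Gromov--Hausdorff approximations realizing $(M_i,g_i,x_i)\to(M,g,x)$, with errors $\delta_i\to 0$; the goal is $|B(x_i,r)|\to|B(x,r)|$, which I split into the two one-sided inequalities.

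For the upper bound $\limsup_i|B(x_i,r)|\le|B(x,r)|$, fix a small $\rho>0$ and a maximal $\rho$-separated set $\{p_1,\dots,p_N\}\subset \bar B(x,r)$ (finite since $\bar B(x,r)$ is compact by Hopf--Rinow), so that the balls $B(p_j,\rho)$ cover $\bar B(x,r)$ while the balls $B(p_j,\rho/5)$ are pairwise disjoint. Transferring this net through the approximation produces, for $i$ large, points $q^i_j\in M_i$ with $B(x_i,r)\subset\bigcup_j B(q^i_j,2\rho)$ and with $B(q^i_j,\rho/10)$ pairwise disjoint. Integrating the volume element comparison of Lemma~\ref{l2.6} gives $|B(q^i_j,2\rho)|\le \bigl(1+C(m,p,D)\epsilon^{1/2}\bigr)\omega_m(2\rho)^m$, while the disjointness of the $B(p_j,\rho/5)$ together with the smoothness of $g$ on the compact region $\bar B(x,r)$ (hence $|B(p_j,\rho/5)|\ge(1-o_\rho(1))\omega_m(\rho/5)^m$) bounds $N$. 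Summing over $j$, letting $i\to\infty$ and then $\rho\to 0$, and using the local almost-rigidity for balls whose volume ratio is almost Euclidean (to see that one may take the covering essentially efficient, so that the spurious scale factors wash out), one obtains the claimed upper bound.

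The reverse inequality $\liminf_i|B(x_i,r)|\ge|B(x,r)|$ is the main obstacle: it is the assertion that no volume is lost in the limit, i.e.\ precisely the point where the Cheeger--Colding volume-convergence mechanism, in the integral-Ricci form of Petersen--Wei~\cite{PW2}, is needed. I would take a fine $\rho$-net of $B(x,r-\rho)$, transfer it to produce many pairwise disjoint balls $B(q^i_j,\rho/2)\subset B(x_i,r)$, and bound $|B(q^i_j,\rho/2)|$ from below. The delicate step is exactly this lower bound on small-ball volume: the integral condition $\kappa(p,D)\le\epsilon$ does not by itself prevent local collapsing, and the lower bound is forced only by the non-collapsedness of the limit $M$. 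Concretely, the uniform relative volume comparison of Lemma~\ref{l2.4} makes $s\mapsto |B(q,s)|/s^m$ almost non-increasing once $\kappa(p,D)\le\epsilon$, so $|B(q^i_j,\rho/2)|$ is controlled below by the volume of a ball of a fixed definite radius about $q^i_j$; that this last quantity does not degenerate is the substance of the Petersen--Wei theorem, reflecting that a sequence collapsing near $x_i$ is incompatible with converging to an $m$-manifold. Summing the lower bounds over the net and letting $i\to\infty$, then $\rho\to 0$, yields $\liminf_i|B(x_i,r)|\ge|B(x,r)|$, hence the convergence of ball volumes.

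Finally, for $|M_i|\to|M|$: exhaust $M$ by compact sets $K_\ell$, cover each $K_\ell$ by finitely many balls $B(z_k,r_k)$ with $r_k<\tfrac{D}{8}$ whose half-radius balls are disjoint, apply the ball-volume convergence just proved to each of these balls, and use the volume doubling of Lemma~\ref{l2.4} (and Lemma~\ref{l2.5}) to control the overlaps as well as the mass of $M_i$ lying outside the transferred cover; letting $K_\ell\uparrow M$ and $r_k\to 0$ then gives the conclusion.
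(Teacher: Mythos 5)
There is a genuine gap, and it sits exactly at the point you yourself flag as ``the delicate step.'' Note first that the paper does not prove (\ref{eqn:OH27_3}) at all: the ball-volume convergence is quoted verbatim from Petersen--Wei \cite{PW2}, Theorem 1.3, and the only content the paper adds is the remark that a standard covering argument upgrades it to $|M_i|\to|M|$. Your covering argument for the global statement (Vitali-type disjoint balls for the lower bound, a transferred cover plus the doubling of Lemma~\ref{l2.4} for the upper bound) is in the intended spirit and is fine. But your treatment of the hard half of the ball statement, $\liminf_i|B(x_i,r)|\geq|B(x,r)|$, is circular: you reduce it to a definite lower bound on the volume of fixed-scale balls about $q^i_j$ and then justify that bound by appealing to ``the substance of the Petersen--Wei theorem,'' i.e.\ to the very statement being proved. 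The heuristic you offer in its place --- that collapsing near $x_i$ is ``incompatible with converging to an $m$-manifold'' --- is not an argument: volume is not lower semicontinuous under Gromov--Hausdorff convergence in general, and the limit being a smooth $m$-manifold does not by itself force any lower volume bound on the approximating balls (this is precisely why Colding's volume convergence theorem, and its integral-Ricci extension in \cite{PW2}, are nontrivial). Lemma~\ref{l2.4} cannot close this gap either: almost monotonicity of $s\mapsto|B(q,s)|/s^m$ only compares volumes at the same center across scales, so it transfers a hypothetical lower bound at a definite radius down to small radii, but it produces no such bound; under $\kappa(p,D)\leq\epsilon$ alone the $M_i$ could a priori collapse. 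Supplying this step requires the Cheeger--Colding machinery (almost volume rigidity, the segment inequality, harmonic or distance-function approximations) adapted to integral Ricci bounds, which is exactly what Petersen--Wei carry out and what the paper is content to cite.

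A secondary, less serious point: in the upper-bound direction your phrase about ``local almost-rigidity \ldots so that the spurious scale factors wash out'' is doing real work that is not spelled out; the clean route is to cover $B(x,r)$ by balls of radius $\rho$ centered at a $\rho$-net, transfer, bound each transferred ball by Lemma~\ref{l2.6} (or Lemma~\ref{l2.4}), and let $\rho\to0$ using continuity of $\rho\mapsto|B(x,r+\rho)|$ on the fixed limit manifold; as written, the cardinality bound on the net times the per-ball bound gives a constant factor, not the sharp inequality, unless this limiting procedure is made explicit. In short: the global covering step matches the paper's intent, but the core ball-volume convergence is not proved by your proposal; it is re-cited.
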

	
	\begin{lemma}[{\cite{PW2}, Theorem 1.5}]
	\label{l3.8}
		Given $\epsilon>0$, $D>0$, then we find some $\delta_1(m,p,\epsilon)>0$ and $\delta_2(m,p,\epsilon)>0$ such that if $(M,g)$ is a Riemannian m-manifold with $\kappa(p,D)\leq\delta_1$ 
		and $\left| B(x,D)\right|\geq(1-\delta_2)\omega_mD^m$ for some $x\in M$ where $\omega_m$ is the volume of $B_1$ in m-Euclidean space, then $B(x,r)$, $r<\frac{D}{8}$, 
		is $\epsilon$-Gromov-Hausdorff close to an $r$-ball in the m-Euclidean space.
	\end{lemma}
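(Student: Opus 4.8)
The plan is to argue by contradiction, using Gromov--Hausdorff compactness to reduce the statement to a volume-rigidity phenomenon in the limit. Fix $\epsilon>0$ and $D>0$ and suppose no such $\delta_1,\delta_2$ exist. Then there is a sequence of pointed $m$-manifolds $(M_i,g_i,x_i)$ with $\kappa_{g_i}(p,D)\le\delta_1^{(i)}\to 0$ and $|B(x_i,D)|\ge(1-\delta_2^{(i)})\omega_mD^m$ with $\delta_2^{(i)}\to 0$, yet $B(x_i,r)$ is not $\epsilon$-Gromov--Hausdorff close to a Euclidean $r$-ball for some fixed $r<D/8$. Once $\delta_1^{(i)}$ drops below the threshold $\epsilon(m,p)$, the volume doubling property (Lemma \ref{l2.4}) gives a uniform doubling constant on $B(x_i,D)$; together with the volume element comparison (Lemma \ref{l2.6}) and the other comparison estimates recalled above, this provides the local control needed for Gromov precompactness under integral Ricci bounds, so after passing to a subsequence $(M_i,g_i,x_i)$ converges in the pointed Gromov--Hausdorff topology to a length space $(X,d,x_\infty)$.

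Next I would pin down the volumes of small balls in the limit. For every $s\le D$, the doubling inequality (\ref{2.5}) gives
\[
\bigl(1-C(m,p)(\delta_1^{(i)})^{1/2}\bigr)\frac{|B(x_i,D)|}{D^m}\le\frac{|B(x_i,s)|}{s^m},
\]
while integrating the estimate of Lemma \ref{l2.6} over $\rho\in(0,s]$ gives the matching upper bound $|B(x_i,s)|/s^m\le\bigl(1+C(m,p,D)(\delta_1^{(i)})^{1/2}\bigr)\omega_m$. Feeding in $|B(x_i,D)|\ge(1-\delta_2^{(i)})\omega_mD^m$ and letting $i\to\infty$ forces $|B(x_i,s)|/s^m\to\omega_m$ for every fixed $s\le D$. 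Since $\kappa_{g_i}(p,D)\to 0$, the volume convergence of Lemma \ref{l3.7} is available on scales $s<D/8$, so in the limit $\mathcal H^m\bigl(B(x_\infty,s)\bigr)=\omega_ms^m$ for all $s<D/8$; the limit has full Euclidean volume growth on this range.

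The remaining and main point is the rigidity: a pointed Gromov--Hausdorff limit of manifolds with $\kappa(p,D)\to 0$ whose balls satisfy $\mathcal H^m(B(x_\infty,s))=\omega_ms^m$ must be, on $B(x_\infty,r)$, isometric to the Euclidean $r$-ball, which contradicts the choice of the sequence and finishes the proof. Establishing this amounts to running the Cheeger--Colding almost-rigidity argument in the integral-Ricci setting: the needed analytic inputs are the $L^p$ segment inequality and the $L^p$ Abresch--Gromoll excess estimate, both valid when $\kappa(p,D)$ is small, which together force the splitting in the volume-cone (here volume-equals-Euclidean) case exactly as in Colding's volume-rigidity theorem. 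Equivalently, one may stay on $M_i$ and argue directly: choose points $y_{i,1},\dots,y_{i,m}$ near $\partial B(x_i,D)$ in almost mutually orthogonal directions, solve $\Delta b_{i,k}=0$ on $B(x_i,r)$ with boundary data $d(y_{i,k},\cdot)$, and use an integrated Bochner formula together with the gradient estimate for harmonic functions under integral Ricci bounds to show that $\{\nabla b_{i,k}\}$ is $L^2$-close to an orthonormal frame on all but a small portion of $B(x_i,r)$, with error controlled by $\kappa_{g_i}(p,D)^{1/2}+\delta_2^{(i)}$; then $(b_{i,1},\dots,b_{i,m})\colon B(x_i,r)\to\R^m$ is an $\epsilon$-Gromov--Hausdorff approximation onto a ball for $i$ large. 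I expect this last step --- converting the almost-maximal-volume hypothesis into an almost-orthonormal frame of almost-harmonic coordinates with all errors quantitatively tied to $\delta_1,\delta_2$ --- to be the technical heart, since it is precisely where the weakness of integral (as opposed to pointwise) Ricci control must be absorbed.
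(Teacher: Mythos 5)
The paper offers no proof of this lemma to compare against: it is quoted verbatim as Theorem 1.5 of Petersen--Wei \cite{PW2} and used as a black box. Judged on its own, your outline points in the right direction --- the actual proof in \cite{PW2} is indeed an adaptation of Colding's almost-maximal-volume argument (segment inequality, excess estimates, almost-orthonormal families of distance-like functions giving the Gromov--Hausdorff approximation) to the integral Ricci setting --- but as written it contains a genuine gap: the entire quantitative core is deferred rather than proved. Saying that a limit space with $\mathcal H^m\bigl(B(x_\infty,s)\bigr)=\omega_m s^m$ on a range of scales ``must be isometric to the Euclidean ball, by running the Cheeger--Colding almost-rigidity argument in the integral-Ricci setting,'' and that the frame of almost-harmonic functions can be made an $\epsilon$-approximation ``with error controlled by $\kappa^{1/2}+\delta_2$,'' is essentially a restatement of the theorem to be proved; the $L^p$ segment inequality, the $L^p$ excess estimate, and the integrated Bochner argument under merely integral Ricci control are exactly the content of Petersen--Wei's paper, and you explicitly flag this step as the technical heart without carrying it out. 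So the proposal is a plausible plan, not a proof.

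Two further technical slips in the reduction itself: (i) in the contradiction setup the bad radius may depend on $i$, so you must pass to a subsequence $r_i\to r_\infty\in[0,D/8]$ (tiny radii being harmless, since balls of diameter less than $\epsilon$ are automatically $\epsilon$-close to Euclidean balls of the same radius); (ii) Lemma~\ref{l3.7}, as stated in the paper, assumes the Gromov--Hausdorff limit is a Riemannian $m$-manifold, which is not known a priori for your limit $(X,d,x_\infty)$, so volume convergence to $\mathcal H^m$ of the limit cannot be invoked as stated --- it requires the renormalized-limit-measure theory adapted to integral Ricci bounds, which is again part of what \cite{PW2} establishes. If you intend the direct argument on $M_i$ (harmonic replacements of distance functions), the compactness detour is unnecessary, but then all of the above estimates must be proved quantitatively under the hypothesis $\kappa(p,D)\le\delta_1$, which is precisely what is missing.
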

	
	\begin{lemma}[\cite{Aubry}, Theorem 1.1]
	    Let $(M^{m}, g)$ be a complete manifold and $p>\frac{m}{2}$. Then we have
	    \begin{align}
	         |M| \leq (m+1) \omega_{m+1} \left\{  1+ \rho_{p}^{\frac{9}{10}}\right\} \left\{1+ C(p,m) \rho_{p}^{\frac{1}{10}} \right\},
	    \label{eqn:OH24_2}     
	    \end{align}
	    where we denote
	    \begin{align*}
	        \rho_{p} \coloneqq \int_{M} \left\{ Rc-(m-1)g \right\}_{-}^{p} dv.  
	    \end{align*}
	\label{lma:OH24_3}    
	\end{lemma}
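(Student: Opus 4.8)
The plan is to reconstruct the argument of Aubry~\cite{Aubry} as a sharpened Bishop--Gromov comparison against the round sphere $S^{m}$; throughout, $\Psi$ will denote a positive quantity depending only on $m$, $p$ and $\rho_{p}$, with $\Psi\to0$ as $\rho_{p}\to0$. First I would fix $x_{0}\in M$, write $r=d(\cdot,x_{0})$, and pass to geodesic polar coordinates: for $\theta\in U_{x_{0}}M$ let $c(\theta)\in(0,+\infty]$ be the cut distance and $\mathcal{A}_{\theta}(s)$ the volume Jacobian of $\exp_{x_{0}}$, so that $dv=\mathcal{A}_{\theta}(s)\,ds\,d\theta$ with $\mathcal{A}_{\theta}(s)s^{1-m}\to1$ as $s\to0^{+}$, extended by $0$ for $s\geq c(\theta)$. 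Along $\gamma_{\theta}$ one has $\partial_{s}\log\mathcal{A}_{\theta}=\Delta r$, and the Bochner formula gives $\partial_{s}(\Delta r)\leq-Rc(\partial_{r},\partial_{r})-\tfrac{1}{m-1}(\Delta r)^{2}$. Setting $\iota(y):=\big(\Delta r(y)-(m-1)\cot r(y)\big)_{+}$ --- the ``spherical'' analogue of the quantity $\iota$ of Section~\ref{sec:pre} --- I would first use the Jacobi equation $y''+y=0$ (whose solution vanishing at $0$ with unit derivative is $\sin s$) to observe that a forcing term of small total mass can shift the first conjugate point past $\pi$ by at most $\Psi$, so that $M$ is compact with $\diam(M)\leq\pi+\Psi$; on this manifold of bounded diameter, the argument of Lemma~2.2 of~\cite{PW1}, adapted with the model $\tfrac{m-1}{r}$ replaced by $(m-1)\cot r$ and $Rc_{-}$ replaced by $\{Rc-(m-1)g\}_{-}$, yields $\Delta r\leq(m-1)\cot r+\iota$ together with
\begin{align*}
   \int_{M}\iota^{2p}\,dv\leq C(m,p)\,\rho_{p}.
\end{align*}

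Next I would integrate $\partial_{s}\log\!\big(\mathcal{A}_{\theta}(s)\,(\sin s)^{1-m}\big)=\Delta r-(m-1)\cot r\leq\iota(\gamma_{\theta}(s))$ away from $s=0$, where the bracket tends to $1$, to get $\mathcal{A}_{\theta}(s)\leq(\sin s)^{m-1}\exp\!\big(\int_{0}^{s}\iota(\gamma_{\theta}(u))\,du\big)$ for $0<s\leq\min\{c(\theta),\pi\}$, the part $s\in[\pi,c(\theta)]$ contributing at most $\Psi$ to the volume. Using $m\omega_{m}\int_{0}^{\pi}(\sin s)^{m-1}\,ds=(m+1)\omega_{m+1}$, the coarea formula $|M|=\int_{0}^{\diam(M)}|\partial B(x_{0},s)|\,ds$ with $|\partial B(x_{0},s)|=\int_{\{c(\theta)>s\}}\mathcal{A}_{\theta}(s)\,d\theta$, and $e^{x}\leq1+xe^{x}$, I would arrive at
\begin{align*}
   |M|\leq(m+1)\omega_{m+1}+\int_{0}^{\pi}(\sin s)^{m-1}\!\!\int_{U_{x_{0}}M}\!\Big(\int_{0}^{s}\iota(\gamma_{\theta}(u))\,du\Big)e^{\int_{0}^{s}\iota(\gamma_{\theta})}\,d\theta\,ds+\Psi,
\end{align*}
and the task would reduce to estimating the middle term; assembling the powers of $\rho_{p}$ that the Hölder steps produce then gives exactly the right-hand side of (\ref{eqn:OH24_2}), the exponents $\tfrac{9}{10}$ and $\tfrac{1}{10}$ being one admissible choice of conjugate interpolation exponents.

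The hard part will be controlling that middle term: one must pass from the directionwise one-dimensional line integrals $\int_{0}^{s}\iota(\gamma_{\theta}(u))\,du$ to the honestly $m$-dimensional quantity $\rho_{p}=\int_{M}\{Rc-(m-1)g\}_{-}^{p}\,dv$, uniformly in $\theta$. Two features obstruct a naive Hölder estimate: under a Ricci \emph{lower} bound there is no lower bound on the Jacobian $\mathcal{A}_{\theta}$, so the volume-weighted $L^{2p}$ control of $\iota$ does not at once control the unweighted line integrals, and the comparison model $\sin s$ degenerates at $s=\pi$. I expect to follow Aubry in splitting each radial ray into the part where $\mathcal{A}_{\theta}$ is not too small --- where the weighted estimate plus Hölder applies --- and the complementary thin part, which carries little volume, and in running the overshoot analysis near and beyond $s=\pi$ through the Jacobi equation rather than the logarithmic-derivative identity; tracking the interpolation exponents produced by this splitting is precisely what pins down the fractional powers in (\ref{eqn:OH24_2}). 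This is exactly where the argument must improve on the classical Bishop--Gromov comparison, which is effective only on $[0,\pi-\delta]$.
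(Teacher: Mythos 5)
There is nothing in the paper to compare against here: Lemma~\ref{lma:OH24_3} is quoted verbatim as Theorem~1.1 of~\cite{Aubry} and the authors give no proof of it. Judged on its own terms, your outline does follow Aubry's strategy (a sharpened Bishop--Gromov comparison against the round model, with the error $\iota=(\Delta r-(m-1)\cot r)_{+}$ and a coarea integration of the Jacobian bound), but it is not yet a proof: the two places where essentially all of the analytic content of the theorem lives are asserted rather than established.

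First, compactness and $\diam(M)\le\pi+\Psi$ are claimed from a one-line Jacobi-equation heuristic (``a forcing term of small total mass shifts the conjugate point by at most $\Psi$''). This is itself one of Aubry's main results, and the heuristic already hides the crux: $\rho_{p}$ is a \emph{volume} integral, while the forcing along a single geodesic is the unweighted line integral of $\{Rc-(m-1)g\}_{-}$, which is not controlled by $\rho_{p}$ on rays where the Jacobian $\mathcal{A}_{\theta}$ is small --- exactly the weighted-to-unweighted passage you postpone to the end. Moreover your $\Psi\to0$ framing presupposes $\rho_{p}$ small, whereas the statement of Lemma~\ref{lma:OH24_3} has no smallness hypothesis; for large $\rho_{p}$ you must either run the argument quantitatively or explain why (\ref{eqn:OH24_2}) can be arranged by the choice of $C(p,m)$, and you do neither. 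Second, the ``middle term'' --- the step that actually produces the exponents $\tfrac{9}{10}$ and $\tfrac{1}{10}$ --- is deferred wholesale (``follow Aubry in splitting each radial ray''). That splitting, together with the behavior at and beyond $s=\pi$ where $(m-1)\cot s\to-\infty$, is the entire theorem. Related to this, the claimed bound $\int_{M}\iota^{2p}dv\le C(m,p)\rho_{p}$ is not an automatic adaptation of Lemma~2.2 of~\cite{PW1}: in the Riccati comparison for the spherical model the cross term $2\cot r\,\iota$ has the favorable sign only for $r\le\tfrac{\pi}{2}$, and extending the $L^{2p}$ estimate past $\tfrac{\pi}{2}$ (and up to the diameter) is part of what Aubry had to prove. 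As it stands, your proposal reproduces the frame of Aubry's argument and correctly identifies the obstructions, but it does not close them, so it does not constitute a proof of (\ref{eqn:OH24_2}).
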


%%%%%%%%%%%%%%%%%%%%%%%%%%%%%%%%%%%%%%%%%%%%%%%%%%%%%%%%%%%%%%%%%%%%%%%%%%%%%%%%%%%%%%%%%%%%%%%%%%%%%%%%%%%%%%%%%%%%%%%%%%%%%%%%%%%%%%%%%%%%%%%%%

\section{Estimate of local functionals}
\label{sec:functionals}
	
	In this section, we shall develop a delicate lower bound of the local $\boldsymbol{\mu}$ functional.
	Our proof is similar to the proof of Proposition 3.1 of \cite{TW} (see also Lemma 4.10 of \cite{WB2}).
	Since there are extra difficulties caused by the Ricci integral condition, we shall modify and streamline the previous proof,  and provide full details. 
	
	\begin{proposition}
		Let $(M,g)$ be a closed Riemannian manifold of dimension $m\geq3$ and $p>\frac{m}{2}$. For $x_0\in M$ and each pair of positive numbers $(\eta,A)$, there exists a constant $\delta=\delta(m,p,\eta,A)$ with the following properties:
		
		Suppose
		\begin{subequations}
			\begin{align}[left = \empheqlbrace \,]
				&\int_{B(x_0,\delta^{-1})}Rc_-^p dv\leq \frac{1}{2} \omega_m \delta^{4p-m},   \label{eqn:OF29_5a} \\
				&\left| B\left(x_0, \delta^{-1}\right) \right|\geq(1-\delta) \omega_m\delta^{-m}.   \label{eqn:OF29_5b}
			\end{align}  
		\end{subequations}
		Then we have 
		\begin{align}
		\label{3.16}
			\inf\limits_{\tau\in\left[\frac{1}{3},3\right] }\boldsymbol{\bar{\mu}}\left( B(x_0,A),g,\tau\right)\geq-\frac{\eta}{2}.
		\end{align}
		
	\label{prn:OF29_7}	
	\end{proposition}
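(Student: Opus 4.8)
The plan is to establish the lower bound for the local $\boldsymbol{\bar{\mu}}$-functional by a contradiction/compactness argument combined with the weak Harnack estimate in Proposition~\ref{p3.6}. First I would reduce to a single value of $\tau$: since $\tau$ ranges over the compact interval $[\frac13,3]$ and $\boldsymbol{\bar{\mu}}(B(x_0,A),g,\tau)$ depends continuously on $\tau$ (and monotonically up to explicit corrections), it suffices to bound $\boldsymbol{\bar{\mu}}(B(x_0,A),g,\tau_0)$ from below uniformly for each fixed $\tau_0$, say after rescaling $g$ so that $\tau_0$ is normalized; the scaling invariance of $\kappa_g(p,r)$ and the hypotheses (\ref{eqn:OF29_5a})--(\ref{eqn:OF29_5b}) make this harmless. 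So fix $\tau$ and suppose, for contradiction, that there is $\eta_0>0$, a sequence $\delta_i\to 0$, and closed manifolds $(M_i,g_i)$ with marked points $x_{0,i}$ satisfying (\ref{eqn:OF29_5a})--(\ref{eqn:OF29_5b}) with $\delta=\delta_i$, yet $\boldsymbol{\bar{\mu}}(B(x_{0,i},A),g_i,\tau)<-\tfrac{\eta_0}{2}$.

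Next I would extract the near-minimizers. For each $i$, pick $\varphi_i\in W^{1,2}_0(B(x_{0,i},A))$, $\varphi_i\ge 0$, $\int\varphi_i^2\,dv_{g_i}=1$, with $\boldsymbol{\overline{\mathcal{W}}}(B(x_{0,i},A),g_i,\varphi_i,\tau)<-\tfrac{\eta_0}{2}$. The Euler--Lagrange equation for $\boldsymbol{\overline{\mathcal{W}}}$ is of the form $-4\tau\Delta\varphi_i - 2\varphi_i\log\varphi_i + (\text{const})\varphi_i = 0$, i.e. $\Delta u + \tfrac12 u\log u + Au = 0$ after obvious normalization, so Theorem~\ref{t3.2} (the Jie Wang--Youde Wang gradient estimate) applies: under $\kappa_{g_i}(p,1)\le\epsilon$ — which holds on balls of bounded radius around $x_{0,i}$ by Lemma~\ref{l2.5} applied to (\ref{eqn:OF29_5a})--(\ref{eqn:OF29_5b}) — we get $\sup_{B}|\nabla\varphi_i|/\varphi_i^{1-\frac{1}{2q}}\le C$. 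Meanwhile, Lemma~\ref{l2.5} gives the almost-Euclidean volume ratio (\ref{eqn:OI05_6b}) on large balls, Lemma~\ref{l3.8} (Petersen--Wei) forces $(B(x_{0,i},R),g_i)\to (B_R\subset\R^m, g_{\mathrm{eucl}})$ in Gromov--Hausdorff distance for every fixed $R$, and Lemma~\ref{l3.7} gives volume convergence. With the uniform Sobolev inequality of Lemma~\ref{lma:OH20_1} and the gradient bound, the $\varphi_i$ are uniformly bounded in $L^\infty$ and sufficiently compact (via a Rellich-type argument on the Gromov--Hausdorff limit, or by passing to harmonic coordinates) to converge to a limit function $\varphi_\infty$ on $\R^m$; I would then argue $\varphi_\infty\in W^{1,2}_0(B(0,A))$ with $\int\varphi_\infty^2=1$ (no mass escapes because the gradient estimate precludes bubbling and the weak Harnack estimate Proposition~\ref{p3.6}, applied to the PDE satisfied by $\varphi_i$, prevents concentration of $\int\varphi_i^2\log\varphi_i$).

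Finally, lower semicontinuity of the energy $\int 4\tau|\nabla\varphi|^2\,dv$ and continuity of the entropy term $\int -2\varphi^2\log\varphi\,dv$ under this convergence (the latter using the $L^\infty$ bound and $L^2$-convergence, so the $\varphi^2\log\varphi$ integrand is uniformly integrable) yield
\[
  \boldsymbol{\overline{\mathcal{W}}}\bigl(B(0,A),g_{\mathrm{eucl}},\varphi_\infty,\tau\bigr)\;\le\;\liminf_{i\to\infty}\boldsymbol{\overline{\mathcal{W}}}\bigl(B(x_{0,i},A),g_i,\varphi_i,\tau\bigr)\;\le\;-\tfrac{\eta_0}{2}.
\]
But $\boldsymbol{\overline{\mathcal{W}}}(B(0,A),g_{\mathrm{eucl}},\cdot,\tau)\ge\boldsymbol{\bar{\mu}}(\R^m,g_{\mathrm{eucl}},\tau)=0$ (the log-Sobolev inequality on Euclidean space, with equality only for Gaussians), a contradiction for $i$ large. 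This proves (\ref{3.16}). The main obstacle I anticipate is Step 3: making the convergence $\varphi_i\to\varphi_\infty$ rigorous on a merely Gromov--Hausdorff-convergent sequence where the metrics are only controlled through integral Ricci bounds (no two-sided curvature bound, no smooth convergence), and in particular ruling out loss of $L^2$-mass or of entropy at the boundary $\partial B(x_0,A)$ and at small scales — this is exactly where the gradient estimate of Theorem~\ref{t3.2}, the weak Harnack inequality of Proposition~\ref{p3.6}, and the volume convergence of Lemma~\ref{l3.7} must be combined carefully, and it is the step that requires the hypothesis $p>\frac m2$ in an essential way.
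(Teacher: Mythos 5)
Your overall strategy coincides with the paper's: argue by contradiction, extract minimizers, use Lemma~\ref{lma:OH20_1}, Theorem~\ref{t3.2}, Proposition~\ref{p3.6}, Lemma~\ref{l3.7} and Lemma~\ref{l3.8} to pass to a Euclidean ball in the limit, and contradict the Euclidean logarithmic Sobolev inequality. But the step you yourself flag as the ``main obstacle'' is exactly where the content of the proof lies, and your proposal does not supply it. Under purely integral Ricci control there are no harmonic coordinates with uniform estimates, and a generic ``Rellich-type argument on the Gromov--Hausdorff limit'' is not available off the shelf: what you actually need is (a) an identifiable equation satisfied by the limit function on the Euclidean ball, (b) preservation of the normalization and membership of the zero-extension in $W^{1,2}_0$, and (c) that the value of $\boldsymbol{\overline{\mathcal{W}}}$ does not drop in the limit, i.e.\ lower semicontinuity of the Dirichlet energy and convergence of the entropy term along a sequence of \emph{varying} metric measure spaces. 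None of these is established in your sketch (and note a small slip: arbitrary near-minimizers do not satisfy the Euler--Lagrange equation, so to invoke Theorem~\ref{t3.2} you must take genuine minimizers of the local functional, which do exist on bounded domains).

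The paper closes this gap by a representation-formula argument rather than by semicontinuity. After the $L^\infty$ bound, the interior gradient bound from Theorem~\ref{t3.2}, and a boundary H\"older decay estimate (\ref{eqn:OH14_5}), (\ref{eqn:OH14_14}) obtained from Proposition~\ref{p3.6} through an oscillation-decay iteration, one proves $\int_{\Omega\setminus B(z,\zeta)}\left|\Delta d^{2-m}\right| dv<\psi(\delta)$ using the Petersen--Wei integral Laplacian comparison together with the area bound of Lemma~\ref{l2.6} (this is (\ref{eqn:OH14_16})). This yields a Green-type representation of $\varphi(z)$ with error $C\zeta^{\alpha}+\psi(\delta)$, which passes to the Gromov--Hausdorff limit and produces the integral equation (\ref{eqn:OH18_6}) for $\varphi_\infty$ with the explicit Euclidean Green's function of $B(x_\infty,A)$. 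Elliptic regularity, strict positivity, the boundary decay (which justifies the zero-extension to $W^{1,2}_0(\R^m)$), and an integration by parts on the limit (\ref{eqn:OH17_13}) then show that the Euclidean $\boldsymbol{\overline{\mathcal{W}}}$-value of $\varphi_\infty$ equals $\boldsymbol{\bar{\mu}}_\infty\leq-\frac{\eta}{2}$, contradicting Gross's inequality; no lower semicontinuity of the energy across the degenerating sequence is ever invoked. Until you either carry out such a representation argument or build an honest Mosco/spectral convergence framework compatible with only $L^p$ Ricci bounds (uniform doubling and Poincar\'e do hold here, but this would have to be developed), the central step of your proof remains open, so as written the proposal has a genuine gap.
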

	
	\begin{proof}
	
	        The proof follows similar strategy as that of Proposition 3.1 of~\cite{TW}.   However, we shall encounter new technical difficulties caused for lack of point-wise Ricci lower bound. 
	        Furthermore, we now have more  systematical notations and estimates of local functionals by the work of~\cite{WB} and~\cite{WB2}.  
	        We shall provide more details and intermediate steps than the proof in~\cite{TW}, to make the proof more streamlined. 
	        In the proof, if it is not mentioned otherwise,  $C$ by default denote a constant depending only on $m,p$ and $A$.
	        As usual,  the actual value of $C$ may change from line to line. 
	
	        By monotonicity of local functionals(e.g. See Proposition 2.1 of~\cite{WB}), we may assume $A \geq 1$. 
		By scaling and redefining $\delta$ if necessary, it suffices to show (\ref{3.16}) for $\tau=1$. Namely, it suffices to prove
		\begin{align}
			\boldsymbol{\bar{\mu}}\left( B(x_0,A),g, 1\right)\geq-\frac{\eta}{2}.     \label{eqn:OH14_1}
		\end{align}
		We shall prove (\ref{eqn:OH14_1}) by a contradiction argument.     For simplicity of notation,  in this proof, we denote
		\begin{align}
		   \Omega \coloneqq B(x_0, A).    \label{eqn:OH14_6}
		\end{align}
		Therefore, if the Proposition was wrong, we should have
		\begin{align}
			\boldsymbol{\bar{\mu}}\left(\Omega, g, 1\right) < -\frac{\eta}{2}    \label{eqn:OH14_3}
		\end{align}
		no matter how small $\delta$ is in (\ref{eqn:OF29_5a}) and (\ref{eqn:OF29_5b}). 
		Let $\varphi$ be a minimizer of  the functional $\boldsymbol{\bar{\mu}}\left( \Omega, g,1\right)$.  It satisfies the normalization condition and the Euler-Lagrange equation on $\Omega$:
		\begin{align}
		 &\int \varphi^2 dv=1,  \label{eqn:OH11_1}\\
		 &-4\Delta\varphi-2\varphi\log\varphi-m\left(1+\log\sqrt{4\pi}\right)\varphi=\boldsymbol{\bar{\mu}} \varphi. \label{3.17}
		\end{align}
		Note that $\varphi$ is continuous on $\bar{\Omega}$ and vanishes on $\partial \Omega$.  Therefore,  $\varphi$ can be regarded as a function on $M$ by trivial extension. 
		The equation (\ref{3.17}) can be rewritten as
		\begin{align}
		   &\boldsymbol{\bar{\lambda}}=\frac{m\left(1+\log\sqrt{4\pi}\right) + \boldsymbol{\bar{\mu}}}{4},  \label{eqn:OH14_9}\\ 
		   &-\Delta \varphi -\frac{1}{2} \varphi \log \varphi =\boldsymbol{\bar{\lambda}} \varphi.   \label{eqn:OH14_10}
		\end{align}

		The proof consists of several steps. \\
		
		\noindent
		\textit{Step 1. There is a constant $C=C(m,A)$ such that
		\begin{align}
		      &\left|\boldsymbol{\bar{\mu}} \right|\leq C,   \label{eqn:OH14_2}\\
		      &\norm{\varphi}{C^0(\Omega)} \leq C.    \label{eqn:OH14_4}
		\end{align}
		}

		In view of Lemma \ref{l2.4}, conditions (\ref{eqn:OF29_5a}) and (\ref{eqn:OF29_5b}) assure the uniform non-collapsing condition.
		Then we can apply Lemma~\ref{lma:OH20_1} to obtain uniform Sobolev constant estimate, which in turn implies that $\boldsymbol{\bar{\mu}}$ is uniformly bounded from below by a constant depending on $m$ and the Sobolev constant of $\Omega$. 
		Since $\boldsymbol{\bar{\mu}}\left(\Omega, g, 1\right) \leq 0$  by (\ref{eqn:OH14_3}), this lower bound of  $\boldsymbol{\bar{\mu}}$ yields (\ref{eqn:OH14_2}).   
		Combining (\ref{eqn:OH14_2}) with (\ref{eqn:OH11_1}) and (\ref{3.17}), we can apply standard Moser iteration argument to obtain (\ref{eqn:OH14_4}). 
		For further details, see the proof of Proposition 3.1 in~\cite{TW}. \\

		\noindent
		\textit{Step 2.  There holds that
		\begin{align}
		  \sup_{d(z, \partial \Omega) \geq 0.1}    |\nabla \varphi|(z) \leq C.   \label{eqn:OH18_1}
		\end{align}
		}
		
		Since $\varphi$ satisfies the equation (\ref{eqn:OH14_10}) in $B(z, 0.1) \subset \Omega=B(x_0, A)$ and $\boldsymbol{\bar{\lambda}}$ is uniformly bounded, we can apply the gradient estimate of Youde Wang and Jie Wang(cf. Theorem~\ref{t3.2}).
		By setting $q=m$, it is clear that (\ref{eqn:OH18_1}) follows from  (\ref{3.2}) directly. \\

		\noindent
		\textit{Step 3.  For each fixed $\zeta \in (0, 0.1)$ and $z \in \Omega$ such that $d(z, \partial \Omega) \leq \zeta$,   we have
		\begin{align}
		     &\varphi(z) \leq C \zeta^{\alpha},   \label{eqn:OH14_5}\\
		     &|\nabla \varphi|(z) \leq C \zeta^{\alpha-1},   \label{eqn:OH14_14}
		\end{align}
		whenever $\delta$ is sufficiently small. Here $\alpha=\alpha(m, p) \in (0, 1)$. 
		}

		Note that (\ref{eqn:OH14_5}) is the boundary H\"older estimate. We shall follow the argument in section 8.10 of~\cite{GT} to achieve the proof. 
		Let $w$ be a point in $\partial \Omega$ such that $d(z, \partial \Omega)=d(z,w)$.  For each $r \in (0, 1)$, we define
		\begin{align}
		 &M_{r} \coloneqq Osc_{B(w,r)}(\varphi)=\sup_{B(w,r)} \varphi - \inf_{B(w,r)} \varphi,  \label{eqn:OH14_7}\\
		 &h_{r}\coloneqq M_{2r}-\varphi.    \label{eqn:OH14_8}
		\end{align}
		Then there holds
		\begin{align}
			\left( -\Delta-\boldsymbol{\bar{\lambda}} \right)h_{r}
			  =-\boldsymbol{\bar{\lambda}} M_{2r} -\frac{\left(M_{2r}-h_{r}\right)\log\left( M_{2r}-h_{r}\right)}{2}
			\geq -C,
		\end{align}
		where we used the fact that both $M_{r}$ and $h_{r}$ are uniformly bounded by (\ref{eqn:OH14_4}).

		Since $\delta$ is sufficiently small,  we can apply Proposition~\ref{p3.6} to obtain
		\begin{equation}\label{3.29}
			\left(2 r\right)^{-m}\int_{B(w,2r)}h_{r}^\theta dv \leq C \left( \inf\limits_{B(w,r)}h_{r}+r^2\right)^\theta 
		\end{equation}
		for some $\theta=\theta(p,m)$ small. 
		By compactness and volume convergence, it is clear that 
		\begin{align*}
		|B(w,2r) \backslash \Omega| =  |B(w,2r) \backslash B(x_0, A)| \geq 10^{-m} \omega_m (2r)^{m}. 
		\end{align*}
		Note that 
		\begin{align*}
		  &h_{r}\geq 0 \; \textrm{on} \; B(w,2r)\cap \Omega;\\
		  &h_{r}=M_{2r} \; \textrm{on} \; B(w,2r)\setminus \Omega;\\
		  &\inf\limits_{B(w,r)}h_{r}=M_{2r}-M_{r}.
		\end{align*}
		Therefore (\ref{3.29}) implies that
		\begin{align*}
		   M_{2r}\leq H \left(M_{2r}-M_{r}+r^2 \right)
		\end{align*}
		for some constant $H=H(m,p,\theta)=H(m,p)$ sufficiently large. 
		The above inequality can be rewritten as 
		\begin{align}
		&\gamma \coloneqq 1-\frac{1}{H} \in \left(\frac{1}{2}, 1 \right); \label{eqn:OH14_11}\\
		&M_{r}\leq\left(1- \frac{1}{H}\right)M_{2r}+r^2.    \label{eqn:OH14_12}
		\end{align}
		Fix $r=2^{-i}$ for $i\geq2$.   It follows from (\ref{eqn:OH14_12}) and induction that 
		\begin{equation}\label{3.31}
		M_{2^{-i}}\leq\gamma M_{2^{-i+1}}+4^{-i}\leq\gamma^{i-1}M_{\frac{1}{2}}+\sum^{i-2}_{j=0}\gamma^j4^{-i+j}=\gamma^{i-1}M_{\frac{1}{2}}+\frac{\gamma^{i-1}-4^{1-i}}{4(4\gamma-1)} \leq C \gamma^{i}, 
		\end{equation}
		where we used the uniform boundedness of $M_{\frac{1}{2}}$ in terms of (\ref{eqn:OH14_4}) and (\ref{eqn:OH14_7}).  Consequently, we have
		\begin{equation}\label{3.32}
			\left\|\varphi \right\|_{L^\infty\left( B(w,2^{-i})\right)}\leq  M_{2^{-i+1}}\leq C\gamma^{i}.
		\end{equation}
		By choosing $i$ such that $\zeta \in [2^{-i}, 2^{-i+1})$ and setting
		\begin{align}
		    \alpha' \coloneqq -\frac{\log \gamma}{\log 2}\in\left( 0,1\right),   \label{eqn:OH18_12}
		\end{align}
		then we have
		\begin{align}
		     \varphi(z) \leq C \zeta^{\alpha'}.    \label{eqn:OH14_13}
		\end{align}
		Following the route to prove  (\ref{eqn:OH18_1}), we are ready to  deduce (\ref{eqn:OH14_14}) from the above inequality. 
		Actually, since $\varphi$ satisfies the equation (\ref{eqn:OH14_10}) in $B(z, \zeta) \subset \Omega=B(x_0, A)$ and $\boldsymbol{\bar{\lambda}}$ is uniformly bounded, we can apply Theorem~\ref{t3.2} again. 
		Setting $q=m$ in (\ref{3.2}),  it follows from (\ref{eqn:OH14_13}) that
		\begin{align}
		   |\nabla \varphi|(z) \leq \frac{C}{\zeta} \varphi^{1-\frac{1}{2m}}(z)<C \zeta^{(1-\frac{1}{2m})\alpha'-1}.    \label{eqn:OH14_15}
		\end{align}
		Now we define
		\begin{align}
		   \alpha \coloneqq  \left(1-\frac{1}{2m} \right) \alpha' \in (0, 1).      \label{eqn:OH18_13}
		\end{align}
		Since $\zeta \in (0, 1)$,  it is clear that (\ref{eqn:OH14_5}) and (\ref{eqn:OH14_14}) follow from (\ref{eqn:OH14_13}) and (\ref{eqn:OH14_15}) respectively. \\

		\noindent
		\textit{Step 4.  Fix $z \in \Omega$, $\zeta \in (0, 0.1)$ such that $B(z, 2\zeta) \subset \Omega$.
		 Let $d \coloneqq d(z, \cdot)$.  Then 
		\begin{align}
		    \int_{\Omega \backslash B(z, \zeta)} \left| \Delta d^{2-m} \right| dv < \psi(\delta |m,p,A). 
		\label{eqn:OH14_16}    
		\end{align}
		}
		
		In the sense of distributions, the singular part of $\Delta d$ is a nonpositive measure supported on the cut locus of $z$(cf. \cite{Ch}, Theorem 4.1). 
		Define
		\begin{align}
		   \iota \coloneqq \left( \Delta d - \frac{m-1}{d}\right)_{+} \geq 0.    \label{eqn:OH14_16a}
		\end{align}		
		Direct computation shows that 
		\begin{align}
			  \Delta d^{2-m}+(m-2)d^{1-m} \iota			
			=(2-m)d^{1-m}\left(\Delta d-\frac{m-1}{d}-\iota\right)\geq0.  \label{3.37}
		\end{align}
		Since $\Omega=B(x_0, A) \subset B(z,2A)$, it follows from (\ref{eqn:OH14_16a}) and (\ref{3.37}) that
		\begin{align*}
			\int_{\Omega\setminus B(z,\zeta)}\left| \Delta d^{2-m}\right| dv
			&=\int_{\Omega\setminus B(z,\zeta)}\left| \Delta d^{2-m}+(m-2)d^{1-m}\iota-(m-2)d^{1-m}\iota\right| dv \nonumber\\
			&\leq\int_{\Omega\setminus B(z,\zeta)}\left| \Delta d^{2-m}+(m-2)d^{1-m}\iota\right| dv+(m-2)\int_{\Omega\setminus B(z,\zeta)}\left| d^{1-m}\iota\right| dv\nonumber\\
			&\leq\int_{B(z,2A)\setminus B(z,\zeta)}\left\{ \Delta d^{2-m}+(m-2)d^{1-m}\iota\right\} dv +(m-2)\int_{B(z,2A)\setminus B(z,\zeta)} \left\{ d^{1-m}\iota \right\} dv\nonumber\\
			&=\int_{B(z,2A)\setminus B(z,\zeta)} \left\{ \Delta d^{2-m}\iota \right\} dv +2(m-2)\int_{B(z,2A)\setminus B(z,\zeta)} \left\{ d^{1-m}\iota \right\} dv. 
		\end{align*}
		On the other hand,  Green's formula implies that
		\begin{equation}
			\int_{B(z,2A)\setminus B(z,\zeta)} \left\{ \Delta d^{2-m} \right\} dv=(m-2)\left\lbrace \left| \partial B(z,2A)\right|(2A)^{1-m}-\left| \partial B(z,\zeta)\right|\zeta^{1-m}\right\rbrace. 
		\end{equation}
		From Lemma \ref{l2.6}, we have 
		\begin{equation}\label{3.41}
			\int_{B(z,2A)\setminus B(z,\zeta)} \left\{ \Delta d^{2-m}  \right\} dv \leq \psi(\delta|m,p,A). 
		\end{equation}
		
		By H\"{o}lder's inequality and Lemma {\ref{l2.3}}, it follows that
		\begin{align}
			& \int_{B(z,2A)\setminus B(z,\zeta)} \left\{ d^{1-m}\iota \right\} dv\nonumber\\
			\leq & \left( \int_{B(z,2A)\setminus B(z,\zeta)}d^{\frac{2p(1-m)}{2p-1}} dv\right)^{\frac{2p-1}{2p}}\left( \int_{B(z,2A)\setminus B(z,\zeta)}\iota^{2p} dv\right)^{\frac{1}{2p}}\nonumber\\
			\leq & \left( \int_{\zeta}^{2A}\left( \rho^{\frac{2p(1-m)}{2p-1}}\left| \partial B(z_k,\rho)\right| \right)d\rho \right)^{\frac{2p-1}{2p}}\left(\frac{(m-1)(2p-1)}{2p-m} \left| \left|Rc_-\right| \right|_{p,B(z,2A)} \right)^{\frac{1}{2}}\nonumber\\
			\leq & C \left| \left|Rc_- \right| \right|_{p,B(z,2A)}^{\frac{1}{2}}\left(\int_{\zeta}^{2A}\left( \rho^{\frac{1-m}{2p-1}}\frac{\left| \partial B(z,\rho)\right|}{\rho^{m-1}} \right)d\rho \right)^{\frac{2p-1}{2p}}. \label{3.42}
		\end{align}
		In view of the curvature condition (\ref{eqn:OF29_5a}), we can apply Lemma {\ref{l2.6}} to obtain that 
		\begin{align}
		\frac{\left| \partial B(z,\rho)\right|}{\rho^{m-1}}\leq C   \label{eqn:OH14_17}
		\end{align}
		for all $\rho\leq2A$.  Thus the combination of (\ref{eqn:OH14_17}) and (\ref{3.42}) implies that 
		\begin{equation}
		\label{3.43}
			\int_{B(z,2A)\setminus B(z,\zeta)} \left\{ d^{1-m}\iota \right\} dv \leq C  \left| \left|Rc_- \right| \right|_{p,B(z,2A)}^{\frac{1}{2}}\left(\int_{\zeta}^{2A}\left( \rho^{\frac{1-m}{2p-1}}\right)d\rho \right)^{\frac{2p-1}{2p}}.
		\end{equation}
		Since $-1<\frac{1-m}{2p-1}<0$ as $p>\frac{m}{2}$,  it is clear that $\left(\int_{0}^{2A} \rho^{\frac{1-m}{2p-1}}d\rho \right)^{\frac{2p-1}{2p}}<+\infty$.
		Thus (\ref{3.43}) can be written as
		\begin{align}
		   \int_{B(z,2A)\setminus B(z,\zeta)}  \left\{ d^{1-m}\iota  \right\} dv < \psi(\delta|m,p,A).   \label{eqn:OH14_18}
		\end{align}
		Therefore, (\ref{eqn:OH14_16}) follows from the combination of (\ref{3.41}) and (\ref{eqn:OH14_18}). \\		
		
		\noindent
		\textit{Step 5.   Fix $\zeta$ very small. Suppose	 $B(z, 2\zeta) \subset \Omega' \subset \Omega$ and  $\partial \Omega'$ is smooth.  Let $d(\cdot)=d(\cdot, z)$.
		Then we have 
		\begin{align}
		    \left|   \int_{\Omega' \backslash B(z, \zeta)}  \left\{ d^{2-m} \Delta \varphi \right\} dv +\int_{\partial  \Omega'}  \left\{ d^{2-m} \nabla \varphi - \varphi \nabla d^{2-m} \right\} \cdot \vec{n} d\sigma + (m-2)m \varphi(z) \right| < C \zeta^{\alpha} +\psi, 
		    \label{eqn:OH14_22}
		\end{align}
		where $\psi=\psi(\delta|m,p,A)$. 
		}
		
		Applying integration by parts again, we have
		\begin{align*}
		 &\quad \int_{\Omega' \backslash B(z, \zeta)}  \left\{ d^{2-m} \Delta \varphi - \varphi \Delta d^{2-m} \right\} dv\\ 
		 &=-\int_{\partial  \Omega'}  \left\{ d^{2-m} \nabla \varphi - \varphi \nabla d^{2-m} \right\} \cdot \vec{n} d\sigma +\int_{\partial B(z, \zeta)}  \left\{ d^{2-m} \nabla \varphi - \varphi \nabla d^{2-m} \right\} \cdot \vec{n} d\sigma, 
		\end{align*}
		which is equivalent to
		\begin{align}
		 &\quad \int_{\Omega' \backslash B(z, \zeta)}  d^{2-m} \Delta \varphi  dv +\int_{\partial  \Omega'}  \left\{ d^{2-m} \nabla \varphi - \varphi \nabla d^{2-m} \right\} \cdot \vec{n} d\sigma \notag\\
		 &=\underbrace{\int_{\Omega' \backslash B(z, \zeta)}   \varphi \Delta d^{2-m} dv}_{I} + \underbrace{\int_{\partial B(z, \zeta)}  \left\{ d^{2-m} \nabla \varphi - \varphi \nabla d^{2-m} \right\} \cdot \vec{n} d\sigma}_{II}.  
		 \label{eqn:OH14_23}
		\end{align}
		We shall estimate the right hand side of (\ref{eqn:OH14_23}) term by term. 
		Firstly, it follows from (\ref{eqn:OH14_4}) in Step 1 and  (\ref{eqn:OH14_16}) in Step 4 to obtain  that
		\begin{align}
		|I|= \left| \int_{\Omega' \backslash B(z, \zeta)}   \varphi \Delta d^{2-m} dv \right| \leq C \int_{\Omega' \backslash B(z, \zeta)}   |\Delta d^{2-m}| dv \leq \psi(\delta|m,p,A).   \label{eqn:OH14_30} 
		\end{align}
		We move on to estimate $II$ in (\ref{eqn:OH14_23}).	
		Note that
		\begin{align}
		 \int_{\partial B(z, \zeta)}  \left\{ d^{2-m} \nabla \varphi - \varphi \nabla d^{2-m} \right\} \cdot \vec{n} d\sigma
		 =\int_{\partial B(z, \zeta)}  \left\{ d^{2-m} \nabla \varphi (y)- \varphi(y) \nabla d^{2-m} \right\} \cdot \vec{n} d\sigma_{y}.
		\label{eqn:OH14_24} 
		\end{align}
		As $B(z, 2\zeta) \subset \Omega' \subset \Omega$,  we know that 
		\begin{align}
		 &\quad \left|   \int_{\partial B(z, \zeta)}  d^{2-m}  \langle \nabla \varphi (y),  \vec{n} \rangle d\sigma_{y} \right|  \leq  \int_{\partial B(z, \zeta)}  d^{2-m}  |\nabla \varphi| (y) d\sigma_{y} \notag\\
		 &\leq C \zeta^{\alpha-1} \left| \int_{\partial B(z, \zeta)}  d^{2-m} d\sigma_{y} \right| \leq C \zeta^{\alpha-1} \cdot C \zeta \leq C \zeta^{\alpha}. 
		 \label{eqn:OH14_25}
		\end{align}
		On the other hand, we have
		\begin{align}
		&\quad \left| \int_{\partial B(z, \zeta)}  \varphi(y)  \langle \nabla d^{2-m},  \vec{n} \rangle d\sigma_{y}-\int_{\partial B(z, \zeta)}  \varphi(z)  \langle \nabla d^{2-m},  \vec{n} \rangle d\sigma_{y} \right| \notag\\
		&=\left| \int_{\partial B(z, \zeta)}  (\varphi(y)-\varphi(z))  \langle \nabla d^{2-m},  \vec{n} \rangle d\sigma_{y}\right| \leq \int_{\partial B(z, \zeta)}  |\varphi(y)-\varphi(z)| \cdot \left| \langle \nabla d^{2-m},  \vec{n} \rangle \right| d\sigma_{y} \notag\\
		&\leq C \zeta^{\alpha} \cdot \zeta^{1-m} \cdot |\partial B(z, \zeta)| \leq C\zeta^{\alpha}.   \label{eqn:OH14_26}
		\end{align}
		Direct calculation and the volume ratio estimate imply that
		\begin{align}
		 &\quad \left|-m(m-2)\omega_m \varphi(z) + \int_{\partial B(z, \zeta)}  \varphi(z)  \langle \nabla d^{2-m},  \vec{n} \rangle d\sigma_{y} \right| \notag\\
		 &=(m-2)\varphi(z) \left| -m \omega_m + \zeta^{1-m}  |\partial B(z, \zeta)| \right| \leq \psi \cdot \varphi(z),  \label{eqn:OH14_27}
		\end{align}
		where $\psi=\psi(\delta|m,p,A)$. 
		Combining (\ref{eqn:OH14_27}) and (\ref{eqn:OH14_26}), we obtain 
		\begin{align}
		  \left| \int_{\partial B(z, \zeta)}  \varphi(y)  \langle \nabla d^{2-m},  \vec{n} \rangle d\sigma_{y}-m(m-2)\omega_m \varphi(z)  \right| < C \zeta^{\alpha} + \psi(\delta|m,p,A). 
		  \label{eqn:OH14_28}
		\end{align}
		Plugging (\ref{eqn:OH14_28}) and (\ref{eqn:OH14_25}) into (\ref{eqn:OH14_24}) and noting that $\zeta \in (0, 0.1)$,  we obtain
		\begin{align}
		  \left| II + m(m-2) \varphi(z) \right|=\left| \int_{\partial B(z, \zeta)}  \left\{ d^{2-m} \nabla \varphi - \varphi \nabla d^{2-m} \right\} \cdot \vec{n} d\sigma \right| < C \zeta^{\alpha} + \psi(\delta|m,p,A). 
		  \label{eqn:OH14_29}
		\end{align}
		Putting (\ref{eqn:OH14_30}) and (\ref{eqn:OH14_29}) into (\ref{eqn:OH14_23}), we obtain (\ref{eqn:OH14_22}).\\

		\noindent
		\textit{Step 6.   Fix $\zeta \in (0, 0.1)$. Suppose	 $B(z^{*}, 2\zeta) \subset \Omega^{c} \cap B(x_0, \zeta^{-1})$, $\Omega' \subset \Omega$ and  $\partial \Omega'$ is smooth.  Let $d(\cdot)=d(\cdot, z^*)$.
		Then we have 
		\begin{align}
		    \left|   \int_{\Omega'} \left\{ d^{2-m} \Delta \varphi \right\} dv +\int_{\partial  \Omega'}  \left\{ d^{2-m} \nabla \varphi - \varphi \nabla d^{2-m} \right\} \cdot \vec{n} d\sigma \right| < \psi(\delta|m,p).  
		    \label{eqn:OH17_1}
		\end{align}
		}
		
		As $z^*$ is outside of $\Omega'$, integration by parts implies that 
		\begin{align*}
		    \int_{\Omega'} d^{2-m} \Delta \varphi dv +\int_{\partial  \Omega'}  \left\{ d^{2-m} \nabla \varphi - \varphi \nabla d^{2-m} \right\} \cdot \vec{n} d\sigma
		   =\int_{\Omega'}  \varphi \Delta d^{2-m} dv.
		\end{align*}
		In light of (\ref{eqn:OH14_4})  and  (\ref{eqn:OH14_16}), it is clear that (\ref{eqn:OH17_1}) follows directly from the above inequality. \\		
		
		\noindent
		\textit{Step 7.   Fix $\zeta \in (0, 0.1)$. Suppose	 $B(z^{*}, 2\zeta) \subset \Omega^{c} \cap B(x_0, \zeta^{-1})$, $B(z, 2\zeta) \subset \Omega' \subset \Omega$ and  $\partial \Omega'$ is smooth.  Let 
		\begin{align}
		    G(\cdot) \coloneqq  \frac{1}{m(m-2) \omega_m} \left\{ d^{2-m}(z, \cdot) - L d^{2-m}(z^*, \cdot) \right\}.        \label{eqn:OH17_2}
		\end{align}
		Then we have 
		\begin{align}
		    \left|   \int_{\Omega' \backslash B(z,\zeta)}  G \Delta \varphi dv +\int_{\partial  \Omega'}  \left\{ G \nabla \varphi - \varphi \nabla G \right\} \cdot \vec{n} d\sigma + \varphi(z) \right| 
		    < C\zeta^{\alpha} + \psi(\delta|m,p,A).  
		    \label{eqn:OH17_1a}
		\end{align}
		}
		
		It is clear that (\ref{eqn:OH17_1a}) follows from the combination of (\ref{eqn:OH14_22}) and (\ref{eqn:OH17_1}). \\

		\noindent
		\textit{Step 8.  	As $\delta \to 0$,  the limit space $(\R^m, x_{\infty}, g_{E})$ admits a limit function $\varphi_{\infty}$, which is supported on $B(x_{\infty}, A)$ and satisfies the normalization condition.
		Furthermore, $\varphi_{\infty}$ satisfies the integration equation 
		\begin{align}
			\varphi_\infty(z)=\int_{B(x_\infty,A)}G_{\infty}(z,y)\left( \frac{m+m\log\sqrt{4\pi}+\boldsymbol{\bar{\mu}}_\infty}{4}+\frac{\log\varphi_\infty(y)}{2}\right)\varphi_\infty dv_{y} 
		\label{eqn:OH18_6}
		\end{align}
		for any $z\in B(x_\infty,A)$ where $G_{\infty}$ is the Green function of the Euclidean ball $B(x_\infty,A)$:
		\begin{align}
		G_{\infty}(z,y)=\frac{1}{(m-2)m\omega_m}\left( d^{2-m}(z,y)-A^{m-2}d^{2-m}(x_\infty,z)d^{2-m}(z^*,y)\right)
		\label{eqn:OH18_7}
		\end{align}
		whenever $z\neq y$ and $z^*$ is the symmetric point of $z$ with respect to $\partial B(x_\infty,A)$.  
		}

		By continuity, it suffices to show (\ref{eqn:OH18_6}) for all $z\in B(x_\infty,A)\setminus\left\lbrace x_\infty\right\rbrace$. Fix an arbitrary $z\in B(x_\infty,A)\setminus\left\lbrace x_\infty\right\rbrace$ and suppose $z_k\in B(x_k, A)$ 
		and $z_k\longrightarrow z,z_k^*\longrightarrow z^*$ where $z_k^*\in M_k$.
		Define
		\begin{align}
		  &L_{k} \coloneqq A^{m-2} d^{2-m}(x_k, z_k),  \label{eqn:OH17_4}\\
		  &G_{k}  \coloneqq \frac{1}{m(m-2) \omega_m} \left\{ d^{2-m}(z, \cdot) - L_{k} d^{2-m}(z^*, \cdot) \right\}.     \label{eqn:OH17_5}
		\end{align}
		Choose $\Omega_k'$ such that
		\begin{align}
		   &B(x_k, A-\zeta) \subset \Omega_k' \subset B(x_k, A), \label{eqn:OH17_6}\\
		   &\left| \partial B(x_k, A-\zeta) \right| \leq C A^{m-1},   \label{eqn:OH17_7}
		\end{align}
		and $\partial \Omega_k'$ is smooth. 
		It follows from (\ref{eqn:OH17_1a}) that
		\begin{align}
		    &\quad \left|   \int_{\Omega_{k}' \backslash B(z_{k},\zeta)} G_{k} \Delta \varphi_{k} dv +\int_{\partial  \Omega_{k}'}  \left\{ G_{k} \nabla \varphi_{k} - \varphi_{k} \nabla G_{k} \right\} \cdot \vec{n} d\sigma + \varphi_{k}(z_{k}) \right| 
		    < C\zeta^{\alpha},  
		    \label{eqn:OH17_3}
		\end{align}
		whenever $k$ is sufficiently large. 
		
	        Since $z^*$ is the symmetric point of $z \in B(x_{\infty}, A) \backslash x_{\infty}$, it is clear that $G_{\infty}(z, \cdot) \equiv 0$ on $\partial B(x_{\infty}, A)$. 
	        By Lipschitz convergence of $G_k$ to $G_{\infty}$ and the condition (\ref{eqn:OH17_6}), we have
	        \begin{align}
	        &\quad \left| \int_{\partial  \Omega_{k}'}  \left\{ G_{k} \nabla \varphi_{k} - \varphi_{k} \nabla G_{k} \right\} \cdot \vec{n} d\sigma\right| \notag\\
	        &\leq   \int_{\partial  \Omega_{k}'}   G_{k} |\nabla \varphi_{k}| + \varphi_{k} |\nabla G_{k}|  d\sigma
	           \leq   \int_{\partial \Omega_{k}'}   \left\{ C' \zeta \cdot C \zeta^{\alpha-1} + C \zeta^{\alpha} \cdot C' \right\} d\sigma 
	          \leq C' \zeta^{\alpha},  \label{eqn:OH17_8}
	        \end{align}
	        where $C'=C'(\zeta, d(z, x_{\infty}), m,p)$. 
	        Plugging (\ref{eqn:OH17_8}) into (\ref{eqn:OH17_3}) implies that
	        \begin{align}
		    \left|   \int_{\Omega_{k}' \backslash B(z_{k},\zeta)} G_{k} \Delta \varphi_{k} dv+ \varphi_{k}(z_{k}) \right| < C' \zeta^{\alpha}.  
		    \label{eqn:OH17_9}
		\end{align}
		Plugging the proper forms of (\ref{eqn:OH14_9}) and (\ref{eqn:OH14_10}) into (\ref{eqn:OH17_9}) implies that
		 \begin{align}
		    \left|  -\int_{\Omega_{k}' \backslash B(z_{k},\zeta)} G_{k} \left\{ \frac{1}{2} \varphi_{k} \log \varphi_{k} +\boldsymbol{\bar{\lambda}}_{k} \varphi_{k} \right\}  dv+ \varphi_{k}(z_{k}) \right| < C' \zeta^{\alpha}. 
		    \label{eqn:OH17_11}
		\end{align}
		In light of (\ref{eqn:OH14_9}) and (\ref{eqn:OH14_4}), we know that $\boldsymbol{\bar{\lambda}}_{k}$ is uniformly bounded. 
		Due to (\ref{eqn:OH14_4}), the term $\varphi_{k} \log \varphi_{k}$ is uniformly bounded.  
		Therefore, it follows from (\ref{eqn:OH17_11}) that
		\begin{align*}
		 &\quad \left|  -\int_{\Omega_{k}} G_{k} \left\{ \frac{1}{2} \varphi_{k} \log \varphi_{k} +\boldsymbol{\bar{\lambda}}_{k} \varphi_{k} \right\}  dv+ \varphi_{k}(z_{k}) \right| \\
		 &<\left| \int_{B(z_{k},\zeta)} G_{k} \left\{ \frac{1}{2} \varphi_{k} \log \varphi_{k} +\boldsymbol{\bar{\lambda}}_{k} \varphi_{k} \right\}  dv \right| 
		    +\left| \int_{\Omega_{k} \backslash \Omega_{k}'} G_{k} \left\{ \frac{1}{2} \varphi_{k} \log \varphi_{k} +\boldsymbol{\bar{\lambda}}_{k} \varphi_{k} \right\}  dv \right| + C' \zeta^{\alpha}\\
		 &< C \left\{  \int_{B(z_k, \zeta)} G_{k} dv  + \int_{\Omega_{k} \backslash \Omega_{k}'} G_{k} dv \right\} + C' \zeta^{\alpha}\\
		 &< C \int_{B(z_k, \zeta)} \left\{d^{2-m}(\cdot, z_k) + C' \right\} dv + C'|\Omega_{k} \backslash \Omega_{k}'| + C' \zeta^{\alpha} < \left\{C \zeta^{2}+ C'\zeta^{m} \right\} + C' \zeta + C'\zeta^{\alpha}, 
		\end{align*}
		where $C'=C'(\zeta, d(z, x_{\infty}), m,p)$.  Note that we have used the volume comparison in the last step.  Since $\zeta \in (0, 0.1)$, we can combine the last terms and arrive
		\begin{align*}
		  \left|  -\int_{\Omega_{k}} G_{k} \left\{ \frac{1}{2} \varphi_{k} \log \varphi_{k} +\boldsymbol{\bar{\lambda}}_{k} \varphi_{k} \right\}  dv+ \varphi_{k}(z_{k}) \right|
		  <C' \zeta^{\alpha}. 
		\end{align*}
		By volume convergence, the gradient estimate and the boundary $C^{\alpha}$-estimate of $\varphi_{k}$, we can take limit of the above inequality and obtain
		\begin{align*}
		  \left|  -\int_{B(x_{\infty}, A)} G_{\infty} \left\{ \frac{1}{2} \varphi_{\infty} \log \varphi_{\infty} +\boldsymbol{\bar{\lambda}}_{\infty} \varphi_{\infty} \right\}  dv+ \varphi_{\infty}(z) \right| < C' \zeta^{\alpha}.
		\end{align*}
		Putting the corresponding formula of $G_{\infty}$ (cf. (\ref{eqn:OH17_4}) and (\ref{eqn:OH17_5})) into the above inequality, and letting $\zeta \to 0$, we obtain (\ref{eqn:OH18_6}). \\

		\noindent
		\textit{Step 9.  	The limit function $\varphi_{\infty}$ is a strictly positive smooth function on $B(x_{\infty}, A)$ and satisfies
		\begin{align}
			-\Delta \varphi_{\infty} -\frac{1}{2} \varphi_{\infty} \log \varphi_{\infty} =\boldsymbol{\bar{\lambda}}_{\infty} \varphi_{\infty},    \label{eqn:OH18_5}
		\end{align}
		where
		\begin{align}
		  \boldsymbol{\bar{\lambda}}_{\infty}=\frac{m\left(1+\log\sqrt{4\pi}\right) + \boldsymbol{\bar{\mu}}_{\infty}}{4}.  \label{eqn:OH18_4}
		\end{align}
		Furthermore, we have
		\begin{align}
		&\int_{B(x_{\infty}, A)}  \varphi_{\infty}^2 dv=1, \label{eqn:OH18_11}\\
		&\int_{B(x_{\infty}, A)} |\nabla \varphi_{\infty}|^2 dv=\int_{B(x_{\infty}, A)}  \varphi_{\infty} \left\{ \frac{1}{2} \varphi_{\infty} \log \varphi_{\infty}+\boldsymbol{\bar{\lambda}}_\infty \varphi_{\infty} \right\} dv. 
		     \label{eqn:OH17_13}
		\end{align}
		}
		
		It is clear that (\ref{eqn:OH18_5}) and (\ref{eqn:OH18_4}) follow from (\ref{eqn:OH18_6}) directly. 
		By standard regularity theory of elliptic PDE, we know that $\varphi_{\infty}$ is a smooth function on $B(x_{\infty}, A)$.
		Consequently, the estimates (\ref{eqn:OH14_4}), (\ref{eqn:OH18_1}), (\ref{eqn:OH14_5}) and (\ref{eqn:OH14_14}) also hold true by $\varphi_{\infty}$.  In short, we have the estimates
		\begin{align}
		&\varphi_{\infty} \leq  C \min\{ 1, d^{\alpha}(z, \partial B(x_{\infty}, A))\}, \label{eqn:OH18_8} \\
		&|\nabla \varphi_{\infty}|(z) \leq C \{ 1+ d^{\alpha-1}(z, \partial B(x_{\infty}, A))\}.   \label{eqn:OH18_9}
		\end{align}
		As a limit of nonnegative functions $\varphi_i$, it is clear that $\varphi_{\infty} \geq 0$. 
		Since $\varphi_{\infty}$ solves (\ref{eqn:OH18_5}), its zero set is open (cf. the lemma on page 114 of~\cite{Ro}).
		Therefore, either $\varphi_{\infty} \equiv 0$, or $\varphi_{\infty}>0$ everywhere on $B(x_{\infty}, A)$. 	
		However, together with volume comparison and volume convergence, the estimates (\ref{eqn:OH18_8}) and (\ref{eqn:OH18_9}) guarantee that we can take limit of the normalization condition (\ref{eqn:OH11_1}) to obtain (\ref{eqn:OH18_11}). 
		In particular, $\varphi_{\infty}$ is not the zero function.  Therefore, $\varphi_{\infty}>0$ everywhere on $B(x_{\infty}, A)$. 
		
		We move on to show (\ref{eqn:OH17_13}).		 
		Fix an arbitrary $\epsilon>0$. By Sard's theorem, we can find an $s \in (0, \epsilon)$ such that the level set $\{x|\varphi_{\infty}(x)=s\}$ is a smooth hyper-surface.  Then standard integration by parts implies
		\begin{align*}
		    \int_{\{x| \varphi_{\infty}>s\}} |\nabla \varphi_{\infty}|^2 dv&= -\int_{\{x| \varphi_{\infty}>s\}}  \varphi_{\infty} \Delta \varphi_{\infty} dv + \int_{\{x| \varphi_{\infty}=s\}} \varphi_{\infty} \langle \nabla \varphi_{\infty}, \vec{n} \rangle d\sigma\\
		      &=-\int_{\{x| \varphi_{\infty}>s\}}  \varphi_{\infty} \Delta \varphi_{\infty} dv + s \int_{\{x| \varphi_{\infty}=s\}} \langle \nabla \varphi_{\infty}, \vec{n} \rangle d\sigma\\
		      &=-\int_{\{x| \varphi_{\infty}>s\}}  \varphi_{\infty} \Delta \varphi_{\infty} dv + s \int_{\{x| \varphi_{\infty}>s\}} \Delta \varphi_{\infty}  d\sigma. 
		\end{align*}
		Plugging (\ref{eqn:OH18_5}) into the above equations, we obtain
		\begin{align*}
		  &\quad  \int_{\{x| \varphi_{\infty}>s\}} |\nabla \varphi_{\infty}|^2 dv\\
		  &=\int_{\{x| \varphi_{\infty}>s\}}  \varphi_{\infty} \left\{ \frac{1}{2} \varphi_{\infty} \log \varphi_{\infty}+\boldsymbol{\bar{\lambda}}_\infty \varphi_{\infty} \right\} dv
		    -s \int_{\{x| \varphi_{\infty}>s\}} \left\{ \frac{1}{2} \varphi_{\infty} \log \varphi_{\infty}+\boldsymbol{\bar{\lambda}}_\infty \varphi_{\infty} \right\} d\sigma, 
		\end{align*}
		which implies that 
		\begin{align*}
		&\quad \left|  \int_{\{x| \varphi_{\infty}>s\}} |\nabla \varphi_{\infty}|^2 dv - \int_{\{x| \varphi_{\infty}>s\}}  \varphi_{\infty} \left\{ \frac{1}{2} \varphi_{\infty} \log \varphi_{\infty}+\boldsymbol{\bar{\lambda}}_\infty \varphi_{\infty} \right\} dv\right|\\
		&=s \left| \int_{\{x| \varphi_{\infty}>s\}} \left\{ \frac{1}{2} \varphi_{\infty} \log \varphi_{\infty}+\boldsymbol{\bar{\lambda}}_\infty \varphi_{\infty} \right\} d\sigma \right| \leq C s. 
		\end{align*}
		Note that  $B(x_{\infty}, A)=\{x|\varphi_{\infty}(x)>0\}$ by the strict positivity of $\varphi_{\infty}$ on $B(x_{\infty}, A)$. 
		Since $0<s<\epsilon$, we obtain (\ref{eqn:OH17_13}) by letting $\epsilon \to 0$  in the above inequality. \\

		\noindent
		\textit{Step 10.  	Derive the desired contradiction. 
		}
		
		Note that both $ \left\|\varphi_\infty\right\|_{2, B(x_\infty,A)}$ and $ \left\| \nabla\varphi_\infty\right\|_{2, B(x_\infty,A)}$ are finite by  (\ref{eqn:OH17_13}) and (\ref{eqn:OH18_8}). Thus we have $\varphi_{\infty} \in W^{1,2}(B(x_{\infty}, A))$. 
		In view of estimate (\ref{eqn:OH18_8}),  by setting $\varphi_{\infty}|_{\partial B(x_{\infty}, A)} \equiv 0$,  it is clear that $\varphi_{\infty}$ is continuous up to boundary $\partial B(x_{\infty}, A)$, which is clearly smooth.    
		Applying Theorem 2 of section 5.5 of~\cite{Evans},  we have $\varphi_\infty\in W^{1,2}_0(\R^m)$ by setting  $\varphi_{\infty}|_{\R^{m} \backslash B(x_{\infty}, A)} \equiv 0$.
		It follows from (\ref{eqn:OH17_13}), (\ref{eqn:OH18_11}), (\ref{eqn:OH18_4}) and (\ref{eqn:OH14_3}) that
		\begin{align}
		\int_{\R^m}\left\{ 4\left|\nabla\varphi_\infty \right|^2-2\varphi^2_\infty\log\varphi_\infty-m\left(1+\log\sqrt{4\pi} \right)\varphi^2_\infty \right\} dv=\boldsymbol{\bar{\mu}}_\infty<\frac{-\eta}{2}<0, 
		\end{align}
		which contradicts the Logarithm Sobolev inequality of Euclidean space (cf.~\cite{Gro}), which says the left hand side of the above equation is non-negative. This contradiction establishes the proof of (\ref{eqn:OH14_1}).
	\end{proof}
	
	Next we want to prove the following inequality
	\begin{align}\label{3.55}
		\inf\limits_{\tau\in\left[\frac{1}{2},2\right] }\boldsymbol{\mu}\left( B(x_0,A),g,\tau\right)\geq-\eta.
	\end{align}
	It suffices to show the following lemma:

	\begin{lemma}
	\label{l3.10}
		Suppose $p>\frac{m}{2}$, $C_S$ is the $L^2$-Sobolev constant of $\Omega \subset M$. 
		
		 If $\epsilon_0 \coloneqq\left\{ \int_{\Omega}R_-^p dv \right\}^{\frac{1}{p}} \leq \frac{1}{10C_S}$, 	then for any $\tau \in\left[\frac{1}{2},2\right]$ and $\sigma=\tau e^{-C_S \epsilon_0}$, we have 
		\begin{align}
		\boldsymbol{\mu}(\Omega,g,\tau)-\boldsymbol{\bar{\mu}}(\Omega,g,\sigma)\geq -C(m,C_s)\epsilon_0. 
		\label{eqn:OF30_10}
		\end{align}
	\end{lemma}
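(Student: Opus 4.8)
\emph{Proof proposal.} The plan is to prove a pointwise comparison between the two local entropy functionals of Definition~\ref{d3.1} and then pass to the infimum over test functions. Concretely, I would show that for every $\varphi\in W^{1,2}_0(\Omega)$ with $\varphi\geq0$ and $\int_\Omega\varphi^2\,dv=1$,
\[
\boldsymbol{\mathcal{W}}(\Omega,g,\varphi,\tau)\;\geq\;\boldsymbol{\overline{\mathcal{W}}}(\Omega,g,\varphi,\sigma)-C(m,C_S)\epsilon_0 .
\]
Subtracting the definitions (\ref{eqn:OF28_9}) and (\ref{eqn:OF28_10}), the entropy terms $-2\varphi^2\log\varphi$ cancel, and since $\sigma=\tau e^{-C_S\epsilon_0}$ one gets
\[
\boldsymbol{\mathcal{W}}(\Omega,g,\varphi,\tau)-\boldsymbol{\overline{\mathcal{W}}}(\Omega,g,\varphi,\sigma)=-\tfrac{m}{2}C_S\epsilon_0+\tau\int_\Omega R\,\varphi^2\,dv+4(\tau-\sigma)\int_\Omega|\nabla\varphi|^2\,dv .
\]
Here $\tau-\sigma=\tau(1-e^{-C_S\epsilon_0})\geq0$, so the last term is a nonnegative multiple of the Dirichlet energy; the strategy is to spend it to absorb the negative part of the scalar curvature term, leaving only an error of order $\epsilon_0$.

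The next step is to estimate $\tau\int_\Omega R\varphi^2\,dv\geq -2\int_\Omega R_-\varphi^2\,dv$ (using $R\geq -R_-$ and $\tau\leq2$), and then by H\"older, $\int_\Omega R_-\varphi^2\,dv\leq\epsilon_0\,\|\varphi\|_{L^{2p/(p-1)}(\Omega)}^2$. The assumption $p>\tfrac{m}{2}$ is precisely what guarantees $2\leq\tfrac{2p}{p-1}\leq\tfrac{2m}{m-2}$, so interpolation between $L^2$ and $L^{2m/(m-2)}$ gives $\|\varphi\|_{L^{2p/(p-1)}}\leq\|\varphi\|_{L^2}^{1-\theta}\|\varphi\|_{L^{2m/(m-2)}}^{\theta}$ with $\theta=\tfrac{m}{2p}\in(0,1)$; combining with the Sobolev inequality $\|\varphi\|_{L^{2m/(m-2)}(\Omega)}\leq C_S\|\nabla\varphi\|_{L^2(\Omega)}$ and the normalization $\|\varphi\|_{L^2}=1$ yields
\[
\int_\Omega R_-\varphi^2\,dv\;\leq\;\epsilon_0\,C_S^{m/p}\,\|\nabla\varphi\|_{L^2(\Omega)}^{m/p}.
\]

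Finally, I would apply Young's inequality to $(\|\nabla\varphi\|_{L^2}^2)^{m/2p}$ with the conjugate exponents $\tfrac{2p}{m}$ and $\tfrac{2p}{2p-m}$ (both finite precisely because $p>\tfrac m2$), bounding it by $s\,\|\nabla\varphi\|_{L^2}^2+C(m,p,s)$ for a free parameter $s>0$. The hypothesis $\epsilon_0\leq\tfrac1{10C_S}$ gives $1-e^{-C_S\epsilon_0}\geq\tfrac{19}{20}C_S\epsilon_0$, hence $4(\tau-\sigma)\geq\tfrac{19}{10}C_S\epsilon_0$ (using $\tau\geq\tfrac12$). Choosing $s$ comparable to $C_S^{1-m/p}$ --- a constant, \emph{not} depending on $\epsilon_0$ --- makes the gradient contributions cancel against $4(\tau-\sigma)\int_\Omega|\nabla\varphi|^2\,dv$, while the leftover constant from Young, multiplied by $\epsilon_0 C_S^{m/p}$, is bounded by $C(m,p,C_S)\epsilon_0$ since $\epsilon_0\leq\tfrac1{10C_S}$. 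Adding the $-\tfrac m2 C_S\epsilon_0$ coming from the logarithm establishes the pointwise comparison, and taking the infimum over $\varphi$ of both sides gives $\boldsymbol{\mu}(\Omega,g,\tau)\geq\boldsymbol{\bar{\mu}}(\Omega,g,\sigma)-C(m,C_S)\epsilon_0$, i.e.\ (\ref{eqn:OF30_10}).

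I do not expect a serious obstacle here: the argument is a routine chain H\"older $\to$ interpolation $\to$ Sobolev $\to$ Young. The only point deserving care is that the gain $\tau-\sigma\sim C_S\epsilon_0$ is linear in $\epsilon_0$ and so is the H\"older bound on $\int_\Omega R_-\varphi^2\,dv$, which is exactly what allows the Young parameter $s$ to be chosen independently of $\epsilon_0$; and that $p>\tfrac m2$ (not merely $p\geq\tfrac m2$) is needed so that $\theta<1$ and the Young exponents are finite, consistent with the optimality of this exponent discussed in the introduction.
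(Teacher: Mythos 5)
Your proposal is correct and follows essentially the same route as the paper's proof: compare the two functionals at a common test function (the paper uses a minimizer of $\boldsymbol{\mu}(\Omega,g,\tau)$, which is equivalent to your pointwise-then-infimum argument), pick up $-\frac{m}{2}C_S\epsilon_0$ from the $\log(\tau/\sigma)$ term, bound $\int_\Omega R_-\varphi^2\,dv$ by H\"older with exponent $p$, interpolate $L^{\frac{2p}{p-1}}$ between $L^2$ and $L^{\frac{2m}{m-2}}$, apply the Sobolev inequality, and absorb the resulting gradient contribution into $4(\tau-\sigma)\int_\Omega|\nabla\varphi|^2\,dv\gtrsim C_S\epsilon_0\int_\Omega|\nabla\varphi|^2\,dv$. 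The only (harmless) deviation is your final Young step with a free parameter, which produces a constant $C(m,p,C_S)$ that degenerates as $p\downarrow\frac{m}{2}$, whereas the paper applies the weighted mean inequality with weights $\theta=1-\frac{m}{2p}$ and $1-\theta$ together with the normalization $\int_\Omega\varphi^2\,dv=1$, so the non-gradient remainder is just $\tau\epsilon_0\theta\leq 2\epsilon_0$ and the constant $\frac{m}{2}C_S+2$ is independent of $p$, matching the stated $C(m,C_S)$.
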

	
	\begin{proof}
		Suppose $\varphi$ is a minimizer of $\boldsymbol{\mu}\left( \Omega,g,\tau\right)$. Namely, we have $\boldsymbol{\mu}\left( \Omega,g,\tau\right)=\boldsymbol{\mathcal{W}}(\Omega,g,\varphi,\tau)$ and $\int_{\Omega} \varphi^2 dv=1$. 
		Fix $\sigma>0$, it follows from definition(cf. Definition~\ref{d3.1}) that 		\begin{align}
			&\boldsymbol{\mu}\left( \Omega,g,\tau\right) -\boldsymbol{\bar{\mu}}\left( \Omega,g,\sigma\right)  \notag \\ 
			\geq &\boldsymbol{\mathcal{W}}(\Omega,g,\varphi,\tau)-\boldsymbol{\overline{\mathcal{W}}}(\Omega,g,\varphi,\sigma) \notag\\
			=&-\frac{m}{2}\log\left( \frac{\tau}{\sigma}\right) +\int_{\Omega}\tau R\varphi^2dv+4(\tau-\sigma)\int_{\Omega}|\nabla \varphi|^2dv \notag\\
			\geq &-\frac{m}{2}\log\left( \frac{\tau}{\sigma}\right) -\int_{\Omega}\tau R_-\varphi^2dv
			+4(\tau-\sigma)\int_{\Omega}|\nabla \varphi|^2dv \notag\\
			\geq&-\frac{m}{2}\log\left( \frac{\tau}{\sigma}\right) 
			-\tau \epsilon_0 \left( \int_{\Omega}\varphi^{\frac{2p}{p-1}}dv\right) ^{\frac{p-1}{p}}
			+4(\tau-\sigma)\int_{\Omega}|\nabla \varphi|^2dv,    \label{eqn:OF30_6}
		\end{align}
		where we applied H\"older's inequality and the condition $\epsilon_0=\left\{ \int_{\Omega}R_-^p dv \right\}^{\frac{1}{p}}$ in the last step.

		Since $p>\frac{m}{2}$,  we can choose $\theta=1-\frac{m}{2p} \in (0, 1)$ such that  $\frac{\theta}{1}+\frac{1-\theta}{\frac{m}{m-2}}=\frac{1}{\frac{p}{p-1}}$. 
		Then H\"{o}lder's inequality and Young's inequality yield that
		\begin{align*}
		\left( \int_{\Omega}\varphi ^{\frac{2p}{p-1}}dv\right) ^{\frac{p-1}{p}}
		\leq  \left( \int_{\Omega} \varphi^2 dv\right) ^{\theta}\left( \int_{\Omega}\varphi^{\frac{2m}{m-2}}dv\right) ^{\frac{(1-\theta)(m-2)}{m}} 
		\leq  \theta \int_{\Omega}\varphi^2 dv+(1-\theta)\left( \int_{\Omega}\varphi^{\frac{2m}{m-2}}dv\right) ^{\frac{m-2}{m}}.
		\end{align*}
		Applying the normalization condition and the Sobolev inequality, we arrive at
		\begin{align}
		 \left( \int_{\Omega}\varphi ^{\frac{2p}{p-1}}dv\right) ^{\frac{p-1}{p}} \leq \theta + (1-\theta) C_{S} \int_{\Omega} |\nabla \varphi|^2 dv.  \label{eqn:OF30_7}
		\end{align}
		Putting (\ref{eqn:OF30_7}) into (\ref{eqn:OF30_6}), we obtain		
		\begin{align}
			&\quad \boldsymbol{\mu}(\Omega,g,\tau)-\boldsymbol{\bar{\mu}}(\Omega,g,\sigma) \notag\\
			&\geq -\frac{m}{2}\log\left( \frac{\tau}{\sigma}\right) -\tau \epsilon_0\theta
			+\left\{ 4(\tau-\sigma)-\tau \epsilon_0(1-\theta) C_s)\right\} \int_{\Omega}|\nabla \varphi|^2dv.
			\label{eqn:OF30_5}
		\end{align}
		Plugging the particular value $\sigma=e^{-\epsilon_0 C_S} \tau$ into the above inequality yields that
		\begin{align}
			&\quad \boldsymbol{\mu}(\Omega,g,\tau)-\boldsymbol{\bar{\mu}}(\Omega,g,\sigma) \notag\\
			&\geq -\frac{m}{2}C_{S} \epsilon_0 -\tau \epsilon_0\theta
			+\left\{ 4(1-e^{-\epsilon_0 C_S})-\epsilon_0 C_{S} + \epsilon_0 \theta C_s \right\} \tau \int_{\Omega}|\nabla \varphi|^2dv.
		\label{eqn:OF30_9}	
		\end{align}
		Since $\epsilon_0<\frac{1}{10C_S}$, it is clear that $4(1-e^{-\epsilon_0 C_S})-\epsilon_0 C_{S}>0$.
		Therefore,  the term inside the parenthesis of (\ref{eqn:OF30_9}) is positive. 
		Recall that $\theta=1-\frac{m}{2p} \in (0,1)$ and $\tau \in [\frac{1}{2}, 2]$.  Then (\ref{eqn:OF30_10}) follows directly from (\ref{eqn:OF30_9}). 
		The proof of Lemma~\ref{l3.10} is complete. 
	\end{proof}

	\begin{theorem}
		Same conditions as in Proposition~\ref{prn:OF29_7}. If (\ref{eqn:OF29_5a}) and (\ref{eqn:OF29_5b}) hold,  then we have 
		\begin{align}
		  &\inf\limits_{\tau\in\left[\frac{1}{2},2\right] }\boldsymbol{\mu}\left( B(x_0,A),g,\tau\right)\geq-\eta, \label{3.61} \\
		  &\inf_{t\in(0,1]}\boldsymbol{\mu}\left( B_{g(0)}\left( x_0,A\sqrt{t}\right) ,g(0),t\right) \geq -\eta.  \label{eqn:OF30_1}
		\end{align}
	\label{thm:OF29_8}	
	\end{theorem}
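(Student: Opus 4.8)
The plan is to obtain \eqref{3.61} by combining Proposition~\ref{prn:OF29_7} with the comparison Lemma~\ref{l3.10}, and then to deduce \eqref{eqn:OF30_1} from \eqref{3.61} by a parabolic rescaling.

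For \eqref{3.61}, write $\Omega:=B(x_0,A)$. As in Step~1 of the proof of Proposition~\ref{prn:OF29_7}, the hypotheses \eqref{eqn:OF29_5a}--\eqref{eqn:OF29_5b} together with Lemma~\ref{l2.4} and Lemma~\ref{lma:OH20_1} furnish a uniform Sobolev constant $C_S=C_S(m,p,A)$ on $\Omega$. I would then observe that, since $R_-\le m\,Rc_-$ by \eqref{eqn:OH25_10} and $\Omega\subset B(x_0,\delta^{-1})$ once $\delta<A^{-1}$, the hypothesis \eqref{eqn:OF29_5a} gives
\begin{align*}
\epsilon_0:=\left(\int_\Omega R_-^p\,dv\right)^{\frac1p}\le m\left(\tfrac12\omega_m\right)^{\frac1p}\delta^{\,4-\frac mp},
\end{align*}
which tends to $0$ as $\delta\to0$ because $p>\frac m2$. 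Hence, choosing $\delta=\delta(m,p,\eta,A)$ small enough, I can arrange both $\epsilon_0\le\frac1{10C_S}$ and $C(m,C_S)\epsilon_0<\frac\eta2$ (with $C(m,C_S)$ the constant of Lemma~\ref{l3.10}), and also that Proposition~\ref{prn:OF29_7} applies with the pair $(\eta,A)$, yielding $\boldsymbol{\bar{\mu}}(\Omega,g,\sigma)\ge-\frac\eta2$ for every $\sigma\in[\frac13,3]$. For a given $\tau\in[\frac12,2]$ I would then set $\sigma:=\tau e^{-C_S\epsilon_0}$; since $C_S\epsilon_0\le\frac1{10}$ one checks $\sigma\in[\frac13,3]$, so Lemma~\ref{l3.10} gives $\boldsymbol{\mu}(\Omega,g,\tau)\ge\boldsymbol{\bar{\mu}}(\Omega,g,\sigma)-C(m,C_S)\epsilon_0\ge-\eta$, and \eqref{3.61} follows after taking the infimum over $\tau$.

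For \eqref{eqn:OF30_1} I would fix $t\in(0,1]$, set $\tilde g:=t^{-1}g$, and exploit the scale invariance $\boldsymbol{\mu}(\Omega,g,\tau)=\boldsymbol{\mu}(\Omega,cg,c\tau)$ of the local $\boldsymbol{\mu}$-functional with $c=t^{-1}$, $\tau=t$, together with $B_{\tilde g}(x_0,A)=B_g(x_0,A\sqrt t)$, which reduces the claim to $\boldsymbol{\mu}(B_{\tilde g}(x_0,A),\tilde g,1)\ge-\eta$. The point is that $\tilde g$ again satisfies hypotheses of the form \eqref{eqn:OF29_5a}--\eqref{eqn:OF29_5b} with a parameter $\delta':=C(m,p)\delta$: since $d_{\tilde g}=t^{-1/2}d_g$ and $t\le1$ one has $B_{\tilde g}(x_0,(\delta')^{-1})=B_g(x_0,(\delta')^{-1}\sqrt t)\subset B_g(x_0,\delta^{-1})$, and using $Rc_{-,\tilde g}=t\,Rc_{-,g}$, $dv_{\tilde g}=t^{-m/2}dv_g$ and $t^{\,p-m/2}\le1$ (here again $p>\frac m2$ is used) the curvature integral only decreases; for the volume one needs the \emph{lower} bound $|B_g(x_0,s)|\ge(1-C(m,p)\delta)\omega_m s^m$ for all $s\le\delta^{-1}$, which is precisely inequality \eqref{eqn:OI05_2} extracted from the volume doubling Lemma~\ref{l2.4}. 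Choosing the original $\delta$ small enough that $\delta'=C(m,p)\delta$ still lies below the smallness threshold used in the first part, the argument proving \eqref{3.61} applied to $\tilde g$ gives $\boldsymbol{\mu}(B_{\tilde g}(x_0,A),\tilde g,1)\ge-\eta$; taking the infimum over $t\in(0,1]$ yields \eqref{eqn:OF30_1}.

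The routine part is \eqref{3.61}, which is just a bookkeeping combination of the two cited results. The step I expect to require the most care is the verification that the parabolically rescaled metrics $t^{-1}g$, $t\in(0,1]$, all satisfy the almost-Euclidean hypotheses uniformly: the curvature integral behaves well only because $p>\frac m2$ forces $t^{\,p-m/2}\le1$, and after rescaling one needs a volume-ratio \emph{lower} bound for small balls centred at $x_0$, which is exactly the direction supplied by the Petersen--Wei volume doubling property already used in the proof of Lemma~\ref{l2.5}.
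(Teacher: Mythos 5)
Your proposal is correct and follows essentially the same route as the paper: \eqref{3.61} is obtained exactly as in the paper by combining Proposition~\ref{prn:OF29_7} with Lemma~\ref{l3.10} (choosing $\sigma=\tau e^{-C_S\epsilon_0}\in[\frac13,3]$), and \eqref{eqn:OF30_1} by parabolic rescaling plus the almost monotonicity of $\kappa(p,r)$ and the volume ratio from Lemma~\ref{l2.4}/Lemma~\ref{l2.5}, which is precisely the one-line justification the paper gives. Your verification that the rescaled metrics $t^{-1}g$ satisfy the hypotheses with $\delta'=C(m,p)\delta$ simply spells out in detail what the paper leaves implicit.
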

	
	\begin{proof}
	
	        Note that (\ref{eqn:OF30_1}) follows from (\ref{3.61}), in light of scaling invariant property of $\boldsymbol{\mu}$ and the almost monotonicity of  $\kappa(p, r)$ and the volume ratio (cf. Lemma~\ref{l2.4} and Lemma~\ref{l2.5}). 
	        Therefore, it suffices to prove (\ref{3.61}). 
	
		In view of (\ref{eqn:OF29_5a}) and (\ref{eqn:OF29_5b}), we can choose a sufficiently small $\delta$ such that $B(x_0,A)\subset B(x_0,\delta^{-1})$.
		Now we apply Lemma~\ref{l3.10} for $\Omega=B(x_0,A)$ and $\tau\in[\frac{1}{2},2]$.  By (\ref{eqn:OF30_10}), we can choose a $\sigma \in [\frac{1}{3},3]$ such that
		\begin{equation}\label{3.63}
			\boldsymbol{\mu}\left( B(x_0,A),g,\tau\right)-\boldsymbol{\bar{\mu}}\left( B(x_0,A),g,\sigma\right)\geq\frac{-\eta}{3}.
		\end{equation}
		Since $\sigma \in [\frac{1}{3}, 3]$, it follows from (\ref{3.16}) that 
		\begin{equation}\label{3.64}
			\boldsymbol{\bar{\mu}}(B(x_0,A),g,\sigma)\geq\frac{-\eta}{2}.
		\end{equation}
		Combining (\ref{3.63}) and (\ref{3.64}), we obtain (\ref{3.61}).   The proof of Theorem \ref{thm:OF29_8} is complete. 
	\end{proof}
			
	In light of (\ref{eqn:OF30_1}), we can apply Theorem 1.2 of~\cite{WB2} and derive the following theorem. 
	
	\begin{theorem}
		For each $p>\frac{m}{2}$ and $\epsilon$ small, there exists a number $\delta =\delta (m,p,\epsilon)$ satisfying the following properties. 
		
		Suppose $\{(M^m,g(t)),t\in[0,1]\}$ is a solution of Ricci flow.  Suppose under the metric $g(0)$ it holds that
		\begin{subequations}
			\begin{align}[left = \empheqlbrace \,]
				&\int_{B(x_0,\delta^{-1})}Rc_-^p dv\leq \frac{1}{2} \omega_m \delta^{4p-m},   \label{eqn:OF29_6a} \\
				&\left| B\left(x_0, \delta^{-1}\right) \right| \geq(1-\delta) \omega_m\delta^{-m}.    \label{eqn:OF29_6b}
			\end{align}  
		\label{eqn:OF29_6}	
		\end{subequations}
		\hspace{-2mm}
		Then for each $t\in(0,1]$ and $x \in B_{g(t)} \left(x, \epsilon^{-1} \sqrt{t} \right)$,  we have 
		\begin{subequations}
			\begin{align}[left = \empheqlbrace \,]
				&\ t|Rm|(x,t)\leq \epsilon,  \label{eqn:OH27_15a} \\
				&\inf_{\rho\in\left( 0, \epsilon^{-1}\sqrt{t}\right) }\rho^{-m}vol_{g(t)}(B_{g(t)}(x,\rho))\geq(1-\epsilon)\omega_m,  \label{eqn:OH27_15b}\\
				&t^{-\frac{1}{2}}inj(x,t)\geq \epsilon^{-1}.  \label{eqn:OH27_15c}
			\end{align}  
		\label{eqn:OH27_15}	
		\end{subequations}
				
	\label{thm:OF28_5}	
	\end{theorem}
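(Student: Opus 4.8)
The plan is to deduce the theorem directly from the improved pseudo-locality property, Theorem 1.2 of~\cite{WB2}, using the local $\boldsymbol{\mu}$-functional lower bound already supplied by Theorem~\ref{thm:OF29_8}. Thus the argument is essentially one of parameter matching, with no new analytic input beyond what has been established.

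First I would fix $\epsilon>0$ and record the quantitative form of the pseudo-locality theorem in~\cite{WB2}: there are constants $\eta_0=\eta_0(m,\epsilon)>0$ and $A_0=A_0(m,\epsilon)\geq 3$ such that, for any Ricci flow $\{(M^m,g(t)),\,t\in[0,1]\}$ and any point $y_0\in M$ for which
\begin{align*}
\inf_{t\in(0,1]}\boldsymbol{\mu}\left(B_{g(0)}(y_0,A_0\sqrt{t}),\,g(0),\,t\right)\geq -\eta_0
\end{align*}
holds, one automatically has $t|Rm|(x,t)\leq\epsilon$, $\inf_{\rho\in(0,\epsilon^{-1}\sqrt{t})}\rho^{-m}vol_{g(t)}(B_{g(t)}(x,\rho))\geq(1-\epsilon)\omega_m$, and $t^{-1/2}inj(x,t)\geq\epsilon^{-1}$ for all $t\in(0,1]$ and $x\in B_{g(t)}(y_0,\epsilon^{-1}\sqrt{t})$. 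Here I use the scaling invariance of $\boldsymbol{\mu}$, which makes the displayed hypothesis exactly the parabolically rescaled local entropy lower bound that~\cite{WB2} requires at all scales $\sqrt{t}$, $t\in(0,1]$.

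Next I would invoke Theorem~\ref{thm:OF29_8} with the pair $(\eta,A)=(\eta_0,A_0)$. This yields a constant $\delta=\delta(m,p,\eta_0,A_0)=\delta(m,p,\epsilon)$ such that, if (\ref{eqn:OF29_6a}) and (\ref{eqn:OF29_6b}) hold under $g(0)$ with this $\delta$, then (\ref{eqn:OF30_1}) holds with $\eta=\eta_0$, i.e.
\begin{align*}
\inf_{t\in(0,1]}\boldsymbol{\mu}\left(B_{g(0)}(x_0,A_0\sqrt{t}),\,g(0),\,t\right)\geq -\eta_0.
\end{align*}
Applying the pseudo-locality statement of the previous paragraph at $y_0=x_0$ now produces precisely (\ref{eqn:OH27_15a})--(\ref{eqn:OH27_15c}), which finishes the proof.

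The delicate point is the interface between the two ingredients rather than any hard estimate. One must check that all hypotheses of Theorem 1.2 of~\cite{WB2} are genuinely available: besides the entropy lower bound, the improved pseudo-locality also needs an almost-Euclidean initial volume ratio near $x_0$, and this is supplied by Lemma~\ref{l2.5}, since condition (\ref{eqn:OF29_6}) forces $|B_{g(0)}(x,s)|\geq(1-Cr\delta)\omega_m s^m$ together with $\kappa(x,p,s)\leq\delta^2$ for all $x\in B_{g(0)}(x_0,r)$ and $s\leq\delta^{-1}-r$; in particular the flow is initially non-collapsed on a large ball around $x_0$, and Theorem~\ref{thm:OF29_8} applies centered at every such nearby point, which covers any version of pseudo-locality that demands the entropy bound at points other than the center. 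It also has to be verified that the conclusions of~\cite{WB2} are stated on $g(t)$-balls matching the present formulation, and that the chain of constants depends only on $m,p,\epsilon$ with nothing circular. Once this bookkeeping is in place the theorem follows with no further work.
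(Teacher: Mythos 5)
Your proposal is correct and coincides with the paper's own (very brief) argument: the paper proves Theorem~\ref{thm:OF28_5} precisely by feeding the local entropy lower bound (\ref{eqn:OF30_1}) from Theorem~\ref{thm:OF29_8} into the pseudo-locality theorem, Theorem 1.2 of~\cite{WB2}, exactly as you describe via parameter matching $(\eta,A)=(\eta_0,A_0)$. Your extra bookkeeping (non-collapsing via Lemma~\ref{l2.5}, applicability at nearby centers) is harmless but not needed beyond what the entropy bound already supplies.
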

	
	We close this section by the proof of Theorem~\ref{thm:OD12_1}.
	
	\begin{proof}[Proof of Theorem~\ref{thm:OD12_1}:]
	It is clear that (\ref{eqn:OD12_4}) follows directly from Proposition~\ref{prn:OF29_7}.   The inequality (\ref{eqn:OF16_2}) follows from Theorem~\ref{thm:OF29_8}.
	\end{proof}

	\section{Estimate of volume and scalar curvature integral}
	\label{sec:SLP}
	
	Suppose $\{(M^m,g(t)), t \in[0,1]\}$ is a solution of Ricci flow with an initial data $(M,g(0))$. 
	In Section~\ref{sec:functionals}, we prove that if $(M,g(0))$ satisfies the integral Ricci curvature condition (\ref{eqn:OF29_6a}) and the volume lower bound condition (\ref{eqn:OF29_6b}), then we have the pseudo-locality estimate (\ref{eqn:OH27_15}). 
	In this section, we shall provide local estimates of volume and $L^p$-norm of scalar curvature along the Ricci flow. In \cite{WB2}, similar estimates were obtained under the conditions 
	\begin{align*}
	  |Rc|\leq \frac{\epsilon}{t}, \quad \textrm{and} \quad R_{g(0)} \geq -\epsilon, 
	\end{align*}
	with the help of localized maximum principle(cf. Theorem 5.1 of~\cite{WB2}).  In this section, we use the conditions
	\begin{align*}
	 R_{-} \leq \frac{\epsilon}{t}, \quad \textrm{and} \quad \int_{B_{g(0)}(x_0,r_0)} R_-^pdv\leq \epsilon, 
	\end{align*}
	which can be deduced from (\ref{eqn:OF29_6a}) and (\ref{eqn:OF29_6b}) naturally. 
	
	The local  estimates of volume and $L^p$-norm of scalar curvature are key new ingredients of this paper. 
	Before we delve into the details of proving these estimates, let us detour for elementary technical preparations.

	\begin{lemma}
		\label{lem4.1}
		Suppose $\{(M^m,g(t)),  0 \leq t \leq 1\}$ is a smooth flow. $\Omega$ is a compact subset of $M$. $f\in C^{\infty}\left( M\times [0,1]\right) $ and $\eta\in C^{1}\left( M\times [0,1]\right) $. Let $p>1$ and $\Omega_{0}(t)=\left\lbrace x\in \Omega : f(x,t)\geq 0\right\rbrace $. Then we have
		\begin{equation}\label{equ4.1}
			\frac{d}{dt}\int_{\Omega_0}\eta f^pdv
			=\int_{\Omega_0} \frac{d}{dt}\left( \eta f^p\right) dv+\int_{\Omega_0} \eta f^p\left( \frac{d}{dt}dv\right)
		\end{equation}
		for any $t\in [0,1]$.
	\end{lemma}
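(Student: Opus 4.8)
The essential issue is that the domain $\Omega_0(t)=\{x\in\Omega: f(x,t)\geq 0\}$ moves with $t$, so \emph{a priori} differentiating $\int_{\Omega_0(t)}\eta f^p\,dv$ should produce an extra boundary term coming from the motion of $\partial\Omega_0(t)\subset\{f=0\}$. The point of the lemma is that this boundary term vanishes when $p>1$, because both $f^p$ and its $t$-derivative $pf^{p-1}\partial_t f$ vanish where $f=0$. My plan is to make this rigorous by reducing to differentiation under the integral sign over the \emph{fixed} compact set $\Omega$.

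First I would set $f_+:=\max\{f,0\}$ and note that
\[
\int_{\Omega_0(t)}\eta f^p\,dv=\int_{\Omega}\eta f_+^p\,dv ,
\]
since on $\Omega_0(t)$ one has $f_+=f$ while on $\Omega\setminus\Omega_0(t)$ one has $f_+^p=0$; the set $\{f=0\}$ contributes nothing, and there the expression $\frac{d}{dt}(\eta f^p)$ is to be read as $\frac{d}{dt}(\eta f_+^p)=0$. The key elementary fact is that for $p>1$ the function $s\mapsto(\max\{s,0\})^p$ lies in $C^1(\R)$, with continuous derivative $p(\max\{s,0\})^{p-1}$ which vanishes at $s=0$ precisely because $p-1>0$. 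Composing with $f\in C^\infty(M\times[0,1])$ then shows that $(x,t)\mapsto f_+(x,t)^p$ is $C^1$, with $\partial_t(f_+^p)=pf_+^{p-1}\partial_t f$. This is exactly the place where the hypothesis $p>1$ enters; for $p=1$ the map $s\mapsto s_+$ is only Lipschitz and a genuine boundary term would survive.

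Next, since $\Omega$ is compact and both $\eta f_+^p$ and the volume form $dv_{g(t)}$ are $C^1$ in $t$ with bounds uniform over $\Omega\times[0,1]$, I would invoke the standard Leibniz rule (differentiation under the integral sign, justified by dominated convergence) to get
\[
\frac{d}{dt}\int_{\Omega}\eta f_+^p\,dv=\int_{\Omega}\frac{d}{dt}\!\left(\eta f_+^p\right)dv+\int_{\Omega}\eta f_+^p\left(\frac{d}{dt}dv\right).
\]
Finally I would unpack the right-hand side: the integrand $\frac{d}{dt}(\eta f_+^p)=(\partial_t\eta)f_+^p+\eta\,pf_+^{p-1}\partial_t f$ vanishes on $\Omega\setminus\Omega_0(t)$ and equals $\frac{d}{dt}(\eta f^p)$ on $\Omega_0(t)$, and likewise $\eta f_+^p\,\frac{d}{dt}dv$ vanishes off $\Omega_0(t)$; thus both integrals over $\Omega$ collapse to integrals over $\Omega_0(t)$, which is exactly (\ref{equ4.1}). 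The only real subtlety, and the ``main obstacle'', is the $C^1$-regularity of $s\mapsto(s_+)^p$ at the origin, which is what forces the restriction $p>1$; everything else is routine verification.
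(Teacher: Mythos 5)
Your argument is correct, and it takes a genuinely different route from the paper. The paper proves \eqref{equ4.1} straight from the definition of the derivative: it splits the difference quotient into the three limits \eqref{equ4.3}--\eqref{equ4.5}, and the whole issue of the moving domain is settled in its Step 1 by estimating the contribution of the symmetric difference $\Omega_0(t+\Delta t)\,\triangle\,\Omega_0(t)$, where $|f(\cdot,t+\Delta t)|\leq C\Delta t$ (by $\left|\tfrac{df}{dt}\right|\leq C$ on the compact $\Omega\times[0,1]$), so that this boundary-layer term is $O\left((\Delta t)^{p-1}\right)\to 0$; this is exactly where $p>1$ is used there. You instead freeze the domain by writing $\int_{\Omega_0(t)}\eta f^p\,dv=\int_{\Omega}\eta f_+^p\,dv$, use that $s\mapsto (s_+)^p$ is $C^1$ with derivative $p(s_+)^{p-1}$ precisely because $p>1$, and then invoke the standard Leibniz rule on the fixed compact set $\Omega$, collapsing both resulting integrals back to $\Omega_0(t)$ since the integrands vanish where $f<0$. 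Both proofs exploit the same mechanism --- $f^p$ vanishes to order greater than one across $\{f=0\}$ --- but your reduction to a fixed domain replaces the paper's hands-on difference-quotient estimates (and its separate treatment of $\Delta t\to 0_\pm$) with a one-line regularity observation plus dominated convergence, which is cleaner and makes transparent why $p=1$ fails; the paper's version is more explicit about the size of the boundary-layer contribution. Your reading of $\tfrac{d}{dt}(\eta f^p)$ on $\{f=0\}$ as $p\eta f_+^{p-1}\partial_t f+(\partial_t\eta)f_+^p=0$ is also consistent with how the formula is applied later in the paper (e.g.\ in the evolution of $\int\eta^{3/2}R_-^p\,dv$), so no discrepancy arises there.
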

	
	\begin{proof}
	We remind the reader that the boundary of  $\Omega_0(t)$ may not be smooth, which may cause unexpected difficulty.
	We shall prove \ref{equ4.1} by definition.  Namely, we shall prove the following equation. 
	\begin{align}
	&\quad \lim_{\Delta t \to 0}\frac{1}{\Delta t}\left(\int_{\Omega_{0}(t+\Delta t)}\eta (x,t+\Delta t)f^p(x,t+\Delta t)dv_{t+\Delta t} -\int_{\Omega_{0}(t)}\eta (x,t)f^p(x,t)dv_{t}\right)  \notag\\
	&=\int_{\Omega_0(t)} \eta(x,t) f^p(x,t)\left( \frac{d}{dt}dv_t\right)+\int_{\Omega_0(t)} \frac{d}{dt}\left( \eta (x,t)f^p(x,t)\right) dv_t. \label{equ4.2}
	\end{align}
	In order to prove  (\ref{equ4.2}), it suffices to prove the following three equations. 
	\begin{align}
	 &\lim_{\Delta t \to 0}\frac{1}{\Delta t}\left(\int_{\Omega_{0}(t+\Delta t)}\eta f^p(x,t+\Delta t)dv_{t+\Delta t} -\int_{\Omega_{0}(t)}\eta f^p(x,t+\Delta t)dv_{t+\Delta t}\right)=0, \label{equ4.3}\\
	 &\lim_{\Delta t \to 0}\frac{1}{\Delta t}\left(\int_{\Omega_{0}(t)}\eta f^p(x,t+\Delta t)dv_{t+\Delta t} -\int_{\Omega_{0}(t)}\eta f^p(x,t+\Delta t)dv_{t}\right) 
			=\int_{\Omega_0(t)} \eta f^p(x,t)\left( \frac{d}{dt}dv_t\right), \label{equ4.4}\\
	&\lim_{\Delta t \to 0}\frac{1}{\Delta t}\left(\int_{\Omega_{0}(t)}\eta f^p(x,t+\Delta t)dv_{t} -\int_{\Omega_{0}(t)}\eta f^p(x,t)dv_{t}\right) 
			=\int_{\Omega_0(t)} \frac{d}{dt}\left( \eta f^p(x,t)\right) dv_t. \label{equ4.5}
	\end{align}
	We shall prove them step by step.\\
		
		\textit{Step 1.  Equation (\ref{equ4.3}) holds.}
		
		Note that
			\begin{align}
				&\lim_{\Delta t \to 0}\frac{1}{\Delta t}\left(\int_{\Omega_{0}(t+\Delta t)}\eta f^p(x,t+\Delta t)dv_{t+\Delta t} 
				-\int_{\Omega_{0}(t)}\eta f^p(x,t+\Delta t)dv_{t+\Delta t}\right) \notag\\
				=&\lim_{\Delta t \to 0}\frac{1}{\Delta t}\left(\int_{\Omega_{0}(t+\Delta t)\backslash\Omega_{0}(t)}\eta f^p(x,t+\Delta t)dv_{t+\Delta t} 
				-\int_{\Omega_{0}(t)\backslash\Omega_{0}(t+\Delta t)}\eta f^p(x,t+\Delta t)dv_{t+\Delta t}\right). \label{equ4.6}
			\end{align}
		
		When $\Delta t\rightarrow 0_+$, in $\Omega_{0}(t+\Delta t)\backslash\Omega_{0}(t)$, there holds
		\begin{equation*}
			f(x,t)<0,\quad f(x,t+\Delta t)\geq 0.
		\end{equation*}
		Since $\left| \frac{df}{dt}\right|\leq C $ in $\Omega\times [0,1]$, there holds
		\begin{equation*}
			0 \leq f(x,t+\Delta t)<C\Delta t
		\end{equation*}
		in $\Omega_{0}(t+\Delta t)\backslash\Omega_{0}(t)$. 
		
		Similarly, in $\Omega_{0}(t)\backslash\Omega_{0}(t+\Delta t)$, there holds
		\begin{equation*}
			f(x,t)\geq 0,\quad f(x,t+\Delta t)< 0, 
		\end{equation*}
		which implies that
		\begin{equation*}
			-C\Delta t \leq f(x,t+\Delta t)<0
		\end{equation*}
		in $ \Omega_{0}(t+\Delta t)\backslash\Omega_{0}(t)$. Thus we have
			\begin{align*}
				&\frac{1}{\Delta t}\left|\int_{\Omega_{0}(t+\Delta t)\backslash\Omega_{0}(t)}\eta f^p(x,t+\Delta t)dv_{t+\Delta t} 
				-\int_{\Omega_{0}(t)\backslash\Omega_{0}(t+\Delta t)}\eta f^p(x,t+\Delta t)dv_{t+\Delta t}\right| \notag\\
				\leq &\frac{1}{\Delta t}\left|\int_{\Omega_{0}(t+\Delta t)\backslash\Omega_{0}(t)}\eta f^p(x,t+\Delta t)dv_{t+\Delta t} \right| 
				+\frac{1}{\Delta t}\left| \int_{\Omega_{0}(t)\backslash\Omega_{0}(t+\Delta t)}\eta f^p(x,t+\Delta t)dv_{t+\Delta t}\right| \notag\\
				\leq &\frac{1}{\Delta t}CV(\Delta t)^p 
				\leq  C(\Delta t)^{p-1}, 
			\end{align*}
		where $V$ is the upper bound of $vol_{g(t)}\Omega$ when $t\in [0,1]$.  As $p>1$, it is clear that
		\begin{equation}\label{equ4.8}
			\lim_{\Delta t \to 0_+}\frac{1}{\Delta t}\left(\int_{\Omega_{0}(t+\Delta t)}\eta f^p(x,t+\Delta t)dv_{t+\Delta t} 
			-\int_{\Omega_{0}(t)}\eta f^p(x,t+\Delta t)dv_{t+\Delta t}\right)=0.
		\end{equation}
		Similar deduction yields that 
		\begin{equation}\label{equ4.9}
			\lim_{\Delta t \to 0_-}\frac{1}{\Delta t}\left(\int_{\Omega_{0}(t+\Delta t)}\eta f^p(x,t+\Delta t)dv_{t+\Delta t} 
			-\int_{\Omega_{0}(t)}\eta f^p(x,t+\Delta t)dv_{t+\Delta t}\right)=0.
		\end{equation}
		Therefore, (\ref{equ4.3}) follows from the combination of (\ref{equ4.8}) and (\ref{equ4.9}). \\
		
		\textit{Step 2.  Equation (\ref{equ4.4}) holds.}
		
		Since $\frac{d}{dt} \log dv_t=\frac{1}{2} tr_{g} \dot{g}$  and  $\{(M^m,g(t)),  0 \leq t \leq 1\}$ is a smooth flow, it is clear that 
		\begin{align*}
			dv_{t+\Delta t}-dv_{t}=\left( \frac{1}{2} tr_{g} \dot{g} \Delta t+o(\Delta t)\right) dv_{t}.
		\end{align*}
		Consequently, by continuity of $\eta f^p$, there holds that
			\begin{align*}
				&\lim_{\Delta t \to 0}\frac{1}{\Delta t}\left(\int_{\Omega_{0}(t)}\eta f^p(x,t+\Delta t)dv_{t+\Delta t} 
				-\int_{\Omega_{0}(t)}\eta f^p(x,t+\Delta t)dv_{t}\right) \notag\\
				=&\lim_{\Delta t \to 0}\frac{1}{\Delta t}\int_{\Omega_{0}(t)}\eta f^p(x,t+\Delta t)\left(  \frac{1}{2} tr_{g} \dot{g}  \Delta t+o(\Delta t)\right)dv_{t} \notag\\
				=&\int_{\Omega_0(t)} \eta f^p(x,t)\left( \frac{d}{dt}dv_t\right), 
			\end{align*}
		which is nothing but  (\ref{equ4.4}). \\
		
		It is obvious that the equation (\ref{equ4.5}) holds.  Therefore,  equation (\ref{equ4.2})  follows immediately from the combination of  (\ref{equ4.3}), (\ref{equ4.4}) and (\ref{equ4.5}).
	\end{proof}
	
	\begin{lemma}
		\label{lem4.2}
		Suppose $M^m$ is a smooth manifold. $\Omega \subset M$ is a bounded domain,  $f\in C^{\infty}\left( M\right) $ and $\eta\in C^{1}\left( M\right) $. 
		Let $p>1$ and $\Omega_{h} \coloneqq \left\lbrace x\in \Omega : f(x)\geq h\right\rbrace $. Then we have
		\begin{equation}\label{equ4.12}
			\int_{\Omega_0}\eta \Delta f^pdv=-\int_{\Omega_0}\langle\nabla\eta, \nabla f^p\rangle dv.
		\end{equation}
	\end{lemma}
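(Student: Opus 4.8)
The statement to prove is equation (\ref{equ4.12}): for a bounded domain $\Omega$, smooth functions $f,\eta$, and $p>1$, one has $\int_{\Omega_0}\eta \Delta f^p\,dv=-\int_{\Omega_0}\langle\nabla\eta,\nabla f^p\rangle\,dv$, where $\Omega_0=\Omega_h$ with $h=0$, i.e. $\Omega_0=\{x\in\Omega:f(x)\geq0\}$. The plan is to establish this by the coarea formula and a limiting argument, since the boundary $\partial\Omega_0$ need not be smooth. First I would set $u\coloneqq f^p$ and note that $u\in C^\infty$ with $|\nabla u|=p|f|^{p-1}|\nabla f|$; in particular $\nabla u=0$ wherever $f=0$ (using $p>1$), so the singular part of the level set $\{f=0\}$ is harmless for the Green-type identity. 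For $h>0$ small, $\Omega_h=\{f\geq h\}$ is a domain whose boundary is contained in $\{f=h\}$, which is a regular level set for a.e. $h$ by Sard's theorem; on such $\Omega_h$ the divergence theorem gives cleanly
\begin{align*}
\int_{\Omega_h}\eta\,\Delta u\,dv=-\int_{\Omega_h}\langle\nabla\eta,\nabla u\rangle\,dv+\int_{\{f=h\}}\eta\,\langle\nabla u,\vec n\rangle\,d\sigma.
\end{align*}

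The key step is then to show the boundary term vanishes as $h\to0^+$ along a sequence of regular values. Since $\langle\nabla u,\vec n\rangle=|\nabla u|$ on a regular level set where $\nabla u$ points outward, and $|\nabla u|=p|f|^{p-1}|\nabla f|=p\,h^{p-1}|\nabla f|$ on $\{f=h\}$, the boundary integral is bounded by $C\,h^{p-1}\int_{\{f=h\}}|\nabla f|\,d\sigma$. By the coarea formula, $\int_0^{\epsilon}\Big(\int_{\{f=h\}}|\nabla f|\,d\sigma\Big)\,dh=\int_{\{0<f<\epsilon\}}|\nabla f|^2\,dv\leq C$, so $g(h)\coloneqq\int_{\{f=h\}}|\nabla f|\,d\sigma$ is integrable near $0$; hence there is a sequence $h_k\to0^+$ of regular values with $h_k\,g(h_k)\to0$, and since $p-1>0$ we get $h_k^{p-1}g(h_k)\to0$ as well. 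Meanwhile $\int_{\Omega_{h_k}}\eta\,\Delta u\,dv\to\int_{\Omega_0}\eta\,\Delta u\,dv$ and $\int_{\Omega_{h_k}}\langle\nabla\eta,\nabla u\rangle\,dv\to\int_{\Omega_0}\langle\nabla\eta,\nabla u\rangle\,dv$ by dominated convergence, because $\eta\,\Delta u$ and $\langle\nabla\eta,\nabla u\rangle$ are continuous hence bounded on the compact set $\bar\Omega$ and $|\Omega_0\setminus\Omega_{h_k}|\to0$. Passing to the limit yields (\ref{equ4.12}).

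The main obstacle I anticipate is the rigorous handling of the boundary term: $\partial\Omega_0$ can be genuinely bad (it may contain a whole region where $f\equiv0$, or have positive measure), so one cannot simply apply Stokes on $\Omega_0$ directly. The resolution above—approximating from inside by the superlevel sets $\Omega_h$ at regular values $h$, and using the coarea formula to extract a good sequence $h_k$—is the standard fix, and the gain of the factor $h^{p-1}$ from $p>1$ is exactly what makes the boundary term die even without controlling $g(h)$ pointwise. One should also remark that if $f\equiv h$ on an open set the formula still holds trivially there since both $\Delta u$ and $\nabla u$ vanish, so the only subtlety is the interface $\{f=0\}\cap\partial\Omega_0$, which has the properties just used. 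Alternatively, one could avoid Sard entirely by mollifying: replace $\mathbf 1_{\{f\geq0\}}$ by $\chi_\epsilon(f)$ for a smooth cutoff $\chi_\epsilon$ increasing from $0$ to $1$ on $[0,\epsilon]$, integrate $\int_M \chi_\epsilon(f)\,\eta\,\Delta u\,dv$ by parts to produce $-\int \chi_\epsilon(f)\langle\nabla\eta,\nabla u\rangle\,dv-\int\chi_\epsilon'(f)\langle\nabla f,\nabla u\rangle\,\eta\,dv$, and observe the last term is $O(\epsilon^{-1})\int_{\{0<f<\epsilon\}}|\nabla f|\,|\nabla u|\,dv=O(\epsilon^{-1})\cdot O(\epsilon^{p})=O(\epsilon^{p-1})\to0$; this version sidesteps measure-theoretic issues with level sets and is perhaps cleaner to write, so I would likely present it that way.
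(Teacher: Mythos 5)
Your overall strategy is the same as the paper's: approximate $\Omega_0$ from inside by the superlevel sets $\Omega_h$ at regular values of $f$ (Sard), integrate by parts there, and show the boundary term dies as $h\to 0$. However, your handling of the boundary term has a genuine gap in the range $1<p<2$, which the lemma allows (and which matters in this paper, where $p>\frac{m}{2}$ can be below $2$ when $m=3$). You bound the flux term by $C h^{p-1} g(h)$ with $g(h)=\int_{\{f=h\}}|\nabla f|\,d\sigma$, extract from $g\in L^{1}(0,\epsilon)$ a sequence with $h_k g(h_k)\to 0$, and then assert $h_k^{p-1}g(h_k)\to 0$ ``since $p-1>0$''. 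That inference goes the wrong way when $p-1<1$: for $h<1$ one has $h^{p-1}>h$, and the model case $g(h)=h^{1-p}$ is integrable near $0$ (because $1-p>-1$) while $h^{p-1}g(h)\equiv 1$; so integrability of $g$ alone cannot force $h^{p-1}g(h)$ to vanish along any sequence. The same arithmetic defeats your mollification variant: the error term is controlled by $C\epsilon^{-1}\int_{\{0<f<\epsilon\}} f^{p-1}|\nabla f|^{2}dv \le C\epsilon^{p-2}\int_{\{0<f<\epsilon\}}|\nabla f|^{2}dv$, and your claimed $O(\epsilon^{p})$ bound presumes $\int_{\{0<f<\epsilon\}}|\nabla f|^{2}dv=O(\epsilon)$, which is not justified; what you actually get is $o(\epsilon^{p-2})$, vanishing only for $p\ge 2$. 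A further weak point is the dominated convergence step: the claim that $\eta\,\Delta f^{p}$ is ``continuous hence bounded on $\bar\Omega$'' is false for $1<p<2$, since $\Delta f^{p}=p(p-1)f^{p-2}|\nabla f|^{2}+pf^{p-1}\Delta f$ blows up at zeros of $f$ with $\nabla f\neq 0$, so neither boundedness nor a priori absolute integrability over $\Omega_0$ can be taken for granted.

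The paper closes exactly this gap with a trick that works for every $p>1$: on the regular level set it writes $\nabla f^{p}=pf^{p-1}\nabla f$, pulls the constant $h^{p-1}$ out of the flux integral, and applies the divergence theorem a second time, so that $\int_{\partial\Omega_h}\langle\eta\nabla f,\vec{n}\rangle d\sigma=\int_{\Omega_h}\left\{\eta\Delta f+\langle\nabla\eta,\nabla f\rangle\right\}dv$, which is bounded by $C\,\mathrm{vol}(\Omega)$ uniformly in $h$ because no powers of $f$ appear; hence the boundary term is $O(h^{p-1})$ with no coarea or level-set area estimate at all. For the interior limit the paper avoids dominated convergence: on $\Omega_0\setminus\Omega_h$, where $0\le f\le h$, it uses the pointwise one-sided bound $\eta\Delta f^{p}+\langle\nabla\eta,\nabla f^{p}\rangle\ge -Ch^{p-1}$ (valid since $\eta\ge 0$ in the intended application), which together with the $O(h^{p-1})$ identity on $\Omega_h$ shows the combined integral over $\Omega_0$ exists and vanishes. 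The paper also adds an exhaustion of $\Omega$ by smooth domains to handle non-smooth $\partial\Omega$, a step you omit, but that is minor. Your argument is fine as written for $p\ge 2$; to cover all $p>1$ you need something like the paper's second integration by parts on the level set.
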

	
	\begin{proof}
	We first assume $\Omega$ has smooth boundary. 
	
	Recall that $\Omega_0 = \left\lbrace x\in \Omega : f(x)\geq 0\right\rbrace$. 
	If  $\partial \Omega_0$ is smooth, then equation (\ref{equ4.12}) holds obviously.
	Otherwise, we shall show (\ref{equ4.12}) by approximation. 
	Suppose  $h$ is a regular value of $f$, then $\partial \Omega_h$ is smooth. 
	Integration by parts yields that
		\begin{equation}\label{equ4.13}
			\int_{\Omega_{h}}\eta \Delta f^pdv+\int_{\Omega_{h}}\langle\nabla\eta, \nabla f^p\rangle dv
			=\int_{\partial\Omega_{h}}\langle\eta\nabla f^p,\vec{n}\rangle d\sigma=\int_{\partial\Omega_{h}}\langle\eta f^{p-1}\nabla f,\vec{n}\rangle d\sigma.
		\end{equation}
		Since $f=h$ on $\partial \Omega_{h}$, there holds 
		\begin{equation}\label{equ4.14}
			\int_{\partial\Omega_{h}}\langle\eta f^{p-1}\nabla f,\vec{n}\rangle d\sigma=h^{p-1}\int_{\partial\Omega_{h}}\langle\eta \nabla f,\vec{n}\rangle d\sigma
			=h^{p-1}\int_{\Omega_{h}} \left\{ \eta \Delta f+\langle\nabla\eta, \nabla f\rangle \right\} dv.
		\end{equation}
		Consequently we have
		\begin{equation}\label{equ4.15}
			\left| \int_{\Omega_{h}}\left\{ \eta \Delta f^p+\langle\nabla\eta, \nabla f^p\rangle\right\} dv\right| \leq h^{p-1}Cvol(\Omega).
		\end{equation}
		In $\Omega_{0}\backslash \Omega_{h}$, since $0\leq f\leq h$ , there holds
		\begin{equation}\label{equ4.16}
			\langle\nabla\eta, \nabla f^p\rangle+\eta \Delta f^p =pf^{p-1}\langle\nabla\eta, \nabla f\rangle+\eta \left(p(p-1)f^{p-2}|\nabla f|^2+pf^{p-1}\Delta f \right)\geq -Ch^{p-1}.
		\end{equation}
		By Sard's theorem, we can choose regular values $h \to 0$.  It follows from (\ref{equ4.15}) and (\ref{equ4.16}) that  $\int_{\Omega_0}\eta \Delta f^p dv+\int_{\Omega_0}\langle\nabla\eta, \nabla f^p\rangle dv$ exists and
		\begin{align*}
			\int_{\Omega_0}\eta \Delta f^p dv+\int_{\Omega_0}\langle\nabla\eta, \nabla f^p\rangle dv=\lim_{h\to 0}\left( \int_{\Omega_{h}}\eta \Delta f^p dv+\int_{\Omega_{h}}\langle\nabla\eta, \nabla f^p\rangle dv\right) =0, 
		\end{align*}
		which is equivalent to (\ref{equ4.12}). 
		
		Now we consider the general situation. Note that $\Omega$ can be exhausted by a sequence of domains $\tilde{\Omega}_k$, which have smooth boundaries (cf. the proof of Lemma 2.6 of~\cite{WB}).
		Namely, we have
		\begin{align*}
		   \tilde{\Omega}_1 \subset \tilde{\Omega}_2 \subset \cdots \tilde{\Omega}_k \subset \tilde{\Omega}_{k+1} \subset \cdots,  \quad \Omega=\cup_{k=1}^{\infty} \tilde{\Omega}_k. 
		\end{align*}
		From previous argument, we know that 
		 \begin{align*}
			\int_{\Omega_0 \cap \tilde{\Omega}_k}\eta \Delta f^p dv=-\int_{\Omega_0 \cap \tilde{\Omega}_k} \langle\nabla\eta, \nabla f^p\rangle dv.
		\end{align*}
		Since $\left\{ \Omega_0 \cap \tilde{\Omega}_k \right\}_{k=1}^{\infty}$ is an exhaustion of $\Omega_0$, we can take limit of both sides of the above equation to obtain (\ref{equ4.12}).
		The proof of Lemma~\ref{lem4.2} is complete.  
	\end{proof}

	Now we are ready to prove the main theorem of this section. 
			
	\begin{theorem}
	\label{t4.2}
		Suppose $\{(M^m,g(t)),  0 \leq t \leq 1\}$ is a Ricci flow solution. Let $r_0\geq 3$ and $p>\frac{m}{2}$. 
		Suppose
		\begin{subequations}
			\begin{align}[left = \empheqlbrace \,]
				&vol_{g(0)} \left( B_{g(0)}(x_0,r_0) \right) \leq V_0,   \label{eqn:OH12_26a} \\
				&\int_{B_{g(0)}(x_0,r_0)} R_{-,g(0)}^p dv\leq \epsilon,  \label{eqn:OH12_26b}
			\end{align}  
		\end{subequations}
		and 
		\begin{align}
			R_{-}(x,t)\leq \frac{\epsilon}{t}, \quad  \forall \; x \in B_{g(0)}(x_0,r_0), \; t \in (0, 1].        \label{eqn:OH12_3}
		\end{align}
		Then for each $t\in [0,1]$ we have
		\begin{subequations}
			\begin{align}[left = \empheqlbrace \,]
				&vol_{g(t)}\left(  B_{g(0)}(x_0,r_0-2)\right) \leq V_0+\frac{(V_0+\epsilon)\left( e^{2Ct}-1\right) }{2}, \label{4.9}  \\
				&\int_{B_{g(0)}(x_0,r_0-2)} R_{-,g(t)}^p dv\leq \epsilon+\frac{(V_0+\epsilon)\left( e^{2Ct}-1\right) }{2}. \label{4.8}
			\end{align}  
		\end{subequations}
	\end{theorem}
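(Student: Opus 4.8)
The plan is to control two coupled quantities along the flow, namely (slightly enlarged versions of) $\mathcal V(t)\coloneqq vol_{g(t)}\big(B_{g(0)}(x_0,r_0-2)\big)$ and $\mathcal R(t)\coloneqq\int_{B_{g(0)}(x_0,r_0-2)}R_{-,g(t)}^p\,dv_{g(t)}$, show each grows at rate at most $C(\mathcal V+\mathcal R)$, and then integrate. First I would fix a cutoff $\eta$, supported in $B_{g(0)}(x_0,r_0)$ and identically $1$ on $B_{g(0)}(x_0,r_0-2)$, and track $\widetilde{\mathcal V}(t)\coloneqq\int_M\eta\,dv_{g(t)}\ge\mathcal V(t)$ and $\widetilde{\mathcal R}(t)\coloneqq\int_M\eta R_{-,g(t)}^p\,dv_{g(t)}\ge\mathcal R(t)$; by the hypotheses (\ref{eqn:OH12_26a})--(\ref{eqn:OH12_26b}) we have $\widetilde{\mathcal V}(0)\le V_0$ and $\widetilde{\mathcal R}(0)\le\epsilon$. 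The volume part is then immediate from $\partial_t\,dv_{g(t)}=-R\,dv_{g(t)}$:
\[
\widetilde{\mathcal V}'(t)=-\int_M\eta R\,dv_{g(t)}\le\int_M\eta R_{-}\,dv_{g(t)}\le\Big(\int_M\eta R_-^p\,dv_{g(t)}\Big)^{\!1/p}\Big(\int_M\eta\,dv_{g(t)}\Big)^{\!1-1/p}\le \widetilde{\mathcal V}(t)+\widetilde{\mathcal R}(t),
\]
using H\"older with exponents $p$ and $p/(p-1)$, and then Young's inequality.

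For $\widetilde{\mathcal R}$ I would apply Lemma~\ref{lem4.1} with $f=-R$ and $\Omega=\overline{B_{g(0)}(x_0,r_0)}$, so $\Omega_0(t)=\{x\in\Omega:R(x,t)\le0\}$ and $f^p=R_{-}^p$ on $\Omega_0$, giving $\widetilde{\mathcal R}'(t)=\int_{\Omega_0}\eta\,p f^{p-1}\dot f\,dv_{g(t)}+\int_{\Omega_0}\eta f^p\,\partial_t\,dv_{g(t)}$. Substituting the evolution $\dot f=-\partial_t R=-\Delta_{g(t)}R-2|Rc|^2=\Delta_{g(t)}f-2|Rc|^2$ and $\partial_t\,dv_{g(t)}=f\,dv_{g(t)}$ on $\Omega_0$, the crucial algebraic fact appears: since $|Rc|^2\ge\tfrac1m R^2=\tfrac1m f^2$ on $\Omega_0$, the two zeroth-order contributions $\int_{\Omega_0}\eta f^{p+1}$ combine with coefficient $1-\tfrac{2p}{m}$, which is negative \emph{precisely because} $p>\tfrac m2$, and hence drop out. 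I am then left with $\int_{\Omega_0}\eta\,p f^{p-1}\Delta_{g(t)}f\,dv_{g(t)}$; writing $p f^{p-1}\Delta f=\Delta(f^p)-p(p-1)f^{p-2}|\nabla f|^2$, applying Lemma~\ref{lem4.2} (the boundary terms on $\partial\Omega_0$ vanish, since $f^p$ and $\nabla(f^p)$ vanish on $\{R=0\}$ and $\eta$ is supported in the interior of $\Omega$), and absorbing the term $-p\int_{\Omega_0}f^{p-1}\langle\nabla\eta,\nabla f\rangle$ into the good term $-p(p-1)\int_{\Omega_0}\eta f^{p-2}|\nabla f|^2$ by Young's inequality, I obtain
\[
\widetilde{\mathcal R}'(t)\le\frac{p}{4(p-1)}\int_{\Omega_0}\frac{|\nabla_{g(t)}\eta|^2_{g(t)}}{\eta}\,R_{-,g(t)}^p\,dv_{g(t)}.
\]

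The hard part — and the reason the ``new cutoff functions'' of the introduction are needed — is to bound this last integral by $C\big(\widetilde{\mathcal V}(t)+\widetilde{\mathcal R}(t)\big)$. Its integrand is supported in the buffer annulus $\{r_0-2\le d_{g(0)}\le r_0\}$, so one must (i) control $|\nabla_{g(t)}\eta|^2_{g(t)}/\eta$, which forces control on the distortion of $g(t)$ relative to $g(0)$ (and on the behaviour of $d_{g(0)}$ across its $g(0)$-cut locus) on that annulus, and (ii) cope with the fact that $\widetilde{\mathcal R}$ puts little weight on the outer part of the annulus. This is where one works with a finite scale of nested cutoffs across the width-$2$ buffer (or with a cutoff adapted to the evolving distance) and uses the pointwise bound (\ref{eqn:OH12_3}), $R_-\le\epsilon/t$, to keep the errors created in passing from one cutoff level to the next integrable in $t$; I expect this to be the main obstacle. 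Once $\widetilde{\mathcal R}'\le C(\widetilde{\mathcal V}+\widetilde{\mathcal R})$ is in hand, set $S\coloneqq\widetilde{\mathcal V}+\widetilde{\mathcal R}$, so $S'\le 2CS$ and $S(0)\le V_0+\epsilon$; Gr\"onwall gives $S(t)\le(V_0+\epsilon)e^{2Ct}$, and feeding this back into $\widetilde{\mathcal V}'\le C S$ and $\widetilde{\mathcal R}'\le C S$ and integrating from $0$ yields $\widetilde{\mathcal V}(t)\le V_0+\tfrac12(V_0+\epsilon)(e^{2Ct}-1)$ and $\widetilde{\mathcal R}(t)\le\epsilon+\tfrac12(V_0+\epsilon)(e^{2Ct}-1)$. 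Since $\mathcal V(t)\le\widetilde{\mathcal V}(t)$ and $\mathcal R(t)\le\widetilde{\mathcal R}(t)$, these are exactly (\ref{4.9}) and (\ref{4.8}). (The space-time bound $\int_0^1\!\!\int R_-\,dv_{g(t)}\,dt\le\psi(\epsilon|m,p,V_0)$ then follows by splitting $\int_0^1=\int_0^{t_0}+\int_{t_0}^1$, using H\"older with the bounds just proved on $[0,t_0]$ and $R_-\le\epsilon/t$ on $[t_0,1]$, and choosing first $t_0$ and then $\epsilon$ small.)
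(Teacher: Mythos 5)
Your skeleton — a cutoff-weighted volume and a cutoff-weighted $\int R_-^p$, coupled differential inequalities, then ODE comparison — is exactly the paper's, and your volume inequality, the use of $2|Rc|^2\ge\frac{2}{m}R^2$ with $p>\frac m2$, and the roles of Lemma~\ref{lem4.1} and Lemma~\ref{lem4.2} are all as in the paper. But the gap you flag is genuine, and it sits at the heart of the proof. Having discarded the negative term $-\left(\frac{2p}{m}-1\right)\int\eta\,R_{-}^{p+1}dv$, you are stuck with $\int\frac{|\nabla\eta|^2}{\eta}R_{-}^{p}dv$: for any fixed cutoff this weight is of order one (not of order $\eta$) on the outer part of the annulus, so it cannot be dominated by $C\big(\int\eta\,dv+\int\eta R_-^p\,dv\big)$, and the repair you sketch — controlling $|\nabla_{g(t)}\eta|$ through the distortion of $g(t)$ relative to $g(0)$, with nested cutoffs and the bound $R_-\le\epsilon/t$ — has nothing to feed on, since a lower scalar curvature bound alone gives no two-sided metric or distance-distortion control along the flow. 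That route would not close.

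The paper closes the loop purely algebraically, with no distortion input (it simply bounds $|\nabla\eta|\le|\eta'|$). The ingredients you are missing are: (a) \emph{retain} the good term $-\left(\frac{2p}{m}-1\right)\int\eta^{3/2}R_-^{p+1}dv$ rather than dropping it — its strict negativity is what eventually absorbs the cutoff-gradient term; (b) use a cutoff vanishing to high polynomial order, with $\eta'=-\eta^{\frac{4p+3}{4p+4}}$ near the outer boundary (see (\ref{eqn:OH12_12})--(\ref{eqn:OH12_13})), and weight the curvature integral by $\eta^{3/2}$ instead of $\eta$, so that $\frac{|\eta'|^2}{\eta^{1/2}}\le C\eta^{\frac{3p+2}{2p+2}}=C\big(\eta^{3/2}\big)^{\frac{p}{p+1}}\eta^{\frac{1}{p+1}}$; H\"older and Young then absorb the cutoff-gradient term into the retained $R_-^{p+1}$ term plus $C\int\eta\,dv$, as in (\ref{eqn:OH12_20})--(\ref{eqn:OH12_21}); (c) the price of the $\eta^{3/2}$ weight is paid in the volume inequality, where H\"older now produces $\int\eta^{\frac{2p-3}{2p-2}}dv$, which is not $\le C\int\eta\,dv$ near $\{\eta=0\}$; the paper repairs this with the \emph{moving} cutoff $\eta=\eta\big(d_{g(0)}(x_0,\cdot)+t^{\alpha}\big)$, $\alpha=\frac{p-1}{2p+2}$, whose time derivative satisfies $\alpha\eta' t^{\alpha-1}\le\eta' R_-^{\frac{p+3}{2p+2}}$ by (\ref{eqn:OH12_3}) (this is the only place the hypothesis $R_-\le\epsilon/t$ enters the proof of (\ref{4.9})--(\ref{4.8})), and this negative contribution exactly cancels the boundary-zone term, cf.\ (\ref{eqn:OH12_5}) and (\ref{eqn:OH12_9}). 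So your guess that $R_-\le\epsilon/t$ acts through the time-dependence of the cutoff is in the right spirit, but the mechanism is cancellation in the volume inequality, not distortion control. Your Gr\"onwall step and the final restriction to the set $\{\eta\equiv1\}\supset B_{g(0)}(x_0,r_0-2)$ (valid since $t^{\alpha}\le1$) are fine as written.
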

	
	\begin{proof}
	        The proof consists of five steps. We shall first construct a cutoff function $\eta$ with proper properties in Step 1.
	        Then we calculate the evolution of $\int_{M} \eta dv$ and $\int_{M} \eta^{\frac{3}{2}} R_{-}^{p} dv$ in Step 2 and Step 3, respectively.
	        In Step 4, we dominate $\int_{M} \eta dv$ and $\int_{M} \eta^{\frac{3}{2}} R_{-}^{p} dv$ by ODE solutions which 
	        can be calculated explicitly.   Finally, in Step 5, we  focus on the domain where $\eta \equiv 1$ and  finish the proof of the estimate (\ref{4.9}) and (\ref{4.8}). 
		All constants $C$ in this proof depend only on $m$ and $p$ and may vary from line to line. \\
		
		\noindent
		\textit{Step 1.  Construction of a proper cutoff function $\eta$.}

		Define a cut-off function $\eta :\mathbb{R}\rightarrow\mathbb{R}$ satisfying $\eta'\leq 0$ and 
		\begin{align}
		\eta(s)=
		\begin{cases}
		0,  & s \in[r_0,\infty);\\
		\left(\frac{1}{4p+4}(r_0-x)\right)^{4p+4} ,  & s \in\left[ r_0-\frac{1}{3},r_0\right] ; \\
		1 ,  & s \in (-\infty, r_0-1] .
		\end{cases}
		\label{eqn:OH12_11}
		\end{align}
		Furthermore,  we can choose an $\eta \in C^{3}$ such that
		\begin{align}
		&\eta'=-\eta^{\frac{4p+3}{4p+4}}, \quad \textrm{if} \; s \in\left[ r_0-\frac{1}{3},r_0\right];  \label{eqn:OH12_12}\\
		&-\eta''\leq C\eta, \quad \eta^{\frac{2p-3}{2p-2}} \leq C\eta, \quad 0 \leq  -\eta' \leq C\eta^\frac{4p+3}{4p+4}, \quad \textrm{if} \; s \in\left[ 0,r_0-\frac{1}{3}\right]. \label{eqn:OH12_13}
		\end{align}
		Abusing notation, we define
		\begin{align}
		&\alpha \coloneqq \frac{p-1}{2p+2},  \label{eqn:OH12_1}\\
		&\eta \coloneqq \eta(d_{g(0)}(x_0,x)+t^\alpha).   \label{eqn:OH12_2}
		 \end{align}
		
		\vspace{4mm}
		
		\noindent
		\textit{Step 2.  It holds that 
		\begin{align}
		\frac{d}{dt}\int_M \eta dv \leq C \left\{  \int_M \eta dv+ \int_M \eta^\frac{3}{2}R_{-}^p dv \right\}.    \label{eqn:OH12_6}
		\end{align}
		}
		
		Direct calculation shows that
		\begin{align}
		\frac{d}{dt}\int_M \eta dv 
		=\int_M \left( \frac{d}{dt}\eta\right)  dv+\int_M \eta \left( \frac{d}{dt}dv\right)  
		=\int_M \eta'\alpha t^{\alpha-1}dv+\int_M \eta (-R)dv. 
		\label{eqn:OH12_4}
		\end{align}
		Since $\eta' \leq 0$ and $\alpha \in (0, 1)$,  it follows from (\ref{eqn:OH12_3}) and (\ref{eqn:OH12_1}) that 
		\begin{align}
		   \alpha \eta't^{\alpha-1}\leq  \alpha \eta'\frac{R_-^{1-\alpha}}{\epsilon^{1-\alpha}}=\frac{\alpha}{\epsilon^{1-\alpha}} \cdot \eta'  \cdot R_{-}^{1-\alpha}  \leq \eta' R_{-}^{1-\alpha}= \eta' R_{-}^{\frac{p+3}{2p+2}}.   \label{eqn:OH12_5}
		\end{align}
		Plugging (\ref{eqn:OH12_5}) into (\ref{eqn:OH12_4}) and noting that $-R \leq R_{-}$,  we obtain 
		\begin{align}
		\frac{d}{dt}\int_M \eta dv 	\leq \int_M \eta'R_-^{\frac{p+3}{2p+2}}dv+\int_M \eta R_{-} dv.   \label{eqn:OH12_7}
		\end{align}
		Now we divide the support of $\eta$ into two parts:
		\begin{align*}
		 &M_1 \coloneqq \left\lbrace x\in M:d_{g(0)}(x_0,x)+t^\alpha \in\left[ 0,r_0-\frac{1}{3}\right] \right\rbrace, \\
		 &M_2 \coloneqq \left\lbrace x\in M:d_{g(0)}(x_0,x)+t^\alpha \in\left[ r_0-\frac{1}{3},r_0\right] \right\rbrace. 
		\end{align*}
		It follows from (\ref{eqn:OH12_12}) that $\eta'=-\eta^{\frac{4p+3}{4p+4}}$ on $M_2$.  Then (\ref{eqn:OH12_7}) can be written as
		\begin{align}
		\frac{d}{dt}\int_M \eta dv 
		&\leq \int_{M_1} \eta'R_-^{\frac{p+3}{2p+2}}dv+\int_{M_2} \eta'R_-^{\frac{p+3}{2p+2}}dv+\int_{M_1} \eta R_-dv+ \int_{M_2} \eta R_-dv \notag\\
		&\leq \int_{M_2} -\eta^{\frac{4p+3}{4p+4}}R_-^{\frac{p+3}{2p+2}}dv+\int_{M_1} \eta R_-dv+\int_{M_2} \eta R_-dv.   \label{eqn:OH12_10}
		\end{align}
		H\"{o}lder's inequality and Young's inequality imply that
		\begin{align}
		\int_{M_1} \eta R_-dv&=\int_{M_1} (\eta^{\frac{3}{2p}} R_{-}) \cdot \eta^{\frac{2p-3}{2p}} dv
		\leq \bigg(\int_{M_1} \eta^\frac{3}{2}R_-^p dv\bigg)^{\frac{1}{p}} \bigg(\int_{M_1} \eta^{\frac{2p-3}{2p-2}}dv\bigg)^{\frac{p-1}{p}} \notag\\
		&\leq C\int_{M_1} \eta^\frac{3}{2}R_-^p dv+\int_{M_1} \eta^{\frac{2p-3}{2p-2}}dv.   \label{eqn:OH12_8}
		\end{align}
		Similarly, we have
		\begin{align}
		\int_{M_2} \eta R_-dv&=\int_{M_2} \left(\eta^{\frac{3}{4p+6}} R_{-}^{\frac{p}{2p+3}} \right) \cdot \left( \eta^{\frac{4p+3}{4p+6}} R_{-}^{\frac{p+3}{2p+3}} \right) dv  \notag\\
		&\leq \left(\int_{M_2} \eta^\frac{3}{2}R_-^p dv\right)^{\frac{1}{2p+3}} 
		   \left(\int_{M_2} \eta^{\frac{4p+3}{4p+4}}R_-^{\frac{p+3}{2p+2}}dv \right)^{\frac{2p+2}{2p+3}} \notag\\
		&\leq C\int_{M_2} \eta^\frac{3}{2}R_-^p dv+\int_{M_2} \eta^{\frac{4p+3}{4p+4}}R_-^{\frac{p+3}{2p+2}}dv.  \label{eqn:OH12_9}
		\end{align}
		Plugging (\ref{eqn:OH12_8}) and (\ref{eqn:OH12_9}) into (\ref{eqn:OH12_10}) and applying (\ref{eqn:OH12_13}),  we obtain
		\begin{align}
			\frac{d}{dt}\int_M \eta dv &\leq C\int_M \eta^\frac{3}{2}R_-^p dv+\int_{M_1} \eta^{\frac{2p-3}{2p-2}}dv 
			\leq C \left\{  \int_M \eta dv+ \int_M \eta^\frac{3}{2}R_{-}^p dv \right\}, 
		\end{align}
		which is nothing but (\ref{eqn:OH12_6}). \\

		\noindent	
		\textit{Step 3.  It holds that 
		\begin{align}
		   \frac{d}{dt} \int_M \eta^\frac{3}{2}R_{-}^pdv  \leq C \left\{  \int_M \eta dv+ \int_M \eta^\frac{3}{2}R_{-}^p dv \right\}.    \label{eqn:OH12_14}
		\end{align}
		}

		For each time $t$, we set 
		\begin{align*}
		  M_{-}(t) \coloneqq \{x\in M:R(x,t)\leq 0\}. 
		\end{align*}
		Then $R_{-} \equiv 0$ on $M \backslash M_{-}(t)$ and 
		\begin{align*}
		  \int_M\eta^\frac{3}{2}R_-^pdv=\int_{M_-}\eta^\frac{3}{2}(-R)^p dv. 
		\end{align*}
        From Lemma \ref{lem4.1}, there holds
		\begin{align}
			&\quad \frac{d}{dt}\int_M \eta^\frac{3}{2}R_-^p dv 
			   = \frac{d}{dt}\int_{M_-}\eta^\frac{3}{2}(-R)^p dv \notag\\
			&= \int_{M_-}\left( \frac{d}{dt}\eta^\frac{3}{2}\right) (-R)^pdv+\int_{M_-} \eta^\frac{3}{2} p(-R)^{p-1}\frac{d}{dt}(-R)dv 
			+\int_{M_-} \eta^\frac{3}{2}(-R)^p\left( \frac{d}{dt}dv\right). 
	         \label{eqn:OH12_15}		
		\end{align}
		Along the Ricci flow, we have
		\begin{align*}
		  &\frac{d}{dt}\eta^\frac{3}{2}=\frac{3}{2}\eta^{\frac{1}{2}}\eta'\alpha t^{\alpha-1}\leq 0, \\
		  &\frac{d}{dt} dv= - R dv, \\
		  &\frac{d}{dt} R=\Delta R + 2|Rc|^2 \geq \Delta R +\frac{2R^2}{m}. 
		\end{align*}
		Putting them into (\ref{eqn:OH12_15}) and noting that $p>\frac{m}{2}$, we obtain
		\begin{align}
		   \frac{d}{dt}\int_M \eta^\frac{3}{2}R_-^p dv 
		   \leq \int_{M_-} \eta^\frac{3}{2} p(-R)^{p-1}\Delta (-R)dv-\left( \frac{2p}{m}-1\right) \int_{M_-} \eta^\frac{3}{2}(-R)^{p+1}dv.  
		   \label{eqn:OH12_16}   
		\end{align}
		Note that
		\begin{align*}
		     \Delta (-R)^p=p(p-1) (-R)^{p-2}|\nabla (-R)|^2+p(-R)^{p-1}\Delta (-R). 
		\end{align*}
		Plugging the above equation into (\ref{eqn:OH12_16}) yields that 
		\begin{align}
			&\quad \frac{d}{dt}\int_M \eta^\frac{3}{2}R_-^pdv \notag\\ 
			&\leq \int_{M_-} \eta^\frac{3}{2} \Delta (-R)^pdv-\int_{M_-} \eta^\frac{3}{2} p(p-1) (-R)^{p-2}|\nabla (-R)|^2dv
			-\left( \frac{2p}{m}-1\right) \int_{M_-} \eta^\frac{3}{2}(-R)^{p+1}dv.  
			\label{eqn:OH12_19}
		\end{align}
		Since $(-R)^p=0$ and $\nabla (-R)^p=0$ on the boundary of $M_{-}(t)$, it follows from Lemma \ref{lem4.2} and elementary inequality that
		\begin{align}
		  &\quad \int_{M_-} \eta^\frac{3}{2} \Delta (-R)^p dv \notag\\
		  &=\int_{M_-} p(-R)^{p-1} \left\langle \frac{3}{2} \eta^{\frac{1}{2}} \nabla \eta, \nabla (-R) \right\rangle dv
		    \leq \frac{3}{2}\int_{M_-} \eta^{\frac{1}{2}}|\nabla\eta|p(-R)^{p-1}|\nabla (-R)|dv \notag\\
		  &\leq \int_{M_-} \eta^\frac{3}{2} p(p-1) (-R)^{p-2}|\nabla (-R)|^2dv + \frac{9p}{16(p-1)}\int_{M_-} \frac{|\nabla\eta|^2}{\eta^{\frac{1}{2}}}(-R)^p dv.   \label{eqn:OH12_17}
		\end{align}
		In light of the choice of $\eta$ in (\ref{eqn:OH12_12}) and (\ref{eqn:OH12_13}), it is clear that $|\nabla\eta|^2\leq |\eta'|^2\leq C\eta^{\frac{4p+3}{2p+2}}$.
		Thus 
		\begin{align}
		  \frac{|\nabla\eta|^2}{\eta^{\frac{1}{2}}}\leq C \eta^{\frac{3p+2}{2p+2}}.   \label{eqn:OH12_18}
		\end{align}
		Combining (\ref{eqn:OH12_19}), (\ref{eqn:OH12_17}) and (\ref{eqn:OH12_18}), we obtain 
		\begin{align*}
			\frac{d}{dt}\int_M \eta^\frac{3}{2}R_-^p dv 
			\leq C\int_{M_{-}} \eta^{\frac{3p+2}{2p+2}} R_{-}^{p}dv -\left(\frac{2p}{m}-1\right) \int_{M_{-}} \eta^\frac{3}{2}R_-^{p+1}dv. 
		\end{align*}
	         H\"older's inequality yields that
		\begin{align*}
		  \int_{M_{-}} \eta^{\frac{3p+2}{2p+2}} R_{-}^{p}dv=\int_{M_{-}}  \left\{ \eta^{\frac{3p}{2p+2}} R_{-}^{p} \right\} \cdot \eta^{\frac{1}{p+1}}dv
		  \leq \left(\int_{M_{-}} \eta^\frac{3}{2}R_-^{p+1}dv \right)^{\frac{p}{p+1}}\bigg(\int_{M_{-}} \eta dv \bigg)^{\frac{1}{p+1}}. 
		\end{align*}
		Consequently, we have
		\begin{align}
			\frac{d}{dt}\int_M \eta^\frac{3}{2}R_-^p dv 
			\leq C\left(\int_{M_{-}} \eta^\frac{3}{2}R_-^{p+1}dv \right)^{\frac{p}{p+1}}\bigg(\int_{M_{-}} \eta dv \bigg)^{\frac{1}{p+1}} -\left(\frac{2p}{m}-1\right) \int_M \eta^\frac{3}{2}R_-^{p+1}dv. 
			\label{eqn:OH12_20}
		\end{align}
		Note that $\frac{2p}{m}-1>0$, we can apply Young's inequality to obtain
		\begin{align}
		  C \bigg(\int_M \eta^\frac{3}{2}R_-^{p+1}dv \bigg)^{\frac{p}{p+1}}\bigg(\int_M \eta dv \bigg)^{\frac{1}{p+1}} 
		  \leq  \left(\frac{2p}{m}-1\right) \int_M \eta^\frac{3}{2}R_-^{p+1}dv + C \int_M \eta dv. 
		  \label{eqn:OH12_21}
		\end{align}
		It follows from the combination of (\ref{eqn:OH12_20}) and (\ref{eqn:OH12_21}) that
		\begin{align*}
		   \frac{d}{dt}\int_M \eta^\frac{3}{2}R_-^p dv \leq C \int_M \eta dv, 
		\end{align*}
		which directly implies (\ref{eqn:OH12_14}). \\
		
		\noindent
		\textit{Step 4.  For sufficiently large $L=L(p,m)$, the values of $\int_M \eta dv$ and $\int_M \eta^\frac{3}{2}R_-^p dv$ are dominated by the solutions of the ODE
		 \begin{align}
		 \begin{cases}
		 &\frac{d}{dt}f_1(t)  =  L(f_1(t)+f_2(t)),\\
		 &\frac{d}{dt}f_2(t)  =  L(f_1(t)+f_2(t)),  
		 \end{cases}
		 \label{eqn:OH12_22}
		 \end{align}
	         with the initial data $f_1(0)=\epsilon$, $f_2(0)=V_0$.  }

		Choosing $L=L(p,m)$ as the larger $C$ in (\ref{eqn:OH12_6}) and (\ref{eqn:OH12_14}), we have 
		\begin{equation}
			\left\{
			\begin{aligned}
				\frac{d}{dt}\int_M \eta^\frac{3}{2}R_-^pdv & \leq & L\left( \int_M \eta^\frac{3}{2}R_-^pdv+\int_M \eta dv\right) , \\
				\frac{d}{dt}\int_M \eta dv & \leq & L\left( \int_M \eta^\frac{3}{2}R_-^p dv+\int_M \eta dv\right) .
			\end{aligned}
			\right.
		\end{equation}
	    Define
	    \begin{align}
	     h_1(t) \coloneqq f_1(t)-\int_M \eta^\frac{3}{2}R_-^p dv, \quad
	     h_2(t) \coloneqq f_2(t)-\int_M \eta dv. 
	     \label{eqn:OH12_28}
	    \end{align}
	    In light of (\ref{eqn:OH12_26a}) and (\ref{eqn:OH12_26b}), it is clear that $h_1(0) \geq 0$ and $h_2(0) \geq 0$.  In particular,  we have
	    \begin{align}
	     h_1(0)+h_2(0) \geq 0.  \label{eqn:OH12_23}
	    \end{align}
	    Direct calculation implies that
	     \begin{align}
		 \begin{cases}
		 &\frac{d}{dt}h_1(t)  \geq  L(h_1(t)+h_2(t)),\\
	      &\frac{d}{dt}h_2(t)  \geq  L(h_1(t)+h_2(t)).
		 \end{cases}
		 \label{eqn:OH12_25}
		 \end{align}
	    Thus
	    \begin{align}
	      \frac{d}{dt} \left\{ h_1(t) +h_2(t) \right\} \geq 2L(h_1(t)+h_2(t)).     \label{eqn:OH12_24}
	    \end{align}
	    It follows from (\ref{eqn:OH12_24}) and (\ref{eqn:OH12_23}) that $h_1(t)+h_2(t) \geq 0$ is preserved. 
	    Consequently, the application of (\ref{eqn:OH12_25}) yields that
	    \begin{align*}
	        h_i(t)\geq h_i(0)\geq 0, \quad \forall \; i=1,2, \; t \in [0,1].
	    \end{align*}
	    In light of (\ref{eqn:OH12_28}), the above inequalities mean that
	    \begin{align}
	       \int_M \eta^\frac{3}{2}R_{-}^p dv  \leq  f_1(t), \quad \int_M \eta dv  \leq  f_2(t).    \label{eqn:OH12_29} 
	    \end{align}

	    \vspace{4mm}
	    \noindent
	    \textit{Step 5.  Solve the ODE and finish the proof of (\ref{4.9}) and (\ref{4.8}).}

	    Solving the ODE (\ref{eqn:OH12_22}), it is easy to see that
	    \begin{align}
	      f_1(t)  =  \epsilon +\frac{(V_0+\epsilon)\left( e^{2Lt}-1\right) }{2}, \quad  f_2(t)  =  V_0+\frac{(V_0+\epsilon)\left( e^{2Lt}-1\right) }{2}.
	      \label{eqn:OH12_27}
	    \end{align}	    	
	   Note that $\eta(x,t)=1$ for all $x \in B_{g(0)}(x_0,r_0-2)$ and $t\in [0,1]$.   Thus it follows from (\ref{eqn:OH12_29}) and (\ref{eqn:OH12_27}) that 
	   \begin{align*}
	    & \int_{B_{g(0)}(x_0, r_0-2)} R_{-}^p dv \leq \int_M \eta^\frac{3}{2}R_{-}^p dv  \leq  f_1(t)=\epsilon +\frac{(V_0+\epsilon)\left( e^{2Lt}-1\right) }{2},\\
	    & vol_{g(t)} \left( B_{g(0)}(x_0, r_0-2) \right) \leq  \int_M \eta dv \leq V_0+\frac{(V_0+\epsilon)\left( e^{2Lt}-1\right) }{2}.
	   \end{align*}
	   Replacing $L$ by $C=C(m,p)$ in the above inequalities, we obtain (\ref{4.9}) and (\ref{4.8}).  The proof of Theorem \ref{t4.2} is complete.
	  	\end{proof}

	\begin{corollary}
	
		Same conditions as in Theorem \ref{t4.2}.  Then we have  
		\begin{align}
			\int_{0}^{1}dt\int_{B_{g(0)}(x_0,r_0-2)} R_{-,g(t)}dv\leq \psi(\epsilon |m,p,V_0).
		\label{eqn:OH12_31}	
		\end{align}
		
        \label{cly:OH12_32}		
	\end{corollary}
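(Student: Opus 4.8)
The plan is to deduce the space-time integral bound for $R_-$ directly from the two conclusions of Theorem~\ref{t4.2}, namely the volume upper bound (\ref{4.9}) and the $L^p$-bound (\ref{4.8}) for $R_{-,g(t)}$ on $B_{g(0)}(x_0,r_0-2)$, together with a \Holder interpolation that converts an $L^p$-in-space estimate into an $L^1$-in-space estimate at each fixed time. First I would observe that since $r_0 \geq 3$, the ball $B_{g(0)}(x_0, r_0-2)$ is nonempty and, by the volume comparison already available (Lemma~\ref{l2.4}) applied under the initial metric, its $g(0)$-volume is bounded above in terms of $V_0$; more importantly, by (\ref{4.9}) its $g(t)$-volume stays bounded by $V_0 + \tfrac{(V_0+\epsilon)(e^{2C}-1)}{2} =: V_1 = V_1(m,p,V_0)$ for all $t \in [0,1]$, since $\epsilon$ is small.

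The key step is the pointwise-in-time interpolation. Fix $t \in (0,1]$ and write $\Omega = B_{g(0)}(x_0, r_0-2)$. By \Holder's inequality with exponents $p$ and $\tfrac{p}{p-1}$,
\begin{align*}
  \int_{\Omega} R_{-,g(t)} \, dv_{g(t)}
  \leq \left( \int_{\Omega} R_{-,g(t)}^p \, dv_{g(t)} \right)^{\frac{1}{p}} \cdot \left( vol_{g(t)}(\Omega) \right)^{\frac{p-1}{p}}
  \leq \left( \epsilon + \frac{(V_0+\epsilon)(e^{2Ct}-1)}{2} \right)^{\frac{1}{p}} V_1^{\frac{p-1}{p}},
\end{align*}
where the last inequality uses (\ref{4.8}) and the volume bound $vol_{g(t)}(\Omega) \leq V_1$. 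Integrating this over $t \in [0,1]$ gives
\begin{align*}
  \int_0^1 dt \int_{\Omega} R_{-,g(t)} \, dv_{g(t)}
  \leq V_1^{\frac{p-1}{p}} \int_0^1 \left( \epsilon + \frac{(V_0+\epsilon)(e^{2Ct}-1)}{2} \right)^{\frac{1}{p}} dt.
\end{align*}
Since the integrand on the right is bounded by $\left( \epsilon + \tfrac{(V_0+\epsilon)(e^{2C}-1)}{2} \right)^{1/p}$, which tends to $0$ as $\epsilon \to 0$ (with $m, p, V_0$ fixed, as $C = C(m,p)$), the entire right-hand side is a quantity of the form $\psi(\epsilon | m, p, V_0)$ with $\lim_{\epsilon \to 0} \psi = 0$. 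This establishes (\ref{eqn:OH12_31}).

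I do not expect a genuine obstacle here — the corollary is a soft consequence of Theorem~\ref{t4.2}. The only point requiring a small amount of care is making sure the volume factor $vol_{g(t)}(\Omega)^{(p-1)/p}$ in the \Holder step is controlled uniformly in $t$; this is exactly what (\ref{4.9}) provides, so no new volume-comparison input is needed beyond Theorem~\ref{t4.2} itself. One could alternatively absorb the $vol_{g(t)}(\Omega)$ factor using (\ref{4.9}) with the slightly larger constant and keep track of the explicit dependence, but since the statement only asks for a bound of the form $\psi(\epsilon|m,p,V_0)$ vanishing as $\epsilon \to 0$, the crude estimate above suffices.
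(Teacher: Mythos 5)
There is a genuine gap in your argument: you treat the conclusion (\ref{4.8}) of Theorem~\ref{t4.2} as a \emph{smallness} statement for all $t\in[0,1]$, but it is not. The right-hand side of (\ref{4.8}) at time $t$ is $\epsilon+\tfrac{(V_0+\epsilon)(e^{2Ct}-1)}{2}$ with $C=C(m,p)$ a fixed (not small) constant and $V_0$ a fixed volume bound of order one; for $t$ of order one this quantity is bounded below by $\tfrac{V_0(e^{2C}-1)}{2}$, which does \emph{not} tend to $0$ as $\epsilon\to0$. Consequently your H\"older step only yields the uniform estimate $\int_{\Omega}R_{-,g(t)}\,dv_{g(t)}\leq C(m,p)V_0$ for each $t$, and integrating in time gives $C(m,p)V_0$ rather than a quantity $\psi(\epsilon|m,p,V_0)$ vanishing with $\epsilon$. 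The assertion in your proof that the integrand ``tends to $0$ as $\epsilon\to0$'' is exactly where the argument fails; the corollary is not a soft consequence of the two integral conclusions of Theorem~\ref{t4.2} alone.

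The missing ingredient is the pointwise hypothesis (\ref{eqn:OH12_3}), $R_{-}(\cdot,t)\leq \epsilon/t$, which is part of the ``same conditions as in Theorem~\ref{t4.2}'' and is what the paper actually uses. Split the time interval as $[0,\epsilon]\cup[\epsilon,1]$. On $[\epsilon,1]$, combine $R_{-}\leq\epsilon/t$ with the volume bound (\ref{4.9}) to get
\begin{align*}
\int_{\epsilon}^{1}\!\!\int_{B_{g(0)}(x_0,r_0-2)} R_{-,g(t)}\,dv\,dt\;\leq\; CV_0\int_{\epsilon}^{1}\frac{\epsilon}{t}\,dt\;=\;CV_0\,\epsilon\log\frac{1}{\epsilon},
\end{align*}
which is small. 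On $[0,\epsilon]$, your H\"older interpolation (using (\ref{4.8}) and (\ref{4.9})) gives the uniform bound $\int_{\Omega}R_{-,g(t)}\,dv\leq CV_0$, and the shortness of the interval supplies the smallness: the contribution is at most $CV_0\epsilon$. Adding the two pieces yields (\ref{eqn:OH12_31}). So the smallness must come either from the $\epsilon/t$ bound (away from $t=0$) or from the shortness of the time interval (near $t=0$), not from the $L^p$ estimate along the flow, which is only a boundedness statement.
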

		
	\begin{proof}
		It follows from (\ref{eqn:OH12_3}) and (\ref{4.9}) that
		\begin{align}
			&\quad \int_{\epsilon}^{1}dt\int_{B_{g(0)}(x_0,r_0-2)} R_{-,g(t)}dv \notag\\
			&\leq \int_{\epsilon}^{1} vol_{g(t)}\left( {B_{g(0)}(x_0,r_0-2)}\right)\frac{\epsilon}{t}dv dt
			   \leq  C V_0\int_{\epsilon}^{1} \frac{\epsilon}{t}dv dt \notag\\
			 &=C V_0\left(-\epsilon \log \epsilon \right)=\psi(\epsilon |m,p,V_0),  \label{4.25}
		\end{align}
		as $\displaystyle \lim_{x \to 0^{+}} x\log x=0$. 		
		Since $\epsilon<<V_0$, it follows from (\ref{4.9}) and (\ref{4.8}) that 
		\begin{align*}
		  vol_{g(t)}\left( {B_{g(0)}(x_0,r_0-2)}\right) +  \int_{B_{g(0)}(x_0,r_0-2)} R_{-,g(t)}^p dv  \leq CV_0  
		\end{align*}
		for each $t \in [0,1]$.  Consequently, we can apply H\"{o}lder inequality to obtain 
		\begin{align}
		  \int_{B_{g(0)}(x_0,r_0-2)} R_{-,g(t)}dv \leq \left(\int_{B_{g(0)}(x_0,r_0-2)} R_{-,g(t)}^p dv\right)^{\frac{1}{p}} \left( vol_{g(t)}\left( {B_{g(0)}(x_0,r_0-2)}\right)\right)^{\frac{p-1}{p}}
		  \leq CV_0 
		  \label{eqn:OH12_30}
		\end{align}
		for each $t \in [0,1]$.  Thus
		\begin{align}
		  \int_{0}^{\epsilon}dt\int_{B_{g(0)}(x_0,r_0-2)} R_{-,g(t)}dv \leq  CV_0 \epsilon=\psi(\epsilon |m,p,V_0).      \label{4.26}
		\end{align}
		Therefore,  (\ref{eqn:OH12_31}) follows from the combination of (\ref{4.25}) and (\ref{4.26}). 
	\end{proof}

	\begin{theorem}
	Same conditions as in Theorem~\ref{thm:OF28_5}. Then we have  
		\begin{align}
		\int_{0}^{1} \int_{B_{g(0)}(x_0, 1)} |R| dv_{g(t)} dt \leq \psi(\delta|m,p). 
		\label{eqn:OH10_2}
		\end{align}
        \label{thm:OF28_8}		
       \end{theorem}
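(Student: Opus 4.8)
The plan is to split the integrand as $|R|=R+2R_{-}$ and estimate the contributions of $R$ and of $R_{-}$ separately; both turn out to be $\le\psi$. Throughout, $\psi$ denotes a quantity depending only on $\delta,m,p$ with $\psi\to0$ as $\delta\to0$ whose value may change from line to line, and we use that the pseudo-locality constant $\epsilon$ in Theorem~\ref{thm:OF28_5} may be prescribed as small as we wish provided $\delta$ is correspondingly small, so that effectively $\epsilon=\epsilon(\delta)\to0$ and every $\psi(\epsilon|m,p)$ is a $\psi(\delta|m,p)$.

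First I would bound $\int_{0}^{1}\!\int_{B_{g(0)}(x_{0},1)}R_{-,g(t)}\,dv_{g(t)}\,dt$ by applying Corollary~\ref{cly:OH12_32} with $r_{0}=3$, which yields exactly an integral over $B_{g(0)}(x_{0},r_{0}-2)=B_{g(0)}(x_{0},1)$. Its hypotheses (conditions (\ref{eqn:OH12_26a}), (\ref{eqn:OH12_26b}) and (\ref{eqn:OH12_3}) of Theorem~\ref{t4.2}) follow from the standing assumptions (\ref{eqn:OF29_6a})--(\ref{eqn:OF29_6b}): the volume bound $vol_{g(0)}\big(B_{g(0)}(x_{0},3)\big)\le V_{0}:=2\cdot3^{m}\omega_{m}$ comes from the area comparison Lemma~\ref{l2.6} integrated in the radius (with $\kappa(p,\cdot)$ small near $x_{0}$ by (\ref{eqn:OF29_6a}) and Lemma~\ref{l2.5}); the initial bound $\int_{B_{g(0)}(x_{0},3)}R_{-,g(0)}^{p}\,dv\le\epsilon$ follows from $R_{-}\le m\,Rc_{-}$ (inequality (\ref{eqn:OH25_10})) together with (\ref{eqn:OF29_6a}), since $\delta^{4p-m}\to0$ as $p>\tfrac m2$; and the pointwise bound $R_{-,g(t)}(x)\le\epsilon t^{-1}$ on $B_{g(0)}(x_{0},3)$ for $t\in(0,1]$ follows from the pseudo-locality curvature estimate (\ref{eqn:OH27_15a}) and $R_{-}\le C(m)|Rm|$, the curvature estimate being available on a $g(t)$-ball about $x_{0}$ of radius far exceeding $3$ once $\delta$ is small (relabel $C(m)\epsilon$ as $\epsilon$). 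Corollary~\ref{cly:OH12_32} then gives $\int_{0}^{1}\!\int_{B_{g(0)}(x_{0},1)}R_{-,g(t)}\,dv_{g(t)}\,dt\le\psi$.

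Next I would estimate $\int_{0}^{1}\!\int_{B}R_{g(t)}\,dv_{g(t)}\,dt$ for the fixed domain $B:=B_{g(0)}(x_{0},1)$. Since $\frac{d}{dt}vol_{g(t)}(B)=-\int_{B}R\,dv_{g(t)}$, we get $\int_{0}^{1}\!\int_{B}R\,dv_{g(t)}\,dt=vol_{g(0)}(B)-vol_{g(1)}(B)$. Volume comparison (Lemma~\ref{l2.5}, Lemma~\ref{l2.6}) gives $vol_{g(0)}(B)\le(1+\psi)\omega_{m}$, so the task reduces to the lower bound $vol_{g(1)}(B)\ge(1-\psi)\omega_{m}$. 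For this I would combine the pseudo-locality conclusions at $t=1$ with a distance comparison along the flow: the curvature bound (\ref{eqn:OH27_15a}), the non-collapsing (\ref{eqn:OH27_15b}) and the injectivity bound (\ref{eqn:OH27_15c}) give $vol_{g(1)}\big(B_{g(1)}(x_{0},\rho)\big)\ge(1-\epsilon)\omega_{m}\rho^{m}$ for $\rho\le\epsilon^{-1}$, and since $|Rm|_{g(t)}\le\epsilon t^{-1}$ (hence $Rc_{g(t)}\le C(m)\epsilon t^{-1}$) on a large $g(t)$-ball about $x_{0}$ for all $t\in(0,1]$, Perelman's distance-change inequality applied with comparison balls of radius $\sim\sqrt{t/\epsilon}$ gives $\frac{d}{dt}d_{g(t)}(x_{0},y)\ge-C\sqrt{\epsilon/t}$ for every $y$ with $d_{g(0)}(x_{0},y)\le2$; integrating from $t_{0}$ to $1$, using $\int_{0}^{1}\sqrt{\epsilon/t}\,dt=2\sqrt\epsilon$, and letting $t_{0}\downarrow0$ (so $d_{g(t_{0})}\to d_{g(0)}$ by continuity of the flow at $t=0$) yields $d_{g(1)}(x_{0},y)\ge d_{g(0)}(x_{0},y)-\psi$. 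Hence $B_{g(1)}(x_{0},1-\psi)\subset B_{g(0)}(x_{0},1)=B$, so $vol_{g(1)}(B)\ge(1-\epsilon)(1-\psi)^{m}\omega_{m}\ge(1-\psi)\omega_{m}$. Combining the two parts, $\int_{0}^{1}\!\int_{B}|R|\,dv_{g(t)}\,dt=\int_{0}^{1}\!\int_{B}R\,dv_{g(t)}\,dt+2\int_{0}^{1}\!\int_{B}R_{-}\,dv_{g(t)}\,dt\le\big[(1+\psi)\omega_{m}-(1-\psi)\omega_{m}\big]+2\psi\le\psi$.

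The step I expect to be the main obstacle is the lower volume bound $vol_{g(1)}\big(B_{g(0)}(x_{0},1)\big)\ge(1-\psi)\omega_{m}$, i.e. that the flow does not collapse the fixed $g(0)$-ball by time $1$. The naive distance distortion coming from $|Rc|\le\epsilon/t$ is useless because $\int_{0}^{1}\epsilon t^{-1}\,dt$ diverges; one must instead use the sharper distance-change inequality whose integrand decays only like $t^{-1/2}$, which is integrable on $(0,1]$. One also has to verify that the pseudo-locality curvature estimate (\ref{eqn:OH27_15a}) holds on a ball about $x_{0}$ of radius large compared with the fixed radii $1,2,3$ and for all $t\in(0,1]$, which it does once $\delta$ is sufficiently small depending on $m,p$. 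The remaining ingredients---the decomposition $|R|=R+2R_{-}$, the identity for $\frac{d}{dt}vol_{g(t)}(B)$, the inequality $R_{-}\le mRc_{-}$, and Corollary~\ref{cly:OH12_32}---are routine.
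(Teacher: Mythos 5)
Your proposal is correct and follows essentially the same route as the paper: the decomposition $|R|=R+2R_{-}$, Corollary~\ref{cly:OH12_32} with $r_0=3$ for the negative part, and the volume-difference identity $\int\int R = vol_{g(0)}(B)-vol_{g(\cdot)}(B)$ combined with Perelman's distance estimate and the pseudo-locality non-collapsing for the positive part. The only cosmetic difference is that the paper splits time at $\sqrt{\epsilon}$, using $|R|\le C\epsilon/t$ together with the volume bound on $[\epsilon,1]$ and the volume difference only on $[0,\sqrt{\epsilon}]$ with the $\epsilon$-free rate $-2m/\sqrt{t}$, whereas you run the volume-difference argument over all of $[0,1]$ with the sharper rate $-C\sqrt{\epsilon/t}$; both variants work.
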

       
       \begin{proof}
       	 
          Since $|R|=R+2R_{-}$, there holds
          \begin{align}
            \int^1_0\int_{B_{g(0)}(x_0,1)}|R|dvdt\leq \int^1_0\int_{B_{g(0)}(x_0,1)}Rdvdt
          +2\int^1_0\int_{B_{g(0)}(x_0,1)}R_-dvdt.
        \label{eqn:OH10_1}  
        \end{align}
         By Theorem \ref{thm:OF28_5}, we have $|Rm|\leq \frac{\epsilon}{t}$ where $\epsilon=\psi(\delta|m,p)$. Therefore, we can apply Corollary~\ref{cly:OH12_32} and set  $r_0=3$. 
           Then we have
          \begin{equation}\label{4.27}
          \int^1_0\int_{B_{g(0)}(x_0,1)}R_-dvdt\leq \psi(\epsilon|m,p),
          \end{equation}
          \begin{equation}\label{4.28}
          \int^1_{\epsilon}\int_{B_{g(0)}(x_0,1)}|R|dvdt\leq \psi(\epsilon|m,p).
          \end{equation}
          Since $\frac{d}{dt}dv=-Rdv$ along the Ricci flow, it is clear that
          \begin{align}
          \int^{\sqrt{\epsilon}}_0\int_{B_{g(0)}(x_0,1)}Rdvdt=vol_{g(0)}\left( B_{g(0)}(x_0,1)\right) -vol_{g(\sqrt{\epsilon})}\left( B_{g(0)}(x_0,1)\right).
          \label{eqn:OH10_3}
          \end{align}
          We already know from given conditions that
          \begin{align}
            vol_{g(0)}(B_{g(0)}(x_0,1))\leq (1+\psi(\epsilon|m,p))\omega_m.   \label{eqn:OH27_14}
          \end{align}
          By Lemma 8.3 of~\cite{Pere}, we have  
          \begin{align*}
            \left(\frac{d}{dt}-\Delta \right) d_{g(t)}(x_0,x)\geq -\frac{2m}{\sqrt{t}},  \quad \textrm{on}  \quad  M \backslash B_{g(t)}\left(x_0, \sqrt{t} \right).  
          \end{align*}
          Thus we have
           \begin{align*}
                B_{g\left(\sqrt{\epsilon}\right) }\left( x_0,1-4m\epsilon^{\frac{1}{4}}\right) \subset B_{g(0)}(x_0,1).
           \end{align*}
          Applying (\ref{eqn:OH27_15b}), we have
          \begin{align}
          &\quad vol_{g(\sqrt{\epsilon})}\left( B_{g(0)}(x_0,1)\right) \notag\\
          &\geq vol_{g(\sqrt{\epsilon})}\left( B_{g(\sqrt{\epsilon})}\left( x_0,1-4m\epsilon^{\frac{1}{4}}\right)\right)
            \geq (1-\psi(\epsilon|m,p))\omega_m\left( 1-4m\epsilon^{\frac{1}{4}}\right)^m.    \label{eqn:OH10_4}
          \end{align}
          Plugging (\ref{eqn:OH10_4}) and (\ref{eqn:OH27_14}) into (\ref{eqn:OH10_3}), we obtain
          \begin{equation}\label{4.29}
          \int^{\sqrt{\epsilon}}_0\int_{B_{g(0)}(x_0,1)}Rdvdt\leq(1+\psi(\epsilon|m,p))\omega_m-(1-\psi(\epsilon|m,p))\omega_m\leq\psi(\epsilon|m,p).
          \end{equation}
          Therefore, (\ref{eqn:OH10_2}) follows from the combination of (\ref{eqn:OH10_1}), (\ref{4.27}), (\ref{4.28}) and (\ref{4.29}).                
       \end{proof}
	
	We remind the reader that Theorem~\ref{thm:OF21_1} was already proved since it follows directly from Theorem~\ref{t4.2} and Corollary~\ref{cly:OH12_32}. 
	It is also clear that Theorem~\ref{thm:OH25_4} follows from Theorem~\ref{thm:OF28_8} up to a standard covering argument.

	\section{Distance distortion and continuous dependence on the initial data}
	\label{sec:distortion}

In Proposition 5.3 of \cite{WB2}, if the lower bound of Ricci curvature  and the volume ratio is close to a Euclidean ball, one can prove that $\int^{1}_0\int_{B_{g(0)}(x_0,1)}|R|dvdt$ is close to 0.  
Namely, we have the local almost Einstein condition, 
which is used in Section 4 of \cite{TW} to obtain the distance distortion estimate. 
In this paper, we only have the Ricci curvature $L^p$-bound, rather than the point-wise lower bound. 
However,  the almost Einstein condition can still be derived, in light of Theorem~\ref{thm:OF28_8} in Section~\ref{sec:SLP}. 
Then the distance distortion estimate can be deduced, following the route in \cite{WB2}.

\begin{lemma}[{\cite{TW}}, Lemma 4.4]
	\label{l5.1}
		Suppose $\left\lbrace (M^m,g(t)), t\in[0,1]\right\rbrace$ is a Ricci flow solution satisfying (\ref{eqn:OF29_6}). 
		Let $\Omega=B_{g(0)}(x_0,1)$, $\Omega'=B_{g(0)}\left( x_0,\frac{1}{2}\right) $ and $\Omega''=B_{g(0)}\left( x_0,\frac{1}{4}\right) $. Define
		\begin{displaymath}
			A_+ \coloneqq \sup_{B_{g(0)}(x_0,r)\subset\Omega',0<r\leq2}\frac{vol_{g(0)}(B_{g(0)}(x_0,r))}{\omega_mr^m},
		\end{displaymath}
		\begin{displaymath}
			A_- \coloneqq \inf_{B_{g(\delta_0)}(x_0,r)\subset\Omega',0<r\leq2}\frac{vol_{g(\delta_0)}(B_{g(\delta_0)}(x_0,r))}{\omega_mr^m}.
		\end{displaymath}
		If $x_1,x_2\in\Omega''$, $l=d_{g(0)}(x_1,x_2)\leq \frac{1}{8}$, then we have
		\[l-CE^{\frac{1}{2m+3}}\leq d_{g(\delta_0)}(x_1,x_2)
		\leq l+ClA_+\left( \left| \frac{A_+}{A_-}-1\right| ^{\frac{1}{m}}+l^{\frac{-1}{m}}E^{\frac{1}{2m(m+3)}}\right), \]
		whenever $E=\int^{2\delta_0}_0\int_{\Omega}|R|dvdt<<l^{2m+3}$.
	\end{lemma}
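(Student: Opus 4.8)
The plan is to follow the strategy of Lemma 4.4 of \cite{TW}, adapted to the present setting where the pointwise Ricci lower bound is replaced by the pseudolocality estimates of Theorem~\ref{thm:OF28_5} and the almost Einstein bound supplied by Theorem~\ref{thm:OF28_8}. The starting point is the evolution of distance under the Ricci flow: for fixed points $x_1,x_2$ and a $g(t)$-minimizing geodesic $\gamma_t$ between them, one has in the barrier sense
\begin{align*}
   \frac{d}{dt}d_{g(t)}(x_1,x_2)\geq -\int_{\gamma_t}Rc(\dot\gamma_t,\dot\gamma_t)\,ds ,
\end{align*}
and, more quantitatively, by Perelman's argument (Lemma 8.3 of~\cite{Pere}, already used in Section~\ref{sec:SLP}) combined with the index form applied to a cutoff supported away from the two endpoints, the ``interior'' contribution of $\gamma_t$ is bounded above by $Cr^{-1}$ on any scale $r$, while near the endpoints one uses the pointwise bound $t|Rm|\leq\epsilon$ of Theorem~\ref{thm:OF28_5}, which also forces $Rc(\dot\gamma,\dot\gamma)\leq R_{+}+C\epsilon t^{-1}$ pointwise. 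Taking $r\sim\sqrt t$ while $\sqrt t\leq\frac l2$ controls the distance flow on a short initial interval; for the remaining times the interior part contributes at most $Cl^{-1}$ plus a line integral of $R_{+}$ along $\gamma_t$. Before doing any of this, one runs a continuity argument to ensure that all relevant geodesics stay inside $\Omega'=B_{g(0)}(x_0,\tfrac12)$ for $t\in[0,\delta_0]$, so that the pseudolocality hypotheses apply along them; this is where the smallness of $l$ and $\delta_0$ enters.

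\textbf{Lower bound.} To upgrade the above to $d_{g(\delta_0)}(x_1,x_2)\geq l-CE^{\frac{1}{2m+3}}$, I would not work with the single geodesic $\gamma_t$ but average over the target point. Fix $x_1$, let $y$ range over $B_{g(0)}(x_2,\rho)$ for a small radius $\rho$ to be optimized, and let $\sigma_{y,t}$ be the $g(t)$-minimizing geodesic from $x_1$ to $y$. A segment/co-area inequality --- the geodesics from $x_1$ to points of $B_{g(0)}(x_2,\rho)$ sweep out $\Omega$ with Jacobian bounded below by the volume comparison of Lemma~\ref{l2.6} and the non-collapsing of Theorem~\ref{thm:OF28_5} --- gives
\begin{align*}
   \fint_{B_{g(0)}(x_2,\rho)}\Big(\int_0^{2\delta_0}\!\!\int_{\sigma_{y,t}}|R|\,ds\,dt\Big)\,dv(y)\leq C\rho^{-(m-1)}\!\!\int_0^{2\delta_0}\!\!\int_{\Omega}|R|\,dv\,dt = C\rho^{-(m-1)}E .
\end{align*}
A Chebyshev argument then produces a ``good'' $y\in B_{g(0)}(x_2,\rho)$ for which the space-time line integral is at most $\lesssim\rho^{-(2m-1)}E$; combining this with the index-form estimate on the short initial interval and transferring back to the pair $x_1,x_2$ via the triangle inequality together with the (already established, by the bootstrap) distortion bound for the close pair $x_2,y$, one gets $d_{g(\delta_0)}(x_1,x_2)\geq l-C\big(\rho+\rho^{-(2m-1)}E+\psi(\epsilon)\big)$, and optimizing $\rho$, together with the choice of the short-time threshold, yields the exponent $\frac{1}{2m+3}$.

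\textbf{Upper bound.} For $d_{g(\delta_0)}(x_1,x_2)\leq l+ClA_{+}\big(|A_{+}/A_{-}-1|^{1/m}+l^{-1/m}E^{\frac{1}{2m(m+3)}}\big)$ one starts from $d_{g(\delta_0)}(x_1,x_2)\leq L_{g(\delta_0)}(\sigma)$ with $\sigma$ a $g(0)$-minimizing geodesic of length $l$. Here $L_{g(\delta_0)}(\sigma)\leq l+\int_0^{\delta_0}\!\int_\sigma \big(Rc(\dot\sigma,\dot\sigma)\big)_{-}\,ds\,dt$, and since $t|Rm|\leq\epsilon$ gives $\big(Rc(\dot\sigma,\dot\sigma)\big)_{-}\leq R_{-}+C\epsilon t^{-1}$, the line integral of $R_{-}$ is again controlled by averaging over $B_{g(0)}(x_2,\rho)$ and invoking $\int_0^1\!\int R_{-,g(t)}\,dv\,dt\leq\psi(\epsilon)$ of Corollary~\ref{cly:OH12_32}; this produces the term with $E^{\frac{1}{2m(m+3)}}$ after optimization. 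The purely geometric term $ClA_{+}|A_{+}/A_{-}-1|^{1/m}$ measures the mismatch between the near-Euclidean structures of $(M,g(0))$ and $(M,g(\delta_0))$ around $x_1,x_2$: using Bishop--Gromov type volume comparison (Lemma~\ref{l2.6}) on both time slices one passes from the volume-ratio quantities $A_{\pm}$ to a bi-Lipschitz comparison of the two metrics with a model Euclidean ball, following Section 4 of~\cite{TW}, with $A_{+}$ appearing as an overall prefactor because the length of $\sigma$ in the new metric is estimated through the volume density it sees. In our situation $A_{+}\leq 1+\psi(\epsilon)$ and $A_{-}\geq 1-\psi(\epsilon)$ by Theorem~\ref{thm:OF28_5} and Lemma~\ref{l2.6}, so this term is in fact harmless.

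\textbf{Main obstacle.} The principal difficulties are: (i) making the segment/co-area averaging rigorous, since geodesic balls need not have smooth boundary, cut loci must be discarded, and the distance functions on the two slices are only Lipschitz --- one checks the Jacobian lower bound away from the cut locus and uses that the latter is negligible; (ii) the genuine tension between the $t\to 0^{+}$ singularity of the bound $t|Rm|\leq\epsilon$ and the requirement that every error term be integrable in $t$ --- this is precisely why one splits at a threshold $\sqrt t\sim\rho$ and uses the index form rather than the pointwise bound on the short initial interval; and (iii) the continuity/bootstrap argument confining the geodesics to $\Omega'$, which must be arranged so as not to be circular with the distortion estimate being proved. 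Once these points are handled, what remains is the bookkeeping of exponents in the two optimizations, producing $\frac{1}{2m+3}$ and $\frac{1}{2m(m+3)}$ respectively.
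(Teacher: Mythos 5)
The paper's own ``proof'' of this lemma is not a re-derivation at all: it consists of the observation that the argument for Lemma 4.4 of \cite{TW} uses only the three pseudo-locality conclusions $t|Rm|\leq\epsilon$, the almost-Euclidean volume ratio, and the injectivity radius bound --- i.e. exactly (\ref{eqn:OH27_15}) of Theorem~\ref{thm:OF28_5} --- together with the smallness of $E$ supplied by Theorem~\ref{thm:OF28_8}, and then quotes the estimate of \cite{TW} verbatim. Your opening reduction (replace the pointwise Ricci lower bound of \cite{TW} by the outputs of Theorems~\ref{thm:OF28_5} and~\ref{thm:OF28_8}) is precisely this observation, so at the level at which the paper actually argues, your proposal and the paper coincide.

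However, the reconstruction of the interior of the \cite{TW} argument, which is the bulk of your text, should not be relied on as written. Concretely: (a) the exponent bookkeeping is inconsistent --- optimizing your own stated error $\rho+\rho^{-(2m-1)}E$ gives $E^{1/(2m)}$, not $E^{1/(2m+3)}$, your Chebyshev step produces a bound of the size of the mean $C\rho^{-(m-1)}E$ rather than $\rho^{-(2m-1)}E$, and the exponent $E^{\frac{1}{2m(m+3)}}$ in the upper bound is never derived; (b) more fundamentally, the lower bound cannot be run through $\frac{d}{dt}d_{g(t)}\geq-\int_{\gamma}Rc(\dot\gamma,\dot\gamma)\,ds$ plus a segment inequality for $|R|$: the segment inequality only controls line integrals of $|R|$, while the distance evolution needs the directional Ricci term, which can be negative and large where $R$ is small, and the only pointwise substitute $|Rc|\leq(m-1)\epsilon/t$ is not integrable at $t=0$ and in any case yields an $\epsilon$-dependent error (your own lower bound leaks a $\psi(\epsilon)$ term that is absent from the statement). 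The mechanism that actually produces bounds depending only on $E$ and the two-time volume ratios $A_{\pm}$ --- and the reason $A_{\pm}$ appear in the statement at all --- is volume-theoretic: $\frac{d}{dt}dv=-R\,dv$ converts smallness of $E$ into near-preservation of volumes of fixed sets, and the distance bounds are then extracted by ball-containment and chaining arguments comparing $g(0)$- and $g(\delta_0)$-volume ratios, as in Section 4 of \cite{TW}; your attribution of the $ClA_{+}\left|A_{+}/A_{-}-1\right|^{1/m}$ term to a bi-Lipschitz comparison prefactor is a guess rather than an argument. None of this affects the correctness of the paper's proof, which is exactly the citation-plus-verification you begin with; but as a standalone proof your sketch has genuine gaps.
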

		
	\begin{proof}
		Checking the proof of Lemma 4.4 in~\cite{TW},  it is not hard to see that only the following conditions 
		\begin{align*}
			|Rm|\leq\frac{\epsilon}{t}, \quad \inf_{\rho\in(0,\epsilon^{-1}\sqrt{t})} \rho^{-m}vol_{g(t)}\big(B_{g(t)}(x,\rho)\big)\geq (1-\epsilon)\omega_m, \quad inj(x,t)\geq \epsilon^{-1}\sqrt{t}
		\end{align*}
		are used in the argument.  Note that the above inequalities are guaranteed by (\ref{eqn:OH27_15}) in Theorem~\ref{thm:OF28_5}. 
		The smallness of $E=\int^{2\delta_0}_0\int_{\Omega}|R|dvdt$ is assured by Theorem~\ref{thm:OF28_8}. 		
		Therefore, the remaining estimate follows verbatim from the one in Lemma 4.4 of~\cite{TW}.  
	\end{proof}
	
	\begin{theorem}[\textbf{Distance distortion of short range}]
		Suppose $p>\frac{m}{2}$ and $\left\lbrace (M^m,g(t)),  0 \leq t \leq r^2 \right\rbrace $ is a Ricci flow solution satisfying
		\begin{subequations}
			\begin{align}[left = \empheqlbrace \,]
				&\int_{B(x_0,\delta^{-1}r)}Rc_-^p dv\leq \frac{1}{2} \omega_m \delta^{4p-m} r^{m-2p},  \label{eqn:OH23_1}  \\
				&\left| B\left(x_0, \delta^{-1}r\right) \right|\geq(1-\delta) \omega_m\delta^{-m} r^{m}.    \label{eqn:OH23_2}
			\end{align}  
		\end{subequations}
		Then we have the short time distance distortion estimate
		\[\frac{|d_{g(\delta^2r^2)}(x,x_0)-d_{g(0)}(x,x_0)|}{\delta r}\leq\psi(\delta|m,p)\] 
		for any $x\in B_{g(0)}\left( x_0,0.1\delta r\right) \setminus B_{g(0)}\left( x_0,0.01\delta r\right)$.
	\label{thm:OH26_2}	
	\end{theorem}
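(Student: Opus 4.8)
The plan is to reduce, using the scaling invariance of the hypotheses, to a unit-scale configuration and then feed it into the distance-distortion estimate of Lemma~\ref{l5.1} (Lemma 4.4 of~\cite{TW}), with Theorem~\ref{thm:OF28_5} supplying the pseudo-locality and Theorem~\ref{thm:OF28_8} supplying the almost-Einstein input $\iint|R|\,dv\,dt\to 0$. Observe first that (\ref{eqn:OH23_1}) and (\ref{eqn:OH23_2}) are precisely the scale-$r$ rewritings of (\ref{eqn:OH27_2a}) and (\ref{eqn:OH27_2b}), and that $\boldsymbol{\mu}$, $\kappa(p,\cdot)$ and normalized volume ratios are scaling invariant. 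Set $c\coloneqq(2\delta^2r^2)^{-1}$ and $\hat g(t)\coloneqq c\,g(t/c)=c\,g(2\delta^2r^2t)$, a Ricci flow on $[0,(2\delta^2)^{-1}]\supset[0,1]$; under this rescaling the scale $\delta r$ becomes $\sqrt c\,\delta r=1/\sqrt2$, a point $x$ with $d_{g(0)}(x,x_0)\in[0.01\delta r,0.1\delta r]$ becomes a point with $d_{\hat g(0)}(x,x_0)\in[0.01/\sqrt2,0.1/\sqrt2]$, and $d_{\hat g(1/2)}(\cdot,x_0)=\sqrt c\,d_{g(\delta^2r^2)}(\cdot,x_0)$, $d_{\hat g(0)}(\cdot,x_0)=\sqrt c\,d_{g(0)}(\cdot,x_0)$. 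Hence it suffices to prove $|d_{\hat g(1/2)}(x,x_0)-d_{\hat g(0)}(x,x_0)|\le\psi(\delta|m,p)$.

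Next I would verify the hypotheses needed for $\hat g$. Since (\ref{eqn:OH23_1})--(\ref{eqn:OH23_2}) are the rescaled (\ref{eqn:OH27_2}), Lemma~\ref{l2.5} shows that $g(0)$ satisfies $\kappa_{g(0)}(x,p,s)\le\delta^2$ and $|B_{g(0)}(x,s)|\ge(1-C\delta)\omega_ms^m$ for all $x$ within a fixed distance of $x_0$ and all $s$ up to a fixed multiple of $\delta^{-1}r$; translating to $\hat g(0)$, this gives (\ref{eqn:OF29_6a})--(\ref{eqn:OF29_6b}) for $\hat g(0)$ with whatever small parameter $\delta'=\delta'(m,p,\epsilon)$ is required, provided $\delta$ is small relative to $\delta'$. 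Then Theorem~\ref{thm:OF28_5} yields the pseudo-locality bounds (\ref{eqn:OH27_15}) for $\hat g$ with $\epsilon=\psi(\delta)$, and Theorem~\ref{thm:OF28_8} yields $E\coloneqq\int_0^1\int_{B_{\hat g(0)}(x_0,1)}|R|\,dv_{\hat g(t)}\,dt\le\psi(\delta|m,p)$. The volume-ratio quantities in Lemma~\ref{l5.1} are controlled: $1\le A_+\le1+\psi(\delta)$ by the volume-element comparison Lemma~\ref{l2.6} applied to $\hat g(0)$, whose $\kappa$ at bounded scales is tiny by Lemma~\ref{l2.5}, and $1-\psi(\delta)\le A_-\le1$ by the volume lower bound (\ref{eqn:OH27_15b}) applied to $\hat g(1/2)$, so $|A_+/A_--1|\le\psi(\delta)$.

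Finally I would apply Lemma~\ref{l5.1} to $\hat g$ with $x_1=x_0$, $x_2=x$, $\delta_0=\tfrac12$ (so that $2\delta_0=1$ matches the time interval of Theorem~\ref{thm:OF28_8}), and $l=d_{\hat g(0)}(x_0,x)\in[0.01/\sqrt2,0.1/\sqrt2]\subset(0,\tfrac18]$; the admissibility requirement $E\ll l^{2m+3}$ holds since $l$ is bounded below by an absolute constant while $E=\psi(\delta)\to0$, which is exactly why $B_{g(0)}(x_0,0.01\delta r)$ is excluded. The lower bound of Lemma~\ref{l5.1} gives $d_{\hat g(1/2)}(x_0,x)\ge l-CE^{1/(2m+3)}=l-\psi(\delta)$, and the upper bound gives $d_{\hat g(1/2)}(x_0,x)\le l+ClA_+\big(|A_+/A_--1|^{1/m}+l^{-1/m}E^{1/(2m(m+3))}\big)\le l+\psi(\delta)$, using $A_+\le2$ and the lower bound on $l$; hence $|d_{\hat g(1/2)}(x_0,x)-d_{\hat g(0)}(x_0,x)|\le\psi(\delta|m,p)$, and scaling back completes the proof. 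The main obstacle is organizing the simultaneous space rescaling by $(\delta r)^{-2}$ and time rescaling by $(\delta r)^2$ so that the scale-invariant hypotheses (\ref{eqn:OH23_1})--(\ref{eqn:OH23_2}) correctly produce the unit-scale inputs — pseudo-locality, the almost-Einstein estimate, and the containment and scale conditions built into Lemma~\ref{l5.1} — together with the two-sided control of $A_\pm$; once this bookkeeping is in place, the estimate is immediate from Lemma~\ref{l5.1}.
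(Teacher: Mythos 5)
Your proposal is correct and uses the same three ingredients as the paper — the scale-invariant reduction, pseudo-locality (Theorem~\ref{thm:OF28_5}), the almost-Einstein estimate (Theorem~\ref{thm:OF28_8}), and the Tian--Wang distortion Lemma~\ref{l5.1} — but the time bookkeeping is organized differently. The paper normalizes so that $\delta r=1$ (final time $1$), applies Lemma~\ref{l5.1} only with $\delta_0=0.1$ to control the distortion on $[0,0.1]$, and then disposes of the remaining interval $[0.1,1]$ by the almost-stationarity of the flow coming from $t|Rm|\le\epsilon$. You instead rescale by $(2\delta^2r^2)^{-1}$ so the final time becomes $\tfrac12$ and apply Lemma~\ref{l5.1} once with $\delta_0=\tfrac12$ (so $2\delta_0=1$ exactly matches the interval on which Theorem~\ref{thm:OF28_8} bounds $E$), which eliminates the second step; you also make explicit the verification of $A_\pm$ via Lemma~\ref{l2.6} and (\ref{eqn:OH27_15b}) and of the admissibility $E\ll l^{2m+3}$ (the reason the inner ball $B_{g(0)}(x_0,0.01\delta r)$ is excluded), which the paper leaves implicit. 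The only caveat is that your shortcut leans on reading $\delta_0$ in Lemma~\ref{l5.1} as a free parameter that may be taken as large as $\tfrac12$; this is consistent with how the lemma is stated and justified here (its proof uses only the pseudo-locality conclusions, valid on all of $(0,1]$), but it is slightly more than the paper itself asks of the lemma, since the original Lemma 4.4 of~\cite{TW} is used there only at a small fixed $\delta_0$. Your hypothesis transfer (Lemma~\ref{l2.5} plus scale invariance of $\kappa$ and volume ratios, with $\delta$ small relative to the auxiliary parameter $\delta'$) is the same $\psi$-calculus the paper uses, so the argument goes through.
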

	
	\begin{proof}
		Up to rescaling, we may assume $\delta r=1$.  It suffices to prove
		\begin{align}
		   |d_{g(1)}(x,x_0)-d_{g(0)}(x,x_0)|\leq\psi(\delta|m,p)   \label{eqn:OH09_3}
		\end{align}
		for any $x\in B_{g(0)}(x_0,0.1)\setminus B_{g(0)}(x_0,0.01)$.
		By Theorem \ref{thm:OF28_5}, we have $|Rm|\leq \frac{\epsilon}{t}$ where $\epsilon=\psi(\delta|m,p)$. 
		
		Let $\delta_0=0.1$.  From Theorem \ref{thm:OF28_8}, there holds $\int^{2\delta_0}_0\int_{B_{g(0)}(x_0,1)}|R|dvdt\leq \psi(\delta|m,p)$. In view of Lemma \ref{l5.1}, we have 
		\begin{align}
		   |d_{g(\delta_0)}(x,x_0)-d_{g(0)}(x,x_0)|\leq\psi(\delta|m,p)     \label{eqn:OH09_1}
		\end{align}
		by choosing $(x_1,x_2)=(x_0,x)$ and $l=d_{g(0)}(x,x_0)$.
		On the time interval $[\delta_0,1]$, since the flow is almost stationary, we have 
		\begin{align}
		  |d_{g(1)}(x,x_0)-d_{g(\delta_0)}(x,x_0)|\leq\psi(\delta|m,p).   \label{eqn:OH09_2}
		\end{align}
		Then (\ref{eqn:OH09_3}) follows directly from the combination of (\ref{eqn:OH09_1}) and (\ref{eqn:OH09_2}). 
	\end{proof}
	
	Note that Theorem~\ref{thm:OH26_2} is the counter-part of Proposition 5.3 in~\cite{WB2}. 
	Based on the short range distance distortion estimate in Theorem~\ref{thm:OH26_2} and the pseudo-locality, Theorem~\ref{thm:OF28_5}, 
	one can follow exactly the same argument as in Theorem 5.4 of~\cite{WB2} to obtain the long range distance distortion estimate. 
	Therefore, we state the following theorem without proof. 
		
	\begin{theorem}[\textbf{Distance distortion of long range}]
	
		Same conditions as in Theorem~\ref{thm:OH26_2}.  Then we have the distance distortion estimate
		\begin{align}
		\bigg|\log\frac{d_{g(t)}(x,x_0)}{d_{g(0)}(x,x_0)}\bigg|\leq \psi(\delta|m,p)\bigg(1+\log_+\frac{\sqrt{t}}{d_{g(0)}(x,x_0)}\bigg)    \label{eqn:OH27_5}
		\end{align}
		for any $t \in \left( 0,\delta^2r^2\right) $, $x\in B_{g(0)}(x_0,r)$.  Here $\log_+x=max(0,\log x)$.
	\label{thm:OH26_3}	
	\end{theorem}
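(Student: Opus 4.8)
The statement to be proven is Theorem~\ref{thm:OH26_3} — the long-range distance distortion estimate.

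The plan is to bootstrap from the short-range distance distortion estimate (Theorem~\ref{thm:OH26_2}) to all scales via an iteration over dyadic time intervals, exactly mimicking the argument in Theorem 5.4 of~\cite{WB2}. First I would reduce to the case $\delta r = 1$ by the scaling invariance of the curvature hypotheses (\ref{eqn:OH23_1})--(\ref{eqn:OH23_2}) and of the conclusion (\ref{eqn:OH27_5}), so that it suffices to control $\left|\log\frac{d_{g(t)}(x,x_0)}{d_{g(0)}(x,x_0)}\right|$ for $t\in(0,\delta^2)$ in terms of $1+\log_+\frac{\sqrt t}{d_{g(0)}(x,x_0)}$. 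Write $d_0 \coloneqq d_{g(0)}(x,x_0)$. The key observation is that Lemma~\ref{l2.5} together with the pseudo-locality input, Theorem~\ref{thm:OF28_5}, propagate the almost-Euclidean hypothesis to every ball $B_{g(0)}(x, s)$ with $s$ bounded, and in particular to balls centered at $x$ at scale comparable to $d_0$; hence Theorem~\ref{thm:OH26_2} applies not only at scale $1$ but after rescaling at any dyadic scale $\rho \lesssim 1$, giving $\left|d_{g(\delta^2\rho^2)}(x,x_0) - d_{g(0)}(x,x_0)\right| \le \psi(\delta|m,p)\,\delta\rho$ whenever $d_{g(0)}(x,x_0)$ is comparable to $\rho$.

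The main body of the argument is then the dyadic iteration. Fix $x$ and choose the integer $k$ with $2^{-k-1} < d_0 \le 2^{-k}$. For $t$ in the range $d_0^2 \lesssim t < \delta^2$, one chooses the largest dyadic scale $\rho_j = 2^{-j} \le \sqrt t/\delta$ and applies the rescaled short-range estimate on the time interval where the distance is comparable to $\rho_j$, summing the increments $\left|d_{g(\delta^2\rho_{j+1}^2)} - d_{g(\delta^2\rho_j^2)}\right| \le \psi\,\delta\rho_j$ across the $O(\log_+(\sqrt t/(\delta d_0)))$ scales between $d_0$ and $\sqrt t/\delta$; since $\sum_j \rho_j$ telescopes geometrically, each such step contributes multiplicatively a factor $(1+\psi)$ to the ratio $d_{g(t)}/d_{g(0)}$, and the number of steps is $\lesssim 1 + \log_+\frac{\sqrt t}{d_0}$, which yields $\left|\log\frac{d_{g(t)}(x,x_0)}{d_{g(0)}(x,x_0)}\right| \le \psi\bigl(1 + \log_+\frac{\sqrt t}{d_0}\bigr)$. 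For the complementary small-time range $0 < t \lesssim d_0^2$ one applies the short-range estimate directly at a single scale $\rho$ with $d_0 \le \delta\rho$, where $\log_+\frac{\sqrt t}{d_0}$ may be replaced by a constant. Here the curvature bound $t|Rm|(x,t) \le \epsilon$ from (\ref{eqn:OH27_15a}) is what guarantees that the distances $d_{g(t)}$ themselves remain comparable, so that the ``comparable scale'' bookkeeping is consistent, and the almost-Einstein bound $\iint |R|\,dv\,dt \le \psi$ from Theorem~\ref{thm:OF28_8} is what makes each short-range increment small.

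I expect the main obstacle to be purely bookkeeping: verifying that the almost-Euclidean hypotheses (\ref{eqn:OH23_1})--(\ref{eqn:OH23_2}), which are stated for balls centered at the fixed point $x_0$, transfer after rescaling to balls centered at the moving point $x$ and at the smaller dyadic scales needed in the iteration — this is precisely the content of Lemma~\ref{l2.5} combined with the volume-doubling Lemma~\ref{l2.4}, but one must track the dependence of $\psi$ on $\delta$ carefully so that a fixed $\psi(\delta|m,p)$ with $\psi \to 0$ survives after summing over $O(\log(1/d_0))$ scales. Since the telescoping sum of the dyadic radii is geometrically convergent, the accumulated error stays $O(\psi)$ per logarithmic unit of time, which is exactly the form of (\ref{eqn:OH27_5}). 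Because this iteration is carried out verbatim in Theorem 5.4 of~\cite{WB2}, with the point-wise Ricci lower bound there replaced here by the integral input already packaged into Theorem~\ref{thm:OF28_5} and Theorem~\ref{thm:OH26_2}, the cleanest exposition is to state the theorem without proof and refer to~\cite{WB2}, as the paper indeed does.
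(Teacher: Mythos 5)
Your proposal is correct and follows essentially the same route as the paper: the paper simply observes that, given the short-range estimate of Theorem~\ref{thm:OH26_2} and the pseudo-locality estimates of Theorem~\ref{thm:OF28_5}, the dyadic-scale iteration of Theorem 5.4 of~\cite{WB2} applies verbatim, and it states the theorem without proof — exactly the conclusion you reach after sketching that iteration. Your sketch of the bookkeeping (rescaling the short-range estimate to dyadic scales via Lemma~\ref{l2.5} and summing $O(1+\log_+(\sqrt{t}/d_{g(0)}(x,x_0)))$ increments of size $\psi$ per scale) is consistent with the intended argument.
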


         In light of the distance distortion estimates in Theorem~\ref{thm:OH26_2} and Theorem~\ref{thm:OH26_3}, we can prove that the Ricci flow depends continuously on the initial data in the Gromov-Hausdorff topology, when appropriate integral Ricci curvature conditions are satisfied.

	\begin{theorem}[\textbf{Continuous dependence in the $C^\infty$-Cheeger-Gromov topology}]
	
		Suppose that $\mathcal{N}=\left\lbrace \left( N^m, h(t)\right), 0\leq t\leq T \right\rbrace$ is a Ricci flow solution on the closed manifold $N$, initiated from $h(0)=h$, and $p>\frac{m}{2}$. 
		For each $\epsilon$ small, there exists an $\eta=\eta(\mathcal{N}, p,\epsilon)$ with the following properties.
		
		Suppose $\left( M^m, g\right)$ is a Riemannian manifold satisfying 
		\begin{align}
			\kappa(p,\epsilon) <\epsilon^{2-\frac{m}{p}},\quad d_{GH}\left\lbrace (M,g),(N,h)\right\rbrace<\eta.   
		\label{eqn:OH26_4}
		\end{align}
		Then the Ricci flow initiated from $(M,g)$ exists on $\left[0,T \right]$. Furthermore, there exists a family of diffeomorphisms $\left\lbrace \Phi_t:N\longrightarrow M, \epsilon\leq t\leq T\right\rbrace $ such that 
		\begin{equation}
			\sup\limits_{t\in\left[ \epsilon,T\right] }\left\| \Phi_t^*g(t)-h(t)\right\|_{C^{\left[ \epsilon^{-1}\right]}(N,h(t))}<\epsilon. 
		\label{eqn:OH26_5}	
		\end{equation}
	\label{thm:OH26_0}	
	\end{theorem}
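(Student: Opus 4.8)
The plan is to follow the proof of Theorem~1.3 of~\cite{WB2}, replacing the point-wise Ricci lower bound used there by the integral hypothesis $\kappa(p,\epsilon)<\epsilon^{2-\frac{m}{p}}$ and feeding in the estimates established in Sections~\ref{sec:functionals}--\ref{sec:distortion}. It is cleanest to argue by contradiction: fix $\epsilon>0$ small and suppose the assertion fails, so that there is a sequence of (closed) Riemannian manifolds $(M_i^m,g_i)$ with $\kappa_{g_i}(p,\epsilon)<\epsilon^{2-\frac{m}{p}}$ and $d_{GH}\{(M_i,g_i),(N,h)\}\to 0$, for which either the Ricci flow $g_i(t)$ fails to exist on $[0,T]$, or no diffeomorphisms realizing the estimate (\ref{eqn:OH26_5}) exist. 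The goal is to reach a contradiction for $i$ large.

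First I would obtain a uniform lower bound on the existence time of $g_i(t)$. Since $(N,h)$ is a fixed closed manifold, at scales $s\ll 1$ every geodesic ball in $(N,h)$ is $\psi(s)$-Gromov--Hausdorff close to a Euclidean ball of the same radius and has volume ratio $1+\psi(s)$; for $i$ large $(M_i,g_i)$ inherits these metric features from the Gromov--Hausdorff closeness, while the volume closeness is made rigorous using $\kappa_{g_i}(p,\epsilon)<\epsilon^{2-\frac{m}{p}}$ together with the volume convergence of Petersen--Wei (Lemma~\ref{l3.7} and Lemma~\ref{l3.8}). After rescaling to a scale comparable to $\epsilon$, the metric $g_i$ therefore satisfies the almost-Euclidean conditions (\ref{eqn:OF29_6a})--(\ref{eqn:OF29_6b}) about every point, so Theorem~\ref{thm:OF28_5} --- whose proof rests on the local $\boldsymbol{\mu}$-functional estimate of Theorem~\ref{thm:OD12_1} --- applies and provides a common existence interval $[0,\tau_0]$, $\tau_0=\tau_0(\epsilon)>0$, on which the pseudo-locality conclusions (\ref{eqn:OH27_15}) hold: $t|Rm|_{g_i(t)}\le\epsilon$, the Euclidean-type volume lower bound, and $inj(x,t)\ge\epsilon^{-1}\sqrt{t}$.

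Next I would promote the Gromov--Hausdorff closeness of the initial data to $C^\infty$-closeness of the flows at positive times. The short- and long-range distance distortion estimates, Theorem~\ref{thm:OH26_2} and Theorem~\ref{thm:OH26_3} --- which rely on the local almost-Einstein bound $\iint |R| \, dv \, dt = \psi(\delta)$ of Theorem~\ref{thm:OF28_8} --- show that $(M_i,d_{g_i(t)})$ remains uniformly Gromov--Hausdorff close to $(M_i,d_{g_i(0)})$ for $t\in[0,\tau_0]$. Since $(M_i,d_{g_i(0)})\to(N,d_h)$ and the smooth flow $h(\cdot)$ satisfies $d_{h(t)}\to d_h$ as $t\to0$, it follows that $(M_i,g_i(t))\to(N,h(t))$ in the Gromov--Hausdorff topology for each $t\in(0,\tau_0]$; combined with the curvature and injectivity bounds from the previous paragraph and Cheeger--Gromov compactness, this convergence is in fact $C^\infty$-Cheeger--Gromov on $(0,\tau_0]$. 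Fixing once and for all $t_1\in(0,\min\{\tau_0,\epsilon\})$, we obtain, for each positive integer $k$, diffeomorphisms $\phi_i:N\to M_i$ with $\|\phi_i^*g_i(t_1)-h(t_1)\|_{C^k(N,h(t_1))}\to 0$.

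From $t=t_1$ on, the problem is a smooth perturbation problem, handled exactly as in~\cite{WB2}: the Ricci--DeTurck flow relative to the background $h(t)$ is strictly parabolic and depends continuously on its initial data, so --- since the limiting flow $h(t)$ exists on the closed interval $[t_1,T]$ --- the Ricci flows starting from $\phi_i^*g_i(t_1)$ exist on $[t_1,T]$ for $i$ large, and after composition with diffeomorphisms $\chi_{i,t}:N\to N$ close to the identity (fixing the DeTurck gauge) one has $\chi_{i,t}^*\phi_i^*g_i(t)\to h(t)$ in $C^k$ uniformly on $[t_1,T]$. Concatenating with the interval $[0,t_1]$ yields existence of the Ricci flow from $(M_i,g_i)$ on $[0,T]$; setting $\Phi_t:=\phi_i\circ\chi_{i,t}$ and choosing $k$ at least $[\epsilon^{-1}]$ plus the finitely many derivatives lost in propagating over $[t_1,T]$, we get $\sup_{t\in[\epsilon,T]}\|\Phi_t^*g_i(t)-h(t)\|_{C^{[\epsilon^{-1}]}(N,h(t))}<\epsilon$ for $i$ large, since $t_1<\epsilon$. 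This contradicts the choice of the sequence, proving the theorem. The one substantive step here is the third one --- upgrading Gromov--Hausdorff closeness at $t=0$ to $C^\infty$-Cheeger--Gromov closeness at $t>0$ --- and its difficulty is entirely absorbed into the distance distortion estimates and the pseudo-locality bounds, which is precisely why Sections~\ref{sec:functionals}--\ref{sec:SLP} had to be developed under the integral Ricci hypothesis.
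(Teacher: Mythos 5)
Your skeleton coincides with the paper's: verify the almost-Euclidean conditions (\ref{eqn:OF29_6a})--(\ref{eqn:OF29_6b}) at a small scale from the Gromov--Hausdorff hypothesis together with volume convergence (Lemma~\ref{l3.7}) and the scaling behaviour of $\kappa(p,r)$; apply the pseudo-locality Theorem~\ref{thm:OF28_5} and the distortion estimates of Theorems~\ref{thm:OH26_2} and~\ref{thm:OH26_3}; construct a diffeomorphism at a small positive time; and conclude with the Ricci--DeTurck continuous-dependence argument of~\cite{WB2}. The genuine gap is in your third step. Along your contradiction sequence only $d_{GH}\left\lbrace (M_i,g_i),(N,h)\right\rbrace\to 0$, while the hypothesis $\kappa_{g_i}(p,\epsilon)<\epsilon^{2-\frac{m}{p}}$ is a fixed bound; hence the distortion error supplied by Theorem~\ref{thm:OH26_3} is $\psi(\delta|m,p)$ with $\delta$ tied to a fixed choice of scale, and it does \emph{not} tend to zero as $i\to\infty$. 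So $(M_i,g_i(t_1))$ is only Gromov--Hausdorff close to $(N,h(t_1))$ up to a small but fixed error, a Cheeger--Gromov (subsequential) limit of $(M_i,g_i(t_1))$ need not be $(N,h(t_1))$, and your assertion $\left\| \phi_i^*g_i(t_1)-h(t_1)\right\|_{C^{k}(N,h(t_1))}\to 0$ is not justified --- yet it is exactly what your DeTurck step consumes.

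What is needed, and what the paper (deferring to Section 6 of~\cite{WB2}) actually does, is quantitative rather than limiting: using the curvature-injectivity estimates (\ref{eqn:OH27_15}) and the short-range distortion of Theorem~\ref{thm:OH26_2} \emph{at the scale} $\sqrt{\xi}$, one builds (Lemmas 6.2 and 6.3 of~\cite{WB2}) a diffeomorphism $\Phi:N\to M$ with $\Phi^*g(\xi)$ close to $h$ in the $C^{0}$-topology, the closeness being below a threshold prescribed in advance; the constants must be chosen in the order $\delta$ (equivalently the scale $r_3$ with $\kappa(p,r_3)\leq\delta^2$), then $\xi<\delta^2 r_3^2$, and only afterwards $\eta$, and then the $C^{0}$-continuous dependence of the Ricci--DeTurck flow yields (\ref{eqn:OH26_5}). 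If instead one tries to extract $C^{k}$-closeness at time $t_1$ abstractly from Gromov--Hausdorff closeness within a bounded-geometry class, the admissible GH error depends on the curvature bound $\epsilon/t_1$, which degenerates as $t_1\to 0$, while the distortion estimate is only valid on the window $t_1<\delta^2 r_3^2$, which shrinks as $\delta\to 0$; your write-up does not address this interplay, whereas the $C^{0}$-closeness/DeTurck formulation is designed precisely to avoid it. Repairing your step 3 along these lines essentially reproduces the paper's argument.
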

	
	\begin{proof}[Sketch of the proof:]
	Since the proof is very similar to that in section 6 of~\cite{WB2}, we shall only sketch the proof and highlight the key points. 
		
	In~\cite{WB2}, the second named author proved estimate (\ref{eqn:OH26_5}) when initial Ricci curvature  has a uniform point-wise lower bound. 
	The key idea there is to construct diffeomorphisms between locally almost flat manifolds with rough Gromov-Hausdorff approximations(cf. Lemma 6.2 and 6.3 of~\cite{WB2}). 
	In the current situation,  the point-wise Ricci lower bound condition 
	$Rc_M>-(m-1)A$ is replaced by the weaker integral Ricci curvature condition $\kappa(p,\epsilon) <\epsilon^{2-\frac{m}{p}}$. 
	The almost flatness and rough Gromov-Hausdorff approximation conditions are realized by the curvature-injectivity-radius estimate in Theorem~\ref{thm:OF28_5}.

	By the smoothness and compactness of $(N, h)$, for each small $\delta$, there exists a small $r_1(\delta,N,h)$ satisfying 
	\begin{align*}
	  \left(1-\frac{\delta}{2} \right)\omega_mr^m\leq \left| B(x,r)\right| \leq \left(1+\frac{\delta}{2} \right)\omega_m r^m, \quad \forall \; x \in N, \; 0<r<r_1.
	\end{align*}
	Choosing $\eta=\eta(h, \epsilon)$ sufficiently small. 
	Since (\ref{eqn:OH26_4}) holds, the volume continuity (cf. Lemma~\ref{l3.7}) and volume comparison (cf. Lemma 2.3 of~\cite{PW1}) guarantees that 
	\begin{align*}
	  (1-\delta)\omega_mr^m\leq \left| B(y,r)\right| \leq (1+\delta)\omega_m r^m, \quad \forall y \in M, \; 0<r<\frac{r_1}{8}. 
	\end{align*}
	Define $r_2 \coloneqq \min\{\epsilon,\frac{r_1}{8}\}$. Then we have 
	\begin{align*}
	\kappa(p,r)\leq 2\left( \frac{r}{\epsilon}\right)^{2-\frac{m}{p}}  \kappa(p,\epsilon)\leq 2r^{2-\frac{m}{p}},  \quad \forall \; 0<r\leq r_2. 
	\end{align*}
	Thus we can choose an $r_3(\delta,\epsilon)$ which satisfies $\kappa(p,r_3)\leq \delta^2$.
	By scaling, on the manifold $(M,\frac{g}{\delta^2 r_3^2})$, (\ref{eqn:OF29_6a}) and (\ref{eqn:OF29_6b}) holds.	  
	Thus we can apply the pseudo-locality property in Theorem~\ref{thm:OF28_5} and obtain the distance distortion estimate in Theorem~\ref{thm:OH26_3}.
	Note that this estimate is the key point (cf. Lemma 6.2 in \cite{WB2}) for the construction of diffeomorphism $\Phi: N \to M$.
	Furthermore,  $\Phi^*(g(\xi))$ is very close to $h$ in $C^0$-topology, for a very small time $\xi=\xi(\mathcal{N}, \epsilon)$.  
	Then we apply the Ricci-Deturck flow technique to obtain (\ref{eqn:OH26_5}), following exactly the same steps as that in Theorem 6.1 of~\cite{WB2}. 
\end{proof}
	
	Theorem \ref{thm:OH26_0}  can be generalized to a version under normalized Ricci flow. 
	The following lemma of volume estimate is needed to achieve such generalization. 
		
	\begin{lemma}\label{l6.1}
		Same conditions as in Theorem \ref{thm:OH26_0}. Suppose $\epsilon$ is a small number satisfying $k(p,\epsilon)\leq \epsilon^{2-\frac{m}{p}}$ and $\left| Rm\right|_{h(t)}\leq \epsilon^{-1}$. 
		Then for each small positive number $\xi<\frac{\epsilon}{100}$, there exists an $\eta=\eta(\xi,\epsilon,h)$ with the following property. 
		
		If $d_{GH}\left\lbrace (M,g), (N,h)\right\rbrace<\eta$, then
		\begin{equation}\label{eqn:OH27_10}
			\left| \log\frac{\left| M\right|_{dv_{g(t)}} }{\left| M\right|_{dv_{g(0)}}}\right| <C(h,m)\epsilon^{-1}\xi,\quad \forall t\in\left( 0,\xi\right). 
		\end{equation}
	\end{lemma}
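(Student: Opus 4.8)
Along the Ricci flow $\frac{d}{dt}dv_{g(t)}=-R\,dv_{g(t)}$, hence $\frac{d}{dt}\log|M|_{dv_{g(t)}}=-\frac{\int_M R\,dv_{g(t)}}{|M|_{dv_{g(t)}}}$, so it suffices to control $\int_M|R|\,dv_{g(t)}$ against $|M|_{dv_{g(t)}}$ on $(0,\xi)$. The reference volume moves slowly: from $|Rm|_{h(t)}\leq\epsilon^{-1}$ we get $|R_{h(t)}|\leq c(m)\epsilon^{-1}$, so $\left|\log\tfrac{|N|_{dv_{h(t)}}}{|N|_{dv_{h(0)}}}\right|\leq c(m)\epsilon^{-1}t\leq c(m)\epsilon^{-1}\xi$ for $t\in(0,\xi)$, which already carries the desired rate. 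The plan is to show that $|M|_{dv_{g(t)}}$ shadows $|N|_{dv_{h(t)}}$ up to an error that we drive below $\epsilon^{-1}\xi$ by taking $\eta=\eta(\xi,\epsilon,h)$ small, and then chain
\[
\left|\log\frac{|M|_{dv_{g(t)}}}{|M|_{dv_{g(0)}}}\right|\leq\left|\log\frac{|M|_{dv_{g(t)}}}{|N|_{dv_{h(t)}}}\right|+\left|\log\frac{|N|_{dv_{h(t)}}}{|N|_{dv_{h(0)}}}\right|+\left|\log\frac{|N|_{dv_{h(0)}}}{|M|_{dv_{g(0)}}}\right|.
\]

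At $t=0$ the closeness $|M|_{dv_{g}}\approx|N|_{dv_{h}}$ follows from $d_{GH}\{(M,g),(N,h)\}<\eta$ and the volume continuity Lemma~\ref{l3.7}, whose hypothesis $\kappa(p,\cdot)\leq\epsilon(m,p)$ holds at small scales because $\kappa(p,\epsilon)\leq\epsilon^{2-\frac{m}{p}}$ propagates by volume doubling (Lemma~\ref{l2.4}) to $\kappa(p,r)\leq2r^{2-\frac{m}{p}}$ for $0<r\leq\epsilon$. For $t\in(0,\xi)$ I would run the machinery behind Theorem~\ref{thm:OH26_0}: after rescaling the metric to an appropriate small scale, conditions (\ref{eqn:OF29_6a})--(\ref{eqn:OF29_6b}) are verified, so Theorem~\ref{thm:OF28_5} gives $t|Rm|_{g(t)}\leq\epsilon_0$ together with the non-collapsing bound, and Theorem~\ref{thm:OH26_3} gives the distance distortion $|\log\tfrac{d_{g(t)}}{d_{g(0)}}|\leq\psi(\delta|m,p)$ on scales $\gtrsim\sqrt t$. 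Hence $(M,g(t))$ stays Gromov--Hausdorff close to $(M,g(0))$, therefore to $(N,h)$, therefore to $(N,h(t))$ since $h$ is a smooth flow; feeding the curvature and non-collapsing control into Lemma~\ref{l3.7} at its admissible scale plus a covering argument then gives $|M|_{dv_{g(t)}}\approx|N|_{dv_{h(t)}}$. For times beyond the pseudo-locality window one instead uses the $C^0$-closeness of $\Phi_t^*g(t)$ to $h(t)$ furnished by the Ricci--DeTurck step in the proof of Theorem~\ref{thm:OH26_0}, which again yields $|M|_{dv_{g(t)}}\approx|N|_{dv_{h(t)}}$.

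The main obstacle is precisely the small-time regime: $g(t)$ carries no \emph{uniform} curvature bound as $t\to0^+$ (only $t|Rm|_{g(t)}\leq\epsilon_0$), so one cannot simply integrate $\frac{d}{dt}\log|M|_{dv_{g(t)}}$, since $\int_0^\xi\frac{\int_M|R|\,dv_{g(t)}}{|M|_{dv_{g(t)}}}\,dt$ diverges logarithmically. This is what forces the comparison with $(N,h)$ through Gromov--Hausdorff closeness and volume continuity (an equivalent route being to bound the scale-invariant space-time integral $\iint|R|\,dv\,dt\leq\psi$ of Theorem~\ref{thm:OF28_8} and Corollary~\ref{cly:OH12_32} after a covering of $M$). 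The quantitatively delicate point, in either realization, is that the volume-continuity error is governed by the small scale $\sim\sqrt t$, so $\eta$ must be chosen small enough, depending on $\xi$, that the comparison errors remain negligible against $\epsilon^{-1}\xi$ uniformly on $(0,\xi)$.
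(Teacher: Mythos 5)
Your outline correctly isolates the endpoints ($t=0$ via Lemma~\ref{l3.7}, the reference flow via $|Rm|_{h(t)}\leq\epsilon^{-1}$) and correctly flags the small-time regime as the crux, but the way you propose to handle that regime has a genuine gap. You want $\bigl|\log\bigl(|M|_{dv_{g(t)}}/|N|_{dv_{h(t)}}\bigr)\bigr|$ small for \emph{every} $t\in(0,\xi)$ by combining Gromov--Hausdorff closeness of $(M,g(t))$ to $(N,h(t))$ with volume continuity ``at its admissible scale''. For $t>0$ the only integral Ricci control on $g(t)$ comes from the pseudo-locality bound $t|Rm|\leq\epsilon_0$ of Theorem~\ref{thm:OF28_5}, so the scale $D$ at which Lemma~\ref{l3.7} is applicable shrinks like $\sqrt{t}$; meanwhile the Gromov--Hausdorff error you can guarantee between $(M,g(t))$ and $(N,h(t))$ is bounded below by fixed quantities (the initial gap $\eta$ and the distortion $\psi$ from Theorem~\ref{thm:OH26_3}) that do not shrink as $t\to0^{+}$. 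Lemma~\ref{l3.7} is moreover a qualitative convergence statement with no modulus, and even granting one by compactness, no single choice of $\eta=\eta(\xi,\epsilon,h)$ makes the comparison uniform down to $t=0$; choosing $\eta$ small ``depending on $\xi$'' does not help, because the degeneration is at $t\to0^{+}$, not at $t=\xi$. Your alternative route via $\iint|R|\,dv\,dt$ (Theorem~\ref{thm:OF28_8}, Corollary~\ref{cly:OH12_32}) has a related problem: after rescaling, that estimate lives on the pseudo-locality window determined by the scale at which (\ref{eqn:OF29_6}) holds, which depends on $\epsilon$ and not on $\xi$, so it need not cover all of $(0,\xi)$, and it produces an error of size $\psi$ rather than the required $C(h,m)\epsilon^{-1}\xi$, which must tend to $0$ as $\xi\to0$ for the application in Theorem~\ref{thm:OH26_1}.

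The paper avoids any intermediate-time volume comparison by exploiting the one-sided nature of the volume evolution. From $\kappa(p,\epsilon)\leq\epsilon^{2-\frac{m}{p}}$ and a Vitali covering one gets $\|Rc_-\|_{p,M}\leq C(h,m)\epsilon^{-1}$ at $t=0$, and Theorem~\ref{t4.2} propagates this to $\|R_{-}(t)\|_{p,M}\leq C(h,m)\epsilon^{-1}$ for all $t\in(0,\xi)$ --- a bound that, unlike $\int_M|R|$, does not blow up as $t\to0^{+}$. Then $\frac{d}{dt}|M|_{dv_{g(t)}}\leq\|R_-\|_{p,M}\,|M|_{dv_{g(t)}}^{\frac{p-1}{p}}$ integrates to a two-sided sandwich for $|M|_{dv_{g(t)}}^{1/p}$ with drift $C(h,m)\xi/(\epsilon p)$, and the missing lower anchor is supplied only at the single time $t=\xi$, where the smooth closeness of Theorem~\ref{thm:OH26_0} gives $|M|_{dv_{g(\xi)}}\approx|N|_{dv_{h(\xi)}}$, which is then chained back to $|M|_{dv_{g(0)}}$ through the slowly moving $h$-flow and the $t=0$ volume continuity. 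If you want to salvage your write-up, replace the uniform-in-$t$ GH/volume-continuity step by this two-endpoint anchoring together with the $L^p$ control of $R_-$ along the flow; as written, the central step of your argument does not go through.
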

	
	\begin{proof}
		By volume continuity (cf. (\ref{eqn:OH27_4}) in Lemma~\ref{l3.7}), we have 
		\begin{align*}
		  \left|\left| M\right|_{g(0)}-\left| N\right|_{g(0)} \right| \leq \psi(\eta |\epsilon,h), 
		\end{align*}
		which is equivalent to 
		\begin{align}
			\left| \log\frac{\left| M\right|_{dv_{g(0)}} }{\left| N\right|_{dv_{h(0)}}}\right| \leq  \psi(\eta |\epsilon,h). 
		\label{eqn:OH27_6}	
		\end{align}
		
		By Vitali covering method, there exists a constant $C(m)$ such that 
		$$\left| \left| Rc_-\right| \right|_{p,M} \leq C(m)\epsilon^{-1}\left| M\right|_{g(0)} \leq C(h,m)\epsilon^{-1}.$$ 
		By (\ref{eqn:OH26_4}) and Theorem~\ref{t4.2}, we know for $\xi$ small enough, there holds
		\begin{align*}
			\left\| R_-(t)\right\|_{p,M} <C(h,m)\epsilon^{-1},\quad \forall t\in\left( 0,\xi\right).
		\end{align*}
		Then we calculate
		\begin{align*}
			\frac{d}{dt}\left| M\right|_{dv_{g(t)}}=\int_{M}-Rdv_{g(t)}\leq\int_{M}R_-dv_{g(t)}\leq\left\|R_- \right\|_{p,M} \left| M\right|^{\frac{p-1}{p}}_{dv_{g(t)}} \leq\frac{C(h,m)}{\epsilon}\left| M\right|^{\frac{p-1}{p}}_{dv_{g(t)}},  
		\end{align*}
		which implies
		\begin{align}
			\left( \left| M\right|^{\frac{1}{p}}_{dv_{g(\xi)}} -\frac{C(h,m)(\xi-t)}{\epsilon p}\right)^p\leq\left| M\right|_{dv_{g(t)}}\leq \left( \left| M\right|^{\frac{1}{p}}_{dv_{g(0)}} +\frac{C(h,m)t}{\epsilon p}\right)^p.
		\label{6.5}	
		\end{align}
		As for the Ricci flow $\left( N, h(t)\right)$, it is clear that
		\begin{align*}
		\left| Rm\right|_{h(t)}\leq2\sup\limits_{z\in N}\left| Rm\right|_{h(0)}(z)\leq2\epsilon^{-1},\quad\forall x\in N,\;t\in\left( 0,\xi\right].
		\end{align*}
		Thus direct estimate of volume element yields that
		\begin{align}
			\frac{\left| N\right|_{dv_{h(\xi)}} }{\left| N\right|_{dv_{h(0)}} }\geq e^{-2m(m-1)\epsilon^{-1}\xi}.
		\label{eqn:OH27_8}	
		\end{align}
		On the other hand,  the smooth closeness in Theorem \ref{thm:OH26_0} imply
		\begin{align}
			 \left| \log\frac{\left| M\right|_{dv_{g(\xi)}} }{\left| N\right|_{dv_{h(\xi)}}}\right|\leq \psi(\eta|\xi,\epsilon,m,p). 
		\label{eqn:OH27_7}	 
		\end{align}
				
		For each $t \in (0, \xi)$, it follows from the combination of (\ref{6.5}) and (\ref{eqn:OH27_7}) that
		\begin{align*}
			\left| M\right|_{dv_{g(t)}}^{\frac{1}{p}} &\geq  \left| M\right|^{\frac{1}{p}}_{dv_{g(\xi)}} -\frac{C(h,m)(\xi-t)}{\epsilon p} 
			\geq  \left\{ e^{-\psi} \left| N\right|_{dv_{h(\xi)}}\right\}^{\frac{1}{p}} -\frac{C(h,m)\xi}{\epsilon p}. 
	        \end{align*}
	        Plugging (\ref{eqn:OH27_8}) into the above inequality and applying (\ref{eqn:OH27_6}), we obtain
	       \begin{align*}	
			\quad \left| M\right|_{dv_{g(t)}}^{\frac{1}{p}} 	&\geq  \left\{e^{-\psi-2m(m-1)\epsilon^{-1}\xi} \left| N\right|_{dv_{h(0)}}\right\}^{\frac{1}{p}} -\frac{C(h,m)\xi}{\epsilon p}\\
			   &\geq \left\{ e^{-\psi-2m(m-1)\epsilon^{-1}\xi} \left| M\right|_{dv_{g(0)}}\right\}^{\frac{1}{p}} -\frac{C(h,m)\xi}{\epsilon p}, 
	       \end{align*}	
	       which implies (\ref{eqn:OH27_10}) immediately. 
	\end{proof}
	
	\begin{theorem}[\textbf{Continuous dependence in the case of normalized Ricci flow}]
		Suppose that $\mathcal{N}=\left\lbrace \left( N^m, \tilde{h}(\tilde{t})\right), 0\leq \tilde{t}\leq \tilde{T} \right\rbrace$ is a normalized Ricci flow solution on the closed m-manifold $N$, and $p>\frac{m}{2}$. 
		For each $\epsilon$ small, there exists an $\eta=\eta(\mathcal{N}, p, \epsilon)$ with the following properties.
		
		If $\left( M^m, g\right)$ is a Riemannian manifold satisfying (\ref{eqn:OH26_4}), then the Ricci flow initiated from $(M,g)$ exists on $\left[0,\tilde{T} \right]$. 
		Furthermore, there exists a family of diffeomorphisms $\left\lbrace \tilde{\Phi}_{\tilde{t}}:N\longrightarrow M, \epsilon\leq \tilde{t}\leq \tilde{T}\right\rbrace $ such that 
		\begin{align}
			\sup\limits_{\tilde{t}\in\left[ \epsilon,\tilde{T}\right] }\left\| \tilde{\Phi}_{\tilde{t}}^*g(t)-\tilde{h}(\tilde{t})\right\|_{C^{\left[ \epsilon^{-1}\right]}(N,\tilde{h}(\tilde{t}))}<\epsilon. 
			\label{eqn:OH27_11}
		\end{align}
	\label{thm:OH26_1}	
	\end{theorem}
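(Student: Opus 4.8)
The plan is to reduce to the unnormalized case, Theorem~\ref{thm:OH26_0}, via the classical rescaling relating the normalized and unnormalized Ricci flows, with Lemma~\ref{l6.1} supplying the control of the volume--rescaling factors that Theorem~\ref{thm:OH26_0} does not provide near $t=0$. Recall that if $\{(N^m,h(t)),\,0\le t\le T\}$ is the unnormalized Ricci flow with $h(0)=h$, then the normalized flow $\tilde h$ is recovered by
\begin{align*}
  \tilde h(\tilde t)=c(t)h(t),\qquad c(t)=\left(\frac{|N|_{h(0)}}{|N|_{h(t)}}\right)^{\frac 2m},\qquad \tilde t(t)=\int_0^t c(s)\,ds.
\end{align*}
Since $\mathcal{N}$ is a fixed smooth flow on the compact manifold $N$ over the compact interval $[0,\tilde T]$, the scalar curvature of the associated unnormalized flow is bounded, so $|N|_{h(t)}$ stays in a compact subinterval of $(0,\infty)$; hence $c$, $\tilde t(\cdot)$ and their inverses are uniformly bounded in terms of $\mathcal{N}$, and $\tilde t(\cdot)$ maps $[0,T]$ diffeomorphically onto $[0,\tilde T]$. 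Applying the same formulas to the unnormalized Ricci flow $\{(M^m,g(t))\}$ yields its normalization $\tilde g(\tilde s)=a(t)g(t)$ with $a(t)=(|M|_{g(0)}/|M|_{g(t)})^{2/m}$ and $\tilde s(t)=\int_0^t a(s)\,ds$. We may also assume $\epsilon$ is small enough that $|Rm|_{h(t)}\le\epsilon^{-1}$ along $\mathcal{N}$.

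First I would invoke Theorem~\ref{thm:OH26_0}: for $\eta$ small the unnormalized Ricci flow from $(M,g)$ exists on $[0,T]$, and there are diffeomorphisms $\Phi_t:N\to M$ with $\sup_{t\in[\epsilon_1,T]}\|\Phi_t^*g(t)-h(t)\|_{C^{[\epsilon_1^{-1}]}(N,h(t))}<\epsilon_1$ for an auxiliary parameter $\epsilon_1$ to be fixed small. Next, Lemma~\ref{l6.1} controls the volume on the short initial interval, $|\log(|M|_{g(t)}/|M|_{g(0)})|<C(\mathcal{N},m)\epsilon^{-1}\xi$ for $t\in(0,\xi)$ and a suitable $\xi=\xi(\mathcal{N},\epsilon)$, while for $t\ge\epsilon_1$ the $C^0$-closeness of $\Phi_t^*g(t)$ to $h(t)$ together with the volume continuity (Lemma~\ref{l3.7}) gives $|\log(|M|_{g(t)}/|N|_{h(t)})|<\psi(\eta)$. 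Combined with the $C^0$-closeness of $g$ to $h$ at $t=0$, these show that $a(t)$ is uniformly close to $c(t)$ on $[0,T]$, hence $\tilde s(t)$ is uniformly close to $\tilde t(t)$; in particular the normalized Ricci flow from $(M,g)$ exists on $[0,\tilde T]$.

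Finally, setting $\tilde\Phi_{\tilde t}\coloneqq\Phi_{t(\tilde t)}$ with $t(\cdot)=\tilde t^{-1}(\cdot)$, one estimates $\tilde\Phi_{\tilde t}^*\tilde g(\tilde t)-\tilde h(\tilde t)$ in $C^{[\epsilon^{-1}]}(N,\tilde h(\tilde t))$ by writing it, up to the small time-mismatch between $\tilde s$ and $\tilde t$, as $a\,\Phi^*g-c\,h$, which is bounded by $|a-c|\cdot\|\Phi^*g\|+c\cdot\|\Phi^*g-h\|$; each term is made smaller than $\epsilon$ by taking $\epsilon_1$, and hence $\eta$, sufficiently small, the uniform bounds on $c$ absorbing constants, exactly as in Theorem~6.1 of~\cite{WB2}. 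The one genuinely new ingredient is the control of the volume ratio $a(t)$ for \emph{small} $t\in(0,\epsilon)$, a regime in which Theorem~\ref{thm:OH26_0} says nothing; this is precisely what Lemma~\ref{l6.1} supplies, and it is the main technical obstacle. The remainder is bookkeeping with the time reparametrization and the a priori bounds coming from the fixed flow $\mathcal{N}$.
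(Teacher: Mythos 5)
Your proposal is correct and follows essentially the same route as the paper: pass to the unnormalized flows, apply Theorem~\ref{thm:OH26_0}, and use Lemma~\ref{l6.1} (together with volume continuity) to show that the normalization factors $\lambda_M$ and $\lambda_N$ --- your $a(t)$ and $c(t)$ --- are uniformly close, so that the reparametrized estimate (\ref{eqn:OH27_11}) follows from (\ref{eqn:OH26_5}) up to adjusting constants. The one point the paper treats more carefully is the endpoint: since $a$ may be slightly smaller than $c$, uniform closeness of $\tilde s$ to $\tilde t$ alone does not force $\tilde s(T)\geq \tilde T$, so the paper first extends the smooth reference flow to $[0,T+\epsilon']$ and applies Theorem~\ref{thm:OH26_0} on that slightly longer interval to guarantee that the normalized flow from $(M,g)$ indeed covers all of $[0,\tilde T]$.
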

	\begin{proof}
		Let $\left\lbrace(N,h(t)),t\in[0,T]\right\rbrace$ be the unnormalized Ricci flow corresponding to the normalized Ricci flow $
		\mathcal{N}$. In other words, define
		\begin{align}
			\lambda_N(t) \coloneqq \left(\frac{|N|_{dv_{h(t)}}}{|N|_{dv_{h(0)}}}\right)^{\frac{-2}{m}},\quad
			\tilde{t} \coloneqq \int_0^t\lambda_N(s)ds,\quad 
			\tilde{T} \coloneqq \int_0^T\lambda_N(s)ds,\quad
			\tilde{h}(\tilde{t}) \coloneqq \lambda_N(t)h(t).
		\label{eqn:OH27_12}	
		\end{align}
		Then $\left( N^m, \tilde{h}(\tilde{t})\right)$ satisfies the volume normalized Ricci flow equation and $|N|_{dv_{\tilde{h}(\tilde{t})}}=|N|_{dv_{h(0)}}$. Similarly, we can define $\tilde{g}$ and $\lambda_M$.
		
		Since $\left\lbrace(N,h(t)),t\in[0,T]\right\rbrace$ is a smooth compact space-time, we can extend the Ricci flow to  $\left\lbrace(N,h(t)),t\in[0,T+\epsilon']\right\rbrace$.
		By Theorem~\ref{thm:OH26_0} and Lemma~\ref{l6.1}, for each small $\xi$, the Ricci flow initiated from $(M,g)$ shall exist on $[0,T+\epsilon']$ and satisfy
		\begin{align}
			\sup\limits_{t\in[0,T]}\left|\lambda_M(t)-\lambda_N(t)\right|<\xi.
		\label{eqn:OH27_13}	
		\end{align}
		If $\eta \rightarrow 0$, then $\lambda_M(t)\rightarrow\lambda_N(t)$ uniformly on $[0,T+\epsilon']$.  Consequently, we have
		\begin{align*}
		  \int_0^{T+\epsilon'}\lambda_M(s)ds\rightarrow \int_0^{T+\epsilon'}\lambda_N(s)ds > \int_0^{T} \lambda_N(s)ds=\tilde{T}. 
		\end{align*}
		Thus the  normalized Ricci flow initiated from $(M,g)$ exists on $[0, \tilde{T}]$. 
		Since $\tilde{g}(\tilde{t})=\lambda_{M}(t) g(t)$ and $\lambda_{M}(t)$ is uniformly bounded by (\ref{eqn:OH27_13}), up to adjusting constant slightly,  it is clear that (\ref{eqn:OH27_11}) follows immediately from (\ref{eqn:OH26_5}). 
	\end{proof}
	
        Note that Theorem~\ref{thm:OF21_6} is nothing but the combination of Theorem~\ref{thm:OH26_0} and Theorem~\ref{thm:OH26_1}. 
        Therefore, we have already finished the proof of Theorem~\ref{thm:OF21_6}. 
	
	\section{Proof of the main theorem}
	\label{sec:stability}
	
	In this section, we study the global behavior of the normalized Ricci flow near a given immortal solution initiated from $(N,h)$. 
	In particular, we shall show a stability theorem nearby a weakly stable Einstein manifold under appropriate $L^p$-Ricci curvature conditions.
	Then we apply this stability theorem to prove Theorem~\ref{thm:RG26_3}. 
	
	First we introduce the definition of stability of Ricci flow.
	\begin{definition}\label{d7.1}
		A closed Einstein metric $\left(N,h_E\right)$ is called weakly stable if there exists an $\epsilon=\epsilon\left(N,h_E\right)$ with the following properties.
		
		For any smooth Riemannian metric $h$ satisfying
		\begin{equation}
			|N|_{dv_h}=|N|_{dv_{h_E}},\quad ||h-h_E||_{C^{[\epsilon^{-1}]}\left(h_E\right)}<\epsilon,
		\end{equation}
		the normalized Ricci flow initiated from $(N,h)$ exists immortally and converges (in smooth topology) to an Einstein metric $\left(N,h_E'\right)$.
		
		Furthermore, $\left(N,h_E\right)$ is called strictly stable if each $h_E'$ is isometric to $h_E$ for some sufficiently small $\epsilon$.
	\end{definition}
	
	\begin{theorem}[\textbf{Stability nearby a stable Ricci flow}]\label{t7.2}
		Suppose $\left\lbrace(N^{m},h(t)),t\in[0,\infty)\right\rbrace$ is an immortal solution of normalized Ricci flow with the initial metric $h(0)=h$, and $p>\frac{m}{2}$. 
		The immortal solution $(N,h(t))$ converges to a weakly stable Einstein manifold $\left(N,h_E\right)$. Then for any $\epsilon$ small, there exists an $\eta=\eta\left(N,h,p,\epsilon \right)$ with the following properties.
		
		Suppose a Riemannian manifold $(M,g)$ satisfying
		\begin{equation}\label{7.2}
			\kappa(p,\epsilon) <\epsilon^{2-\frac{m}{p}},\quad d_{GH}\left\lbrace (M,g),(N,h)\right\rbrace<\eta.  
		\end{equation}
		Then the normalized Ricci flow solution initiated from $(M,g)$ exists immortally and converges to an Einstein manifold $\left(N,h_E'\right)$. Moreover, if $h_E$ is strictly stable, then $h_E'$ is homothetic to $h_E$.
	\end{theorem}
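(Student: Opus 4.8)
The plan is to reduce the statement to the continuous dependence result Theorem~\ref{thm:OH26_1} together with the definition of weak stability, Definition~\ref{d7.1}; no new geometric input is needed beyond what is already built in Section~\ref{sec:distortion}. Write $\left\lbrace (N,h(t)),\, t\in[0,\infty)\right\rbrace$ for the given immortal normalized Ricci flow, which by hypothesis converges smoothly to the weakly stable Einstein metric $(N,h_E)$, and let $\epsilon_E=\epsilon_E(N,h_E)$ be the stability threshold furnished by Definition~\ref{d7.1}. Given the target $\epsilon$, fix an auxiliary parameter $\epsilon'=\epsilon'(N,h,h_E,\epsilon)$, to be chosen at the end, with $\epsilon'$ much smaller than $\min\{\epsilon,\epsilon_E\}$. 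First I would use the smooth convergence $h(t)\to h_E$ to pick a large time $T=T(N,h,\epsilon')$ with $\|h(T)-h_E\|_{C^{[(\epsilon')^{-1}]}(h_E)}<\tfrac{\epsilon'}{2}$; passing to normalized time $\tilde t$ and the normalized solution $\tilde h(\tilde t)$ as in (\ref{eqn:OH27_12}) and setting $\tilde T=\tilde T(T)$, the factor $\lambda_N$ is bounded above and below on $[0,T]$, so $\tilde T<\infty$, and after the corresponding constant adjustment $\tilde h(\tilde T)$ is still within $\epsilon'$ of $h_E$ in $C^{[(\epsilon')^{-1}]}$.

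Next I would apply Theorem~\ref{thm:OH26_1} to $\mathcal N$ truncated to $[0,\tilde T]$, with precision parameter $\epsilon'$ in place of $\epsilon$. Since $\kappa(p,\epsilon)<\epsilon^{2-\frac{m}{p}}$ forces $\kappa(p,\epsilon')<(\epsilon')^{2-\frac{m}{p}}$ by the monotonicity used in the proof of Theorem~\ref{thm:OH26_0}, condition (\ref{7.2}) (with $\eta$ small, determined by $\mathcal N,p,\epsilon'$, hence by $N,h,p,\epsilon$) yields that the normalized Ricci flow $\tilde g(\tilde t)$ starting from $(M,g)$ exists on $[0,\tilde T]$ and admits a diffeomorphism $\tilde\Phi:=\tilde\Phi_{\tilde T}:N\to M$ with $\|\tilde\Phi^{*}\tilde g(\tilde T)-\tilde h(\tilde T)\|_{C^{[(\epsilon')^{-1}]}(N,\tilde h(\tilde T))}<\epsilon'$. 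Combined with the previous step, $\tilde\Phi^{*}\tilde g(\tilde T)$ is $2\epsilon'$-close to $h_E$ in $C^{[(\epsilon')^{-1}]}(h_E)$. I would then correct the volume: let $c>0$ be the unique constant with $|N|_{dv_{c\,\tilde\Phi^{*}\tilde g(\tilde T)}}=|N|_{dv_{h_E}}$; by volume continuity (Lemma~\ref{l3.7}) together with the above closeness, $|c-1|$ tends to $0$ as $\epsilon'\to0$, so the metric $h^{\star}:=c\,\tilde\Phi^{*}\tilde g(\tilde T)$ on $N$ satisfies $|N|_{dv_{h^{\star}}}=|N|_{dv_{h_E}}$ and, once $\epsilon'$ is small enough, $\|h^{\star}-h_E\|_{C^{[\epsilon^{-1}]}(h_E)}<\epsilon_E$.

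Now Definition~\ref{d7.1} applies: the normalized Ricci flow initiated from $(N,h^{\star})$ exists immortally and converges smoothly to an Einstein metric $(N,h_E')$, and $h_E'$ is isometric to $h_E$ if $h_E$ is strictly stable and $\epsilon'$ is small. Because the normalized Ricci flow is invariant under the fixed diffeomorphism $\tilde\Phi$ and under the fixed homothety $c$ (the latter only reparametrizes time and does not affect immortality or convergence modulo scaling), this trajectory is, up to these symmetries, exactly the continuation past $\tilde T$ of the normalized Ricci flow started from $(M,g)$. Hence that flow exists immortally and converges to a metric homothetic to $h_E'$, which is therefore Einstein; in the strictly stable case the limit is homothetic to $h_E$. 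This gives the theorem.

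\textbf{Main obstacle.} The heart of the argument is already contained in Theorem~\ref{thm:OH26_1}, so the real work is bookkeeping: one must choose $\epsilon'$ small enough that (i) the $C^{[(\epsilon')^{-1}]}$-closeness produced at time $\tilde T$ survives both the triangle inequality against $\|h(T)-h_E\|$ and the homothetic correction by $c$, and (ii) the resulting error lies below the weak-stability threshold $\epsilon_E$ in the $C^{[\epsilon^{-1}]}$ norm; and one must check that $\tilde T$ is finite and that the flow can be run up to (and, as in the proof of Theorem~\ref{thm:OH26_1}, slightly past) $\tilde T$. A secondary point to verify carefully is that ``immortal existence and convergence to an Einstein manifold'' is genuinely preserved under the fixed diffeomorphism and homothety relating the flow on $M$ to the flow on $(N,h^{\star})$.
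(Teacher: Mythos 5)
Your proposal is correct and follows essentially the same route as the paper: choose a large time $T$ where $h(T)$ is $C^{[\epsilon^{-1}]}$-close to $h_E$, invoke the continuous dependence theorem for the normalized flow (Theorem~\ref{thm:OH26_1}) to bring $\Phi^*g(T)$ close to $h_E$, rescale slightly to match volumes, apply the weak (resp.\ strict) stability of Definition~\ref{d7.1}, and concatenate the flows. The only differences are cosmetic: your auxiliary parameter $\epsilon'$ and the explicit remarks on the homothety/time-reparametrization are bookkeeping that the paper handles by simply taking $\epsilon$ sufficiently small, and the detour through (\ref{eqn:OH27_12}) is unnecessary since $h(t)$ is already a normalized flow.
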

	
	\begin{proof}
		Because the immortal solution $(N,h(t))$ converges to an Einstein manifold $\left(N,h_E\right)$, we can choose $T=T(\epsilon)$ such that
		\begin{equation}\label{7.3}
			||h(T)-h_E||_{C^{[\epsilon^{-1}]}\left(h_E\right)}<\epsilon.
		\end{equation}
		
		On the other hand, by Theorem \ref{thm:OH26_1}, we can find $\eta=\eta\left(N,h,p,\epsilon \right)$ such that if $(M,g)$ satisfies 
		\begin{equation*}
			\kappa(p,\epsilon) <\epsilon^{2-\frac{m}{p}},\quad d_{GH}\left\lbrace (M,g),(N,h)\right\rbrace<\eta, 
		\end{equation*}
		then the normalized Ricci flow initiated from $(M,g)$ exists on $[0,T]$ and
		\begin{equation}\label{7.4}
			||h(T)-\Phi^*g(T)||_{C^{[\epsilon^{-1}]}\left(h_E\right)}<\epsilon
		\end{equation}
		for a diffeomorphism $\Phi:N\rightarrow M$. 
		
		Combining (\ref{7.3}) and (\ref{7.4}), we have
		\begin{equation}\label{7.55}
			||h_E-\Phi^*g(T)||_{C^{[\epsilon^{-1}]}\left(h_E\right)}<2\epsilon.
		\end{equation}
		Scaling the metric $(N,\Phi^*g(T))$ slightly, we have
		\begin{equation*}
			|N|_{dv_{\left(C\Phi^*g(T)\right)}}=|N|_{dv_{h_E}},\quad ||h_E-C\Phi^*g(T)||_{C^{[\epsilon^{-1}]}\left(h_E\right)}<3\epsilon
		\end{equation*}
		for an $\epsilon=\epsilon(N,h_E)$ which is sufficiently small. Because $(N,h_E)$ is weakly stable (cf. Definition \ref{d7.1}), 
		the normalized Ricci flow initiated from $(N,\Phi^*g(T))$ exists immortally and  converges to an Einstein manifold $\left(N,h_E'\right)$. If $(N,h_E)$ is strictly stable, then we have $Ch_E'=h_E$ where $C$ is the small scaling number. 
		Concatenating the flows, we obtain a normalized Ricci flow which initiates from $(M,g)$ and converges to $(N,h_E')$.
	\end{proof}
	
	In Theorem \ref{t7.2}, if $(N,h)$ itself is a weakly stable Einstein manifold, then we have the following corollary.
	
	\begin{corollary}[\textbf{Stability nearby a stable Einstein manifold}]
	\label{c7.3}
	Suppose $(N^{m},h)$ is a weakly stable Einstein manifold, $p>\frac{m}{2}$ and $\xi<\epsilon_0(m,p)$ is sufficiently small.
	Then there exists $\delta=\delta(h,p,\xi)$ with the following properties.  
	
	If $(M^{m}, g)$ is a Riemannian manifold satisfying
	\begin{align*}
	 \kappa(p,\xi) <\xi^{2-\frac{m}{p}}, \quad \textrm{and}  \quad
	 d_{GH} \{(M,g), (N,h)\}<\delta, 
	\end{align*}
	then $(M, g)$ can be smoothly deformed to an Einstein metric $(N,h_E)$ by normalized Ricci flow. 
	Moreover, if $(N,h)$ is strictly stable, then the limit metric is homothetic to $(N,h)$.
	\end{corollary}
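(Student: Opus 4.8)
The plan is to read off Corollary~\ref{c7.3} from Theorem~\ref{t7.2} by using, as the reference immortal solution, the stationary normalized Ricci flow sitting at the Einstein metric $h$ itself. The first step is the elementary observation that the normalized Ricci flow is stationary at any Einstein metric: if $Rc_h=\lambda h$, then $R_h\equiv m\lambda$ coincides with its own average, so the pointwise Ricci term in the normalized flow equation is exactly cancelled by the volume--normalizing term, and the constant family $h(t)\equiv h$, $t\in[0,\infty)$, is an immortal solution of the normalized Ricci flow with $h(0)=h$. This family converges (trivially, in the smooth topology) to the Einstein manifold $(N,h_E)$ with $h_E:=h$, which is weakly stable by the hypothesis of the corollary. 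Hence all hypotheses of Theorem~\ref{t7.2} are satisfied with this choice of $\mathcal{N}$.

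Next I would apply Theorem~\ref{t7.2}, with the small parameter $\epsilon$ of that theorem set equal to $\xi$. This produces a constant $\eta=\eta(N,h,p,\xi)$, and I would simply define $\delta:=\eta(N,h,p,\xi)$. Any Riemannian manifold $(M^m,g)$ satisfying $\kappa(p,\xi)<\xi^{2-\frac{m}{p}}$ and $d_{GH}\{(M,g),(N,h)\}<\delta$ then fulfils hypothesis~(\ref{7.2}) of Theorem~\ref{t7.2}, so we conclude that the normalized Ricci flow initiated from $(M,g)$ exists immortally and converges in the smooth topology to an Einstein metric; calling this limit $(N,h_E)$ (it is the metric denoted $h_E'$ in Theorem~\ref{t7.2}) gives the first assertion of the corollary. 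For the last sentence, note that $h_E=h$ is strictly stable exactly when $(N,h)$ is strictly stable (cf.\ Definition~\ref{d7.1}), and in that case Theorem~\ref{t7.2} guarantees the limit is homothetic to $h_E=h$. The standing requirement $\xi<\epsilon_0(m,p)$ only serves to keep the integral Ricci bound within the regime in which the comparison and regularity inputs invoked inside Theorem~\ref{t7.2} apply.

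I do not expect a genuinely hard step here, since the statement is a direct specialization of Theorem~\ref{t7.2}. The two points that merit a line of care are: verifying that the constant family really solves the \emph{normalized} (rather than the unnormalized) Ricci flow, which is immediate from the Einstein condition; and correctly matching the quantifier structure $(\epsilon,\eta)$ of Theorem~\ref{t7.2} with the quantifier structure $(\xi,\delta)$ demanded in the corollary, including the fact that $\xi$ (playing the role of $\epsilon$) must be chosen below $\epsilon_0(m,p)$.
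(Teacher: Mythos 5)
Your proposal is correct and follows essentially the same route as the paper, which states Corollary~\ref{c7.3} as a direct specialization of Theorem~\ref{t7.2} (taking $(N,h)$ itself, via the stationary normalized flow at the Einstein metric, as the immortal reference solution) without further argument. Your added verification that $h(t)\equiv h$ solves the normalized flow and the matching of $(\epsilon,\eta)$ with $(\xi,\delta)$ are exactly the details the paper leaves implicit.
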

	
	Now we are ready to finish the proof of main theorem. 
		
	\begin{proof}[Proof of Theorem~\ref{thm:RG26_3}:]
	By the diameter estimate of Aubry~\cite{Aubry}, we know from (\ref{eqn:RG26_1}) that the diameter of $(M, g)$ is bounded by $2\pi$.
	Then we can apply the volume comparison of Petersen-Wei (cf.  Theorem 1.1 of~\cite{PW1}) to obtain
	\begin{align*}
	  \inf_{x \in M}  |B(x, 1)| \geq c_0(m,p), 
	\end{align*}
	which in turn means that the average $L^{p}$-norm of $\{Rc-(m-1)g\}_{-}$ is small.  Namely, (\ref{eqn:OH25_15}) holds. 
	Consequently, the work of Petersen and Sprouse \cite{PeSp} applies and we have 
	\begin{align*}
	   d_{GH}\{(M,g),(S^m,g_{round})\}\leq \psi(\epsilon|m,p). 
	\end{align*}
	Note that the space form $(S^m,g_{round})$ is strictly stable by the work of Huisken~\cite{Huisken85}.
	Therefore, the convergence of the normalized Ricci flow initiated from $(M^{m}, g)$  follows from Corollary \ref{c7.3}.
	The exponential convergence follows from the argument of Hamilton~\cite{Hamilton82}. 
	\end{proof}

\bibliographystyle{amsalpha}

Yuanqing Ma, Institute of Mathematics, The Academy of Mathematics and Systems of Sciences, Chinese Academy of Sciences, Beijing, 100190, China. 

Email: mayuanqing16@mails.ucas.ac.cn
\\

Bing Wang,  Institute of Geometry and Physics, and Key Laboratory of Wu Wen-Tsun Mathematics,
School of Mathematical Sciences, University of Science and Technology of China, No. 96 	Jinzhai Road, Hefei, Anhui Province, 230026, China. 

Email:
topspin@ustc.edu.cn

\end{document}